\documentclass[reqno]{amsart}
\usepackage{amsthm, amssymb, amsfonts}
\usepackage{lmodern}
\usepackage{color}
\usepackage{hyperref}
\usepackage[T5]{fontenc}
\usepackage{amscd,amssymb}
\usepackage[v2,cmtip]{xy}
\usepackage{mathrsfs}
\oddsidemargin 0in
\evensidemargin 0in
\theoremstyle{plain}
\newtheorem{thm}{Theorem}[section]

\newtheorem{con}[thm]{Conjecture}
\newtheorem{corl}[thm]{Corollary}
\theoremstyle{definition}

\theoremstyle{plain}
\newtheorem{thms}{Theorem}[subsection]
\newtheorem{props}[thms]{Proposition}
\newtheorem{lems}[thms]{Lemma}
\newtheorem{corls}[thms]{Corollary}
\theoremstyle{definition}

\newtheorem{defns}[thms]{Definition}

\newtheorem{notas}[thms]{Notation}

\hoffset=1.5truecm
\allowdisplaybreaks
\begin{document} 
	
\title[The squaring operation and the hit problem]
{The squaring operation and the hit problem\\ for the polynomial algebra in a type\\ of generic degree}
\author{Nguy\~\ecircumflex n Sum}

\address{Department of Mathematics and Applications, S\`ai G\`on University, 273 An D\uhorn \ohorn ng V\uhorn \ohorn ng, District 5, H\`\ocircumflex\ Ch\'i Minh city, Viet Nam}

\email{nguyensum@sgu.edu.vn}

\subjclass[2010]{Primary 55S10; Secondary 55S05}

\keywords{Steenrod algebra, Peterson hit problem, polynomial algebra}

\thanks{The author was supported in part by the National Foundation for Science and Technology Development (NAFOSTED) of Viet Nam under the grant number 101.04-2017.05}

\maketitle

\begin{abstract}
Let $P_k$ be the graded polynomial algebra $\mathbb F_2[x_1,x_2,\ldots ,x_k]$ with the degree of each generator $x_i$ being 1, where $\mathbb F_2$ denote the prime field with two elements. 
	
The \textit{hit problem} of Frank Peterson asks for a minimal generating set for the polynomial algebra $P_k$ as a module over the mod-2 Steenrod algebra $\mathcal{A}$. Equivalently, we want to find a vector space basis for $\mathbb F_2 \otimes_{\mathcal A} P_k$ in each degree.  
	
In this paper, we study a generating set for the kernel of Kameko's squaring operation $\widetilde{Sq}^0_*: \mathbb F_2 \otimes_{\mathcal A} P_k \longrightarrow \mathbb F_2 \otimes_{\mathcal A} P_k$ in a so-called \textit{generic degree}. By using this result, we explicitly compute the hit problem for $k=5$ in the respective generic degree.
\end{abstract}

\bigskip
\begin{center}
	\textit{(In memory of Professor Reginald Wood)}
\end{center}

\medskip
\section{Introduction}\label{s1} 
\setcounter{equation}{0}

Denote by $P_k:= \mathbb F_2[x_1,x_2,\ldots ,x_k]$ the polynomial algebra over the field with two elements, $\mathbb F_2$, in $k$ generators $x_1, x_2, \ldots , x_k$, each of degree 1. This algebra arises as the cohomology with coefficients in $\mathbb F_2$ of a classifying space of an elementary abelian 2-group $V_k$ of rank $k$.  Therefore, $P_k$ is a module over the mod-2 Steenrod algebra, $\mathcal A$.   
The action of $\mathcal A$ on $P_k$ is determined by the elementary properties of the Steenrod squares $Sq^i$ and subject to the Cartan formula
$Sq^n(fg) = \sum_{i=0}^nSq^i(f)Sq^{n-i}(g),$
for $f, g \in P_k$ (see Steenrod and Epstein~\cite{st}).

A polynomial $f$ in $P_k$ is called \textit{hit} if it can be written as a finite sum $f = \sum_{i> 0}Sq^{i}(h_i)$ for suitable polynomials $h_i\in P_k$. That means $f$  belongs to  $\mathcal{A}^+P_k$, where $\mathcal{A}^+$ denotes the augmentation ideal in $\mathcal A$. 

We study the \textit{Peterson hit problem} of determining a minimal set of generators for the polynomial algebra $P_k$ as a module over the Steenrod algebra. Equivalently, we want to find a vector space basis for the quotient 
\[QP_k := P_k/\mathcal A^+P_k = \mathbb F_2 \otimes_{\mathcal A} P_k.\] 

The Peterson hit problem is an open problem in Algebraic Topology. It was first studied by Peterson~\cite{pe}, Priddy~\cite{pr}, Singer~\cite {si1} and  Wood~\cite{wo}, who showed its relation to several classical problems respectively in cobordism theory, modular representation theory of general linear groups, Adams spectral sequence for the stable homotopy of spheres, and stable homotopy type of classifying spaces of finite groups. Then, this problem  was studied by Carlisle and Wood~\cite{cw}, Crabb and Hubbuck~\cite{ch}, Kameko~\cite{ka,ka2}, Mothebe \cite{mo2}, Nam~\cite{na}, Peterson~\cite{pe1}, Repka and Selick~\cite{res}, Silverman~\cite{sl}, Silverman and Singer~\cite{ss}, Singer~\cite{si2}, Walker and Wood~\cite{wa1,wa2}, Wood~\cite{wo2} and others.

\smallskip
Let $GL_k$ be the general linear group over the field $\mathbb F_2$. Since $V_k$ is an $\mathbb F_2$-vector space of dimension $k$, this group acts naturally on $V_k$ and therefore on the cohomology $P_k$ of $BV_k$. The two actions of $\mathcal A$ and $GL_k$ upon $P_k$ commute with each other. Hence, there is an inherited action of $GL_k$ on $QP_k$. 

The vector space $QP_k$ was explicitly calculated by Peterson~\cite{pe} for $k=1, 2,$ by Kameko~\cite{ka} for $k=3$ and by Kameko \cite{ka2} and the present author \cite{su2} for $k = 4$, unknown in general. Recently, the hit problem and its applications to representations of general linear groups have been presented in the monographs of Walker and Wood \cite{wa3, wa4}. 

\smallskip
For a positive integer $n$, by $\mu(n)$ one means the smallest number $r$ for which it is possible to write $n = \sum_{1\leqslant i\leqslant r}(2^{u_i}-1)$ with $u_i >0$. By a simple computation, we can see that  $\mu(n) = s$ if and only if there exists a unique sequence of integers $d_1 > d_2 >\ldots > d_{s-1}\geqslant d_s>0$ such that 
\begin{equation} \label{ct1.1}n =  2^{d_1} + 2^{d_2}+ \ldots + 2^{d_{s-1}}+ 2^{d_{s}} - s = \sum_{1\leqslant i \leqslant s}(2^{d_i}-1), 
\end{equation}
(see e.g. \cite[Lemma 2]{su3} for a proof). From this it implies that $n-s$ is even and that $\mu(\frac{n-s}2) \leqslant s$, where $s = \mu(n)$. 

Based on the results of Wood \cite{wo} and Kameko \cite[Theorem 4.2]{ka}, the hit problem is reduced to the case of degree $n$ of the form \eqref{ct1.1} with $\mu(n) = s < k$. 

The hit problem in the case of degree $n$ of the form \eqref{ct1.1} with $s=k-1$,  was studied by Crabb and Hubbuck~\cite{ch}, Nam~\cite{na}, Repka and Selick~\cite{res}, Walker and Wood \cite{wa2} and the present author \cite{su,su2}. 

For $s = k - 2$,   in \cite{su}, we studied the kernel of Kameko's squaring operation $\widetilde{Sq}^0_*: QP_k \to QP_k$. This operation is induced by the $\mathbb F_2$-linear map $\varphi:P_k\to P_k$, given by
\[\varphi(x) =  \left\{
\begin{array}{l@{\enskip}l}
y, &\mbox{ if }x=x_1x_2\ldots x_ky^2,\\
0,  &\mbox{ otherwise,}
\end{array}\right.\]
for any monomial $x \in P_k$. Note that $\varphi$ is a homomorphism of $GL_k$-modules but it is not an $\mathcal A$-homomorphism. However, 
$\varphi Sq^{2t} = Sq^{t}\varphi$ and $\varphi Sq^{2t+1} = 0$ for any non-negative integer $t$. So, for each positive integer $n$ such that $n-k$ is even, $\varphi$ induced a homomorphism of $GL_k$-modules:
\[(\widetilde{Sq}^0_*)_{(k,n)}:= \widetilde{Sq}^0_*: (QP_k)_{n}\to (QP_k)_{\frac{n-k}2}.\]
Here and in what follows, we denote by $(P_k)_n$ the subspace of $P_k$ consisting of the homogeneous polynomials of degree $n$ in $P_k$ and  $(QP_k)_n$ the subspace of $QP_k$ consisting of all the classes represented by the elements in $(P_k)_n$.

Since $(\widetilde{Sq}^0_*)_{(k,n)}$ is a homomorphism of $GL_k$-modules, $\mbox{Ker}(\widetilde{Sq}^0_*)_{(k,n)}$ gives a representation of $GL_k$. We have gave a prediction for the dimension of $\mbox{Ker}(\widetilde{Sq}^0_*)_{(k,n)}$ in this case.

\begin{con}[See \cite{su}] \label{ker} Let  $n= \sum_{i=1}^{k-2}(2^{d_i}-1)$ with $d_i$ positive integers. If $d_{i-2} - d_{i-1} > i$ for  $3 \leqslant i \leqslant k - 1$ and $d_{k-2}  > k \geqslant 3$, then 
\[\dim\mbox{\rm Ker}(\widetilde{Sq}^0_*)_{(k,n)} = \prod_{3\leqslant i \leqslant k}(2^i-1).\]
\end{con}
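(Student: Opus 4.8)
The plan is to convert the statement into a classification of admissible monomials (in the sense of Kameko) and then to build, explicitly and uniformly in the $d_i$, a basis of $\mathrm{Ker}(\widetilde{Sq}^0_*)_{(k,n)}$ of size $\prod_{3\leqslant i\leqslant k}(2^i-1)$. The starting point is Kameko's splitting: the map $[y]\mapsto[x_1x_2\ldots x_ky^2]$ is a section of $\widetilde{Sq}^0_*$ (it is defined since $n-k$ is even and $n>0$), so
\[
\dim\mathrm{Ker}(\widetilde{Sq}^0_*)_{(k,n)}=\dim (QP_k)_n-\dim (QP_k)_{\frac{n-k}2},
\]
and, because $x_1x_2\ldots x_ky^2$ is admissible precisely when $y$ is, this number equals the count of admissible monomials of degree $n$ having at least one even exponent. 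Everything then reduces to enumerating those monomials.

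The next step is to pin down the weight vectors ($\omega$-sequences) that admissible monomials of degree $n$ can carry. This is where the spacing hypotheses $d_{i-2}-d_{i-1}>i$ and $d_{k-2}>k$ are used: they force the binary carries arising when one rewrites portions of $n$ to be non-overlapping, so the admissible $\omega$-sequences occurring in degree $n$ form a short list depending only on $k$, not on the particular values $d_1>\ldots>d_{k-2}$. Feeding this into the standard hit criteria — Kameko's lemma, the results of \cite{su3}, Singer's criterion, and Mothebe's monomial lemmas \cite{mo2} — lets one reduce every admissible class of degree $n$ modulo $\mathcal A^+P_k$ to an $\mathbb F_2$-combination of a fixed, $d_i$-independent family of monomials, organised by the subset of $\{x_1,\ldots,x_k\}$ that actually appears.

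Counting this family is the bookkeeping step that produces the product $\prod_{3\leqslant i\leqslant k}(2^i-1)$. The monomials involving fewer than $k$ variables contribute — through the substitution homomorphisms relating $P_k$ to $P_{k-1}$ and an inclusion–exclusion — copies of the analogous spanning set for $k-1$ variables in the same generic shape, which by induction has size $\prod_{3\leqslant i\leqslant k-1}(2^i-1)$; the monomials in all $k$ variables supply the rest, and the totals satisfy $\prod_{3\leqslant i\leqslant k}(2^i-1)=(2^k-1)\prod_{3\leqslant i\leqslant k-1}(2^i-1)$, so the effect of adjoining the $k$-th variable is a factor $2^k-1$. The base case $k=3$, degree $n=2^{d_1}-1$ with $d_1>3$, follows from Kameko's explicit description of $QP_3$ and gives $2^3-1=7$. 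This yields the upper bound $\dim\mathrm{Ker}(\widetilde{Sq}^0_*)_{(k,n)}\leqslant\prod_{3\leqslant i\leqslant k}(2^i-1)$.

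For the reverse inequality one must show the family constructed above is linearly independent in $(QP_k)_n$, i.e., that no nontrivial $\mathbb F_2$-combination of its members is hit. The method is the standard one: assume such a combination lies in $\mathcal A^+P_k$, apply the substitution homomorphisms $P_k\to P_{k-1}$ together with Kameko's down operation to descend to smaller rank or smaller degree where $QP$ is already computed (by \cite{ka}, \cite{su2}, and the cases handled in this paper for $k\leqslant 5$), and conclude that all coefficients vanish. I expect this to be the main obstacle: the upper-bound argument is combinatorial and should go through for every $k$, but verifying that the spanning set is actually a basis requires detailed knowledge of $QP_k$ near degree $n$, and the size of that verification is precisely why the full statement is within reach for $k=5$ — this is the explicit hit-problem computation the paper carries out — while it remains only a conjecture for larger $k$. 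A secondary difficulty lies in the completeness of the $\omega$-sequence list: excluding exotic weight vectors for admissible monomials of degree $n$ is delicate and rests squarely on the spacing conditions $d_{i-2}-d_{i-1}>i$.
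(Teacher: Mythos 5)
The statement you are proving is Conjecture \ref{ker}, which the paper does not prove: it is verified only for $k\leqslant 4$ (elsewhere) and for $k=5$ (Theorem \ref{dl20} here), and remains open for $k\geqslant 6$. Your outline is a reasonable description of a strategy, and its first step (surjectivity of $\widetilde{Sq}^0_*$, hence $\dim\mbox{\rm Ker}(\widetilde{Sq}^0_*)_{(k,n)}=\dim(QP_k)_n-\dim(QP_k)_{(n-k)/2}$) is indeed used in the paper's proof of Theorem \ref{dl20}. But the two load-bearing steps are deferred rather than carried out, and they cannot be discharged by the tools you invoke. Your claim that ``Kameko's lemma, Singer's criterion, and Mothebe's lemmas'' suffice to reduce every class to a fixed, $d_i$-independent family is directly contradicted by the paper, which states that strictly inadmissible monomials and Singer's criterion are \emph{not} enough when $\mu(n)=k-2$; this is exactly why the author introduces strongly inadmissible monomials (resting on Silverman's criterion, Theorems \ref{dlsil} and \ref{dlww}), the compatibility condition of Definition \ref{gth}, and Theorem \ref{dlck2}, and why even the $k=5$ upper bound consumes the explicit determination of $B_5((3)|^d)$ for all $d$ (Theorem \ref{dl51}) plus Proposition \ref{mdt5}. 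Your inductive count producing the factor $2^k-1$ per variable is asserted, not derived: the paper's actual bound for $k=5$ is $|\mathcal B_5|\cdot\dim(QP_3)_m=155\cdot 21$, which is not obtained by any $(2^k-1)$-recursion, and nothing in your sketch explains why the spanning family you posit has cardinality $\prod_{3\leqslant i\leqslant k}(2^i-1)$.

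On the lower bound, note that the paper does \emph{not} verify linear independence of its generating set by the substitution-homomorphism descent you propose; it imports Walker and Wood's inequality $\dim(QP_k)_n\geqslant(k-1)\prod_{3\leqslant i\leqslant k}(2^i-1)$ (Theorem \ref{dlwa}) and combines it with the surjectivity of $\widetilde{Sq}^0_*$ and the previously computed value of $\dim(QP_5)_{(n-5)/2}$ from \cite{su2}. That is a cleaner route to the inequality $\dim\mbox{\rm Ker}(\widetilde{Sq}^0_*)_{(5,n)}\geqslant\prod_{3\leqslant i\leqslant 5}(2^i-1)$ than a direct independence check, and you should adopt it; but it still requires knowing $\dim(QP_k)_{(n-k)/2}$, which is unavailable for $k\geqslant 6$. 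A further small inaccuracy: the equivalence ``$x_1x_2\cdots x_ky^2$ is admissible precisely when $y$ is'' is only half-supported by the results quoted here (Theorem \ref{dlcb1} gives one direction), so your identification of $\dim\mbox{\rm Ker}$ with the number of admissible monomials having an even exponent needs a citation or proof for the converse direction. In sum, the proposal does not close the conjecture, and the parts it leaves open are precisely the parts that make the statement a conjecture.
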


This conjecture is true for $k \leqslant 4$ and unknown for $k \geqslant 5$. 

In \cite{su,su2}, we have studied the hit problem in case of the degree $n$ with $\mu(n)= s = k-1$ by using the strictly inadmissible monomials and Singer's criterion in \cite{si2} on hit monomials. However, these tools are not enough to study this problem in the case of the degree $n$ with $\mu(n)=s = k-2$.

In this paper, based on Silverman's criterion in \cite{sl2} on hit polynomials, we introduce the notion of strongly inadmissible monomial to construct a generating set for the kernel of Kameko's squaring operation.   One of our main results is Theorem \ref{dlck2} in Section~\ref{s3} which provides the upper bound on the dimension of $\mbox{Ker}(\widetilde{Sq}^0_*)_{(k,n)}$. By using this result, we verify Conjecture \ref{ker} for $k=5$. We prove the following. 

\begin{thm}\label{dl20} Let $n = 2^{d+s+t} + 2^{d + s} + 2^d -3$ with $d,\, s,\, t$ non-negative integers. If  $d \geqslant 6$ and $t,\, s \geqslant 4$, then
\begin{equation}\label{ct130}\dim\mbox{\rm Ker}(\widetilde{Sq}^0_*)_{(5,n)} = (2^3-1)(2^4-1)(2^5-1) = 3255.
\end{equation}
\end{thm}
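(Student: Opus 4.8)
The plan is to pin down $\dim\mbox{\rm Ker}(\widetilde{Sq}^0_*)_{(5,n)}$ by sandwiching it between an upper bound coming from Theorem~\ref{dlck2} and a lower bound coming from an explicitly listed family of linearly independent classes in the kernel, and then to check that both bounds equal $3255$. First one sets $d_1=d+s+t$, $d_2=d+s$, $d_3=d$, so that $n=\sum_{i=1}^{3}(2^{d_i}-1)$ is of the form \eqref{ct1.1} with $\mu(n)=3=k-2$ for $k=5$; the hypotheses $d\geqslant 6$ and $s,t\geqslant 4$ are exactly the gap conditions on $(d_1,d_2,d_3)$ under which Theorem~\ref{dlck2} applies in the case $k=5$. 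Invoking that theorem and evaluating its bound in the present degree gives
\[
\dim\mbox{\rm Ker}(\widetilde{Sq}^0_*)_{(5,n)}\;\leqslant\;(2^3-1)(2^4-1)(2^5-1)\;=\;3255,
\]
so everything reduces to producing $3255$ linearly independent elements of the kernel.

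For the lower bound, the starting observation is that $\varphi$ annihilates every monomial of $(P_5)_n$ that is not divisible by $x_1x_2x_3x_4x_5$; hence $\widetilde{Sq}^0_*$ vanishes identically on the subspace $(QP_5^0)_n$ spanned by the classes of monomials in which some variable is absent, and therefore $(QP_5^0)_n\subseteq\mbox{\rm Ker}(\widetilde{Sq}^0_*)_{(5,n)}$ for free. Since $\mu(n)=3$, Wood's theorem gives $(QP_j)_n=0$ for $j\leqslant 2$, so in the canonical decomposition of $(QP_5)_n$ by the variable-support of monomials only the supports of size $3$ and $4$ contribute, whence
\[
\dim(QP_5^0)_n=\binom{5}{3}\dim(QP_3)_n+\binom{5}{4}\bigl(\dim(QP_4)_n-4\dim(QP_3)_n\bigr)=5\dim(QP_4)_n-10\dim(QP_3)_n;
\]
both $(QP_3)_n$ (Kameko~\cite{ka}) and $(QP_4)_n$ (Kameko~\cite{ka2} and the author~\cite{su2}) are known explicitly in this generic degree, so this number is determined. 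It then remains to construct a further, explicitly listed set of admissible monomials of $(P_5)_n$ involving all five variables whose classes lie in $\mbox{\rm Ker}(\widetilde{Sq}^0_*)_{(5,n)}$, are linearly independent modulo $(QP_5^0)_n$, and bring the running count up to exactly $3255$.

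The construction and the independence proof for this last family is where the work concentrates, and it is the step I expect to be the main obstacle. Here one organises the candidate monomials by the admissible ``spike''/weight shapes compatible with $n$; for each such admissible monomial $w$ involving all five variables one first tests membership in the kernel, i.e.\ whether $\varphi(w)$ — which is a monomial when every exponent of $w$ is odd and is $0$ otherwise — represents $0$ in $(QP_5)_{(n-5)/2}$, and the admissibility and strongly-inadmissible-monomial calculus of Section~\ref{s3}, together with Silverman's criterion~\cite{sl2}, both cuts down the list of $w$ one must consider and settles these vanishing questions, so that the monomials genuinely contributing new kernel directions can be enumerated. Linear independence of the surviving classes is then reduced to the already-settled case $k=4$: applying the substitution homomorphisms $P_5\to P_5$ that collapse one variable onto another, and using the $GL_5$-equivariance of everything in sight, one pushes any hypothetical relation among these classes onto relations in $QP_4$ in degrees where $QP_4$ is completely known, leaving only a bounded, explicit collection of relations to be ruled out by direct computation. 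Once these monomials are enumerated and shown independent, their number together with $\dim(QP_5^0)_n$ is checked to equal $3255$; combining this with the upper bound from Theorem~\ref{dlck2} yields $\dim\mbox{\rm Ker}(\widetilde{Sq}^0_*)_{(5,n)}=3255=\prod_{3\leqslant i\leqslant 5}(2^i-1)$, which in particular verifies Conjecture~\ref{ker} for $k=5$.
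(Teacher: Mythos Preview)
Your proposal has a genuine gap in the upper-bound half and takes an unnecessarily hard route for the lower bound.

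\medskip
\textbf{The upper bound.} You treat Theorem~\ref{dlck2} as a black box that, under ``gap conditions'' on $(d_1,d_2,d_3)$, directly outputs the number $3255$. That is a misreading: Theorem~\ref{dlck2} has no gap hypotheses at all. Its input is a set $\mathcal B\subset{\sf PInc}_k^{d_0}$ that is \emph{compatible with $(k-2)|^{d_0}$} in the sense of Definition~\ref{gth}, and its output is the inequality $\dim\mbox{Ker}(\widetilde{Sq}^0_*)_{(k,n)}\leqslant |\mathcal B|\cdot\dim(QP_{k-2})_m$. To get $3255$ for $k=5$ one must first \emph{construct} such a $\mathcal B$ with $|\mathcal B|=155$ and then multiply by $\dim(QP_3)_m=21$. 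In the paper this is the content of all of Section~\ref{s4}: Theorem~\ref{dl51} determines $B_5((3)|^d)$ for $d\geqslant 5$ and shows it has $155$ elements, and Proposition~\ref{mdt5} verifies that the corresponding set $\mathcal B_5\subset{\sf PInc}_5^6$ is compatible with $(3)|^6$. This is precisely where the strongly inadmissible monomials (Lemmas~\ref{bdd31}, \ref{bdd42}, \ref{bdd51}) are used. Your proposal skips this entirely, so the upper bound is not established.

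\medskip
\textbf{The lower bound.} Your plan --- compute $\dim(QP_5^0)_n$, then exhibit enough admissible monomials in $P_5^+$ lying in the kernel and prove their independence by projecting to $QP_4$ --- is in principle workable but is far more labour than what the paper does. The paper gets the lower bound in two lines: Theorem~\ref{dlwa} (Walker--Wood) gives $\dim(QP_5)_n\geqslant 4\cdot 3255$, and since $(\widetilde{Sq}^0_*)_{(5,n)}$ is surjective onto $(QP_5)_{(n-5)/2}$, whose dimension is $3\cdot 3255$ by \cite[Theorems~1.3, 1.4]{su2}, one subtracts to get $\dim\mbox{Ker}\geqslant 3255$. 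The gap hypotheses $d\geqslant 6$, $s,t\geqslant 4$ enter here (for Walker--Wood and for the image dimension), not in Theorem~\ref{dlck2}. So the hard work you anticipate for the lower bound is avoidable, while the hard work you \emph{don't} anticipate --- building the compatible set $\mathcal B_5$ --- is where the proof actually lives.
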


Thus, Conjecture \ref{ker} is true for $k =5$. Based on Theorem \ref{dl20} and our result in \cite[Theorem 1.4]{su2}, one gets the following.

\begin{corl}\label{cor4240} Let $n$ be as in Theorem $\ref{dl20}$. If $d \geqslant 6$ and $s,\, t \geqslant 4$, then
$$\dim (QP_5)_n = 4(2^3-1)(2^4-1)(2^5-1) = 13020.$$
\end{corl}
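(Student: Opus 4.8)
The plan is to obtain $\dim (QP_5)_n$ by separating off the image of Kameko's squaring operation from its kernel and then invoking Theorem~\ref{dl20} for the latter. The first step is to record that, for every $n$ with $n-k$ even, the map $(\widetilde{Sq}^0_*)_{(5,n)}\colon (QP_5)_n \to (QP_5)_{(n-5)/2}$ is an epimorphism. Indeed, given a class in $(QP_5)_{(n-5)/2}$ represented by $g\in (P_5)_{(n-5)/2}$, set $h := x_1x_2x_3x_4x_5\,g^2 \in (P_5)_n$; writing $g$ as a sum of monomials and using that squaring is the Frobenius on $\mathbb F_2[x_1,\ldots,x_5]$, we have $g^2=\sum_\alpha c_\alpha x^{2\alpha}$ with $c_\alpha\in\mathbb F_2$, so $\varphi(h)=\sum_\alpha c_\alpha x^\alpha = g$ and hence $\widetilde{Sq}^0_*$ carries $[h]$ to $[g]$. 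Thus there is a short exact sequence of $\mathbb F_2$-vector spaces
\[
0 \longrightarrow \mathrm{Ker}(\widetilde{Sq}^0_*)_{(5,n)} \longrightarrow (QP_5)_n \xrightarrow{\ \widetilde{Sq}^0_*\ } (QP_5)_{\frac{n-5}{2}} \longrightarrow 0,
\]
and consequently
\[
\dim (QP_5)_n \;=\; \dim \mathrm{Ker}(\widetilde{Sq}^0_*)_{(5,n)} \;+\; \dim (QP_5)_{\frac{n-5}{2}}.
\]

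Second, I would pin down the degree $\tfrac{n-5}{2}$. Since $n = (2^{d+s+t}-1)+(2^{d+s}-1)+(2^d-1)$ is odd, $n-5$ is even, and using $2^{d-1}=2^{d-2}+2^{d-2}$ (allowed because $d\geqslant 6$) one gets
\[
\tfrac{n-5}{2} \;=\; 2^{d+s+t-1}+2^{d+s-1}+2^{d-1}-4 \;=\; (2^{d+s+t-1}-1)+(2^{d+s-1}-1)+(2^{d-2}-1)+(2^{d-2}-1).
\]
Hence $\mu\!\left(\tfrac{n-5}{2}\right)=4=k-1$, and $\tfrac{n-5}{2}$ is precisely a degree of the generic form handled, for $k=5$, in \cite{su2}. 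The hypotheses $d\geqslant 6$ and $s,t\geqslant 4$ place this degree inside the range of validity of \cite[Theorem~1.4]{su2}, which then gives
\[
\dim (QP_5)_{\frac{n-5}{2}} \;=\; 3\,(2^3-1)(2^4-1)(2^5-1) \;=\; 9765.
\]

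Third and last, under the very same hypotheses $d\geqslant 6$, $s,t\geqslant 4$, Theorem~\ref{dl20} yields $\dim \mathrm{Ker}(\widetilde{Sq}^0_*)_{(5,n)} = (2^3-1)(2^4-1)(2^5-1) = 3255$. Substituting both values into the displayed dimension formula,
\[
\dim (QP_5)_n \;=\; 3255 + 9765 \;=\; 13020 \;=\; 4\,(2^3-1)(2^4-1)(2^5-1),
\]
as asserted. The substantive content of the corollary is entirely inherited from Theorem~\ref{dl20} and from \cite[Theorem~1.4]{su2}; the only genuine care needed in the argument above is to confirm that the constraints $d\geqslant 6$, $s,t\geqslant 4$ are strong enough to apply \cite[Theorem~1.4]{su2} to the degree $\tfrac{n-5}{2}$, whose defining exponents $d+s+t-1$, $d+s-1$, $d-2$, $d-2$ are comfortably separated and bounded below, after which the conclusion is immediate.
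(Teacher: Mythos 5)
Your proposal is correct and follows essentially the same route as the paper: the paper's own proof of Theorem \ref{dl20} already records the surjectivity of $(\widetilde{Sq}^0_*)_{(5,n)}$, the identity $\dim (QP_5)_n = \dim\mathrm{Ker}(\widetilde{Sq}^0_*)_{(5,n)} + \dim(QP_5)_{(n-5)/2}$, and the value $\dim(QP_5)_{(n-5)/2}=3(2^3-1)(2^4-1)(2^5-1)$ from \cite{su2}, so the corollary is exactly the sum you compute. The only addition on your side is the explicit verification that Kameko's operation is an epimorphism, which the paper takes as known.
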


We show in Section \ref{s4} that many of the difficulties encountered when using strictly inadmissible monomials are overcome by using strongly inadmissible monomials. Thus, the notion of strongly inadmissible monomial should be a useful new tool in studying the Peterson hit problem.

\medskip
This paper is organized as follows.
In Section \ref{s2}, we recall  some needed information on the admissible monomials in $P_k$ and criteria of Singer \cite{si2} and Silverman \cite{sl2} on hit monomials. In Section \ref{s3}, we present the results for a generating set of the kernel of Kameko's squaring operation. As an application of the results of Section \ref{s3}, in Section \ref{s4},  we prove that Conjecture \ref{ker} is true for $k=5$. Finally, in Section \ref{s5} we list the needed admissible monomials of degree $3(2^d-1)$ in $P_5$.

\section{Preliminaries 
}\label{s2}
\setcounter{equation}{0}

In this section, we recall some results from Kameko ~\cite{ka},  Singer ~\cite{si2}, Silverman~\cite{sl2} and our work \cite{su2} which will be used in the next sections.

\subsection{The weight vector and the admissible monomials}\

\begin{notas} 
In the paper, we use the following notations.
\begin{align*}
\mathbb N_k &= \{1,2, \ldots , k\},\\
X_{\mathbb J} &= X_{j_1,j_2,\ldots , j_s} =
\prod_{j\in \mathbb N_k\setminus \mathbb J}x_j , \ \ \mathbb J = \{j_1,j_2,\ldots , j_s\}\subset \mathbb N_k.
\end{align*}
In particular, we have
$X_\emptyset = x_1x_2\ldots x_k,\, X_j = x_1\ldots\hat x_j\ldots x_{k}, 1\leqslant j \leqslant k,\, X_{\mathbb N_k} =1.$
		
Denote by $\alpha_i(a)$ the $i$-th coefficient in the dyadic expansion of a non-negative integer $a$. That means
\[a= \alpha_0(a)2^0+\alpha_1(a)2^1+\alpha_2(a)2^2+ \ldots ,\]
for $ \alpha_i(a) =0$ or 1 and $i\geqslant 0.$ Denote by $\alpha(a)$ the number of 1's in the dyadic expansion of $a$.
		
Let $x=x_1^{a_1}x_2^{a_2}\ldots x_k^{a_k} \in P_k$. We denote $\nu_j(x) = a_j, 1 \leqslant j \leqslant k$ and $\nu(x) = \max\{\nu_j(x):1 \leqslant j \leqslant k\}$.  
We set 
\[\mathbb J_i(x) = \{j \in \mathbb N_k :\alpha_i(\nu_j(x)) =0\},\] 
for $i\geqslant 0$. Then, we have
\[x = \prod_{i\geqslant 0}X_{\mathbb J_i(x)}^{2^i}.\] 
\end{notas}
\begin{defns} A weight vector $\omega$ is a sequence of non-negative integers $(\omega_1,\omega_2$, $\ldots , \omega_i, \ldots)$ such that $\omega_i = 0$ for $i \gg 0$.
For any monomial  $x$ in $P_k$, we define two sequences associated with $x$ by
\begin{align*} 
\omega(x)&=(\omega_1(x),\omega_2(x),\ldots , \omega_i(x), \ldots),\\
\sigma(x) &= (\nu_1(x),\nu_2(x),\ldots ,\nu_k(x)),
\end{align*}
where
$\omega_i(x) = \sum_{1\leqslant j \leqslant k} \alpha_{i-1}(\nu_j(x))= \deg X_{\mathbb J_{i-1}(x)},\ i \geqslant 1.$
The sequences $\omega(x)$ and $\sigma(x)$ are respectively called the weight vector and the exponent vector of $x$. 

The set of weight vectors (respectively exponent vectors) is given the left lexicographical order.  

For a weight vector $\omega = (\omega_1,\omega_2,\ldots)$, define $\deg \omega = \sum_{i > 0}2^{i-1}\omega_i$ and the length $\ell(\omega) = \max\{i : \omega_i >0\}$. Then, we write $\omega = (\omega_1,\omega_2,\ldots, \omega_r)$ if $\ell(\omega) = r$.  For a weight vector $\eta = (\eta_1,\eta_2, \ldots)$, we define the concatenation of weight vectors 
\[\omega|\eta = (\omega_1,\ldots,\omega_r,\eta_1,\eta_2,\ldots)\] 
if $\ell(\omega) = r$ and $(a)|^b = (a)|(a)|\ldots|(a)$, ($b$ times of $(a)$'s), where $a,\, b$ are positive integers.  
We denote $P_k(\omega)$ the subspace of $P_k$ spanned by monomials $y$ such that
$\deg y = \deg \omega$ and $\omega(y) \leqslant \omega$, and by $P_k^-(\omega)$ the subspace of $P_k(\omega)$ spanned by monomials $y$  such that $\omega(y) < \omega$. 
\end{defns}
Denote by $\mathcal A(s-1)$ the sub-Hopf algebra of $\mathcal A$ generated by  $Sq^i$ with $0\leqslant i < 2^s$, and $\mathcal A(s-1)^+ = \mathcal A^+\cap\mathcal A(s-1)$.

\begin{defns}\label{dfn2}  For $\omega$ a weight vector and $f, g$ two polynomials of the same degree in $P_k$, we define 
		
i) $f \equiv g$ if and only if $f + g \in \mathcal A^+P_k$. If $f \equiv 0$, then $f$ is said to be \textit{hit}.
		
ii) $f \equiv_{\omega} g$ if and only if $f + g \in \mathcal A^+P_k+P_k^-(\omega).$
		
iii) $f \simeq_{(s,\omega)} g$ if and only if $f + g \in  \mathcal A(s-1)^+P_k+P_k^-(\omega)$. 
\end{defns}

Obviously, the relations $\equiv$, $\equiv_{\omega}$ and $\simeq_{(s,\omega)}$ are equivalence relations. For $s=0$, we have $f \simeq_{(0,\omega)} g$ if and only if $f + g \in P_k^-(\omega)$. 

For $x$ a monomial in $P_k$ and $\omega = \omega(x)$, we denote $x \simeq_{s}g$ if and only if $x \simeq_{(s,\omega(x))}g$. 

Denote by $QP_k(\omega)$ the quotient of $P_k(\omega)$ by the equivalence relation $\equiv_\omega$. Following \cite{su3}, we have 
\begin{equation}\label{ct21} (QP_k)_n \cong \bigoplus_{\deg \omega = n}QP_k(\omega).\end{equation}

For any polynomial $f$ in $P_k$, we denote $[f]$ the class in $QP_k$ represented by $f$. For a subset $S \subset P_k$, we denote 
$[S] = \{[f] : f \in S\} \subset QP_k.$ If $f \in P_k(\omega)$ and $S \subset P_k(\omega)$, then  we denote by $[f]_\omega$ the class in $QP_k(\omega)$ represented by $f$ and 
\[ [S]_\omega = \{[f]_\omega : f \in S\} \subset QP_k(\omega).\]

We recall some elementary properties on the action of the Steenrod squares on $P_k$.

\begin{props}\label{mdcb1} Let $f$ be a homogeneous polynomial in $P_k$. 
	
{\rm i)} If $i > \deg f$, then $Sq^i(f) =0$. If $i = \deg f$, then $Sq^i(f) =f^2$.
	
{\rm ii)} If $i$ is not divisible by $2^s$, then $Sq^i(f^{2^s}) = 0$ while $Sq^{r2^s}(f^{2^s}) = (Sq^r(f))^{2^s}$.
\end{props}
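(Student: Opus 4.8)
The plan is to deduce both parts of Proposition~\ref{mdcb1} from the Cartan formula together with the elementary values of the squares on a degree-one class, with no appeal to Adem relations or to any deeper structure of $\mathcal A$.

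For part i), I would first reduce to the case where $f$ is a monomial, by $\mathbb F_2$-linearity of $Sq^j$. A monomial of degree $n$ in $P_k$ has the form $m = x_{i_1}x_{i_2}\cdots x_{i_n}$, a product of $n$ (not necessarily distinct) generators. Since the Cartan formula makes the total operation $Sq = \sum_{j\geqslant 0}Sq^j$ a ring endomorphism of $P_k$, and the elementary properties of the $Sq^j$ give $Sq(x_{i_l}) = x_{i_l} + x_{i_l}^2$, we get
\[Sq(m) = \prod_{l=1}^{n}\bigl(x_{i_l} + x_{i_l}^2\bigr).\]
The right-hand side is a sum of monomials whose degrees range from $n$ to $2n$, and its only summand of degree $2n$ is $x_{i_1}^2\cdots x_{i_n}^2 = m^2$. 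Comparing the homogeneous component of degree $n+j$ on both sides gives $Sq^j(m) = 0$ for $j > n$ and $Sq^n(m) = m^2$. For an arbitrary homogeneous $f = \sum_\alpha m_\alpha$ of degree $n$, written as a sum of distinct monomials, I would then sum these identities and use the Frobenius identity $(a+b)^2 = a^2+b^2$ in $P_k$ to conclude $Sq^j(f) = 0$ for $j > n$ and $Sq^n(f) = \bigl(\sum_\alpha m_\alpha\bigr)^2 = f^2$.

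For part ii), I would argue by induction on $s$; the case $s = 0$ is immediate. The base case $s = 1$ goes as follows: for any homogeneous $g$, the Cartan formula gives $Sq^i(g^2) = \sum_{a+b=i}Sq^a(g)\,Sq^b(g)$, and over $\mathbb F_2$ the summands with $a\ne b$ cancel in pairs $\{(a,b),(b,a)\}$, so $Sq^i(g^2) = 0$ when $i$ is odd while $Sq^{2r}(g^2) = \bigl(Sq^r(g)\bigr)^2$. For the inductive step I would assume the statement for $s-1$ and write $f^{2^s} = \bigl(f^{2^{s-1}}\bigr)^2$: if $2^s\nmid i$ then either $i$ is odd and $Sq^i(f^{2^s}) = 0$ by the base case, or $i = 2i'$ with $2^{s-1}\nmid i'$ and $Sq^i(f^{2^s}) = \bigl(Sq^{i'}(f^{2^{s-1}})\bigr)^2 = 0$ by the inductive hypothesis; and if $i = r2^s$ then $Sq^{r2^s}(f^{2^s}) = \bigl(Sq^{r2^{s-1}}(f^{2^{s-1}})\bigr)^2 = \bigl((Sq^r(f))^{2^{s-1}}\bigr)^2 = (Sq^r(f))^{2^s}$, again by the inductive hypothesis.

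I do not expect a genuine obstacle here: Proposition~\ref{mdcb1} is a foundational fact (compare Steenrod and Epstein~\cite{st}), and the argument is purely formal once the Cartan formula is available. The only points deserving a little care are the passage from monomials to general homogeneous polynomials in part i), which relies on working in characteristic two, and the bookkeeping of the pairwise cancellation of the cross terms in part ii).
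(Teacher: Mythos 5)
Your argument is correct and complete. The paper itself states Proposition~\ref{mdcb1} without proof, as a recollection of elementary facts from Steenrod and Epstein~\cite{st}, and your derivation --- reducing part i) to monomials via the multiplicativity of the total square $Sq$ and the value $Sq(x_{i_l})=x_{i_l}+x_{i_l}^2$, and proving part ii) by induction on $s$ using the pairwise cancellation of cross terms in the Cartan expansion of $Sq^i(g^2)$ --- is exactly the standard argument those classical facts rest on, so there is nothing to compare against or object to.
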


\begin{props}[Kameko {\cite[Lemma 3.1]{ka}}]\label{bdkbs} Let $x$ be a monomial in $P_k$ and $n,\, s$ be positive integers such that $0<n<2^s$. If $v$ is a monomial in $P_k$ which appears as a term in the polynomial $Sq^n(x)$, then there is an index $i \leqslant s$ such that $\omega_i(v) < \omega_i(x)$ and $\omega(v) < \omega(x)$.	
\end{props}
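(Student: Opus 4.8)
~\textbf{The plan.}
I would prove Proposition~\ref{bdkbs} by a direct analysis of the terms produced by applying $Sq^n$ to a single monomial, exploiting the constraint $0 < n < 2^s$ together with the Cartan formula. The key structural fact to use is the decomposition $x = \prod_{i\geqslant 0}X_{\mathbb J_i(x)}^{2^i}$ recorded in the Notation, which reads off the dyadic digits of the exponents: the $i$-th weight coordinate $\omega_i(x)$ counts how many variables have a $1$ in the $(i-1)$-st dyadic position. The goal is to show that any monomial $v$ appearing in $Sq^n(x)$ must have \emph{strictly} smaller weight vector, and moreover that the first place where it drops occurs at some coordinate with index $\leqslant s$.

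First I would reduce to the case where $x$ is a product of distinct variables times a perfect power, i.e.\ isolate the lowest dyadic layer. Concretely, write $x = z \cdot w^{2}$ where $z = X_{\mathbb J_0(x)}$ is squarefree (it is the product of the variables appearing to an odd power) and $w = \prod_{i\geqslant 0}X_{\mathbb J_{i+1}(x)}^{2^i}$. Then $\omega_1(x) = \deg z$ and $\omega_{i+1}(x) = \omega_i(w)$ for $i\geqslant 1$. By the Cartan formula, $Sq^n(x) = \sum_{a+b=n} Sq^a(z)\,Sq^b(w^2)$, and by Proposition~\ref{mdcb1}(ii) the factor $Sq^b(w^2)$ vanishes unless $b$ is even, say $b = 2c$, in which case $Sq^b(w^2) = (Sq^c(w))^2$. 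Since $n$ is odd whenever $a$ is odd, one does not get a clean parity split, so instead I would argue inductively on $s$ (equivalently on the number of dyadic layers that $n$ can reach), peeling off one square at a time.

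The heart of the argument is the squarefree base case and the induction step. For the squarefree part, applying $Sq^a$ to $z$ with $0 < a \leqslant \deg z$ raises $a$ of the distinct variables to the square; each such squaring moves a digit from position $0$ to position $1$, so the resulting monomial $v$ satisfies $\omega_1(v) = \omega_1(x) - a < \omega_1(x)$ while $\deg v = \deg x + a$ — but we only care about terms of the \emph{same} degree as $Sq^n(x)$, and the bookkeeping of how $a$ and $c$ combine forces a genuine drop. More carefully, I would track the total: if $v$ is a term of $Sq^n(x)$ then $\deg v = \deg x + n$, and since $0 < n < 2^s$, the ``carrying'' of dyadic digits caused by the operation cannot propagate past position $s$ without already having decreased some earlier coordinate; this is exactly the assertion that the first index $i$ with $\omega_i(v)\neq\omega_i(x)$ satisfies $i\leqslant s$ and $\omega_i(v) < \omega_i(x)$. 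The lexicographic order then gives $\omega(v) < \omega(x)$ immediately.

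The main obstacle will be the precise digit-carrying combinatorics: I must show that adding $n < 2^s$ to the exponent vector via Steenrod squaring — which always \emph{raises} powers of variables and hence can only move dyadic $1$'s to higher positions or create carries — cannot leave all coordinates $\omega_1,\ldots,\omega_s$ unchanged. The cleanest way I expect to handle this is to compare $v$ and $x$ layer by layer using the identity $x = \prod_{i\geqslant 0}X_{\mathbb J_i(x)}^{2^i}$: let $i_0$ be the smallest index at which the action first alters a digit; because $Sq^n$ with $n<2^s$ can only affect the bottom $s$ layers before a carry is needed, one gets $i_0 \leqslant s$, and because Steenrod squaring strictly increases exponents (never decreases them) the digit count at position $i_0-1$ must fall, i.e.\ $\omega_{i_0}(v) < \omega_{i_0}(x)$. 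Verifying that carries cannot conspire to keep every $\omega_i$ ($i\leqslant s$) fixed while still increasing the degree by $n$ is the one genuinely delicate point; everything else is a routine consequence of the Cartan formula and Proposition~\ref{mdcb1}.
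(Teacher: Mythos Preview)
The paper does not supply its own proof of this proposition; it is simply quoted from Kameko's thesis. So there is nothing to compare your argument against on the paper's side.

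Your inductive plan via the decomposition $x = z\,w^{2}$ with $z$ squarefree is sound and in fact completes cleanly once you push it through; the ``delicate carrying'' issue you flag as the main obstacle is a phantom. The point is that because $z$ has all exponents in $\{0,1\}$, multiplying any monomial $u$ in $Sq^{a}(z)$ by any square $(v')^{2}$ cannot alter the bit in position~$0$ of any exponent: $\alpha_{0}(\nu_{j}(u)+2\nu_{j}(v')) = \alpha_{0}(\nu_{j}(u))$. Hence for a term $v = u\,(v')^{2}$ of $Sq^{a}(z)\,(Sq^{c}(w))^{2}$ with $a+2c=n$ one has exactly $\omega_{1}(v) = \deg z - a$. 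If $a>0$ this already gives $\omega_{1}(v) < \omega_{1}(x)$, settling the claim with $i=1\leqslant s$. If $a=0$ then $n=2c$ with $0<c<2^{s-1}$, and the same no-carry observation gives $\omega_{i}(v) = \omega_{i-1}(v')$ and $\omega_{i}(x) = \omega_{i-1}(w)$ for all $i\geqslant 2$; the inductive hypothesis applied to $v'$ in $Sq^{c}(w)$ then produces an index $i'\leqslant s-1$ with $\omega_{i'}(v')<\omega_{i'}(w)$, and $i=i'+1\leqslant s$ works for $v$. So your outline is already a proof once you stop worrying about carries between the two factors.

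A more direct route, closer to how this is usually argued, avoids induction: by the Cartan formula on the individual variables, every term of $Sq^{n}(x)$ has the form $\prod_{j} x_{j}^{a_{j}+n_{j}}$ with $\sum n_{j}=n$ and each $\binom{a_{j}}{n_{j}}$ odd. Lucas' theorem forces the binary digits of $n_{j}$ to lie inside those of $a_{j}$, so at the lowest set bit of any nonzero $n_{j}$ the sum $a_{j}+n_{j}$ turns a $1$ into a $0$ with no incoming carry. Taking $i-1$ to be the minimum such position over all $j$ gives $\omega_{i}(v)<\omega_{i}(x)$, with $i\leqslant s$ because each $n_{j}\leqslant n<2^{s}$. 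Both arguments are short; yours trades Lucas' theorem for an induction on $s$.
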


\begin{defns}\label{defn3} 
Let $x, y$ be monomials of the same degree in $P_k$. We define $x <y$ if and only if one of the following holds:  
		
i) $\omega (x) < \omega(y)$;
		
ii) $\omega (x) = \omega(y)$ and $\sigma(x) < \sigma(y).$
\end{defns}

\begin{defns}
A monomial $x$ is said to be inadmissible if there exist monomials $y_1,y_2,\ldots, y_r$ such that $y_j<x$ for $j=1,2,\ldots , r$ and $x \equiv \sum_{j=1}^ry_j.$ 
		
A monomial $x$ is said to be admissible if it is not inadmissible.
\end{defns}

Obviously, the set of all the admissible monomials of degree $n$ in $P_k$ is a minimal set of $\mathcal{A}$-generators for $P_k$ in degree $n$. 

\begin{defns} 
A monomial $x$ is said to be strictly inadmissible if and only if there exist monomials $y_1,y_2,\ldots, y_r$ such that $y_j<x,$ for $j=1,2,\ldots , r$ and 
$x \simeq_s \sum_{j=1}^r y_j $ with $s = \max\{i \, :\, \omega_i(x) > 0\}$.
\end{defns}

It is easy to see that if $x$ is strictly inadmissible, then it is inadmissible. The following theorem is a modification of a result in \cite{ka}.

\begin{thms}[Kameko \cite{ka}, Sum \cite{su}]\label{dlcb1}  
For any monomials $x, y, w$  in $P_k$ such that $\omega_i(x) = 0$ for $i > r>0$, $\omega_s(w) \ne 0$ and $\omega_i(w) = 0$ for $i > s>0$, we have
	
{\rm i)}  If  $w$ is inadmissible, then so is $xw^{2^r}$.
	
{\rm ii)}  If $w$ is strictly inadmissible, then so is $xw^{2^r}y^{2^{r+s}}$.
\end{thms}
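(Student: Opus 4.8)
The plan is to reduce both parts to one elementary fact about weight vectors: for any monomials $z_1,z_2\in P_k$ and any $r\geqslant1$ one has $\omega_q(z_1z_2^{2^r})=\omega_q(z_1)$ for $1\leqslant q\leqslant r$, because in $\nu_m(z_1z_2^{2^r})=\nu_m(z_1)+2^r\nu_m(z_2)$ no carry ever reaches the dyadic positions $0,\dots,r-1$, which therefore still carry the digits of $\nu_m(z_1)$. From this, using $\nu_m(x)<2^r$ and $\nu_m(w)<2^s$, I would first record $\omega(xw^{2^r}y^{2^{r+s}})=\omega(x)|\omega(w)|\omega(y)$ and then the key consequence: if $z$ is a monomial with $\omega(z)<\omega(x)$, then, because $\omega_i(x)=0$ for $i>r$, the first position where $\omega(z)$ and $\omega(x)$ differ is some $p\leqslant r$; hence for any monomials $u,u'$ the weight vector of $z\,u^{2^r}(u')^{2^{r+s}}$ coincides with that of $xw^{2^r}y^{2^{r+s}}$ in positions $<p$ and is strictly smaller in position $p$, so $z\,u^{2^r}(u')^{2^{r+s}}\in P_k^-\bigl(\omega(xw^{2^r}y^{2^{r+s}})\bigr)$. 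A parallel digit comparison gives $z<w\Rightarrow xz^{2^r}<xw^{2^r}$ and $xz^{2^r}y^{2^{r+s}}<xw^{2^r}y^{2^{r+s}}$.

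For part (i), I would write the inadmissibility of $w$ as $w=\sum_j y_j+\sum_{t\geqslant1}Sq^t(h_t)$ with $y_j<w$. Raising this to the $2^r$-th power (the Frobenius is additive over $\mathbb F_2$, and $(Sq^th_t)^{2^r}=Sq^{t2^r}(h_t^{2^r})$ by Proposition~\ref{mdcb1}(ii)) and multiplying by $x$ gives $xw^{2^r}=\sum_j xy_j^{2^r}+\sum_t x\,Sq^{t2^r}(h_t^{2^r})$. Next I would expand $Sq^{t2^r}(xh_t^{2^r})$ by the Cartan formula: since $Sq^b(h_t^{2^r})$ vanishes unless $2^r\mid b$, each term with $a=t2^r-b>0$ has $a=2^ra'$ with $a'\geqslant1$, whence $x\,Sq^{t2^r}(h_t^{2^r})\equiv\sum_{a'\geqslant1}Sq^{2^ra'}(x)\bigl(Sq^{t-a'}h_t\bigr)^{2^r}$. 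By Proposition~\ref{bdkbs}, every monomial $v$ occurring in $Sq^{2^ra'}(x)$ (with $a'\geqslant1$) has $\omega(v)<\omega(x)$, so by the first paragraph every monomial of the right-hand side is $<xw^{2^r}$; combined with $xy_j^{2^r}<xw^{2^r}$, this shows $xw^{2^r}\equiv\sum(\text{strictly smaller monomials})$, i.e.\ $xw^{2^r}$ is inadmissible.

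For part (ii), I would carry out the same computation one level higher. Write the strict inadmissibility of $w$ as $w=\sum_j y_j+\sum_{0<t<2^s}Sq^t(h_t)+w'$ with $y_j<w$ and $w'\in P_k^-(\omega(w))$ (the hypotheses $\omega_s(w)\ne0$, $\omega_i(w)=0$ for $i>s$ ensure this $s$ equals $\ell(\omega(w))$), set $\Omega=\omega(xw^{2^r}y^{2^{r+s}})$ and $S=\ell(\Omega)\geqslant r+s$, raise to the $2^r$-th power and multiply by $xy^{2^{r+s}}$. By the first paragraph $x(w')^{2^r}y^{2^{r+s}}\in P_k^-(\Omega)$ and $xy_j^{2^r}y^{2^{r+s}}<xw^{2^r}y^{2^{r+s}}$. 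For the remaining terms, $0<t2^r<2^{r+s}\leqslant2^S$, and the Cartan formula gives $Sq^{t2^r}(xy^{2^{r+s}}h_t^{2^r})=y^{2^{r+s}}\,Sq^{t2^r}(xh_t^{2^r})$, since $Sq^c(y^{2^{r+s}})=0$ for $0<c<2^{r+s}$; this element lies in $\mathcal A(S-1)^+P_k$, so subtracting it off and arguing exactly as in part (i) yields $xy^{2^{r+s}}Sq^{t2^r}(h_t^{2^r})\simeq_{(S,\Omega)}\sum_{a'\geqslant1}Sq^{2^ra'}(x)(Sq^{t-a'}h_t)^{2^r}y^{2^{r+s}}$, every monomial of which lies in $P_k^-(\Omega)$ by Proposition~\ref{bdkbs} and the first paragraph. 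Hence $xw^{2^r}y^{2^{r+s}}\simeq_S\sum_j xy_j^{2^r}y^{2^{r+s}}$, a sum of strictly smaller monomials, so $xw^{2^r}y^{2^{r+s}}$ is strictly inadmissible.

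The step I expect to be most delicate is the weight-vector bookkeeping of the first paragraph --- in particular, showing that the discrepancy between $\omega(v)$ and $\omega(x)$ for a term $v$ of $Sq^{2^ra'}(x)$ must sit in a position $\leqslant r$ and is not destroyed by multiplication by the $2^r$- and $2^{r+s}$-th power factors --- together with, in part (ii), checking that each Steenrod operation that appears (the $Sq^{t2^r}$ with $0<t<2^s$ and the $Sq^{2^ra'}$ with $2^ra'<2^{r+s}$) really does lie in $\mathcal A(S-1)$. This is precisely where the hypotheses $\ell(\omega(x))\leqslant r$ and $\ell(\omega(w))=s$ get used.
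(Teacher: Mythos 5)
The paper does not prove Theorem \ref{dlcb1}; it is quoted from Kameko \cite{ka} and Sum \cite{su}, where the argument is exactly the one you give: expand $w$ by (strict) inadmissibility, take $2^r$-th powers, use $(Sq^t h)^{2^r}=Sq^{t2^r}(h^{2^r})$ and the Cartan formula to move the operations outside, and control the error terms $Sq^{a'2^r}(x)(\cdots)^{2^r}$ via Proposition \ref{bdkbs} together with the digit-by-digit computation of $\omega(xw^{2^r}y^{2^{r+s}})$. Your proof is correct and coincides with that standard approach, so there is nothing to add.
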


\begin{props}[See \cite{su}]\label{mdcb3} Let $x$ be an admissible monomial in $P_k$ and let $i_0$ be a positive integer. Then we have
	
{\rm i)} If $\omega_{i_0}(x)=0$, then $\omega_{i}(x)=0$ for all $i > i_0$.
	
{\rm ii)} If $\omega_{i_0}(x)<k$, then $\omega_{i}(x)<k$ for all $i > i_0$.
\end{props}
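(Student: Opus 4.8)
The plan is to prove the contrapositive of each part, after a normalisation. Recall that $0\le\omega_i(x)\le k$ for all $i$ and that $x=\prod_{i\ge 0}X_{\mathbb J_i(x)}^{2^i}$ with $\omega_{i+1}(x)=\deg X_{\mathbb J_i(x)}$. If part i) fails for an admissible $x$, take the first positive entry of $\omega(x)$ that has a zero entry somewhere before it; its predecessor is then zero, so we may assume there is $i_0\ge 1$ with $\omega_{i_0}(x)=0<\omega_{i_0+1}(x)$. Likewise, if part ii) fails (using $\omega_i(x)\le k$) we may assume $\omega_{i_0}(x)<k=\omega_{i_0+1}(x)$ for some $i_0\ge 1$. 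In part i) the condition $\omega_{i_0}(x)=0$ forces $\mathbb J_{i_0-1}(x)=\mathbb N_k$, so the factor $X_{\mathbb J_{i_0-1}(x)}^{2^{i_0-1}}$ is trivial and
\[x=u\cdot\left(X_{\mathbb J_{i_0}(x)}^{2}\right)^{2^{i_0-1}}\cdot g^{2^{i_0+1}},\qquad u=\prod_{i\le i_0-2}X_{\mathbb J_i(x)}^{2^i},\quad g=\prod_{i\ge i_0+1}X_{\mathbb J_i(x)}^{2^{i-i_0-1}},\]
with $\omega_j(u)=0$ for $j\ge i_0$ and $X_{\mathbb J_{i_0}(x)}\ne 1$ (since $\omega_{i_0+1}(x)>0$). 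In part ii) the condition $\omega_{i_0+1}(x)=k$ forces $\mathbb J_{i_0}(x)=\emptyset$, i.e. $X_{\mathbb J_{i_0}(x)}=X_\emptyset$, so with the same $u,g$,
\[x=u\cdot\left(X_{\mathbb J_{i_0-1}(x)}X_\emptyset^{2}\right)^{2^{i_0-1}}\cdot g^{2^{i_0+1}},\qquad \mathbb J_{i_0-1}(x)\ne\emptyset\ \text{(since $\omega_{i_0}(x)<k$).}\]

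The computational heart is a single lemma: \emph{if $\mathbb L\subsetneq\mathbb J\subseteq\mathbb N_k$ then $m:=X_{\mathbb J}X_{\mathbb L}^{2}$ is strictly inadmissible.} Indeed, since $\mathbb L\subseteq\mathbb J$, each variable has exponent $[j\notin\mathbb J]+2[j\notin\mathbb L]\in\{0,2,3\}$ in $m$, so $\omega_i(m)=0$ for $i\ge 3$ and $\ell(\omega(m))=2$ (as $\omega_2(m)=k-|\mathbb L|\ge 1$), while some variable $x_p$ has exponent exactly $2$ (pick $p\in\mathbb J\setminus\mathbb L$). Since $Sq^1$ is a derivation with $Sq^1(x^e)=e\,x^{e+1}$, desquaring $x_p$ gives
\[Sq^1(m/x_p)=m+\sum_{q\notin\mathbb J}x_q^{4}\,\frac{m}{x_px_q^{3}}.\]
Each summand other than $m$ has the same $\omega_1$ as $m$ ($x_q$ leaves, $x_p$ enters the set of odd exponents) but $\omega_2$ smaller by $2$, hence is strictly smaller than $m$ in the order of Definition \ref{defn3} and lies in $P_k^{-}(\omega(m))$. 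As $Sq^1\in\mathcal A(1)^+$ and $s:=\max\{i:\omega_i(m)>0\}=2$, this exhibits $m\simeq_{s}\sum_{q\notin\mathbb J}x_q^{4}m/(x_px_q^{3})$ with all terms below $m$, so $m$ is strictly inadmissible; the sum is empty when $\mathbb J=\mathbb N_k$, in which case $m=Sq^1(m/x_p)$ is hit.

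With this lemma the proof finishes quickly when $i_0\ge 2$: the two displays above are exactly of the shape $x=u\,w^{2^{r}}y^{2^{r+s}}$ of Theorem \ref{dlcb1}(ii), with $r=i_0-1>0$, $\omega_j(u)=0$ for $j>r$, $w$ the block $X_{\mathbb J_{i_0}(x)}^{2}$ (resp. $X_{\mathbb J_{i_0-1}(x)}X_\emptyset^{2}$), $s=\ell(\omega(w))=2$, $\omega_2(w)\ne 0$, and $y=g$; since $w$ is strictly inadmissible, so is $x$, contradicting admissibility. The case $i_0=1$ falls outside Theorem \ref{dlcb1} (which needs $r>0$) and I would treat it by hand. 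In part i), $\omega_1(x)=0$ means every exponent of $x$ is even, so $x=z^{2}$ with $z\ne 1$ (as $\omega_2(x)>0$), whence $x=Sq^{\deg z}(z)$ is hit by Proposition \ref{mdcb1}(i). In part ii), $x=m\,g^{4}$ with $m=X_{\mathbb J_0(x)}X_\emptyset^{2}$, $\mathbb J_0(x)\ne\emptyset$, and all exponents of $g^{4}$ divisible by $4$; multiplying the identity $Sq^1(m/x_p)=m+\sum_q x_q^{4}m/(x_px_q^{3})$ by $g^{4}$ and using $Sq^1(g^{4})=0$ (Proposition \ref{mdcb1}(ii)) yields $x\equiv\sum_q\left(x_q^{4}m/(x_px_q^{3})\right)g^{4}$, and since multiplication by $g^{4}$ leaves $\omega_1$ and $\omega_2$ unchanged (no carries into binary positions $0$ and $1$), each term is still $<x$; hence $x$ is inadmissible (the sum being empty, and $x$ outright hit, when $\omega_1(x)=0$).

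I expect the only real difficulty to be organisational: isolating the two elementary blocks $X_{\mathbb J}^{2}$ and $X_{\mathbb J}X_\emptyset^{2}$, verifying that they are strictly inadmissible with weight vectors of length exactly $2$ so that Theorem \ref{dlcb1}(ii) applies with $s=2$, and — in the $i_0=1$ branch of part ii) — checking that tensoring the local Bockstein relation with $g^{4}$ preserves the inequality ``$<x$''. All of this is routine once one observes that $g^{4}$ affects only binary digits in positions $\ge 2$.
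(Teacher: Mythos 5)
Your argument is correct. Note that the paper itself gives no proof of Proposition \ref{mdcb3} --- it is quoted from \cite{su} --- but the mechanism you use is exactly the one the paper deploys elsewhere (compare the identity \eqref{ctbs1} in the proof of Lemma \ref{bdbt2} and the analogous step in Lemma \ref{bdlh}, which are the $\mathbb J=\{u,v\}$, $\mathbb L=\emptyset$ instance of your key lemma), so this is essentially the standard route rather than a new one. Your reductions check out: the normalisation to $\omega_{i_0}(x)=0<\omega_{i_0+1}(x)$ (resp.\ $\omega_{i_0}(x)<k=\omega_{i_0+1}(x)$), the identification of the middle block as $X_{\mathbb J}X_{\mathbb L}^{2}$ with $\mathbb L\subsetneq\mathbb J$, the $Sq^1$-desquaring showing that block is strictly inadmissible with the correction terms dropping $\omega_2$ by $2$, and the hand treatment of $i_0=1$ (where Theorem \ref{dlcb1}(ii) is unavailable because it needs $r>0$) are all sound. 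The only point worth smoothing in a final write-up is the degenerate case $\mathbb J=\mathbb N_k$, where your sum is empty and the block is outright hit: Definition of ``strictly inadmissible'' is phrased with a list $y_1,\dots,y_r$ of smaller monomials, so you should either allow $r=0$ explicitly or observe directly that a monomial of the form $u\,(Sq^1 h)^{2^{r}}y^{2^{r+2}}$ with $\omega_i(u)=0$ for $i>r$ is hit (by the same Cartan-formula computation as in Proposition \ref{mdcb51}); this is a presentational issue, not a gap.
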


For $1 \leqslant i \leqslant k$, define a homomorphism $f_i: P_{k-1} \to P_k$ of $\mathcal A$-algebras by substituting
\begin{equation}\label{ct22}
f_i(x_u) = \begin{cases}x_u, &\mbox{ if } 1 \leqslant u <i,\\ x_{u+1},&\mbox{ if } i \leqslant u <k. \end{cases}
\end{equation}
\begin{props}[See Mothebe and Uys \cite{mo2}]\label{mdmo} Let $i, d$ be positive integers such that $1 \leqslant i \leqslant k$. If $w$ is an admissible monomial in $P_{k-1}$, then $x_i^{2^d-1}f_i(w)$ is also an admissible monomial in $P_{k}$.
\end{props}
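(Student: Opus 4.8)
The plan is to prove the \emph{contrapositive}: if $z:=x_i^{2^d-1}f_i(w)$ is inadmissible in $P_k$, then $w$ is inadmissible in $P_{k-1}$. The main device is the $\mathbb F_2$--linear ``extraction'' map $\Psi\colon P_k\to P_{k-1}$ which sends a monomial $m$ to the $f_i$--preimage of $m/x_i^{2^d-1}$ when $\nu_i(m)=2^d-1$, and to $0$ otherwise; this is well defined because $m/x_i^{2^d-1}$ then lies in $f_i(P_{k-1})$, and clearly $\Psi(z)=w$. First I would check that $\Psi$ is order--decreasing: if $u<z$ then $\Psi(u)=0$ or $\Psi(u)<w$. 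Indeed, for $u=x_i^{2^d-1}f_i(v)$ the variable $x_i$ and the variables occurring in $f_i(v)$ are disjoint, and $\nu_i(u)=2^d-1$ has its first $d$ binary digits equal to $1$, so $\omega_j(u)=\omega_j(v)+1$ for $1\leqslant j\leqslant d$ and $\omega_j(u)=\omega_j(v)$ for $j>d$; the same relations hold for $z$ relative to $w$. Since raising the first $d$ coordinates uniformly by $1$ preserves the left lexicographical order, $\omega(u)<\omega(z)\Leftrightarrow\omega(v)<\omega(w)$, and in the equality case $\sigma(u)<\sigma(z)$ translates into $\sigma(v)<\sigma(w)$ because $f_i$ only reindexes the remaining variables monotonically. (The analogue with $x_i^{2^d-1}$ replaced by $1$ is the assertion that $f_i$ preserves admissibility; it is proved the same way, using instead the $\mathcal A$--algebra retraction $r_i\colon P_k\to P_{k-1}$ defined by $r_i(x_i)=0$ and $r_i\circ f_i=\mathrm{id}$, which \emph{does} carry hits to hits.)

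Next I would record how $\Psi$ interacts with the Steenrod action. Write $h\in P_k$ as $h=\sum_{a\geqslant0}x_i^ah_a$ with $h_a\in f_i(P_{k-1})$. By the Cartan formula together with Lucas' theorem --- which gives $\binom{a}{2^d-1-a}=1$ for all $0\leqslant a\leqslant 2^d-1$, since $2^d-1-a$ is then the bitwise complement of $a$ inside the first $d$ bits --- the part of $Sq^n(x_i^ah_a)$ of $x_i$--degree $2^d-1$ equals $x_i^{2^d-1}Sq^{\,n-2^d+1+a}(h_a)$. Hence $\Psi(Sq^n(h))\in\mathcal A^+P_{k-1}$ as soon as $n\geqslant 2^d$, while for $1\leqslant n\leqslant 2^d-1$ there is exactly one $Sq^0$--contribution and $\Psi(Sq^n(h))\equiv f_i^{-1}\big(h_{2^d-1-n}\big)$ modulo $\mathcal A^+P_{k-1}$. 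Applying $\Psi$ to a hit equation $z+\sum_jy_j=\sum_{n\geqslant1}Sq^n(h^{(n)})$ witnessing the inadmissibility of $z$ (so the $y_j$ are monomials $<z$), and using the first paragraph, yields in $P_{k-1}$
\[
w\ \equiv\ \sum_{j}\Psi(y_j)\ +\ D,\qquad D=\sum_{n=1}^{2^d-1}f_i^{-1}\big(h^{(n)}_{2^d-1-n}\big),
\]
where each $\Psi(y_j)$ is $0$ or a monomial strictly smaller than $w$, and $\deg D=\deg w$.

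The remaining step is to bring the correction term $D$ under control, and I expect this to be the crux of the whole argument. The bare map $\Psi$ does not send hits to hits --- its defect is precisely $D$ --- and a quick invariance check shows $D$ cannot simply be discarded: modulo hits $D$ is forced to represent $w$ plus strictly smaller monomials, so the $\Psi$--reduction by itself is not conclusive and one genuinely extra ingredient is needed. The plan here is to pass first to a relation valid only modulo $P_k^-(\omega(z))$ via the decomposition \eqref{ct21}, so that by Proposition \ref{bdkbs} every intermediate Steenrod square strictly lowers the weight vector and the monomials it produces are controlled; then to read off the coefficient of $x_i^c$ in the normalized equation for $c=0,1,\dots,2^d-1$, obtaining a coupled system of identities in $f_i(P_{k-1})$ relating the components $h^{(n)}_a$; and finally to exploit these identities --- together with the freedom to alter each $h^{(n)}$ by a term $x_i^{2^d-1-n}\Delta'$ whose image under $Sq^n$ stays below $z$ (the choice $n=2^{d-1}$, which makes $\binom{2^d-1-n}{n}=0$, being convenient) --- to extract from $D$ the relation expressing $w$ in terms of smaller monomials. (The observation that $\partial/\partial x_i$ is an $\mathcal A$--module endomorphism of $P_k$, so that $\partial_i z=x_i^{2^d-2}f_i(w)$ and $\partial_i$ transports hit equations to hit equations, may be a useful auxiliary tool at this stage.) Carrying out this simultaneous bookkeeping of the $x_i$--grading and the weight vectors --- where the special shape $2^d-1$ of the exponent enters through the identities $\binom{2^d-1}{\bullet}\equiv1$ --- is the part that does not reduce to a formal manipulation, and once it is done the displayed congruence exhibits $w$ as a sum of strictly smaller monomials modulo hits, contradicting the admissibility of $w$ and finishing the proof.
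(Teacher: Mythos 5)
The paper does not prove this proposition itself; it is quoted from Mothebe and Uys, so your argument has to be judged on its own terms. Your overall strategy --- the contrapositive via the extraction map $\Psi$ reading off the coefficient of $x_i^{2^d-1}$ --- is the right one, and your first paragraph (that $\Psi$ is order-decreasing on monomials) is correct. The problem is a concrete computational error in the second paragraph, which then manufactures a difficulty that does not exist and leaves the proof unfinished. Lucas' theorem says $\binom{a}{c}\equiv 1\pmod 2$ precisely when every binary digit of $c$ is at most the corresponding digit of $a$. For $c=2^d-1-a$ with $0\leqslant a\leqslant 2^d-1$, $c$ is the bitwise complement of $a$ in the first $d$ bits, so $c$ has a digit $1$ exactly where $a$ has a digit $0$; hence $\binom{a}{2^d-1-a}\equiv 0$ for every $a<2^d-1$ and $\equiv 1$ only for $a=2^d-1$ (check: $\binom{0}{1}=0$, $\binom{1}{2}=0$, $\binom{2}{1}\equiv 0$, $\binom{0}{3}=0$). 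This is the opposite of what you assert. Consequently, in $Sq^n(h)=\sum_{a,j}\binom{a}{j}\,x_i^{a+j}\,Sq^{n-j}(h_a)$ the only term with $x_i$-exponent exactly $2^d-1$ is the one with $a=2^d-1$, $j=0$, namely $x_i^{2^d-1}Sq^n(h_{2^d-1})$; in particular your correction term $D=\sum_{n}f_i^{-1}\bigl(h^{(n)}_{2^d-1-n}\bigr)$ vanishes identically, its coefficients being $\binom{2^d-1-n}{n}\equiv 0$ for \emph{all} $1\leqslant n\leqslant 2^d-1$, not just for $n=2^{d-1}$ as you note in passing.

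Once the binomial coefficient is computed correctly, $\Psi(Sq^n(h))=Sq^n(\Psi(h))$ for every $n\geqslant 1$, so $\Psi$ is an $\mathcal A$-module homomorphism and carries hits to hits; applying it to a relation $z=\sum_j y_j+\sum_{n\geqslant 1} Sq^n(h^{(n)})$ with $y_j<z$ immediately gives $w\equiv\sum_j\Psi(y_j)$ with each $\Psi(y_j)$ zero or a monomial strictly smaller than $w$, contradicting the admissibility of $w$. In other words, the corrected version of your own computation finishes the proof in two lines. As written, however, the proof is not complete: your entire third paragraph is an unexecuted plan (``the part that does not reduce to a formal manipulation'') for taming a term $D$ that you were led to believe is nonzero, and no actual argument is supplied there. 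The gap in your text is therefore real, but it is created by the inverted Lucas computation rather than by any genuine obstacle.
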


\subsection{Some criteria for hit monomials.}\

\medskip
Firstly, we recall Singer's criterion on hit monomials in $P_k$. 
\begin{defns} A monomial $z$ in $P_k$ is called a spike if $\nu_j(z)=2^{s_j}-1$ for $s_j$ a non-negative integer and $j=1,2, \ldots , k$. 
If $z$ is a spike with $s_1>s_2>\ldots >s_{r-1}\geqslant s_r>0$ and $s_j=0$ for $j>r,$ then it is called the minimal spike.
\end{defns}

Note that if $\mu(n) = s$, then $n$ is of the form \eqref{ct1.1} and $z = \prod_{i=1}^sx_i^{2^{d_i}-1}$ is the minimal spike of degree $n$. It is easy to show that a spike of degree $n$ is the minimal spike if its weight vector order is minimal with respect to other spikes of degree $n$. The following is a criterion for hit monomials in $P_k$.

\begin{thms}[See Singer~\cite{si2}]\label{dlsig} Suppose $x \in P_k$ is a monomial of degree $n$, where $\mu(n) \leqslant k$. Let $z$ be the minimal spike of degree $n$. If $\omega(x) < \omega(z)$, then $x$ is hit.
\end{thms}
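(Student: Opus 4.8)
The plan is to pass to the linear dual, where ``$x$ is hit'' becomes a statement about the submodule of $H_*(BV_k;\mathbb F_2)$ killed by the positive-degree Steenrod operations.

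First I would set up the duality. Under the perfect pairing $\langle\ ,\ \rangle$ between $(P_k)_n=H^n(BV_k;\mathbb F_2)$ and its dual $H_n(BV_k;\mathbb F_2)$, a homogeneous polynomial $f$ is hit if and only if $\langle f,\zeta\rangle=0$ for every $\zeta$ in
\[\mathrm{Ann}_n:=\{\zeta\in H_n(BV_k;\mathbb F_2)\ :\ Sq^i_*(\zeta)=0\text{ for all }i>0\},\]
since $\langle Sq^i(g),\zeta\rangle=\langle g,Sq^i_*(\zeta)\rangle$ makes $\langle\mathcal A^+P_k,\zeta\rangle=0$ equivalent to $\zeta\in\mathrm{Ann}_n$, where $Sq^i_*$ denotes the dual of $Sq^i$. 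Writing $H_*(BV_k;\mathbb F_2)=\bigotimes_{j=1}^k\Gamma[y_j]$, the divided power algebra, with basis $\gamma_{\vec a}=\prod_{j=1}^k\gamma_{a_j}(y_j)$ dual to the monomial basis $x^{\vec a}=x_1^{a_1}\cdots x_k^{a_k}$ of $(P_k)_n$, it suffices to show that every $\zeta\in\mathrm{Ann}_n$ is a linear combination of $\gamma_{\vec a}$ with $\omega(x^{\vec a})\geqslant\omega(z)$; then $\omega(x)<\omega(z)$ forces $\langle x,\zeta\rangle=0$ for all $\zeta\in\mathrm{Ann}_n$, so $x$ is hit.

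Two facts feed into this. (1) In one variable $Sq^i_*(\gamma_m(y))=\binom{m-i}{i}\gamma_{m-i}(y)$, so by Lucas' theorem $\gamma_m(y)$ lies in $\mathrm{Ann}$ exactly when $m=2^e-1$; since $Sq^i_*$ acts on the tensor product through the cocommutative coproduct of $\mathcal A$, dually to the Cartan formula one has $Sq^i_*(\gamma_{\vec a})=\sum_{\vec b\leqslant\vec a,\,|\vec b|=n-i}\bigl(\prod_{j=1}^k\binom{b_j}{a_j-b_j}\bigr)\gamma_{\vec b}$, and hence $\gamma_{\vec a}\in\mathrm{Ann}$ precisely when each $a_j$ has the form $2^{e_j}-1$, i.e. when $x^{\vec a}$ is a spike (recovering that spikes are not hit). (2) By adjointness the coefficient of $\gamma_{\vec b}$ in $Sq^i_*(\gamma_{\vec a})$ equals the coefficient of $x^{\vec a}$ in $Sq^i(x^{\vec b})$, so Kameko's Proposition~\ref{bdkbs} (applied with $2^s>n$) shows that a nonzero such coefficient forces $\omega(x^{\vec b})>\omega(x^{\vec a})$. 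Thus, ordering the $\gamma_{\vec a}$ by the weight vector $\omega(x^{\vec a})$, the operator $\sum_{i\geqslant0}Sq^i_*$ is the identity plus a strictly weight-raising (and degree-lowering) part.

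Now I would run the main argument on a given $\zeta\in\mathrm{Ann}_n$: among the $\gamma_{\vec a}$ occurring in $\zeta$ select those whose weight vector $\omega^{*}$ is smallest, and let $\zeta^{*}\neq0$ be the corresponding weight-$\omega^{*}$ component. Reading the equations $Sq^i_*(\zeta)=0$ layer by layer in the weight order of each lower degree, one wants to conclude that $\zeta^{*}$ is itself annihilated by all positive $Sq^i_*$, whence by fact (1) the monomials in $\zeta^{*}$ are spikes; but the minimal spike $z$ has the smallest weight vector among all spikes of degree $n$ (by the canonical form \eqref{ct1.1}, $\omega(z)=(\#\{i:d_i\geqslant1\},\#\{i:d_i\geqslant2\},\dots)$), so $\omega^{*}\geqslant\omega(z)$; iterating over the successive weight layers confines the whole support of $\zeta$ to $\{\omega\geqslant\omega(z)\}$, as required. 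The main obstacle is exactly the step ``$\zeta^{*}\in\mathrm{Ann}_n$'': the weight-vector order is only partial, and inside a fixed weight layer $Sq^i_*$ still mixes monomials in the $\sigma$-order, which Proposition~\ref{bdkbs} does not control, so a soft triangular-matrix argument does not suffice (a strictly upper-triangular operator can easily have kernel vectors whose lowest component is not in the kernel). The real work — essentially the content of Singer's argument in \cite{si2} — is to combine the strict weight-raising property with the explicit binomial coefficients above, via Lucas' theorem and the one-variable classification, in order to peel off the lowest layer cleanly; this combinatorial core is the delicate part of the proof.
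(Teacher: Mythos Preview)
The paper does not prove this theorem; it is quoted from Singer~\cite{si2} as background (and the stronger Theorem~\ref{dlww} is likewise quoted from Walker--Wood). So there is no ``paper's own proof'' to compare against, and your task is really to produce an independent proof.

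Your dual set-up is correct and useful: the equivalence ``$x$ hit $\Leftrightarrow$ $\langle x,\zeta\rangle=0$ for all $\zeta\in\mathrm{Ann}_n$'', the identification of single dual monomials $\gamma_{\vec a}\in\mathrm{Ann}$ with dual spikes, and the strict weight-raising property of $Sq^i_*$ deduced from Proposition~\ref{bdkbs} are all right. But the argument breaks at two places, only one of which you flag.

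First, the step you do flag. From $Sq^i_*(\zeta)=0$ and the weight-raising property you cannot conclude $Sq^i_*(\zeta^*)=0$: the contributions $Sq^i_*(\zeta^*)$ and $Sq^i_*(\zeta-\zeta^*)$ both live in weights strictly above $\omega^*$ and may cancel. A partial order gives no leading-term isolation here, as you say.

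Second, and more seriously, even granting $\zeta^*\in\mathrm{Ann}$ your inference ``by fact~(1) the monomials in $\zeta^*$ are spikes'' is wrong. Fact~(1) characterises when a \emph{single} $\gamma_{\vec a}$ lies in $\mathrm{Ann}$; it says nothing about sums. In general $\dim\mathrm{Ann}_n=\dim(QP_k)_n$ far exceeds the number of spikes of degree $n$, so $\mathrm{Ann}_n$ is not spanned by dual spikes, and a nonzero $\zeta^*\in\mathrm{Ann}$ supported on a single weight layer need not involve any spike at all. What you actually need is the weaker statement that the \emph{weight} $\omega^*$ itself satisfies $\omega^*\geqslant\omega(z)$; but showing this for every $\zeta\in\mathrm{Ann}_n$ is exactly equivalent, by your own duality, to the theorem you are trying to prove. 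So the proposal, as it stands, has reduced the theorem to itself.

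A route that does work, and which you have the ingredients for, is to bypass the layer-peeling entirely and derive Theorem~\ref{dlsig} from Silverman's criterion (Theorem~\ref{dlsil}) in the form of Theorem~\ref{dlww}: if $\omega(x)<\omega(z)$ in the left-lexicographic order, then at the first index $h$ where they differ one has $\omega_i(x)=\omega_i(z)$ for $i<h$ and $\omega_h(x)<\omega_h(z)$, hence $\sum_{i\leqslant h}2^{i-1}\omega_i(x)<\sum_{i\leqslant h}2^{i-1}\omega_i(z)$, and Theorem~\ref{dlww} gives that $x$ is hit. Alternatively, consult Singer's original argument in~\cite{si2}, which is direct and does not pass through the dual.
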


We remark that this criterion is not enough to determine all hit monomials. For example, it can be shown that $x_1^{15}x_2^3x_3^3$ is the minimal spike of degree $21$ and $x_1x_2^5x_3^5x_4^5x_5^5$ is hit, but $\omega(x_1x_2^5x_3^5x_4^5x_5^5) = (5,0,4,0) > (3,3,1,1) = \omega(x_1^{15}x_2^3x_3^3)$.
So, we need Silverman's criterion for hit polynomials in $P_k$.
\begin{thms}[{See Silverman~\cite[Theorem 1.2]{sl2}}]\label{dlsil} Let $p$ be a polynomial of the form $fg^{2^m}$ for some homogeneous  polynomials $f$ and $g$. If $\deg f < (2^m - 1)\mu(\deg g)$, then $p$ is hit.
\end{thms}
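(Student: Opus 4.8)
The plan is to follow Silverman's own strategy and exploit the canonical antiautomorphism $\chi$ of $\mathcal A$. Since for $m=0$ the hypothesis $\deg f<(2^0-1)\mu(\deg g)=0$ is vacuous, I assume $m\geqslant 1$ and set $d=\deg g$. First I would realise the Frobenius $g\mapsto g^{2^m}$ as an iterated top Steenrod square: because $Sq^{\deg h}(h)=h^2$ for homogeneous $h$ (Proposition \ref{mdcb1}), repeated application gives
\[ g^{2^m}=\Theta_m(g),\qquad \Theta_m:=Sq^{2^{m-1}d}Sq^{2^{m-2}d}\cdots Sq^{2d}Sq^{d}\in\mathcal A^+ . \]
Next I would use the conjugation identity (the so-called \emph{$\chi$-trick}), valid for every $i\geqslant 1$ and all $f,w\in P_k$,
\[ f\cdot Sq^{i}(w)\equiv \chi(Sq^{i})(f)\cdot w \pmod{\mathcal A^+P_k}, \]
which follows from the Cartan formula together with the antipode relation $\sum_{i+j=n}Sq^{i}\chi(Sq^{j})=0$ for $n>0$: in the expansion $f\,Sq^{i}(w)=\sum_{j=0}^{i}Sq^{i-j}\big(\chi(Sq^{j})(f)\,w\big)$ every term with $j<i$ lies in $\mathcal A^+P_k$. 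Iterating this identity along the factors of $\Theta_m$ (each a positive square) and using that $\chi$ reverses products, I obtain
\[ f\,g^{2^m}=f\cdot\Theta_m(g)\equiv \chi(\Theta_m)(f)\cdot g \pmod{\mathcal A^+P_k}. \]

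The whole problem is thereby reduced to a purely algebraic vanishing statement: it suffices to prove that $\chi(\Theta_m)(f)=0$ whenever $\deg f<(2^m-1)\mu(d)$. Recalling that an element of $\mathcal A$ all of whose admissible summands have excess at least $e$ annihilates every polynomial of degree $<e$, this will follow from the \emph{Excess Lemma}: $\operatorname{exc}\big(\chi(\Theta_m)\big)=(2^m-1)\mu(d)$. Granting the lemma, $\deg f<(2^m-1)\mu(d)$ forces $\chi(\Theta_m)(f)=0$, hence $f g^{2^m}\equiv 0$, i.e. $fg^{2^m}$ is hit, as desired. That a conjugation argument is genuinely needed here, rather than, say, Singer's Theorem \ref{dlsig}, is already visible in the example $x_1x_2^5x_3^5x_4^5x_5^5=x_1x_2x_3x_4x_5\cdot(x_2x_3x_4x_5)^{4}$ quoted above, which is hit although its weight vector exceeds that of the minimal spike, so the weight-vector criterion cannot detect it.

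The main obstacle is the Excess Lemma. The base input is the identity $\operatorname{exc}(\chi(Sq^n))=\mu(n)$, which one checks from the Adem relations (for instance $\chi(Sq^3)=Sq^2Sq^1$ has excess $1=\mu(3)$, while $\chi(Sq^4)=Sq^4+Sq^3Sq^1$ has minimal excess $2=\mu(4)$), together with the combinatorial description $\mu(n)=\min\{r:\alpha(n+r)\leqslant r\}$. The difficulty is that $\chi(\Theta_m)=\chi(Sq^{d})\chi(Sq^{2d})\cdots\chi(Sq^{2^{m-1}d})$ is a product of conjugates, and excess is not additive across products; moreover massive cancellation occurs (for instance $\chi(\Theta_2)=Sq^{3d}$ when $\mu(d)=d$). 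I would therefore compute $\chi(\Theta_m)$ in the Milnor basis, where $\operatorname{exc}\big(Sq(r_1,r_2,\ldots)\big)=\sum_i r_i$ and the antipode is explicit, and track how the minimal-excess summand evolves under successive multiplication by the $\chi(Sq^{2^{j}d})$; the target value $(2^m-1)\mu(d)$ should be isolated by an induction on $m$ that keeps account of the $2$-adic digits of the exponents $2^{j}d$ through the characterisation of $\mu$. Establishing that no cancellation drops the minimal excess below $(2^m-1)\mu(d)$ is the delicate point, and is where the bulk of the work lies.
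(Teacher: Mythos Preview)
The paper does not prove Theorem~\ref{dlsil} at all: it is quoted verbatim from Silverman~\cite[Theorem~1.2]{sl2} as a known result and used as a black box (its only role is to justify Theorem~\ref{dlww}). So there is no ``paper's own proof'' to compare against.

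That said, your outline is exactly Silverman's strategy: realise $g^{2^m}$ as $\Theta_m(g)$, apply the $\chi$-trick to move the operation onto $f$, and then kill $\chi(\Theta_m)(f)$ by an excess bound. The reduction steps you wrote are correct. The gap you yourself flag is real: you have not proved the Excess Lemma $\operatorname{exc}\big(\chi(\Theta_m)\big)\geqslant(2^m-1)\mu(d)$, and this is genuinely the entire content of the theorem. Your proposed attack via the Milnor basis and induction on $m$ is reasonable in spirit, but the cancellation issue you mention is not a detail one can wave away; Silverman's paper devotes most of its length to precisely this computation, and it does not proceed by a naive induction on $m$ tracking minimal-excess summands. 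If you want a self-contained argument you will need to either reproduce Silverman's Milnor-basis analysis of $\chi(\Theta_m)$ or find an alternative (e.g.\ via stripping or via the dual description in terms of Dickson invariants). As written, your proposal is a correct reduction followed by an unproved lemma that carries all the weight.
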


This result leads to a criterion in terms of the minimal spike which strengthens Theorem \ref{dlsig}.

\begin{thms}[{See Walker and Wood~\cite[Theorem 14.1.3]{wa3}}]\label{dlww} Let $x \in P_k$ be a monomial of degree $n$, where $\mu(n)\leqslant k$ and let $z$ be the minimal spike of degree $n$. If there is an index $h$ such that $\sum_{i=1}^{h}2^{i-1}\omega_i(x) < \sum_{i=1}^{h}2^{i-1}\omega_i(z),$ then $x$ is hit.
\end{thms}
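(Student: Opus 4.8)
The plan is to deduce this from Silverman's criterion, Theorem \ref{dlsil}, applied to a factorization of $x$ read off from its binary digits. Using the canonical form $x = \prod_{i\geqslant 0} X_{\mathbb J_i(x)}^{2^i}$, set
\[ f = \prod_{i=0}^{h-1} X_{\mathbb J_i(x)}^{2^i}, \qquad g = \prod_{i\geqslant h} X_{\mathbb J_i(x)}^{2^{i-h}}, \]
so that $x = fg^{2^h}$ with $\deg f = \sum_{i=1}^{h}2^{i-1}\omega_i(x)$ and $\deg g = (n-\deg f)/2^h \geqslant 1$ (the hypothesis rules out $\deg g = 0$, since $\deg g=0$ would force $\deg f = n < \deg z$). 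By Theorem \ref{dlsil} it then suffices to prove
\[ \deg f < (2^h-1)\,\mu(\deg g). \]

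Next I would set up the comparison with the minimal spike $z = \prod_{j=1}^{s}x_j^{2^{d_j}-1}$, $d_1 > \cdots > d_{s-1}\geqslant d_s > 0$, $s = \mu(n)\leqslant k$, which exists in $P_k$ precisely because $\mu(n)\leqslant k$. Factoring $z$ in the same way, $z = z_f z_g^{2^h}$ with $z_f = \prod_j x_j^{2^{\min(d_j,h)}-1}$ and $z_g = \prod_{d_j>h}x_j^{2^{d_j-h}-1}$, a direct computation gives $\deg z_f = \sum_{i=1}^{h}2^{i-1}\omega_i(z) = r(2^h-1)+N$, where $r = \omega_{h+1}(z) = \#\{j:d_j>h\}$ and $N = \sum_{d_j\leqslant h}(2^{d_j}-1)$, while $\mu(\deg z_g) = r$ (the degree of $z_g$ is already in the canonical form of \eqref{ct1.1} with $r$ terms). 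The hypothesis of the theorem is exactly $\deg f < \deg z_f$, and since $n = \deg f + 2^h\deg g = \deg z_f + 2^h\deg z_g$ we get $\deg z_f - \deg f = 2^h(\deg g-\deg z_g)$, a positive multiple of $2^h$; so we may write $\deg f = \deg z_f - q\,2^h$ and $\deg g = \deg z_g + q$ with $q\geqslant 1$. Substituting, the target inequality becomes the purely arithmetic assertion
\[ (2^h-1)\bigl(\mu(\deg z_g + q) - r\bigr) > N - q\,2^h. \]

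The heart of the matter — and the step I expect to be the main obstacle — is this last inequality. Since $\mu$ is not monotone, $\mu(\deg z_g+q)$ may well be smaller than $\mu(\deg z_g)=r$, so no crude monotonicity estimate is available; instead the inequality must be checked by exploiting how tightly the data $\bigl(q,\,N,\,\mu(\deg z_g+q)\bigr)$ are pinned down. From $\deg f\geqslant 0$ one gets $q\leqslant \deg z_f/2^h < r+2$, and if $h$ is chosen \emph{minimal} among indices verifying the hypothesis then $\sum_{i=1}^{h-1}2^{i-1}\omega_i(x)\geqslant \sum_{i=1}^{h-1}2^{i-1}\omega_i(z)$ yields the sharper bound $q\leqslant\lfloor(\omega_h(z)-\omega_h(x))/2\rfloor$. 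On the other side, because the exponents $d_j\leqslant h$ occurring in $N$ are almost strictly decreasing (at most two of them, $d_{s-1}$ and $d_s$, coincide) one has $N\leqslant 2^{h+1}-1-h$, and the exponents $d_j-h$ appearing in $z_g$ are distinct positive integers, which limits how small $\mu(\deg z_g+q)$ can be for a given $q$ (e.g. a parity argument rules out $\deg z_g+q = 2^w-1$ once $q$ is close to $r$). A short case analysis on $q$ — the regime where $N-q\,2^h\leqslant 0$ versus small $q$, where the right-hand side is itself of size at most about $2^h$ — then settles the inequality in every case. This bookkeeping is, in my judgement, where all the genuine work lies.

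Once $\deg f < (2^h-1)\mu(\deg g)$ is established, Theorem \ref{dlsil} gives that $x = fg^{2^h}$ is hit, which is the claim. As a consistency check, taking $h$ to be the first index at which $\omega(x)$ falls strictly below $\omega(z)$ recovers Theorem \ref{dlsig}.
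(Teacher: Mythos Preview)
The paper does not prove Theorem~\ref{dlww}; it is quoted from Walker and Wood~\cite{wa3} as a consequence of Silverman's criterion, so there is no proof in the paper to compare yours against. That said, your route through Theorem~\ref{dlsil} with the factorisation $x=fg^{2^h}$ is exactly the standard one, and your reduction to the arithmetic inequality
\[
(2^h-1)\bigl(\mu(\deg z_g+q)-r\bigr) > N - q\,2^h,\qquad q\geqslant 1,
\]
is correct.

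The gap is in the final step. Your ``short case analysis on $q$'' is not a proof, and the hints you give do not close it: the parity remark only excludes a few values of $q$, and the bound coming from minimality of $h$ does not by itself control $\mu(\deg z_g+q)$. The clean way through is to notice that $\phi_h(M):=(2^h-1)\mu(M)+2^hM$ is \emph{strictly increasing}: from the elementary fact $\mu(M+1)\geqslant\mu(M)-1$ (immediate from the description $\mu(a)=\min\{s:\alpha(a+s)\leqslant s\}$) one gets $\phi_h(M+1)-\phi_h(M)\geqslant -(2^h-1)+2^h=1$. Hence it suffices to treat $q=1$. Now split into two cases. If $N=0$ (i.e.\ $r=s$), then $\mu(\deg z_g+1)\geqslant r-1$ already gives $(2^h-1)(\mu(\deg z_g+1)-r)\geqslant -(2^h-1)>-2^h=N-2^h$. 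If $N\geqslant 1$ then $r<s$, so the exponents $e_j=d_j-h$ for $d_j>h$ are \emph{pairwise distinct} positive integers; writing $\deg z_g+r=\sum_j 2^{e_j}$ one checks $\alpha(\deg z_g+1+t)>t$ for every $t\leqslant r$, whence $\mu(\deg z_g+1)\geqslant r+1$, and then $N\leqslant 2^{h+1}-1-h<2^{h+1}-1$ gives $(2^h-1)\cdot 1>N-2^h$. This completes the argument you left open.
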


For $1 \leqslant r \leqslant k$, we set 
\begin{align*} 
P_s^0 &=\langle\{x=x_1^{a_1}x_2^{a_2}\ldots x_s^{a_s} \ : \ a_1a_2\ldots a_s=0\}\rangle,\\ 
P_s^+ &= \langle\{x=x_1^{a_1}x_2^{a_2}\ldots x_s^{a_s} \ : \ a_1a_2\ldots a_s>0\}\rangle. 
\end{align*}

It is easy to see that $P_s^0$ and $P_s^+$ are the $\mathcal{A}$-submodules of $P_k$, $P_s = P_s^0\oplus P_s^+$ and $QP_s = QP_s^0\oplus QP_s^+$, where $QP_s^0 = P_s^0/\mathcal A^+P_s^0$ and $QP_s^+ = P_s^+/\mathcal A^+P_s^+$. 

For $J= (j_1, j_2, \ldots, j_s) : 1 \leqslant j_1 <\ldots < j_s \leqslant k$, we define a monomorphism $\theta_J: P_s \to P_k$ of $\mathcal A$-algebras by substituting $\theta_J(x_u) = x_{j_u}$ for $1 \leqslant u \leqslant s$. It is easy to see that, for any weight vector $\omega$ of degree $n$, 
\[Q\theta_J(P_s^+)(\omega) \cong  QP_s^+(\omega)\mbox{ and } (Q\theta_J(P_s^+))_n \cong (QP_s^+)_n\] 
for $1 \leqslant s \leqslant k$, where $Q\theta_J(P_s^+) = \theta_J(P_s^+)/\mathcal A^+\theta_J(P_s^+)$. So, by a simple computation using Theorem~ \ref{dlsig} and (\ref{ct21}), we get the following.
\begin{props}[{See Walker and Wood~\cite[Proposition 6.2.9]{wa3}}]\label{mdbs} For a weight vector $\omega$ of degree $n$, we have direct summand decompositions of the $\mathbb F_2$-vector spaces
\begin{align*} QP_k(\omega)  &= \bigoplus_{\mu(n) \leqslant s\leqslant k}\bigoplus_{\ell(J) =s}Q\theta_J(P_s^+)(\omega), 
\end{align*}
where $\ell(J)$ is the length of $J$. Consequently, 
\begin{align*}
\dim QP_k(\omega) &= \sum_{\mu(n) \leqslant s\leqslant k}{k\choose s}\dim QP_s^+(\omega),\\
\dim (QP_k)_n &= \sum_{\mu(n) \leqslant s\leqslant k}{k\choose s}\dim (QP_s^+)_n.
\end{align*}
\end{props}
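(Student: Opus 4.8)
The plan is to deduce the decomposition from the direct sum in \eqref{ct21} together with the observation that $P_k$ itself decomposes, as a module over the Steenrod algebra, according to the set of variables that actually occur in a monomial. Concretely, for each subset $\mathbb{J} = \{j_1 < \dots < j_s\} \subseteq \mathbb{N}_k$, let $\theta_{\mathbb{J}}(P_s^+)$ be the span of those monomials in $P_k$ whose support is exactly $\mathbb{J}$; since a Steenrod square cannot introduce a new variable nor remove one entirely (the Cartan formula only redistributes exponents among the variables already present, and $Sq^0$ fixes them), each $\theta_{\mathbb{J}}(P_s^+)$ is an $\mathcal{A}$-submodule, and $P_k = \bigoplus_{\mathbb{J}} \theta_{\mathbb{J}}(P_s^+)$ as $\mathcal{A}$-modules. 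Passing to the quotient by $\mathcal{A}^+ P_k$ commutes with this finite direct sum, so $QP_k = \bigoplus_{s=0}^{k} \bigoplus_{\ell(J)=s} Q\theta_J(P_s^+)$, and this refines degreewise and, using the isomorphism $Q\theta_J(P_s^+)(\omega) \cong QP_s^+(\omega)$ already recorded in the excerpt, weight-vector-wise.

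The next step is to cut this sum down to the range $\mu(n) \leqslant s \leqslant k$. First, for $s < \mu(n)$ there are no monomials of degree $n$ supported on only $s$ variables whose class survives: indeed the minimal spike of degree $n$ has exactly $\mu(n)$ nonzero exponents, so any monomial of degree $n$ in $s < \mu(n)$ variables has weight vector strictly less than that of the minimal spike (it can contribute at most $s$ to the relevant partial sums of $\omega$-coefficients where the spike contributes more), hence is hit by Theorem~\ref{dlsig}. Therefore $QP_s^+(\omega) = 0$ for all weight vectors $\omega$ of degree $n$ whenever $s < \mu(n)$, and likewise $(QP_s^+)_n = 0$. Restricting the index set of the direct sum accordingly yields the stated decomposition
\[
QP_k(\omega) = \bigoplus_{\mu(n) \leqslant s \leqslant k} \ \bigoplus_{\ell(J)=s} Q\theta_J(P_s^+)(\omega).
\]

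Finally, the dimension formulas follow by counting: there are exactly $\binom{k}{s}$ subsets $J \subseteq \mathbb{N}_k$ with $\ell(J) = s$, and each contributes a summand isomorphic to $QP_s^+(\omega)$ (respectively $(QP_s^+)_n$), so taking dimensions of the direct sum gives $\dim QP_k(\omega) = \sum_{\mu(n)\leqslant s\leqslant k} \binom{k}{s} \dim QP_s^+(\omega)$, and summing over all $\omega$ of degree $n$ via \eqref{ct21} gives the second identity. The only point requiring genuine care — the "main obstacle," such as it is — is the verification that $\theta_{\mathbb{J}}(P_s^+)$ really is $\mathcal{A}$-stable and that the decomposition of $P_k$ into these pieces is respected by the hit-quotient; this is where one invokes the Cartan formula to see that $Sq^i$ preserves the support of a monomial (up to terms which are again supported within $\mathbb{J}$), together with the fact that $\mathcal{A}^+ P_k \cap \theta_{\mathbb{J}}(P_s^+) = \mathcal{A}^+ \theta_{\mathbb{J}}(P_s^+)$, which is exactly the assertion $QP_s = QP_s^0 \oplus QP_s^+$ already established in the excerpt transported along $\theta_J$. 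Everything else is bookkeeping with \eqref{ct21} and Theorem~\ref{dlsig}.
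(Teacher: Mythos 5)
Your proposal is correct and follows essentially the route the paper indicates: the paper derives this proposition ``by a simple computation using Theorem~\ref{dlsig} and (\ref{ct21})'' together with the isomorphisms $Q\theta_J(P_s^+)(\omega)\cong QP_s^+(\omega)$, which is precisely the support decomposition plus Singer's criterion that you spell out. The details you supply (stability of each support piece under the $\mathcal A$-action via the Cartan formula, and the vanishing of $(QP_s^+)_n$ for $s<\mu(n)$ because $\omega_1(x)\leqslant s<\mu(n)=\omega_1(z)$) are exactly the ``simple computation'' the paper leaves to the reader.
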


\begin{notas}
We denote by $B_{k}(n)$ the set of all admissible monomials of degree $n$  in $P_k$, $B_{k}^0(n) = B_{k}(n)\cap P_k^0$, $B_{k}^+(n) = B_{k}(n)\cap P_k^+$. For a weight vector $\omega$ of degree $n$, we set $B_k(\omega) = B_{k}(n)\cap P_k(\omega)$, $B_k^+(\omega) = B_{k}^+(n)\cap P_k(\omega)$. 
Then, $[B_{k}^0(n)]$, $[B_{k}^+(n)]$, $[B_k(\omega)]_\omega$ and $[B_k^+(\omega)]_\omega$ are respectively bases of the $\mathbb F_2$-vector spaces $(QP_k^0)_n$, $(QP_k^+)_n$, $QP_k(\omega)$ and $QP_k^+(\omega) := QP_k(\omega)\cap QP_k^+$.

For any $(i;I)$ with $I= (i_1,i_2,\ldots,i_r)$, $1\leqslant i < i_1< i_2<\ldots<i_r\leqslant k$, $0\leqslant r < k$, define a homomorphism $p_{(i;I)}: P_k \to P_{k-1}$ of algebras by substituting
\begin{equation}\label{ct23}
p_{(i;I)}(x_j) =\begin{cases} x_j, &\text{ if } 1 \leqslant j < i,\\
\sum_{t=1}^rx_{i_t-1}, &\text{ if }  j = i,\\  
x_{j-1},&\text{ if } i< j \leqslant k.
\end{cases}
\end{equation}
Then $p_{(i;I)}$ is a homomorphism of $\mathcal A$-modules.
These homomorphisms will be used in the proof of Theorem \ref{dl51}.
\end{notas}

\section{On the kernel of Kameko's squaring operation}\label{s3}
\setcounter{equation}{0}

In this section, we consider $n =\sum_{i=1}^{k-2}(2^{d_i}-1)$ with $d_i$ positive integers such that $d_1 > d_2 > \ldots > d_{k-3} \geqslant d:=d_{k-2}>0$, $k \geqslant 4$, $m = \sum_{i=1}^{k-3}(2^{d_i-d}-1)=\beta_k^d(n)$, where the function $\beta_k:\mathbb Z \to \mathbb Z$ is defined by $\beta_k(t) = \frac{t-k+2}2$ if $t-k+2$ is even and $\beta_k(t) = 0$ if $t-k+2$ is odd. Note that $\mu(n) = k-2$ and this degree is used in Conjecture \ref{ker} on the dimension of the kernel of Kameko's squaring operation
\[(\widetilde{Sq}^0_*)_{(k,n)}: (QP_k)_n \to (QP_k)_{\frac{n-k}2}.\]

The main result of the section is Theorem \ref{dlck2} that provides an upper bound for the dimension of $\mbox{\rm Ker}((\widetilde{Sq}^0_*)_{(k,n)})$.

Firstly, we prove some properties of monomials in $\mbox{\rm Ker}((\widetilde{Sq}^0_*)_{(k,n)})$ from which we can reduce the computations to the case of weight vector $(k-2)|^d$. In Subsection \ref{s32}, we present the notion of strongly inadmissible monomial and prove Proposition \ref{mdcb51} that is used in place of Theorem \ref{dlcb1}. Note that the notion of strongly inadmissible monomial is weaker than that of strictly inadmissible monomial, so using this notion can overcome many difficulties encountered when using the notion of strictly inadmissible monomial. In Subsection \ref{s33}, we prove our main result by using the results in the previous subsections.

\subsection{Some properties of monomials in $\mbox{\rm Ker}((\widetilde{Sq}^0_*)_{(k,n)})$}\label{s31}\

\medskip
In this subsection we present some properties of the admissible monomials in the kernel of Kameko's squaring operation that allow us to reduce the study of this subspace to the case of weight vector $(k-2)|^d$. 
\begin{lems}\label{bdbt1} 
If $x$ is an admissible monomial of degree $n$ in $P_k$ such that $[x] \in \mbox{\rm Ker}((\widetilde{Sq}^0_*)_{(k,n)})$, then $\omega_i(x) = k-2$ for $1 \leqslant i \leqslant d_{k-2}$.
\end{lems}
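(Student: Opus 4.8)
The plan is to combine Singer's criterion (Theorem \ref{dlsig}), the Kameko-squaring description of the kernel, and the structure theory of admissible monomials (Propositions \ref{mdcb1} and \ref{mdcb3}). Since $\mu(n) = k-2 < k$, the minimal spike $z = \prod_{i=1}^{k-2} x_i^{2^{d_i}-1}$ has weight vector $\omega(z)$ whose first $d = d_{k-2}$ coordinates all equal $k-2$ (because each of the $k-2$ exponents $2^{d_i}-1$ with $d_i \geqslant d$ contributes a $1$ in dyadic positions $0$ through $d-1$). First I would note that for any admissible $x$ of degree $n$, Theorem \ref{dlsig} forces $\omega(x) \geqslant \omega(z)$; in particular $\omega_1(x) \geqslant k-2$, and since $x$ has at most $k$ variables we get $\omega_1(x) \in \{k-2, k-1, k\}$. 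The same applies in each degree, so I must rule out $\omega_i(x) \in \{k-1,k\}$ for $i \leqslant d$.

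Next I would use the hypothesis $[x] \in \mbox{\rm Ker}((\widetilde{Sq}^0_*)_{(k,n)})$. Recall $\widetilde{Sq}^0_*$ is induced by $\varphi$, which sends $X_\emptyset y^2 \mapsto y$ and kills every monomial not divisible by $x_1 x_2 \cdots x_k$. If $\omega_1(x) < k$, i.e. some variable has even exponent, then $x$ is not divisible by $X_\emptyset$, so $\varphi(x) = 0$ and $[x]$ is automatically in the kernel; in that case $\omega_1(x) = k-2$ or $k-1$. Conversely, if $\omega_1(x) = k$ then $x = X_\emptyset \cdot (x')^2$ with $[x'] = \widetilde{Sq}^0_*[x]$, and the condition $[x]$ lies in the kernel means $[x'] = 0$ in $QP_k$, i.e. $x'$ is hit; but by Kameko's theorem (the isomorphism $(QP_k)_n \cong \ker \oplus \,\mathrm{im}$ built from $\widetilde{Sq}^0$) an admissible $x$ with $\omega_1(x) = k$ maps to an admissible representative, forcing $\widetilde{Sq}^0_*[x] \neq 0$ unless $(QP_k)_{(n-k)/2} = 0$ in that degree — which is not the case here since $m = \beta_k^d(n) \geqslant 0$ produces a nonzero minimal spike. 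Hence $\omega_1(x) \neq k$, and the real content is to exclude $\omega_1(x) = k-1$, and more generally $\omega_i(x) = k-1$ for $1 \leqslant i \leqslant d$.

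To handle $\omega_i(x) \in \{k-1\}$ I would argue by the following mechanism. Suppose $h \leqslant d$ is the least index with $\omega_h(x) = k-1$; by Proposition \ref{mdcb3}(i)--(ii), once a coordinate drops below $k$ all later ones stay below $k$, and $\omega_i(x) \geqslant k-2$ for $i \leqslant d$ by Theorem \ref{dlsig} applied degree-by-degree through the stripping $x = \prod_{i \geqslant 0} X_{\mathbb J_i(x)}^{2^i}$. I then want to derive a contradiction with admissibility or with Theorem \ref{dlww}: writing the partial degree sums $\sum_{i=1}^{h} 2^{i-1}\omega_i(x)$ and comparing with $\sum_{i=1}^{h} 2^{i-1}\omega_i(z) = (k-2)(2^h - 1)$, a single coordinate at $k-1$ (with all earlier coordinates at exactly $k-2$, as they must be if $h$ is least) gives a partial sum that is \emph{too large}, not too small, so Theorem \ref{dlww} does not directly apply; instead I would exploit that $[x]$ is in the kernel. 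The cleanest route: if $\omega_1(x) = k-1$ then $x = X_j \cdot (x'')^2$ for a single missing variable $x_j$ up to order, and a Kameko-type squaring relative to the subalgebra on the remaining variables shows $x$ is then in the image of (a variant of) $\widetilde{Sq}^0$, not the kernel, unless $x$ is hit — contradicting admissibility. The main obstacle will be making this last dichotomy precise: pinning down exactly why an admissible monomial with some $\omega_i = k-1$ for $i \leqslant d$ cannot represent a nonzero kernel class, which I expect requires a careful induction on $d$ (or on $\deg x$) peeling off the factor $X_{\mathbb J_0(x)}$, reducing to a strictly smaller instance in $P_{k-1}$ or in lower degree, and invoking Proposition \ref{mdmo} together with Theorem \ref{dlcb1} to propagate (in)admissibility through the squaring. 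I would organize the write-up as: (1) $\omega_i(x) \geqslant k-2$ for $i \leqslant d$ via Singer; (2) $\omega_i(x) \leqslant k-2$ for $i \leqslant d$ via the kernel condition plus induction; then (1)+(2) give equality.
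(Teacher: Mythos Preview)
Your proposal has a genuine gap: you identify ruling out $\omega_i(x)=k-1$ as ``the real content'' and sketch an elaborate route through Kameko-type squarings relative to subalgebras, induction down to $P_{k-1}$, and Proposition~\ref{mdmo}, while admitting that making the dichotomy precise is the main obstacle. In fact this case is a one-line parity observation that you are missing. Since $n=\sum_{i=1}^{k-2}(2^{d_i}-1)$, we have $n\equiv k-2\pmod 2$, hence $n-(k-1)$ is odd. But $\omega_1(x)=k-1$ would force $x=X_j y^2$ for some $j$, giving $\deg y=\tfrac{n-k+1}{2}$, which is not an integer. So $\omega_1(x)=k-1$ is impossible for \emph{any} monomial of degree $n$, admissible or not, with no appeal to the kernel hypothesis.

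Once this is in place, the paper's argument is short and clean, and your step~(2) is more complicated than necessary. After establishing $\omega_1(x)=k-2$ (Singer rules out $<k-2$; parity rules out $k-1$; the kernel hypothesis plus Theorem~\ref{dlcb1} rules out $k$, exactly as you say), write $x=X_{j,\ell}y^2$ with $y$ admissible of degree $\sum_{i=1}^{k-2}(2^{d_i-1}-1)$, which has the same form as $n$ with each $d_i$ reduced by one. The same parity argument rules out $\omega_1(y)=k-1$. Crucially, you do \emph{not} need the kernel condition again to rule out $\omega_1(y)=k$: Proposition~\ref{mdcb3}(ii) applied to $x$ gives $\omega_2(x)<k$ since $\omega_1(x)=k-2<k$, i.e.\ $\omega_1(y)<k$. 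Singer then forces $\omega_1(y)=k-2$, and one iterates down to $d_{k-2}$. Your proposed induction invoking Proposition~\ref{mdmo} and reductions to $P_{k-1}$ is unnecessary.
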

\begin{proof} Note that $z = \prod_{t=1}^{k-2}x_t^{2^{d_t}-1}$ is the minimal spike of degree $n$ and $\omega_i(z) = k-2$ for $1 \leqslant i \leqslant d_{k-2}$. Since $x$ is admissible, $[x] \ne 0$. If $\omega_1(x) = k-1$, then $x = X_jy^2$ with $y$ a monomial of degree $\frac{n-k+1}2= \beta_k(n)+\frac 12$, however this is not an integer. So, by Theorem \ref{dlsig}, we have either $\omega_1(x) = k-2$ or $\omega_1(x) = k$. If $\omega_1(x) = k$, then $x = X_{\emptyset}y^2$ with $y$ a monomial in $P_k$. Since $x$ is admissible, by Theorem \ref{dlcb1}, $y$ is also admissible. Hence, $(\widetilde{Sq}^0_*)_{(k,n)}([x]) = [y] \ne 0$. This contradicts the hypothesis that $x \in \mbox{\rm Ker}((\widetilde{Sq}^0_*)_{(k,n)})$, hence $\omega_1(x) = k-2$. Then, we have $x = X_{j,\ell}y^2$ with $1 \leqslant j < \ell \leqslant k$ and $y$ an admissible monomial of degree $\beta_k(n) =\sum_{i=1}^{k-2}(2^{d_i-1}-1)$. Since $\omega_1(y)\ne k-1$, using Theorem \ref{dlsig} and Proposition \ref{mdcb3} we get $\omega_2(x) = \omega_1(y) = k-2$. By repeating the above argument we obtain $\omega_i(x) = k-2$ for $1 \leqslant i \leqslant d_{k-2}$. The lemma is proved.
\end{proof}

\begin{lems}\label{bdbt2} If $x$ is a monomial of degree $n$ in $P_k$ such that $[x] \in \mbox{\rm Ker}((\widetilde{Sq}^0_*)_{(k,n)})$, then $x \equiv \sum \bar x$ with $\bar x$ monomials in $P_k$ such that $\omega_i(\bar x) = k - 2$, for $1 \leqslant i \leqslant d_{k-2}$.
\end{lems}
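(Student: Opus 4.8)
The plan is to deduce Lemma~\ref{bdbt2} from Lemma~\ref{bdbt1} by an induction that reduces an arbitrary monomial $x$ with $[x]\in\mbox{\rm Ker}((\widetilde{Sq}^0_*)_{(k,n)})$ modulo $\mathcal A^+P_k$ to a sum of admissible monomials, each of which is automatically handled by Lemma~\ref{bdbt1}. First I would recall the standard fact that $[B_k(n)]$ is a basis of $(QP_k)_n$, so $x\equiv\sum_{j}y_j$ for certain admissible monomials $y_j\in B_k(n)$; concretely one runs the usual ``make admissible'' algorithm: if $x$ is admissible we are done, otherwise $x\equiv\sum z_j$ with $z_j<x$, and we repeat on each $z_j$, the process terminating because the order is a well-order on the finite set of monomials of degree $n$.

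The key point is that this reduction stays inside $\mbox{\rm Ker}((\widetilde{Sq}^0_*)_{(k,n)})$, so that Lemma~\ref{bdbt1} applies to each resulting admissible monomial. Since $(\widetilde{Sq}^0_*)_{(k,n)}$ is a well-defined $\mathbb F_2$-linear map on $(QP_k)_n$ and $[x]\in\mbox{\rm Ker}((\widetilde{Sq}^0_*)_{(k,n)})$, the relation $[x]=\sum_j[y_j]$ in $(QP_k)_n$ forces $\sum_j[y_j]\in\mbox{\rm Ker}((\widetilde{Sq}^0_*)_{(k,n)})$. However, what Lemma~\ref{bdbt1} needs is that \emph{each individual} $y_j$ lies in the kernel, not merely their sum. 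To get this, I would invoke the splitting \eqref{ct21}, $(QP_k)_n\cong\bigoplus_{\deg\omega=n}QP_k(\omega)$, together with the fact that Kameko's operation respects the weight-vector grading: $\varphi$ sends $P_k(\omega')$ into $P_k$ in a way compatible with weight vectors (writing $\omega'=(k)|\omega$, one has $(\widetilde{Sq}^0_*)_{(k,n)}$ carrying $QP_k((k)|\omega)$ onto $QP_k(\omega)$ and killing $QP_k(\omega')$ for all other $\omega'$). Hence the kernel is a direct sum over weight vectors, and within each summand $QP_k(\omega)$ the admissible monomials of that weight form a basis; so if $\sum_j[y_j]$ is in the kernel, grouping the $y_j$ by weight vector shows that for each weight vector $\omega$ with some $\omega(y_j)=\omega$ and $\omega\neq(k)|^d|\eta$, the partial sum $\sum_{\omega(y_j)=\omega}[y_j]$ must already be zero in $QP_k(\omega)$, i.e. vanish. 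Thus the only $y_j$ that survive have $\omega_i(y_j)=k-2$ for $1\leqslant i\leqslant d_{k-2}$, and $x\equiv\sum\bar x$ with each $\bar x$ of the desired form.

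The main obstacle I anticipate is making the weight-vector bookkeeping airtight: one must be careful that the ``make admissible'' algorithm, which a priori only respects the single grading by total degree, can be organized to respect the finer grading by weight vector — this is exactly Proposition~\ref{bdkbs} of Kameko, which guarantees that any straightening relation $x\equiv\sum z_j$ produced by applying the Cartan formula only introduces monomials $z_j$ with $\omega(z_j)\leqslant\omega(x)$, so one works within $P_k(\omega(x))$ and the induced relation in $QP_k(\omega(x))$ is what controls things. Combining this with the direct-sum decomposition of the kernel along weight vectors, and the observation that a monomial $x$ with $\omega_1(x)\in\{k-1,k\}$ or with $\omega_1(x)=k-2$ but $\omega_i(x)\neq k-2$ for some $2\leqslant i\leqslant d_{k-2}$ either is hit (by Theorem~\ref{dlsig}, as in the proof of Lemma~\ref{bdbt1}, since such $\omega$ is $<\omega(z)$ at some initial stage) or is sent to a nonzero class by $(\widetilde{Sq}^0_*)_{(k,n)}$, finishes the argument: every nonzero admissible term $\bar x$ appearing in $x\equiv\sum\bar x$ must satisfy $\omega_i(\bar x)=k-2$ for $1\leqslant i\leqslant d_{k-2}$.
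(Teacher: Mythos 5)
Your argument is essentially correct but follows a genuinely different, softer route than the paper. The paper never passes to the admissible basis: it rewrites $x$ directly, using the explicit identity $X_{u,v}^3x_u^2x_v^2=\sum_{i\ne u,v}X_{i,u,v}^3x_ux_v^2x_i^4+Sq^1(X_{u,v}^3x_ux_v^2)$ together with the Cartan formula and Singer's criterion to convert a ``bad'' spot $\omega_s(x)=k$ into $k-2$, and treats the case $\omega_1(x)=k$ by writing $y$ (where $x=X_\emptyset y^2$) as a sum of elements in $\mathcal A^+P_k$ and commuting $X_\emptyset$ past the squares. You instead expand $[x]$ in the admissible basis, argue that each admissible summand must itself lie in $\mbox{\rm Ker}(\widetilde{Sq}^0_*)_{(k,n)}$, and then quote Lemma \ref{bdbt1}. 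That does work: $\varphi$ kills every monomial with $\omega_1\ne k$ and sends distinct admissibles $X_\emptyset z^2$ to distinct admissibles $z$, whose classes are linearly independent; hence no admissible term with $\omega_1=k$ can occur in the expansion of a kernel element, every remaining term is individually in the kernel, and Lemma \ref{bdbt1} finishes. Two caveats. First, one sentence of your middle paragraph is stated backwards: it is the partial sums over weight vectors of the form $(k)|\eta$ that are forced to vanish (these index the summands on which Kameko's operation is injective), not those with $\omega\neq(k)|^d|\eta$; as literally written, your claim would annihilate exactly the terms you want to keep. The surrounding reasoning shows what you meant, but the statement should be corrected. Second, your proof establishes the lemma as stated (indeed with admissible $\bar x$), but it cannot substitute for the paper's computation globally: the explicit rewriting in the paper's proof is reused in Lemma \ref{bdlh}, where the same conclusion is needed modulo the smaller ideal $\mathcal A(d-1)^+P_k+P_k^-((k-2)|^d)$, and an argument through the admissible basis only yields information modulo $\mathcal A^+P_k$.
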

\begin{proof} If $\omega_1(x) < k-2$, then by Theorem \ref{dlsig}, $x$ is hit, hence the lemma holds. Suppose $\omega_1(x) = k-2$ and let $s>1$ be the smallest index such that $\omega_s(x) \ne k - 2$. If  $\omega_s(x) < k - 2$, then by Theorem \ref{dlsig}, $x$ is hit, hence the lemma holds. Since $\omega_1(x) \ne k-1$, we obtain $\omega_s(x) = k$. Then we have
$x = wy^{2^{s-2}}\prod_{t\geqslant s}X_{\mathbb J_t(x)}^{2^t}$, where $w = \prod_{t=0}^{s-3}X_{\mathbb J_t(x)}^{2^t}$, $y = X_{\mathbb J_{s-2}(x)}X_{\mathbb J_{s-1}(x)}^2 = X_{\mathbb J_{s-2}(x)}X_{\emptyset}^2 = X_{u,v}^3x_u^2x_v^2$ with $1 \leqslant u < v \leqslant k$. It is easy to see that
\begin{equation}\label{ctbs1}
y = \sum_{i\ne u,v}X_{i,u,v}^3x_ux_v^2x_i^4 + Sq^1(X_{u,v}^3x_ux_v^2).
\end{equation} 
By combining Proposition \ref{mdcb1} and the Cartan formula, we have \begin{align*}&w(Sq^1(X_{u,v}^3x_ux_v^2))^{2^{s-2}} = wSq^{2^{s-2}}\left((X_{u,v}^3x_ux_v^2)^{2^{s-2}}\right)\\
&\quad = Sq^{2^{s-2}}\left(w(X_{u,v}^3x_ux_v^2)^{2^{s-2}}\right) + Sq^{2^{s-2}}(w)(X_{u,v}^3x_ux_v^2)^{2^{s-2}}.
\end{align*} 
Let $\bar w$ be a monomial which appears as a term in $Sq^{2^{s-2}}(w)$. By Proposition \ref{bdkbs}, $\omega(\bar w) < \omega (w)= (k-2)|^{s-2}$. Hence, using Theorem \ref{dlsig} we see that the polynomial $w(Sq^1(X_{u,v}^3x_ux_v^2))^{2^{s-2}}\prod_{t\geqslant s}X_{\mathbb J_t(x)}^{2^t}$ is hit. So, from the relation \eqref{ctbs1} we obtain $x \equiv \sum_{i\ne u,v}x_{(i,u,v)}$, where 
\[x_{(i,u,v)} = \prod_{0\leqslant t\leqslant s-3}X_{\mathbb J_t(x)}^{2^t}(X_{i,u,v}^3x_ux_v^2x_i^4)^{2^{s-2}}\prod_{t\geqslant s}X_{\mathbb J_t(x)}^{2^t}.\]
A simple computation shows that $\omega_t(x_{(i,u,v)}) = k -2$ for $1 \leqslant t \leqslant s$. By repeating this argument we see that the lemma is true in this case.
	
If $\omega_1(x) = k$, then $x = X_\emptyset y^2$ with $y$ a monomial in $P_k$. Then, we have  $(\widetilde{Sq}^0_*)_{(k,n)}([x]) = [y] = 0$. Hence, 
$y = \sum_{r > 0} Sq^r(g_r)$ with suitable polynomial $g_r$ in $P_k$. Then, by using Proposition \ref{mdcb1} and the Cartan formula, we get 
\begin{align*}
x &= X_\emptyset y^2 = \sum_{r > 0} X_\emptyset Sq^{2r}(g_r^2)\\
&=\sum_{r>0}Sq^{2r}(X_\emptyset g_r^2) + \sum_{r>0}\sum_{t=1}^{r}Sq^{2t}(X_\emptyset)(Sq^{r-t}(g_r))^2.
\end{align*}
Since $\deg (X_\emptyset) = k$, $Sq^{2t}(X_\emptyset) = 0$ for $2t > k$. If $2t \leqslant k$ and $w$ is a monomial which appears as a term of $Sq^{2t}(X_\emptyset)$, then $\omega_1(w) = k-2t\leqslant k-2$. Hence, from the above equality and Theorem \ref{dlsig}, we get $x \equiv \sum x'$  with $x'$ monomials in $(P_k)_n$ such that $\omega_1(x') = k - 2$. The lemma is proved.
\end{proof}

From this lemma, it suffices to consider monomials $x$ such that $\omega_i(x) = k-2$ for $1\leqslant i \leqslant d=d_{k-2}$. Then
\[ x = \prod_{t=1}^dX_{i_t,j_t}^{2^{d-t}}y^{2^d},\]
where $1 \leqslant i_t < j_t \leqslant k$, $1 \leqslant t \leqslant d$ and $y \in (P_k)_m$. Note that $\omega(x) = (k-2)|^d|\omega(y)$.

\begin{lems}\label{bdlh} Let $x$ be a monomial of degree $(k-2)(2^d-1)$. If $\omega_1(x) < k$ and there is $r > d$ such that $\omega_r(x) > 0$, then $x \in P_k^-((k-2)|^d) + \mathcal A(d-1)^+P_k$.
\end{lems}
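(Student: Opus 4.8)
The plan is to prove, by downward induction on $q=d,d-1,\dots,1,0$, the following assertion $(\ast_q)$: \emph{if $x$ is a monomial of degree $(k-2)(2^d-1)$ with $\omega_1(x)<k$, with $\omega_j(x)=k-2$ for $1\le j\le q$, and with $\omega_r(x)>0$ for some $r>d$, then $x\in P_k^-((k-2)|^d)+\mathcal A(d-1)^+P_k$}. Then $(\ast_0)$ is precisely Lemma \ref{bdlh} (its middle hypothesis being vacuous), while $(\ast_d)$ holds vacuously: $\omega_j(x)=k-2$ for all $j\le d$ uses up the whole degree $(k-2)(2^d-1)$, so $\omega_j(x)=0$ for $j>d$, contradicting the last hypothesis. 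For the inductive step put $p=\min\{j:\omega_j(x)\ne k-2\}$; then $q+1\le p\le d$, the bound $p\le d$ being forced exactly as in the vacuity argument. Factoring $x=(\prod_{i=0}^{p-2}X_{\mathbb J_i(x)}^{2^i})\,z^{2^{p-1}}$ with $z=\prod_{i\ge p-1}X_{\mathbb J_i(x)}^{2^{i-p+1}}$ and comparing degrees gives $\deg z=(k-2)(2^{d-p+1}-1)$; since $p\le d$ the factor $2^{d-p+1}-1$ is odd, so $\deg z\equiv k-2\equiv k\pmod 2$, and as $\omega_p(x)=\omega_1(z)$ has the parity of $\deg z$ we obtain $\omega_p(x)\equiv k\pmod 2$. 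Together with $\omega_p(x)\le k$ and $\omega_p(x)\ne k-2$ this leaves only $\omega_p(x)<k-2$ or $\omega_p(x)=k$. In the first case $\omega(x)<(k-2)|^d$ in the left lexicographic order (equality in positions $<p\le d$, then a strict drop), and since $\deg x=\deg((k-2)|^d)$ we conclude $x\in P_k^-((k-2)|^d)$, so $(\ast_q)$ holds for this $x$.

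Now suppose $\omega_p(x)=k$; note first that $p\ne 1$, since $\omega_p(x)=k$ would otherwise contradict $\omega_1(x)<k$. The weighted degree of the first $p$ coordinates of $\omega(x)$ is $(k-2)(2^{p-1}-1)+k\cdot 2^{p-1}=(k-1)2^p-(k-2)$, which is at most $\deg x=(k-2)2^d-(k-2)$; hence $(k-1)2^p\le(k-2)2^d$, which forces $2\le p\le d-1$. From $\omega_{p-1}(x)=k-2$ and $\omega_p(x)=k$ we get $|\mathbb J_{p-2}(x)|=2$ and $\mathbb J_{p-1}(x)=\emptyset$, so, writing $\{u,v\}=\mathbb J_{p-2}(x)$, $w=\prod_{i=0}^{p-3}X_{\mathbb J_i(x)}^{2^i}$ (with $\omega(w)=(k-2)|^{p-2}$, and $w=1$ when $p=2$), and $v=\prod_{j\ge 0}X_{\mathbb J_{j+p}(x)}^{2^j}$, we have $x=w\,y^{2^{p-2}}v^{2^p}$ with $y=X_{\mathbb J_{p-2}(x)}X_\emptyset^2=X_{u,v}^3x_u^2x_v^2$. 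Raising the identity \eqref{ctbs1} to the $2^{p-2}$-th power and multiplying by $w\,v^{2^p}$, its $Sq^1$-summand becomes $w\,Sq^{2^{p-2}}\big((X_{u,v}^3x_ux_v^2)^{2^{p-2}}\big)v^{2^p}$ by Proposition \ref{mdcb1}(ii), and the Cartan formula (using Proposition \ref{mdcb1}(ii) again to kill the other cross-terms) rewrites this as $Sq^{2^{p-2}}\big(w\,(X_{u,v}^3x_ux_v^2)^{2^{p-2}}v^{2^p}\big)+Sq^{2^{p-2}}(w)\,(X_{u,v}^3x_ux_v^2)^{2^{p-2}}v^{2^p}$. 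Since $2^{p-2}\le 2^{d-3}<2^d$, the first term lies in $\mathcal A(d-1)^+P_k$; and for a monomial $\bar w$ occurring in $Sq^{2^{p-2}}(w)$ one has $\omega(\bar w)<(k-2)|^{p-2}$ by Proposition \ref{bdkbs} (with $0<2^{p-2}<2^{p-1}$), so, the two other factors having all exponents divisible by $2^{p-2}$, the coordinates $\omega_i$ with $i\le p-2$ of $\bar w\,(X_{u,v}^3x_ux_v^2)^{2^{p-2}}v^{2^p}$ agree with those of $\bar w$, its weight vector is $<(k-2)|^d$ (a strict drop in a position $\le p-2<d$), and hence it belongs to $P_k^-((k-2)|^d)$. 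Using the identity $X_{i,u,v}^3x_ux_v^2x_i^4=X_{v,i}X_{u,i}^2x_i^4$, we arrive at
\[x\ \equiv\ \sum_{i\ne u,v}w\,X_{v,i}^{2^{p-2}}X_{u,i}^{2^{p-1}}(x_iv)^{2^p}\pmod{P_k^-((k-2)|^d)+\mathcal A(d-1)^+P_k}.\]

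It remains to handle each summand $t_i=w\,X_{v,i}^{2^{p-2}}X_{u,i}^{2^{p-1}}(x_iv)^{2^p}$. Its four factors occupy pairwise disjoint ranges of binary digits of the variable exponents — the digits $0,\dots,p-3$ for $w$, the digit $p-2$ for $X_{v,i}^{2^{p-2}}$, the digit $p-1$ for $X_{u,i}^{2^{p-1}}$, and all digits $\ge p$ for $(x_iv)^{2^p}$ — so no carrying occurs and $\omega(t_i)=(k-2)|^p|\omega(x_iv)$. In particular $\omega_1(t_i)=k-2<k$ and $\omega_j(t_i)=k-2$ for $j\le p$, hence for $j\le q+1$ (as $q+1\le p$). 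Moreover $\omega(x)=(k-2)|^{p-1}|(k)|\omega(v)$, so $\ell(\omega(v))=\ell(\omega(x))-p>d-p$; and multiplying a monomial by a single variable does not decrease the length of its weight vector (the only carry pushes digits to higher positions), whence $\ell(\omega(t_i))=p+\ell(\omega(x_iv))\ge p+\ell(\omega(v))>d$. Thus $t_i$ satisfies the hypotheses of $(\ast_{q+1})$, and since $q+1\le p\le d-1<d$ the assertion $(\ast_{q+1})$ is already proved, so $t_i\in P_k^-((k-2)|^d)+\mathcal A(d-1)^+P_k$. Summing over $i$ completes the induction, and hence the proof.

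The step I expect to be the main obstacle is arranging the induction so that it terminates. A plain induction on $d$ breaks down because, when $\omega_1(x)=k-2$ and $\omega_2(x)=k$, the identity \eqref{ctbs1} rewrites $x$ only in terms of monomials of the \emph{same} degree $(k-2)(2^d-1)$. The remedy is to induct on the position $p$ at which $\omega(x)$ first departs from $(k-2)|^d$; the two elementary estimates above — the parity argument excluding $\omega_p(x)=k-1$, and the degree inequality forcing $p\le d-1$ when $\omega_p(x)=k$ — are exactly what guarantee that one application of \eqref{ctbs1} strictly increases this position while keeping it $\le d$, so the recursion is finite and lands inside $P_k^-((k-2)|^d)+\mathcal A(d-1)^+P_k$.
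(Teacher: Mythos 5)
Your proof is correct and follows essentially the same route as the paper's: the same rewriting via the identity \eqref{ctbs1} and the Cartan formula, the same parity and degree estimates excluding $\omega_p(x)=k-1$ and forcing $p\leqslant d-1$, and the same terminal observation that once the first deviation from $(k-2)|^d$ is a drop below $k-2$ the monomial lies in $P_k^-((k-2)|^d)$. The only difference is organizational: you formalize the paper's ``by repeating this argument'' step as an explicit downward induction on the first position at which $\omega(x)$ departs from $(k-2)|^d$, which makes the termination transparent but is not a different method.
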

\begin{proof} If $\omega_1(x) < k-2$, then $x \in P_k^-((k-2)|^d)$, hence the lemma holds. Since $\omega_1(x)\ne k-1$, if $\omega_1(x) \geqslant k-2$, then $\omega_1(x) = k-2$. Let $s$ be the smallest index such that $\omega_s(x) > k-2$. Since $\omega_s(x)\ne k-1$, we have $\omega_s(x) = k$. If $s \geqslant d$, then $(k-2)(2^d-1)= \deg x \geqslant (k-2)(2^d-1) + 2^{t-1}\omega_t(x)> (k-2)(2^d-1)$. This is a contradiction, so $s < d$. If there is $1 <r < s$ such that $\omega_r(x) < k-2$, then $x \in P_k^-((k-2)|^d)$, so the lemma holds. Suppose $\omega_r(x) = k-2$ for $1 \leqslant r < s$ and $X_{\mathbb J_{s-2}(x)} = X_{u,v}$ for $1 \leqslant u < v \leqslant k$. Then, we have 
\[X_{\mathbb J_{s-1}(x)}^2X_{\mathbb J_{s-2}(x)} = X_{u,v}^3x_u^2x_v^2 = \sum_{i \ne u,v}X_{i,u,v}^3x_i^4x_ux_v^2 + Sq^1(X_{u,v}^3x_ux_v^2).\]
By an argument analogous to the one in the proof of Lemma \ref{bdbt2}, 
 we get 
 $$x = \sum_{i\ne u,v}x_{(i,u,v)}\quad \mbox{mod}(P_k^-((k-2)|^d) + \mathcal A(d-1)^+P_k),$$ where 
\[x_{(i,u,v)} =
\prod_{t=0}^{s-3}X_{\mathbb J_t(x)}^{2^t}(X_{i,u,v}^3x_ux_v^2x_i^4)^{2^{s-2}}\prod_{t\geqslant s}X_{\mathbb J_t(x)}^{2^t}.\]
It is easy to see that $\omega_t(x_{(i,u,v)}) = k -2$ for $1 \leqslant t \leqslant s$ and $\omega_r(x_{(i,u,v)}) > 0$ for suitable $r > d$. By repeating this argument we obtain 
$$x =\sum \bar x \quad \mbox{mod}(P_k^-((k-2)|^d) + \mathcal A(d-1)^+P_k) $$ with $\bar x$ monomials such that $\omega_t(\bar x) \leqslant k -2$ for $1 \leqslant t \leqslant d$ and $\omega_r(\bar x) > 0$ for suitable $r > d$. Then we have $\sum_{i=1}^d2^{i-1}\omega_i(\bar x) < \deg \bar x = (k-2)(2^d-1)$. Hence, there is an index $u \leqslant d$ such that $\omega_t(\bar x) = k-2$ for $1\leqslant t < u$, $\omega_u(\bar x) < k-2$, therefore $\bar x \in P_k^-((k-2)|^d)$. The lemma is proved. 
\end{proof}

\subsection{Strongly inadmissible monomials}\label{s32}\

\medskip
In this subsection, we introduce the notion of strongly inadmissible monomial in $P_k$ and use it to study the kernel of Kemeko's squaring operation. 
 
\begin{defns} 
Let $n$ be a positive integer and $z$ be the minimal spike of degree $n$. Denote by $\mathcal P_{(k,n)}$ the subspace of $P_k$ spanned by all monomials $x$ of degree $n$ such that
\[\sum_{1 \leqslant j\leqslant h}2^{j-1}\omega_j(x) < \sum_{1 \leqslant j\leqslant h}2^{j-1}\omega_j(z),\]
for some  index $h \geqslant 1$.
\end{defns}

\begin{defns}\label{dnstin}
A monomial $x$ of degree $n$ in $P_k$ is said to be strongly inadmissible if there exist  monomials $y_1, y_2,\ldots , y_t$ of the same weight vector $\omega(x)$ such that $y_u < x,\, 1\leqslant u \leqslant t$ and
\[ x \simeq_{s} y_1 + y_2 + \ldots + y_t\ \mbox{ mod}(\mathcal P_{(k,n)}),\]
where $s = \max\{i:\omega_i(x)>0\}$. 
\end{defns}

Obviously, if $x$ is strictly inadmissible, then it is strongly inadmissible. By using Theorem \ref{dlww}, we see that if $g \in \mathcal P_{(k,n)}$, then $g \in\mathcal A^+P_k$. Hence, if $x$ is strongly inadmissible, then it is inadmissible. However, if $g \notin \mathcal A(s-1)^+P_k$, then $x$ is not strictly inadmissible. Therefore, the use of strongly inadmissible monomials is more convenient than that of the strictly inadmissible monomials. It can overcome many difficulties encountered when using the notion of strictly inadmissible monomial.

For example, let $x= x_1x_2^3x_3^6x_4^6x_5^5$ be the monomial of weight vector $(3)|^3$ in $P_5$. We have 
\begin{align*}x &= x_1x_2^{3}x_3^{5}x_4^{6}x_5^{6} + x_1x_2^{3}x_3^{6}x_4^{5}x_5^{6} + x_1x_2^{5}x_3^{5}x_4^{5}x_5^{5}\\ &\qquad  + Sq^1(x_1^{2}x_2^{3}x_3^{5}x_4^{5}x_5^{5}) + Sq^2(x_1x_2^{3}x_3^{5}x_4^{5}x_5^{5})\ \mbox{mod}(P_5^-((3)|^3)),
\end{align*}
where $x_1x_2^{5}x_3^{5}x_4^{5}x_5^{5} \in \mathcal P_{(5,21)}$, hence $x$ is strongly inadmissible. It is easy to see that  
\begin{align*}x_1x_2^{5}x_3^{5}x_4^{5}x_5^{5} &= Sq^2(x_1x_2^{3}x_3^{5}x_4^{5}x_5^{5} + x_1x_2^{5}x_3^{3}x_4^{5}x_5^{5} + x_1x_2^{5}x_3^{5}x_4^{3}x_5^{5} + x_1x_2^{5}x_3^{5}x_4^{5}x_5^{3})\\ &\quad +  Sq^8 (x_1x_2^{3}x_3^{3}x_4^{3}x_5^{3})\ \mbox{mod}(P_5^-((3)|^3)) \in \mathcal A(3)^+P_5 + P_5^-((3)|^3).
\end{align*}
If $x$ is strictly inadmissible, then we must have $x_1x_2^{5}x_3^{5}x_4^{5}x_5^{5} \in \mathcal A(2)^+P_5 + P_5^-((3)|^3)$. However, we have been unable to prove this.

\smallskip
For a positive integer $a$, denote by $\alpha(a)$ the number of ones in the dyadic expansion of $a$ and by $\zeta(a)$ the greatest integer $u$ such that $a$ is divisible by $2^u$. That means $a = 2^{\zeta(a)}b$ with $b$ an odd integer. We set $\delta(a) = a - \alpha(a) - \zeta(a)$.

\begin{props}\label{mdstin} Let $d$ be a positive integer.
If $z^*$ is the minimal spike of degree $n_d := (k-2)(2^d-1)$ and $d > \delta(k-2)$, then $\omega_i(z^*) = k-2$ for $1 \leqslant i \leqslant d-\delta(k-2)$ and $\omega_i(z^*) < k-2$ for $i > d-\delta(k-2)$.
\end{props}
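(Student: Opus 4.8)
\medskip
\noindent\textbf{Proof proposal.} Put $a := k-2$ and write the minimal spike of degree $n_d$ as $z^{*} = \prod_{i=1}^{s}x_i^{2^{c_i}-1}$, where $s = \mu(n_d)$ and $c_1 > c_2 > \cdots > c_{s-1}\geqslant c_s > 0$ as in \eqref{ct1.1}. Since $\nu_i(z^{*}) = 2^{c_i}-1$, one has $\omega_j(z^{*}) = \#\{\,i : c_i \geqslant j\,\}$, so that $\omega_j(z^{*}) = s$ exactly for $j\leqslant c_s$ and $\omega_j(z^{*}) < s$ for $j > c_s$. Hence the proposition is equivalent to the two assertions: (i) $\mu(n_d) = a$; and (ii) the least exponent of $z^{*}$ is $c_a = d-\delta(a)$. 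The plan is to establish both by a direct computation with dyadic expansions, together with the uniqueness of the representation \eqref{ct1.1}.

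For (i): the identity $n_d = \sum_{i=1}^{a}(2^d-1)$ gives $\mu(n_d)\leqslant a$, and $\alpha(n_d+a) = \alpha(a\cdot 2^d) = \alpha(a)\leqslant a$. Conversely, for $0\leqslant s\leqslant a-1$ I would compute $\alpha(n_d+s)$ outright. Setting $r = a-s\in\{1,\dots,a\}$ and first checking that the hypothesis $d>\delta(a)$ forces $a < 2^{d+\zeta(a)}$ --- so that forming $n_d+s = a\cdot 2^d - r$ only disturbs the binary digits below position $d+\zeta(a)$ --- one finds
\[
a\cdot 2^d - r \;=\; 2^{d+\zeta(a)}\bigl(a/2^{\zeta(a)}-1\bigr) + \bigl(2^{d+\zeta(a)}-r\bigr),
\]
whose two summands occupy disjoint blocks of digits, so that
\[
\alpha(n_d+s) \;=\; \bigl(\alpha(a)-1\bigr)+\bigl(d+\zeta(a)-\alpha(r-1)\bigr) \;=\; d+\alpha(a)+\zeta(a)-1-\alpha(a-s-1).
\]
Because $d>\delta(a)$ yields $d+\alpha(a)+\zeta(a)-1\geqslant a$ and because $\alpha(a-s-1)\leqslant a-s-1$, the right-hand side exceeds $s$ for every $s<a$. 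By \eqref{ct1.1} this rules out $\mu(n_d)=s$ for $0<s<a$, so $\mu(n_d)=a$.

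For (ii): write $a = 2^{e_1}+2^{e_2}+\cdots+2^{e_p}$ with $e_1>e_2>\cdots>e_p=\zeta(a)$ and $p=\alpha(a)$, and consider the sequence of exponents
\[
e_1+d,\ \ldots,\ e_{p-1}+d,\ \zeta(a)+d-1,\ \zeta(a)+d-2,\ \ldots,\ \zeta(a)+d-(a-p),\ \zeta(a)+d-(a-p),
\]
obtained from the binary digits of $a\cdot 2^d$ by retaining the top $p-1$ of them and ``splitting down'' the lowest digit $2^{\zeta(a)+d}$ into a strictly decreasing run of length $a-p+1$ whose final value is repeated. A one-line geometric-sum computation shows that $\sum 2^{c_i} = a\cdot 2^d = n_d+a$; the sequence has exactly $a$ terms; it is strictly decreasing except for its last pair (using $e_{p-1}>e_p=\zeta(a)$); and its least term equals $\zeta(a)+d-(a-p)=d-\delta(a)>0$ by hypothesis. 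Thus it is a representation of $n_d$ of the form \eqref{ct1.1} with $\mu(n_d)=a$ terms, so by the uniqueness in \eqref{ct1.1} it is the exponent sequence of $z^{*}$, and $c_a=d-\delta(a)$. Combining this with the first paragraph gives $\omega_j(z^{*})=k-2$ for $1\leqslant j\leqslant d-\delta(k-2)$ and $\omega_j(z^{*})<k-2$ for $j>d-\delta(k-2)$, as claimed.

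The crux is the bookkeeping in part (i): one must verify carefully that $d>\delta(k-2)$ is precisely the condition ensuring that no borrow propagates past digit $d+\zeta(k-2)$ when $n_d+s$ is formed (which is what makes the displayed formula for $\alpha(n_d+s)$ valid), and that the same inequality is exactly what makes the least exponent $c_a=d-\delta(k-2)$ positive. Once $\mu(n_d)=k-2$ has been secured, part (ii) is essentially forced: the explicit exponent sequence above is the only candidate of the form \eqref{ct1.1}, and uniqueness finishes the argument.
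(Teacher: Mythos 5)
Your proof is correct, and your part (ii) is essentially the paper's argument: the paper constructs exactly the same exponent sequence (the top $\alpha(k-2)-1$ shifted bits of $k-2$ followed by a strictly decreasing run ending in a repeated term equal to $d-\delta(k-2)$) and concludes via the characterization and uniqueness in \eqref{ct1.1}. Your part (i), the digit-counting verification that $\mu(n_d)=k-2$, is sound but redundant, since exhibiting in part (ii) a representation of the form \eqref{ct1.1} with $k-2$ terms already forces $\mu(n_d)=k-2$ by the ``if and only if'' in \eqref{ct1.1}.
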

\begin{proof} Set  $s =\alpha(k-2)$. We have
$k-2 = 2^{t_1} + 2^{t_2} + \ldots + 2^{t_{s-1}} + 2^{t_s},$  
where $t_1 > t_2 > \ldots > t_{s-1} > t_s = \zeta(k-2) \geqslant 0$. Then, we obtain
\begin{align*}
(k-2)(2^d-1) &= 2^{d+t_1} + 2^{d+t_2} + \ldots + 2^{d+t_{s-1}} + 2^{d+t_s} - k + 2\\
&= \sum_{1 \leqslant i \leqslant k-2}(2^{e_i}-1),
\end{align*}
where  
\[e_i = \left\{\begin{array}{l@{\enskip}l} d+t_i, &1 \leqslant i < s,\\d+t_s - i+s-1, &s \leqslant i \leqslant k-3,\\ d+t_s - k+s+2, & i = k-2.\end{array}\right.\]
It is easy to see that $e_1 > e_2 > \ldots >e_{k-3} = e_{k-2}=d-\delta(k-2)>0.$ Hence, $z^* = \prod_{i=1}^{k-2}x_i^{2^{e_i-1}}$ is the minimal spike of degree $n_d=(k-2)(2^d-1)$, $\omega_j(z^*) = k-2$ for $1 \leqslant j \leqslant e_{k-2}=d-\delta(k-2)$ and $\omega_i(z^*) < k-2$ for $i > d-\delta(k-2)$. The proposition is proved.
\end{proof}
The following is a refinement of Theorem \ref{dlcb1}.
\begin{props}\label{mdcb51}  Let $c, d, e$ be positive integers and let $u,\, w,\, y \in P_k$ be monomials such that $\omega(u) = (k-2)|^c$,  $\omega(w) = (k-2)|^d$ and $\omega(y) = (k-2)|^e$.
If $w$ is strongly inadmissible, then so is $uw^{2^c}y^{2^{c+d}}$.
\end{props}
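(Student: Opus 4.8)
The plan is to transport the ``$w$ is strongly inadmissible'' relation forward along the $\mathbb{F}_2$-linear operator $\Phi\colon P_k\to P_k$ that sends a monomial $f$ to $u\,f^{2^{c}}\,y^{2^{c+d}}$, extended additively; over $\mathbb{F}_2$ this is well defined, it sends monomials to monomials, and it carries the degree $n':=(k-2)(2^{d}-1)=\deg w$ to the degree $n:=(k-2)(2^{c+d+e}-1)=\deg\!\big(uw^{2^{c}}y^{2^{c+d}}\big)$, noting $\omega\!\big(uw^{2^{c}}y^{2^{c+d}}\big)=(k-2)|^{c+d+e}$. First I would unwind Definition~\ref{dnstin} for $w$: since $\max\{i:\omega_i(w)>0\}=d$ and $P_k^-(\omega(w))=P_k^-\big((k-2)|^{d}\big)$, there are monomials $y_{1},\dots,y_{t}$ with $\omega(y_{l})=(k-2)|^{d}$, $y_{l}<w$ for $1\le l\le t$, and
\[
w+\sum_{l=1}^{t}y_{l}\ \in\ \mathcal{A}(d-1)^{+}P_k\ +\ P_k^-\big((k-2)|^{d}\big)\ +\ \mathcal{P}_{(k,n')}.
\]
Applying $\Phi$, the candidate witnesses for the strong inadmissibility of $uw^{2^{c}}y^{2^{c+d}}$ are $\Phi(y_{l})=u\,y_{l}^{2^{c}}\,y^{2^{c+d}}$, and the proposition reduces to two claims: each $\Phi(y_{l})$ has weight vector $(k-2)|^{c+d+e}$ and satisfies $\Phi(y_{l})<uw^{2^{c}}y^{2^{c+d}}$; and $\Phi$ maps each of the three summands on the right-hand side into $\mathcal{A}(c+d+e-1)^{+}P_k+P_k^-\big((k-2)|^{c+d+e}\big)+\mathcal{P}_{(k,n)}$.

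The engine for nearly everything is the remark that $\omega(u)=(k-2)|^{c}$, $\omega(y_{l})=(k-2)|^{d}$ and $\omega(y)=(k-2)|^{e}$ have lengths $c$, $d$, $e$, so the exponents of $u$, $y_{l}$, $y$ are bounded by $2^{c}$, $2^{d}$, $2^{e}$; hence in $\Phi(y_{l})$ the dyadic digits contributed by the three factors occupy the pairwise disjoint ranges $[0,c)$, $[c,c+d)$, $[c+d,c+d+e)$. This gives at once $\omega(\Phi(y_{l}))=(k-2)|^{c}\,|\,(k-2)|^{d}\,|\,(k-2)|^{e}=(k-2)|^{c+d+e}$, and since $y_{l}<w$ with equal weight vectors forces $\sigma(y_{l})<\sigma(w)$, it gives $\sigma(\Phi(y_{l}))<\sigma\!\big(uw^{2^{c}}y^{2^{c+d}}\big)$, hence $\Phi(y_{l})<uw^{2^{c}}y^{2^{c+d}}$ by Definition~\ref{defn3}. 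The same idea handles the $P_k^-$ summand: if $\omega(f)<(k-2)|^{d}$, its weight vector first drops below $k-2$ at some index $i_{0}\le d$, and because the digits from $u$ and from $y^{2^{c+d}}$ lie below $c$ and at or above $c+d$ they do not affect positions $c+1,\dots,c+d$ of $\Phi(f)$, so $\omega_{c+i}(\Phi(f))=\omega_i(f)$ for $i\le d$, giving $\omega(\Phi(f))<(k-2)|^{c+d+e}$, i.e.\ $\Phi(f)\in P_k^-\big((k-2)|^{c+d+e}\big)$. For the $\mathcal{P}_{(k,n')}$ summand I would use the truncated-degree identity $\langle m\rangle_{h}:=\sum_{j\le h}2^{j-1}\omega_j(m)=\sum_{j=1}^{k}\big(\nu_j(m)\bmod 2^{h}\big)$ together with Proposition~\ref{mdstin}, by which the minimal spikes $z'$ of degree $n'$ and $z^{*}$ of degree $n$ have exponent vectors differing by the constant $c+e$: for a monomial $f$ witnessed at an index $h\le d$ one gets $\langle\Phi(f)\rangle_{c+h}=\deg u+2^{c}\langle f\rangle_{h}$ while $\langle z^{*}\rangle_{c+h}\ge\deg u+2^{c}\langle z'\rangle_{h}$ (using $e\ge1$), so $\langle\Phi(f)\rangle_{c+h}<\langle z^{*}\rangle_{c+h}$ and $\Phi(f)\in\mathcal{P}_{(k,n)}$; a short case check on the witnessing index (and on $\ell(\omega(f))$ versus $d$, recalling that a monomial of degree $n'$ with $\omega=(k-2)|^{d}$ cannot lie in $\mathcal{P}_{(k,n')}$) reduces the remaining monomials of $\mathcal{P}_{(k,n')}$ either to this case or to the $P_k^-$ case.

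The last summand, the image of $\mathcal{A}(d-1)^{+}P_k$, is where the real work lies, and I expect it to be the main obstacle; it is the computation already carried out in the proof of Theorem~\ref{dlcb1}(ii), but now the $\mathcal{P}$-terms must be kept explicit. Writing this summand as $\sum_{1\le a<2^{d}}Sq^{a}(h_{a})$, Proposition~\ref{mdcb1}(ii) gives $\Phi(Sq^{a}(h_{a}))=u\,Sq^{a2^{c}}(h_{a}^{2^{c}})\,y^{2^{c+d}}$, and the Cartan formula (together with $Sq^{l}(y^{2^{c+d}})=0$ for $0<l<2^{c+d}$, again by Proposition~\ref{mdcb1}(ii)) rewrites this as
\[
u\,Sq^{a2^{c}}(h_{a}^{2^{c}})\,y^{2^{c+d}}\ =\ Sq^{a2^{c}}\!\big(u\,h_{a}^{2^{c}}\,y^{2^{c+d}}\big)\ +\ \sum_{\substack{i+j=a2^{c}\\ i\ge1}}Sq^{i}(u)\,Sq^{j}(h_{a}^{2^{c}})\,y^{2^{c+d}}.
\]
Since $a2^{c}<2^{c+d}\le2^{c+d+e}$, the leading term already lies in $\mathcal{A}(c+d+e-1)^{+}P_k$. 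In the sum, Proposition~\ref{mdcb1}(ii) forces $2^{c}\mid j$, hence $2^{c}\mid i$ and $i=b2^{c}$ with $b\ge1$, so the crux is to show that every monomial term of $Sq^{b2^{c}}(u)\,Sq^{j}(h_{a}^{2^{c}})\,y^{2^{c+d}}$ lies in $P_k^-\big((k-2)|^{c+d+e}\big)$. For this I would apply Proposition~\ref{bdkbs} to $Sq^{b2^{c}}(u)$ with an auxiliary exponent bigger than $\log_{2}(b2^{c})$: since $\omega(u)=(k-2)|^{c}$ has length $c$, the conclusion $\omega(\bar u)<\omega(u)$ forces the first index at which a term $\bar u$ of $Sq^{b2^{c}}(u)$ drops below $k-2$ to be $\le c$; and because $Sq^{j}(h_{a}^{2^{c}})$ and $y^{2^{c+d}}$ are $2^{c}$-th powers (hence have no dyadic digits below position $c$), this initial deficiency in the weight vector is inherited by the product, which has degree $n$. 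Collecting the three summands yields $uw^{2^{c}}y^{2^{c+d}}\simeq_{c+d+e}\sum_{l}\Phi(y_{l})\ \mbox{mod}\,(\mathcal{P}_{(k,n)})$, so $uw^{2^{c}}y^{2^{c+d}}$ is strongly inadmissible. The one point beyond Theorem~\ref{dlcb1} is precisely the bookkeeping that $\Phi$ respects both $P_k^-\big((k-2)|^{d}\big)$ and $\mathcal{P}_{(k,n')}$, carrying them into the corresponding spaces for degree $n$; everything else is the familiar dyadic juggling, which is also where the technical care is concentrated.
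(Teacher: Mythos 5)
Your proposal is correct and follows essentially the same route as the paper's proof: decompose $w$ according to Definition \ref{dnstin}, multiply through by $u(-)^{2^c}y^{2^{c+d}}$, handle the $\mathcal A(d-1)^+$ part via the Cartan formula together with Proposition \ref{bdkbs} to push the error terms into $P_k^-((k-2)|^{c+d+e})$, and check that the remaining summands land in the corresponding subspaces for degree $n_{c+d+e}$. The only difference is that you spell out the verification that $\Phi$ carries $P_k^-((k-2)|^d)$ and $\mathcal P_{(k,n_d)}$ into the right spaces, which the paper dismisses as "easily checked."
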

\begin{proof} Note that the weight vector of $uw^{2^c}y^{2^{c+d}}$ is $(k-2)|^{c+d+e}$. Since $w$ is strongly inadmissible, there exist monomials $y_1,y_2,\ldots, y_t$ of the same weight vector $(k-2)|^d$, $g_1 \in P_k^-((k-2)|^d)$ and $g_2 \in \mathcal P_{(k,n_d)}$ such that  $y_i<w$ for $i=1,2,\ldots , t$ and 
\[w = y_1 + y_2 + \ldots + y_t + g_1 + g_2 + \sum_{1\leqslant j < 2^d} Sq^j(h_j),\]
where $h_j$ are suitable polynomials in $P_k$ and $n_d = (k-2)(2^d-1)$. Since $j2^c < 2^{c+d}$, by using Proposition \ref{mdcb1} and the Cartan formula we have $$(Sq^j(h_j))^{2^c}y^{2^{c+d}} = Sq^{j2^c}(h_j^{2^c})y^{2^{c+d}} = Sq^{j2^c}\left(h_j^{2^c}y^{2^{c+d}}\right).$$  
Then, combining the Cartan formula and Proposition \ref{mdcb1}, we get
\begin{align*}u(Sq^j(h_j))^{2^c}y^{2^{c+d}} &= Sq^{j2^c}\left(uh_j^{2^c}y^{2^{c+d}}\right)+ \sum_{1 \leqslant r \leqslant j}Sq^{r2^c}(u)\left(Sq^{j-r}(h_jy^{2^{d}})\right)^{2^c}.
\end{align*}
Suppose $v$ is a monomial which appears as a term of $Sq^{r2^c}(u)$. By Proposition \ref{bdkbs}, we have $\omega(v) < \omega(u) = (k-2)|^c$. Hence, 
$$\sum_{1 \leqslant r \leqslant j}Sq^{r2^c}(u)\left(Sq^{j-r}(h_jy^{2^{d}})\right)^{2^c} \in P_k^-((k-2)|^{c+d+e}),$$
for $1\leqslant j < 2^d$. Combining the above equalities gives
\begin{align*}uw^{2^c}y^{2^{c+d}} &= \sum_{1\leqslant i \leqslant t}uy_i^{2^c}y^{2^{c+d}} + ug_1^{2^c}y^{2^{c+d}} + ug_2^{2^c}y^{2^{c+d}}\\
& \quad + \sum_{1\leqslant j < 2^d} Sq^{j2^c}\left(uh_j^{2^c}y^{2^{c+d}}\right)\ \mbox{ mod}(P_k^-((k-2)|^{c+d+e})).
\end{align*}
Since $\omega(u) = (k-2)|^c$, we can easily check that $uy_i^{2^c}y^{2^{c+d}} < uw^{2^c}y^{2^{c+d}}$ for $1 \leqslant i \leqslant t$, $ug_1^{2^c}y^{2^{c+d}} \in P_k^-((k-2)|^{c+d+e})$ and $ug_2^{2^c}y^{2^{c+d}} \in \mathcal P_{(k,n_{c+d+e})}$ with $n_{c+d+e} = (k-2)(2^{c+d+e}-1)$. Hence, the last equality implies that $uw^{2^c}y^{2^{c+d}}$ is strongly inadmissible. 
\end{proof}

\begin{lems}\label{bdtd} Let $f, g \in (P_k)_{n_d}$ be homogeneous polynomials with $n_d = (k-2)(2^d-1)$, and let $y \in (P_k)_m$ be a monomial. If $f \simeq_{(d,(k-2)|^d)} g \mbox{ \rm mod}(\mathcal P_{(k,n_d)})$, then $fy^{2^d} \equiv gy^{2^d}$.
\end{lems}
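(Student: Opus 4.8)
The plan is to unravel the hypothesis and multiply through by $y^{2^d}$ one summand at a time. By Definition \ref{dfn2}(iii) together with the convention of Definition \ref{dnstin} on reducing modulo $\mathcal P_{(k,n_d)}$, the assumption $f\simeq_{(d,(k-2)|^d)}g\ \mbox{mod}(\mathcal P_{(k,n_d)})$ says that $f+g$ lies in $\mathcal A(d-1)^+P_k+P_k^-((k-2)|^d)+\mathcal P_{(k,n_d)}$, so we may write
\[ f+g=\sum_{1\leqslant j<2^d}Sq^j(h_j)+g_1+g_2 \]
with $h_j\in P_k$, $g_1\in P_k^-((k-2)|^d)$ and $g_2\in\mathcal P_{(k,n_d)}$. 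Since $\equiv$ is $\mathbb F_2$-linear it suffices to show that each of the three summands becomes hit after multiplication by $y^{2^d}$ (note that $n_d+2^dm=n$, so all of this lives in $(P_k)_n$, and that $\mu(n)=k-2\leqslant k$, so Theorems \ref{dlsig} and \ref{dlww} apply). For the first summand, because $j<2^d$ the Cartan formula and Proposition \ref{mdcb1}(ii) give $Sq^j(h_j)y^{2^d}=Sq^j(h_jy^{2^d})$ exactly as in the proof of Proposition \ref{mdcb51} (every term $Sq^{j-i}(y^{2^d})$ with $0<j-i<2^d$ vanishes); hence $\big(\sum_jSq^j(h_j)\big)y^{2^d}=\sum_jSq^j(h_jy^{2^d})\in\mathcal A^+P_k$.

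For the other two summands I would use the minimal-spike hit criteria, together with the elementary remark that for any monomial $w$ one has $\omega_i(wy^{2^d})=\omega_i(w)$ for $1\leqslant i\leqslant d$, since $y^{2^d}$ only alters the dyadic digits in positions $\geqslant d$ and a carry cannot move to a lower position. Let $z$ be the minimal spike of degree $n$; by the computation in the proof of Lemma \ref{bdbt1}, $\omega_i(z)=k-2$ for $1\leqslant i\leqslant d$. If $b$ is a monomial of degree $n_d$ occurring in $g_1$, then $\omega(b)<(k-2)|^d$, and a degree count forces the first index $i_0$ with $\omega_{i_0}(b)\neq k-2$ to satisfy $i_0\leqslant d$ and $\omega_{i_0}(b)<k-2$, with $\omega_i(b)=k-2$ for $i<i_0$. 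Hence $\omega(by^{2^d})$ coincides with $\omega(z)$ in positions $<i_0$ and is strictly smaller at position $i_0$, i.e. $\omega(by^{2^d})<\omega(z)$, so $by^{2^d}$ is hit by Theorem \ref{dlsig} and $g_1y^{2^d}\equiv0$.

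For $g_2$, let $c$ be a monomial of degree $n_d$ occurring in $g_2$, let $z^*$ be the minimal spike of degree $n_d$, and fix $h\geqslant1$ with $\sum_{1\leqslant j\leqslant h}2^{j-1}\omega_j(c)<\sum_{1\leqslant j\leqslant h}2^{j-1}\omega_j(z^*)$. By Proposition \ref{mdstin} (or, more crudely, because $\mu(n_d)\leqslant k-2$, which follows by induction on $d$ from the identity $n_d-(k-2)=2n_{d-1}$), the spike $z^*$ uses at most $k-2$ variables, so $\omega_j(z^*)\leqslant k-2=\omega_j(z)$ for $1\leqslant j\leqslant d$. Taking $h'=\min\{h,d\}$ and treating the cases $h\leqslant d$ and $h>d$ separately, one checks that
\[ \sum_{1\leqslant j\leqslant h'}2^{j-1}\omega_j(cy^{2^d})=\sum_{1\leqslant j\leqslant h'}2^{j-1}\omega_j(c)<\sum_{1\leqslant j\leqslant h'}2^{j-1}\omega_j(z), \]
the inputs being: $\omega_j(cy^{2^d})=\omega_j(c)$ for $j\leqslant d$; the hypothesis inequality at $h$; the bound $\omega_j(z^*)\leqslant\omega_j(z)$ for $j\leqslant d$ (used when $h\leqslant d$); and $\sum_{1\leqslant j\leqslant h}2^{j-1}\omega_j(z^*)\leqslant\deg z^*=n_d=\sum_{1\leqslant j\leqslant d}2^{j-1}\omega_j(z)$ (used when $h>d$). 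By Theorem \ref{dlww}, $cy^{2^d}$ is hit, so $g_2y^{2^d}\equiv0$. Adding the three contributions gives $(f+g)y^{2^d}\equiv0$, that is, $fy^{2^d}\equiv gy^{2^d}$.

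The routine ingredients are the Cartan computation (already carried out verbatim for Proposition \ref{mdcb51}) and the digit remark $\omega_i(wy^{2^d})=\omega_i(w)$ for $i\leqslant d$. The one step that needs genuine care is the $g_2$ summand: one cannot compare $c$ directly with the minimal spike $z$ of the larger degree $n$, whose weight vector begins with $(k-2)|^d$ but continues according to a spike other than $z^*$; instead one compares $cy^{2^d}$ with $z$ via the auxiliary spike $z^*$ of degree $n_d$, and the choice $h'=\min\{h,d\}$ together with the bound $\mu(n_d)\leqslant k-2$ is precisely what makes both cases of the Walker--Wood criterion go through. I do not anticipate any obstacle beyond this bookkeeping.
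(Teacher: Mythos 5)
Your proof is correct, and it follows the same overall strategy as the paper — decompose $f+g$, absorb the $\mathcal A(d-1)^+$ part via the Cartan formula and Proposition \ref{mdcb1}, and kill the remaining summands by minimal-spike criteria — but it differs in two details, both to your advantage. First, you explicitly split off a summand $g_1\in P_k^-((k-2)|^d)$ and dispose of it by comparing $\omega(by^{2^d})$ with $\omega(z)$ via Theorem \ref{dlsig}; the paper's displayed decomposition writes only $f=g+g_1+\sum Sq^j(h_j)$ with $g_1\in\mathcal P_{(k,n_d)}$ and does not separately treat the $P_k^-((k-2)|^d)$ term demanded by Definition \ref{dfn2}(iii), so your version is the more complete reading of the hypothesis (and your argument for it is sound, since the first index at which $\omega(b)$ drops below $k-2$ must occur at some $i_0\leqslant d$ by the degree count). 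Second, for a monomial $c\in\mathcal P_{(k,n_d)}$ whose witnessing index satisfies $h>d$, the paper deduces the existence of $r>d$ with $\omega_r(c)>0$ and then routes the argument through Lemma \ref{bdlh}, whereas you simply truncate to $h'=d$ and apply Theorem \ref{dlww} directly, using $\sum_{j\leqslant d}2^{j-1}\omega_j(c)\leqslant\sum_{j\leqslant h}2^{j-1}\omega_j(c)<\deg z^*=n_d=\sum_{j\leqslant d}2^{j-1}\omega_j(z)$; this is strictly simpler, avoids checking the hypothesis $\omega_1<k$ of Lemma \ref{bdlh}, and is correct because multiplying by $y^{2^d}$ leaves $\omega_j$ unchanged for $j\leqslant d$. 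The case $h\leqslant d$ is handled exactly as in the paper, via Proposition \ref{mdstin} and the bound $\omega_j(z^*)\leqslant k-2=\omega_j(z)$. I see no gaps.
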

\begin{proof} Note that $z = \prod_{i=1}^{k-2}x^{2^{d_i}-1}$ is the minimal spike of degree $n$ and $\omega_t(z) = k-2$ for $1 \leqslant t \leqslant d$. Suppose 
\[f = g + g_1 + \sum_{1\leqslant j < 2^d}Sq^j(h_j),\] 
where $g_1\in \mathcal P_{(k,n_d)}$ and suitable polynomials $h_j \in P_k$. By Proposition \ref{mdcb1} and the Cartan formula, 
\[Sq^j(h_j)y^{2^d} = Sq^j(h_jy^{2^d}),\ 1 \leqslant j < 2^d.\] 
By Definition \ref{dnstin}, if a monomial $w$ appears as a term of the polynomial $g_1$, then there is an integer $h\geqslant 1$, such that 
\begin{equation}\label{ctbs}\sum_{1\leqslant i\leqslant h} 2^{i-1}\omega_i(w) < \sum_{1\leqslant i\leqslant h} 2^{i-1}\omega_i(z^*)\leqslant \deg(z^*) = n_d,\end{equation}
where $z^*$ is determined as in Proposition \ref{mdstin}. If $h \leqslant d$, then using Proposition \ref{mdstin} we have 
$$\sum_{1\leqslant i\leqslant h} 2^{i-1}\omega_i(z^*) \leqslant (k-2)(2^h-1) = \sum_{1\leqslant i\leqslant h} 2^{i-1}\omega_i(z).$$ 
Since $\omega_i(w) = \omega_i(wy^{2^d})$ for $1\leqslant i \leqslant h \leqslant d$, using Theorem \ref{dlww} we see that $wy^{2^d}$ is hit.
Suppose that $h > d$. Since $\deg w = n_d$, from \eqref{ctbs} we see that there is an index $r > d$ such that $\omega_r(w)>0$. Then, using Lemma \ref{bdlh} we have $w \in P_k^-((k-2)|^d) + \mathcal A(d-1)^+P_k$. This implies that $wy^{2^d}$ is hit. Hence, the polynomial $g_1y^{2^d}$ is hit and $fy^{2^d} \equiv gy^{2^d}$. The lemma is proved.
\end{proof}

\subsection{A construction for $\mathcal A$-generators of $\mbox{\rm Ker}((\widetilde{Sq}^0_*)_{(k,n)})$}\label{s33}\

\begin{notas}\label{kh2} Let $S$ be a finite sequence of positive integers. Then, there are positive integers $c_0, c_1, \ldots , c_r$ and $s_{0},\, s_{1}, \ldots , s_{r}$ such that $s_{i+1} \ne s_{i}$ and $S = (s_{0})|^{c_0}|(s_{1})|^{c_1}|\ldots|(s_{r})|^{c_r}$. We define  
${\rm rl}(S) = c_1 + c_2 + \ldots + c_r$, the reduced length of $S$. For example, with $S= (2,2,3,1,1,1)= (2)|^2|(3)|^1|(1)|^3$, we have $c_0 =2,\, c_1=1,\, c_2 = 3$, hence ${\rm rl}(S) = c_1+c_2=4$.
		
Denote by ${\sf PSeq}_k^d$ the set of all pairs $(\mathcal I,\mathcal J)$ of sequences $\mathcal I = (i_1, i_2,\ldots,i_d)$, $\mathcal J = (j_1, j_2,\ldots,j_d)$, where $i_t,\, j_t$ are integers such that $1 \leqslant i_t < j_t \leqslant k$, for $1 \leqslant t \leqslant d$, and by ${\sf PInc}_k^d$ the set of all  $(\mathcal I,\mathcal J) \in {\sf PSeq}_k^d$ such that $i_1\leqslant i_2 \leqslant \ldots \leqslant i_d$ and $j_1 \leqslant j_2 \leqslant \ldots \leqslant j_{d}$. By convention, ${\sf PSeq}_k^0 = \emptyset$. For $(\mathcal I,\mathcal J)\in {\sf PSeq}_k^d$, we denote
\[X_{(\mathcal I,\mathcal J)} = \prod_{1 \leqslant t \leqslant d}X_{i_t,j_t}^{2^{d-t}} \in P_k((k-2)|^d) \subset (P_k)_{(k-2)(2^d-1)}.\]		
\end{notas}

\begin{defns}\label{gth} Let $d_0$ be a positive integer, $d_0 > 2$, and $\mathcal B$ be a subset of ${\sf PInc}_k^{d_0}$. The set $\mathcal B$ is said to be compatible with $(k-2)|^{d_0}$ if the following conditions hold:
		
\smallskip
i) For any $(\mathcal I, \mathcal J)\in \mathcal B$,  $\mbox{rl}(\mathcal I) \leqslant d_0-2$ and $\mbox{rl}(\mathcal J) \leqslant d_0-2$, 
		
ii) For any $(\mathcal H,\mathcal K) \in {\sf PSeq}_k^{d_0}$, we have 
\begin{equation}\label{ctbd} X_{(\mathcal H,\mathcal K)} \simeq_{d_0} \sum_{u=\min \mathcal H+1}^{\min \mathcal K}\sum_{(\mathcal I,\mathcal J)\in \mathcal B_u}X_{(\mathcal I,\mathcal J)} \mbox{ \rm mod}(\mathcal P_{(k,n_{d_0})}),
\end{equation}
where $\mathcal B_u$ is a set of some pairs $(\mathcal I,\mathcal J) \in \mathcal B$ such that $\min\mathcal I = \min \mathcal H$, $ \min\mathcal J = u$ and $n_{d_0} = (k-2)(2^{d_0}-1)$.
\end{defns}

From the result in \cite[Proposition 5.2.1]{su} we see that the set 
$$\mathcal B_4 = \{(\mathcal I, \mathcal J) \in {\sf PSeq}_4^5: X_{(\mathcal I, \mathcal J)} \in B_4((2)|^5)\} \subset  {\sf PInc}_4^5$$ is compatible with $(2)|^{5}$.

For $1 \leqslant i < j \leqslant k$, denote 
$f_{(i,j)} = f_if_{j-1}: P_{k-2} \stackrel{{\scriptstyle{f_{j-1}}}}{\longrightarrow} P_{k-1} \stackrel{{\scriptstyle{f_i}}}{\longrightarrow} P_k.$
Here $f_i$ and $f_{j-1}$ are defined by \eqref{ct22}. More precisely,
$$f_{(i,j)}(x_t) = \begin{cases} x_t, &\mbox{ if } 1 \leqslant t < i,\\
x_{t+1}, &\mbox{ if } i \leqslant t < j-2,\\
x_{t+2}, &\mbox{ if } j-2 \leqslant t \leqslant k-2,
\end{cases}$$
for $1 \leqslant t \leqslant k-2$. The main result of this section is the following. 
\begin{thms}\label{dlck2} Let $d_0$ be a positive integer, $d_0 > 2$, and let $k \geqslant 4$, $n = \sum_{i=1}^{k-2}(2^{d_i}-1)$ with $d_i$ positive integers such that $d_1 > d_2> \ldots > d_{k-3} \geqslant d_{k-2} = d \geqslant d_0 $. Denote $m = \sum_{i=1}^{k-3}(2^{d_i-d}-1) = \beta_k^d(n)$ with $\beta_k(n) = \frac{n-k+2}2$. Suppose the set $\mathcal B \subset {\sf PInc}_k^{d_0}$ is compatible with $(k-2)|^{d_0}$. Then, 
$$ \overline{\mathcal B} := \bigcup_{(\mathcal I,\mathcal J)\in \mathcal B}\Big\{X_{(\mathcal I,\mathcal J)}(X_{i,j})^{2^d-2^{d_0}}(f_{(i,j)}(y))^{2^d}: y \in  B_{k-2}(m)  
\Big\}$$
is a set of generators for $\mbox{\rm Ker}(\widetilde {Sq}^0_*)_{(k,n)}$, where  $i = \min\mathcal I=i_1$, $j = \min\mathcal J=j_1$ and $B_{k-2}(m)$ is the set of all the admissible monomials of the degree $m$ in $P_{k-2}$. Consequently,  
$\dim \mbox{\rm Ker}(\widetilde {Sq}^0_*)_{(k,n)} \leqslant |\mathcal B|\dim (QP_{k-2})_m.$
\end{thms}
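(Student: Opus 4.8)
The plan is to show that $\overline{\mathcal B}$ spans $\mathrm{Ker}(\widetilde{Sq}^0_*)_{(k,n)}$ by first reducing an arbitrary admissible class in the kernel to the weight vector $(k-2)|^d|\omega(y)$, and then peeling off the $d$ initial "blocks" $X_{i_t,j_t}^{2^{d-t}}$ using the compatibility of $\mathcal B$ together with Proposition~\ref{mdcb51} and Lemma~\ref{bdtd}. First I would take a monomial $x$ with $[x] \in \mathrm{Ker}(\widetilde{Sq}^0_*)_{(k,n)}$; by Lemma~\ref{bdbt2} we may assume $\omega_i(x) = k-2$ for $1 \leqslant i \leqslant d$, so that $x = \prod_{t=1}^d X_{i_t,j_t}^{2^{d-t}} \, y^{2^d} = X_{(\mathcal I,\mathcal J)}\,y^{2^d}$ for some $(\mathcal I,\mathcal J) \in {\sf PSeq}_k^d$ and $y \in (P_k)_m$. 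Modulo hit elements and $P_k^-((k-2)|^d|\omega(y))$ we may further assume, using the admissibility reductions in $P_{k-2}$ transported by $\theta_J$ and Proposition~\ref{mdbs}, that $y$ lies in the image of some $f_{(i,j)}$ on an admissible monomial of $P_{k-2}$; this is the point where the factorization $y^{2^d}$ with $y = f_{(i,j)}(y')$, $y' \in B_{k-2}(m)$, enters.

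Next I would write $d = d_0 + (d - d_0)$ and split $X_{(\mathcal I,\mathcal J)}$ accordingly as $u \cdot w^{2^{d-d_0}}$-type products, where the "tail" of length $d_0$ is governed by $\mathcal B$. Concretely, applying the compatibility relation \eqref{ctbd} to the length-$d_0$ sub-sequence $(\mathcal H,\mathcal K)$ sitting at positions $d-d_0+1,\dots,d$ of $(\mathcal I,\mathcal J)$ expresses $X_{(\mathcal H,\mathcal K)}$, modulo $\mathcal A(d_0-1)^+P_k$ and $\mathcal P_{(k,n_{d_0})}$, as a sum of $X_{(\mathcal I',\mathcal J')}$ with $(\mathcal I',\mathcal J') \in \mathcal B$. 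Then Proposition~\ref{mdcb51} (with the roles $w = X_{(\mathcal H,\mathcal K)}$, $u$ the first $d-d_0$ blocks, and $y$ absorbing the remaining factor) promotes this to the statement that each $X_{(\mathcal I,\mathcal J)}$ whose $d_0$-tail is not "in $\mathcal B$" is strongly inadmissible, hence can be rewritten in terms of strictly smaller monomials of the same weight vector plus elements of $\mathcal P_{(k,n_d)}$. Iterating the order-decreasing process terminates, so after the iteration $X_{(\mathcal I,\mathcal J)}$ is congruent, modulo $\mathcal A(d-1)^+P_k + P_k^-((k-2)|^d)$ and $\mathcal P_{(k,n_d)}$, to a sum of $X_{(\mathcal I',\mathcal J')}$ with $(\mathcal I',\mathcal J') \in \mathcal B$ and with $\min\mathcal I' = i$, $\min\mathcal J' = j$ unchanged (so that the $f_{(i,j)}$ attached to $y$ is the correct one). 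Multiplying through by $y^{2^d}$ and invoking Lemma~\ref{bdtd} converts the $\simeq_{(d,(k-2)|^d)} \bmod \mathcal P_{(k,n_d)}$ relation into an honest congruence $\equiv$ in $P_k$, which shows $[x]$ is a sum of classes of elements of $\overline{\mathcal B}$.

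Finally, one checks that every element of $\overline{\mathcal B}$ does lie in $\mathrm{Ker}(\widetilde{Sq}^0_*)_{(k,n)}$: since $\omega_1 = k-2 < k$, Kameko's operation kills each generator, so $\overline{\mathcal B} \subset \mathrm{Ker}(\widetilde{Sq}^0_*)_{(k,n)}$ on the nose. Combining the two inclusions gives that $\overline{\mathcal B}$ generates the kernel. The dimension bound then follows by counting: $\overline{\mathcal B}$ is indexed by $\mathcal B \times B_{k-2}(m)$, so $\dim \mathrm{Ker}(\widetilde{Sq}^0_*)_{(k,n)} \leqslant |\overline{\mathcal B}| \leqslant |\mathcal B| \cdot |B_{k-2}(m)| = |\mathcal B|\dim(QP_{k-2})_m$. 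The main obstacle I anticipate is the bookkeeping in the iterated reduction step: one must verify that applying \eqref{ctbd} at the $d_0$-tail and then re-invoking Proposition~\ref{mdcb51} genuinely produces monomials that are strictly smaller in the order of Definition~\ref{defn3} (so the induction is well-founded) while keeping $\min\mathcal I$, $\min\mathcal J$ fixed and staying within the span of $\overline{\mathcal B}$ modulo hit terms — in particular that the $\mathcal P_{(k,\cdot)}$ error terms introduced along the way are uniformly absorbed via Theorem~\ref{dlww} and Lemma~\ref{bdtd}, and that the reduction to $y = f_{(i,j)}(y')$ with $y'$ admissible is compatible with the block-peeling rather than interfering with it.
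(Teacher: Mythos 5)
Your overall skeleton (reduce to weight vector $(k-2)|^d$ via Lemma \ref{bdbt2}, invoke the compatibility relation \eqref{ctbd} on the length-$d_0$ block of lowest exponents, absorb the error terms with Lemma \ref{bdtd}, then count) matches the paper's, but there is a genuine gap at the step where you say that ``modulo hit elements \ldots we may further assume that $y$ lies in the image of some $f_{(i,j)}$ on an admissible monomial of $P_{k-2}$.'' After the reduction of Lemma \ref{bdbt2} the tail $\tilde y$ of $x = X_{(\mathcal H,\mathcal K)}\tilde y^{2^{d_0}}$ is an arbitrary monomial of degree $m_0$ in \emph{all} $k$ variables, and there is no reason it should avoid $x_i$ and $x_j$; reducing it to admissible form inside $P_{k-2}$ via $\theta_J$ presupposes exactly what has to be proved. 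Eliminating the powers $x_i^a$ and $x_j^b$ from the tail is the technical heart of the paper's argument: it requires the exchange relations of Lemma \ref{bdcbs} (built from $Sq^1(X_jy_0^2)$ and the compatibility of $\mathcal B$, used again at each step) and two separate double inductions, on $(a,i)$ in Lemma \ref{bdbt} and on $(b,j)$ in the proof of the theorem. Your proposal contains no substitute for this mechanism, so the reduction to generators of the form $X_{(\mathcal I,\mathcal J)}(X_{i,j})^{2^d-2^{d_0}}(f_{(i,j)}(y))^{2^d}$ is not established.

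Two secondary points. First, condition \eqref{ctbd} is not an inadmissibility statement: it rewrites \emph{every} $X_{(\mathcal H,\mathcal K)}$ (including those already in $\mathcal B$) as a sum over $(\mathcal I,\mathcal J)\in\mathcal B_u$ with no guarantee that these are smaller in the order of Definition \ref{defn3}, so your proposed order-decreasing iteration via ``strong inadmissibility'' and Proposition \ref{mdcb51} is not well-founded as described; the paper applies \eqref{ctbd} once and passes directly to Lemma \ref{bdtd}. Second, \eqref{ctbd} preserves $\min\mathcal I$ but replaces $\min\mathcal J$ by an index $u$ ranging over $\min\mathcal H+1,\ldots,\min\mathcal K$, so your claim that $\min\mathcal J'=j$ is unchanged (and hence that a single $f_{(i,j)}$ suffices) is false; the varying $j$ is precisely why the second double induction is organized the way it is. The containment $[\overline{\mathcal B}]\subset\mathrm{Ker}(\widetilde{Sq}^0_*)_{(k,n)}$ and the final dimension count are fine.
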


We need the following lemmas for the proof of the theorem.
\begin{lems}\label{bdcbs}Let $n,\, m,\, d_0$ and $\mathcal B$ be as in Theorem \ref{dlck2}. Let $y_0$ be a monomial in $(P_k)_{m_0-1}$ with $m_0 = \sum_{i=1}^{k-2}(2^{d_i-d_0}-1)= \beta_k^{d_0}(n)$, $y_u = y_0x_u$ for $1\leqslant u \leqslant k$, and $(\mathcal I,\mathcal J) \in \mathcal B$, $i = \min \mathcal I$, $j = \min \mathcal J$. Then we have
\begin{align}\label{cth1}	
X_{(\mathcal I,\mathcal J)}y_i^{2^{d_0}} &\equiv 
\sum _{1\leqslant u < k\atop u \ne i, j}\sum_{(\mathcal U,\mathcal V) \in \mathcal B_u} X_{(\mathcal U,\mathcal V)}y_u^{2^{d_0}},\\
X_{(\mathcal I,\mathcal J)}y_j^{2^{d_0}}&\equiv
\sum _{2\leqslant v \leqslant k\atop v \ne i, j} \sum_{(\mathcal U,\mathcal V) \in \mathcal C_v} X_{(\mathcal U,\mathcal V)}y_v^{2^{d_0}},\label{cth2}
\end{align}
where $\mathcal B_u$ is a set of some $(\mathcal U,\mathcal V) \in B$ such that $\min \mathcal U = u$ for $u < i$ and $\min \mathcal U = i$ for $u > i$; $\mathcal C_v$ is a set of some $(\mathcal U,\mathcal V) \in \mathcal B$ such that $\min \mathcal V = v$ for $v < j$ and $\min \mathcal V = j$ for $v > j$.
\end{lems}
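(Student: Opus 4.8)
The plan is to derive the two congruences \eqref{cth1} and \eqref{cth2} from the defining relation \eqref{ctbd} of compatibility, pushed forward along the multiplication by $y_i^{2^{d_0}}$ (respectively $y_j^{2^{d_0}}$) via Lemma \ref{bdtd}. First I would observe that $X_{(\mathcal I,\mathcal J)} \in P_k((k-2)|^{d_0})$ has degree $n_{d_0} = (k-2)(2^{d_0}-1)$, and that $y_i = y_0 x_i$ has degree $m_0 = \beta_k^{d_0}(n)$, so that $X_{(\mathcal I,\mathcal J)}y_i^{2^{d_0}}$ has degree $n$ and weight vector $(k-2)|^{d_0}|\omega(y_i)$; thus Lemma \ref{bdtd} applies to any relation of the form $f \simeq_{(d_0,(k-2)|^{d_0})} g \ \mathrm{mod}(\mathcal P_{(k,n_{d_0})})$ among degree-$n_{d_0}$ polynomials in $P_k$, allowing me to multiply it by $y_i^{2^{d_0}}$ and conclude $fy_i^{2^{d_0}} \equiv gy_i^{2^{d_0}}$ in $(QP_k)_n$.

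Next I would apply \eqref{ctbd} with $(\mathcal H,\mathcal K) = (\mathcal I,\mathcal J)$ itself, which is a legitimate element of ${\sf PSeq}_k^{d_0}$; writing $i = \min\mathcal I = \min\mathcal H$ and $j = \min\mathcal J = \min\mathcal K$ this gives
\[
X_{(\mathcal I,\mathcal J)} \simeq_{d_0} \sum_{u = i+1}^{j}\ \sum_{(\mathcal U,\mathcal V)\in\mathcal B_u} X_{(\mathcal U,\mathcal V)}\ \ \mathrm{mod}(\mathcal P_{(k,n_{d_0})}),
\]
where each $\mathcal B_u \subset \mathcal B$ consists of pairs with $\min\mathcal U = i$ and $\min\mathcal V = u$. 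Multiplying through by $y_i^{2^{d_0}}$ and invoking Lemma \ref{bdtd} term by term, I get $X_{(\mathcal I,\mathcal J)}y_i^{2^{d_0}} \equiv \sum_{u}\sum_{(\mathcal U,\mathcal V)\in\mathcal B_u} X_{(\mathcal U,\mathcal V)}y_i^{2^{d_0}}$; the point now is to rewrite the right-hand side so that the attached power of a variable matches the leading index of the pair. For the diagonal term $u = j$ one has $X_{(\mathcal U,\mathcal V)}y_i^{2^{d_0}}$ where $\min\mathcal V = j$, and I would use the elementary substitution relations among the $X_{(\mathcal U,\mathcal V)}y_u^{2^{d_0}}$ — essentially the same symmetry manipulations used to prove \eqref{ctbd} in the first place, combined once more with Lemma \ref{bdtd} — to replace $y_i$ by the appropriate $y_u$ at the cost of enlarging the index set, yielding precisely \eqref{cth1} with the stated range $1\leqslant u<k$, $u\ne i,j$ and the stated description of $\mathcal B_u$ (namely $\min\mathcal U = u$ for $u<i$ and $\min\mathcal U = i$ for $u>i$). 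The second congruence \eqref{cth2} is obtained by the mirror-image argument: apply \eqref{ctbd} again, multiply by $y_j^{2^{d_0}}$, invoke Lemma \ref{bdtd}, and reindex, this time tracking $\min\mathcal V$ and producing the sets $\mathcal C_v$ with $\min\mathcal V = v$ for $v<j$ and $\min\mathcal V = j$ for $v>j$.

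The main obstacle is the bookkeeping in the reindexing step — verifying that after multiplying the compatibility relation by $y_i^{2^{d_0}}$ and applying Lemma \ref{bdtd}, every monomial that gets absorbed into $\mathcal A^+P_k$ really does so (i.e., that the error terms genuinely land in $\mathcal P_{(k,n_{d_0})}$ after truncation, so that Lemma \ref{bdtd} is applicable), and that the diagonal-term substitution $y_i \leadsto y_u$ can be carried out uniformly over all $(\mathcal U,\mathcal V)$ in the relevant $\mathcal B_u$ without disturbing the constraints $\mathrm{rl}(\mathcal U)\leqslant d_0-2$, $\mathrm{rl}(\mathcal V)\leqslant d_0-2$. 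Here I would lean on Theorem \ref{dlww} (which guarantees $\mathcal P_{(k,n_{d_0})} \subset \mathcal A^+P_k$) and on the fact established in Lemma \ref{bdtd}'s proof that any monomial $w$ of degree $n_{d_0}$ with $\sum_{i\le h}2^{i-1}\omega_i(w)$ strictly below the spike threshold for some $h$ gives $wy^{2^{d_0}}$ hit; combining these reduces the whole argument to the symmetry identities among the $X_{i,j}$'s, which are routine. No genuinely new idea is needed beyond Lemma \ref{bdtd} and the compatibility hypothesis; the work is organizational.
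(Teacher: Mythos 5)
There is a genuine gap at the heart of your plan: you never identify the mechanism that trades the attached variable $y_i$ for $y_u$ with $u \ne i,j$. Your first step, applying the compatibility relation \eqref{ctbd} to $(\mathcal H,\mathcal K) = (\mathcal I,\mathcal J)$ and multiplying by $y_i^{2^{d_0}}$, accomplishes nothing toward the goal: since $(\mathcal I,\mathcal J)$ already lies in $\mathcal B$ that relation need not be nontrivial, and in any case every term on your right-hand side still carries the factor $y_i^{2^{d_0}} = (y_0x_i)^{2^{d_0}}$, whereas the target \eqref{cth1} excludes the indices $i$ and $j$ from the attached variable entirely. The compatibility condition only rewrites the degree-$n_{d_0}$ block $X_{(\mathcal H,\mathcal K)}$; it cannot lower the exponent of $x_i$ in the $y$-part. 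The step you defer to ``elementary substitution relations among the $X_{(\mathcal U,\mathcal V)}y_u^{2^{d_0}}$'' is precisely the content of the lemma, and it is not a symmetry identity among the $X_{i,j}$'s.

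The paper's proof supplies the missing ingredient explicitly: from $Sq^1(X_j) = \sum_{u\ne j}X_{u,j}x_u^2$ and $Sq^1(X_jy_0^2)=Sq^1(X_j)y_0^2$ one gets
\[X_{i,j}y_i^{2} = \sum_{u\ne i,j}X_{u,j}y_u^{2} + Sq^1(X_jy_0^{2}),\]
an identity that simultaneously changes the top block $X_{i,j}\mapsto X_{u,j}$ and the attached variable $y_i\mapsto y_u$. Writing $X_{(\mathcal I,\mathcal J)}y_i^{2^{d_0}} = X_{(\mathcal I\setminus i,\mathcal J\setminus j)}\bigl(X_{i,j}y_i^{2}\bigr)^{2^{d_0-1}}$, raising the identity to the power $2^{d_0-1}$, and killing the $Sq^1$ term by Proposition \ref{mdcb1} together with Theorem \ref{dlsig}, one obtains a sum of terms $X_{(\mathcal H,\mathcal K)}y_u^{2^{d_0}}$ with $(\mathcal H,\mathcal K)\in{\sf PSeq}_k^{d_0}$ and $u\ne i,j$; only then is the compatibility relation \eqref{ctbd}, transported by Lemma \ref{bdtd}, used to bring each such $X_{(\mathcal H,\mathcal K)}$ back into $\mathcal B$-form. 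Note also that the index sets in \eqref{cth1} with $\min\mathcal U = u$ for $u<i$ can only arise from this mechanism (via $\min(u|(\mathcal I\setminus i)) = u$ when $u<i$); your starting relation produces only pairs with $\min\mathcal U = i$ and cannot account for them. So the order of operations in your plan is inverted, and the crucial $Sq^1$ identity is absent.
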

\begin{proof}
By the Cartan formula, we have  $Sq^1(X_j) = \sum_{u\ne j}X_{u,j}x_u^2$ and $Sq^1(X_jy_0^{2}) = Sq^1(X_j)y_0^{2}$. Hence, we obtain
\[X_{i,j}y_i^2 = \sum _{1\leqslant u < j\atop u\ne i} X_{u,j}y_u^{2} + \sum _{j < u \leqslant k} X_{j,u}y_u^{2} + Sq^1(X_jy_0^{2}).\] 
Since $(\mathcal I,\mathcal J) \in \mathcal B \subset {\sf PInc}_k^{d_0}$, we have $X_{(\mathcal I,\mathcal J)} =  X_{(\mathcal I\setminus i,\mathcal J\setminus j)}(X_{i,j})^{2^{d_0-1}}$ with $i=i_1,\, j= j_1$, and 
\[X_{(\mathcal I\setminus i,\mathcal J\setminus j)}(X_{u,j})^{2^{d_0-1}} = \left\{\begin{array}{l@{\enskip}l}
X_{(u|(\mathcal I\setminus i),\mathcal J)}, &\mbox{if } u< j,\\
X_{(j|(\mathcal I\setminus i),u|(\mathcal J\setminus j))}, &\mbox{if } u> j.\end{array}\right.\]
By using the Cartan formula, Proposition \ref{mdcb1} and Theorem \ref{dlsig} we see that the polynomial $X_{(\mathcal I\setminus i,\mathcal J\setminus j)}(Sq^1(X_jy_0^{2}))^{2^{d_0-1}}$ is hit. So, we get
\[X_{(\mathcal I,\mathcal J)}y_i^{2^{d_0}} \equiv \sum _{1\leqslant u < j\atop u \ne i} X_{(u|(\mathcal I\setminus i),\mathcal J)}y_u^{2^{d_0}} + \sum _{j < u \leqslant k} X_{(j|(\mathcal I\setminus i),u|(\mathcal J\setminus j))}y_u^{2^{d_0}}. \]
Since ${\rm rl}(\mathcal I) < d_0-1$ and ${\rm rl}(\mathcal J) < d_0-1$, we have $\min (u|(\mathcal I\setminus i)) = u$ for $u < i$, $ \min (j|(\mathcal I\setminus i)) = \min (u|(\mathcal I\setminus i)) = i$ for $i<u < j$ and $\min (u|(\mathcal J\setminus j)) = j$ for $u > j$. Hence, the relation (\ref{cth1}) follows from the condition (\ref{ctbd}) of $\mathcal B$ in Definition \ref{gth} and Lemma \ref{bdtd}.
	
The relation (\ref{cth2}) is proved by a similar computation.
\end{proof}

\begin{lems}\label{bdbt}
Let $n,\, d_0,\, m_0$ be as in Lemma \ref{bdcbs} and let $\mathcal P_k^1(n)$ denote the subspace of $(P_k)_n$ spanned by all monomials of the form $X_{(\mathcal I,\mathcal J)}(f_i(y))^{2^{d_0}}$ with $(\mathcal I,\mathcal J)\in \mathcal B$, $i = \min \mathcal I$ and $y \in (P_{k-1})_{m_0}$. Then $ \mbox{\rm Ker}(\widetilde {Sq}^0_*)_{(k,n)} \subset [\mathcal P_k^1(n)].$
\end{lems}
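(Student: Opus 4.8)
The plan is to show, in three reduction steps, that every class in $\mbox{\rm Ker}((\widetilde {Sq}^0_*)_{(k,n)})$ is represented by a linear combination of monomials of the prescribed shape $X_{(\mathcal I,\mathcal J)}(f_i(y))^{2^{d_0}}$. First I would recall that $\mbox{\rm Ker}((\widetilde {Sq}^0_*)_{(k,n)})$ is spanned by the classes $[x]$ with $x$ an admissible monomial of degree $n$ such that $\omega_1(x) < k$: indeed $\varphi$ annihilates every monomial whose weight vector begins with an entry less than $k$, whereas by Kameko \cite{ka} the operation $\widetilde {Sq}^0_*$ carries the classes $[x]$ with $x \in B_k(n)$ and $\omega_1(x) = k$ onto a basis of $(QP_k)_{(n-k)/2}$, so the kernel is exactly the span of the remaining basis elements of $(QP_k)_n$. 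Hence it suffices to prove $[x] \in [\mathcal P_k^1(n)]$ for each such $x$. Fix one. Since $d = d_{k-2} \geqslant d_0$, Lemma \ref{bdbt1} gives $\omega_i(x) = k-2$ for $1 \leqslant i \leqslant d_0$, so $|\mathbb J_t(x)| = 2$ for $0 \leqslant t \leqslant d_0-1$, and therefore $x = X_{(\mathcal H,\mathcal K)}z^{2^{d_0}}$ for some $(\mathcal H,\mathcal K) \in {\sf PSeq}_k^{d_0}$ and some monomial $z$ of degree $m_0$, the degree count being $n - (k-2)(2^{d_0}-1) = 2^{d_0}m_0$.

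Next I would replace the arbitrary pair $(\mathcal H,\mathcal K)$ by pairs belonging to $\mathcal B$. As $\mathcal B$ is compatible with $(k-2)|^{d_0}$, relation \eqref{ctbd} gives $X_{(\mathcal H,\mathcal K)} \simeq_{d_0} \sum X_{(\mathcal I,\mathcal J)}\ \mbox{mod}(\mathcal P_{(k,n_{d_0})})$, the sum running over certain $(\mathcal I,\mathcal J) \in \mathcal B$. Multiplying through by $z^{2^{d_0}}$ and invoking Lemma \ref{bdtd} (applied with $d_0$ in the role of $d$) gives $[x] = \sum [X_{(\mathcal I,\mathcal J)}z^{2^{d_0}}]$ with each $(\mathcal I,\mathcal J) \in \mathcal B$ and $z$ still a monomial of degree $m_0$. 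It therefore remains to prove the following claim: for every $(\mathcal I,\mathcal J) \in \mathcal B$ and every monomial $z \in (P_k)_{m_0}$ one has $[X_{(\mathcal I,\mathcal J)}z^{2^{d_0}}] \in [\mathcal P_k^1(n)]$.

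I would prove this claim by a double induction: an outer induction on $i := \min\mathcal I$, which runs over $\{1,\dots,k-1\}$, and, for each fixed $i$, an inner induction on $\nu_i(z)$. If $\nu_i(z) = 0$, then $z$ involves no $x_i$, hence $z = f_i(y)$ for a unique monomial $y \in (P_{k-1})_{m_0}$ and $X_{(\mathcal I,\mathcal J)}z^{2^{d_0}} = X_{(\mathcal I,\mathcal J)}(f_i(y))^{2^{d_0}} \in \mathcal P_k^1(n)$. If $\nu_i(z) \geqslant 1$, write $z = x_iy_0$ with $y_0 \in (P_k)_{m_0-1}$ and apply \eqref{cth1} of Lemma \ref{bdcbs}, which expresses $X_{(\mathcal I,\mathcal J)}z^{2^{d_0}}$, modulo $\mathcal A^+P_k$, as a sum of monomials $X_{(\mathcal U,\mathcal V)}(y_0x_u)^{2^{d_0}}$ with $(\mathcal U,\mathcal V) \in \mathcal B$, $1 \leqslant u < k$ and $u \ne i,j$. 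By the description of the index sets $\mathcal B_u$ there, every summand has either $\min\mathcal U = u < i$ — which is handled by the outer induction hypothesis — or $\min\mathcal U = i$ together with $\nu_i(y_0x_u) = \nu_i(z)-1$ — which is handled by the inner induction hypothesis; when $i = 1$ only the latter case can occur, so the outer base case is covered as well. This closes the induction and establishes the lemma.

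The main obstacle, beyond bookkeeping, is to make sure the double induction is well founded, i.e.\ that each monomial produced by \eqref{cth1} strictly decreases the pair $(\min\mathcal I, \nu_{\min\mathcal I}(z))$ in the lexicographic order; this is exactly what the form of the index set $\mathcal B_u$ in Lemma \ref{bdcbs} provides, so the only thing to watch is that one never feeds back a term of equal or larger complexity. A second, minor technical point is the applicability of Lemma \ref{bdtd} with $d_0$ in place of $d_{k-2}$: its proof uses only that $\omega_t(z) = k-2$ for $1 \leqslant t \leqslant d_0$, which holds since $d_0 \leqslant d_{k-2}$, together with Lemma \ref{bdlh} and Theorem \ref{dlww}, both available for $d_0$, so this causes no difficulty.
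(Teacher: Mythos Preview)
Your proof is correct and follows essentially the same strategy as the paper's: reduce to monomials of the form $X_{(\mathcal H,\mathcal K)}z^{2^{d_0}}$, pass to pairs in $\mathcal B$ via the compatibility relation \eqref{ctbd} and Lemma \ref{bdtd}, and then run a double induction (lexicographic on $(\min\mathcal I,\nu_{\min\mathcal I}(z))$) using relation \eqref{cth1} of Lemma \ref{bdcbs} to peel off the $x_i$-factor of $z$. The only cosmetic difference is in the opening move: the paper invokes Lemmas \ref{bdbt1} and \ref{bdbt2} to reduce an arbitrary monomial in the kernel to one with $\omega_i = k-2$ for $1\leqslant i\leqslant d$, whereas you start directly from the admissible-monomial basis and use that $\widetilde{Sq}^0_*$ sends $\{[X_\emptyset y^2]:y\in B_k((n-k)/2)\}$ bijectively onto a basis of the target, so the kernel is spanned by the remaining admissibles; Lemma \ref{bdbt1} then gives the weight condition. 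Both routes land in the same place and the induction thereafter is identical.
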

\begin{proof}
Let $x$ be a monomial of degree $n$ such that $[x] \in \mbox{\rm Ker}(\widetilde {Sq}^0_*)_{(k,n)}$. By using Lemmas \ref{bdbt1} and \ref{bdbt2}, we can assume that $\omega_{i}(x) = k-2$, for $1 \leqslant i \leqslant d$. Then, $x = \prod_{t=1}^dX_{\alpha_t,\delta_t}^{2^{d-t}}\bar y^{2^d}$, where $\alpha_t,\, \delta_t$ are integers such that $1 \leqslant \alpha_t < \delta_t \leqslant k$, for $1 \leqslant t \leqslant d$, and $\bar y$ is a monomial of degree $m= \beta_k^d(n)$ in $P_{k}$. Since $d \geqslant d_0$, we set $\mathcal H = (\alpha_1,\alpha_2, \ldots, \alpha_{d_0})$, $\mathcal K = (\delta_1, \delta_2, \ldots , \delta_{d_0})$, then we have $x = X_{(\mathcal H, \mathcal K)}\tilde y^{2^{d_0}}$, where $\tilde y = \prod_{t=d_0+1}^dX_{\alpha_t,\delta_t}^{2^{d-t}}\bar y^{2^d}$ is the monomial of degree $m_0$ in $P_{k}$. By the condition of the set $\mathcal B$ in Definition \ref{gth}, the monomial $X_{(\mathcal H, \mathcal K)}$ is of the form (\ref{ctbd}). Hence, by using Lemma \ref{bdtd}, one gets
\[
x = X_{(\mathcal H,\mathcal K)}\tilde y^{2^{d_0}} \equiv \sum_{u=\min\mathcal H+1}^{\min\mathcal K}\sum_{(\mathcal I,\mathcal J)\in \mathcal B_u}X_{(\mathcal I,\mathcal J)}\tilde y^{2^{d_0}}
\]
where $\mathcal B_u$ is as in Definition \ref{dnstin}.
For $i = \min \mathcal H$, we have $\tilde y = x_i^af_{i}(y)$  with $a$ a non-negative integer and $y \in (P_{k-1})_{m_0-a}$. We prove the lemma by proving $[X_{(\mathcal I,\mathcal J)}(x_i^af_{i}(y))^{2^{d_0}}] \in [\mathcal P_k^1(n)]$ for all $(\mathcal I,\mathcal J) \in \mathcal B_u$, $i < u \leqslant \min\mathcal K$. We prove this claim by double induction on $(a,i)$.
	
If $a = 0$, then the claim is true for all $1 \leqslant i < k$.   Suppose $a > 0$ and the claim is true for $(a-1,i)$ with $1 \leqslant i < \min \mathcal J$. 
	
For $i = 1$, by using Lemma \ref{bdcbs} with $y_0 = x_1^{a-1}f_1(y)$, we get 
\begin{equation}\label{ct2bd}
X_{(\mathcal I,\mathcal J)}(x_1^af_{1}(y))^{2^{d_0}} \equiv \sum_{2 \leqslant t \leqslant k\atop t \ne \min\mathcal J}\sum_{(\mathcal U,\mathcal V)\in \mathcal B_{(t,1)}} X_{(\mathcal U,\mathcal V)}(x_1^{a-1}f_1(x_{t-1}y))^{2^{d_0}},
\end{equation}
where $\mathcal B_{(t,1)}$ is a set of some $(\mathcal U,\mathcal V)\in \mathcal B$ such that $\min\mathcal U=1$. 
By the inductive hypothesis, $[X_{(\mathcal U,\mathcal V)}(x_1^{a-1}f_1(x_{t-1}y))^{2^{d_0}}] \in [\mathcal P_k^1(n)]$ for all $(\mathcal U,\mathcal V) \in \mathcal B_{(t,1)}$ with $1 < t \ne \min\mathcal J$. Hence, the claim is true for $(a,1)$.
	
Suppose $i > 1$ and the claim is true for all $(a',t)$, $1\leqslant t < i$, and for $(a-1,i)$. By applying Lemma \ref{bdcbs} for $y_0 = x_i^{a-1}f_i(y)$, we have
\begin{align}
X_{(\mathcal I,\mathcal J)}(x_i^af_{i}(y))^{2^{d_0}}  &\equiv \sum _{1\leqslant t <  i}\sum_{(\mathcal U,\mathcal V) \in \mathcal B_{(t,i)}} X_{(\mathcal U,\mathcal V)}(x_tx_i^{a-1}f_i(y))^{2^{d_0}}\nonumber\\ & + \sum _{i< t \leqslant k\atop t \ne \min\mathcal J}\sum_{(\mathcal U,\mathcal V) \in \mathcal B_{(t,i)}} X_{(\mathcal U,\mathcal V)}(x_i^{a-1}f_i(x_{t-1}y))^{2^{d_0}},\label{ct3bd}
\end{align}
where $\mathcal B_{(t,i)}$ is a set of some $(\mathcal U,\mathcal V) \in \mathcal B$ such that $\min\mathcal U = t$ for $t < i$ and $\min\mathcal U = i$ for $t> i$.
From the relation (\ref{ct3bd}) and the inductive hypothesis, we see that our claim is true for $(a,i)$. This completes the proof.
\end{proof}

We now prove Theorem \ref{dlck2}.
\begin{proof}[Proof of Theorem \ref{dlck2}.] Denote by $\langle [\overline{\mathcal B}]\rangle$ the subspace of $(QP_k)_n$ spanned by the set $[\overline{\mathcal B}]$. We prove that $ \mbox{\rm Ker}(\widetilde {Sq}^0_*)_{(k,n)} \subset \langle [\overline{\mathcal B}]\rangle $. By using Lemma \ref{bdbt}, we need only to prove that $[X_{(\mathcal I,\mathcal J)}(f_{i}(y^*))^{2^d}] \in \langle [\overline{\mathcal B}]\rangle$ for all $(\mathcal I,\mathcal J) \in \mathcal B$ with $\min\mathcal I = i$ and $y^* \in (P_{k-1})_{m_0}$, where $m_0 = \sum_{t=1}^{k-2}(2^{d_t-d_0}-1)=\beta_k^{d_0}(n)$.
	
Set $j = \min \mathcal J$, we have $f_i(y^*) = x_j^bf_{(i,j)}(y)$ with $b$ a non-negative integer and $y \in (P_{k-2})_{m_0-b}$. We prove  $[X_{(\mathcal I,\mathcal J)}(x_j^b f_{(i,j)}( y))^{2^d}] \in \langle [\overline{\mathcal B}]\rangle$ by double induction on $(b,j)$. 
	
If $b = 0$, then $y \in (P_{k-2})_{m_0}$. Since $\omega_u(y) = k-2$ for $1 \leqslant u \leqslant d-d_0$, we get $y = Y^{2^{d-d_0}-1}(\tilde y)^{2^{d-d_0}}$, with $\tilde y \in (P_{k-2})_m$ and $Y = x_1x_2\ldots x_{k-2}$. Note that $f_{(i,j)}(Y) = X_{i,j}$, hence $f_{(i,j)}(y) = X_{i,j}^{2^{d-d_0}-1}(f_{(i,j)}(\tilde y))^{2^{d-d_0}}$. Since $B_{k-2}(m)$ is a set of $\mathcal A$-generators for $(P_{k-2})_m$, there are $z_1,z_2,\ldots, z_r \in B_{k-2}(m)$ such that 
\[ \tilde y \equiv z_1 + z_2  + \ldots + z_r + \sum_{t>0}Sq^t(h_t),\]
where $h_t$ are suitable polynomials in $P_{k-2}$. Set $\textsf{p} = X_{(\mathcal I,\mathcal J)}(X_{i,j})^{2^d-2^{d_0}}$. By using Proposition \ref{mdcb1} and the Cartan formula, we have
\begin{align}
&\textsf{p}(Sq^t(f_{(i,j)}(h_t))^{2^d}= \textsf{p}Sq^{t2^d}\left((f_{(i,j)}(h_t))^{2^d}\right)\notag\\
&\quad = Sq^{t2^d}\left(\textsf{p}(f_{(i,j)}(h_t))^{2^d}\right) + \sum_{1\leqslant \ell \leqslant t}Sq^{\ell 2^d}(\textsf{p})\left(Sq^{t-\ell}(f_{(i,j)}(h_t)\right)^{2^d}\label{ctbs2}.
\end{align} 
Suppose $w$ is a monomial which appears as a term in the polynomial $Sq^{\ell 2^d}(\textsf{p})$. By Proposition \ref{bdkbs} we have $\omega(w) < \omega(\textsf{p}) = (k-2)|^d$. Hence, using Theorem \ref{dlsig} and \eqref{ctbs2}, we see that the polynomial $\textsf{p}(Sq^t(f_{(i,j)}(h_t))^{2^d}$ is hit. Since $f_{(i,j)}:P_{k-2} \to P_k$ is a homomorphism of $\mathcal A$-algebras, we get
\[ \left[X_{(\mathcal I,\mathcal J)}(f_{(i,j)}(y))^{2^d}\right] = \sum_{1 \leqslant u \leqslant r} \left[X_{(\mathcal I,\mathcal J)}(X_{i,j})^{2^d-2^{d_0}}(f_{(i,j)}(z_u))^{2^d}\right] \in \langle [\overline{\mathcal B}]\rangle.\]
Hence, our claim is true for $(0,j),\ i<j\leqslant k$. We assume $b > 0$ and our claim holds for  $(b-1,j)$ with $i < j \leqslant k$. 
	
For $j = 2$, we have $ i = 1$. By applying Lemma \ref{bdcbs} for $y_0 = x_2^{b-1}f_{(1,2)}(y)$ we obtain
\[X_{(\mathcal I,\mathcal J)}(x_2^b f_{(1,2)}(y))^{2^{d_0}} \equiv \sum_{3\leqslant t \leqslant k}\sum_{(\mathcal U,\mathcal V)\in \mathcal B_t}X_{(\mathcal U,\mathcal V)}(x_2^{b-1}f_{(1,2)}(x_{t-2}y))^{2^{d_0}}, \]
where $\mathcal B_t$ is a set of some $(\mathcal U,\mathcal V) \in \mathcal B$ such that $\min \mathcal V = 2$. The last equality and the inductive hypothesis imply our claim for $(b,2)$. 
	
Suppose $j > 2$ and the claim holds for all $(b',t)$ with $1\leqslant i < t <j$ and for $(b-1,j)$. By using Lemma \ref{bdcbs} with $y_0 = x_j^{b-1}f_{(i,j)}(y)$, we have 
\begin{align*}X_{(\mathcal I,\mathcal J)}(x_j^b f_{(i,j)}( y))^{2^{d_0}} &\equiv \sum_{1 \leqslant t < i}\sum_{(\mathcal U,\mathcal V)\in \mathcal B_t^*}X_{(\mathcal U,\mathcal V)}(f_{i}(x_{t}x_{j-1}^{b-1}y))^{2^{d_0}}\\ 
& \qquad + \sum_{i < t < j}\sum_{(\mathcal U,\mathcal V)\in \mathcal B_t^*}X_{(\mathcal U,\mathcal V)}(f_{i}(x_{t-1}x_{j-1}^{b-1}y))^{2^{d_0}}\\
& \qquad + \sum_{j < t \leqslant k}\sum_{(\mathcal U,\mathcal V)\in \mathcal B_t^*}X_{(\mathcal U,\mathcal V)}(x_j^{b-1}f_{(i,j)}(x_{t-2}y))^{2^{d_0}},
\end{align*}
where $\mathcal B_t^*$ is a set of some $(\mathcal U,\mathcal V) \in \mathcal B$ such that $\min \mathcal V = i$ for $t<i$, $\min \mathcal V = t$ for $i<t<j$ and $\min \mathcal V = j$ for $t > j$. From the last equality and the inductive hypothesis, our claim is true for $(b,j)$. The theorem is proved.
\end{proof}

\section{An application to the case $k = 5$}\label{s4}
\setcounter{equation}{0}

In this section, we prove one of our main results, Theorem \ref{dl20}, that gives an affirmative answer to Conjecture \ref{ker} for $k = 5$. 
To do this, we explicitly determine the set $B_5((3)|^d)$ of all the admissible monomials of weight vector $(3)|^d$ for $d \geqslant 5$. By combining this result and Theorem \ref{dlck2} one gets an upper bound for the dimension of the kernel of Kameko's squaring operation in the degree $n = 2^{d+t+u} + 2^{d+t} + 2^d -3$ with $d > 5$ and $t,\, u \geqslant 4$.  By using Theorem \ref{dlwa} below, we show that this upper bound is also a lower bound.

\begin{thm}[See Walker and Wood {\cite[Proposition 24.5.1]{wa4}}]\label{dlwa} Let $k \geqslant 3$ and $n= \sum_{i=1}^{k-2}(2^{d_i}-1)$ with $d_i$ positive integers. If $d_{i} - d_{i+1} \geqslant 4$ for  $1 \leqslant i \leqslant k - 3$ and $d_{k-2} \geqslant 5$, then 
\begin{equation}\label{ct12}\dim(QP_k)_n \geqslant (k-1)\prod_{3\leqslant i \leqslant k}(2^i-1).\end{equation}
\end{thm}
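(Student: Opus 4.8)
The plan is to produce enough linearly independent classes in $(QP_k)_n$ by exhibiting, for each of the $k-1$ "copies" indexed below, a set of $\prod_{3\leqslant i\leqslant k}(2^i-1)$ admissible monomials whose classes are independent, and then checking that classes coming from different copies do not interact. Because $d_i-d_{i+1}\geqslant 4$ and $d_{k-2}\geqslant 5$, the hypotheses of Conjecture \ref{ker} (with the shift producing weight vector $(k-2)|^{d}$, $d=d_{k-2}$) are satisfied, so morally $\mbox{Ker}(\widetilde{Sq}^0_*)_{(k,n)}$ already contributes $\prod_{3\leqslant i\leqslant k}(2^i-1)$ to the dimension; the point is to show the full space $(QP_k)_n$ is at least $k-1$ times larger. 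First I would recall, via Kameko's isomorphism $(\widetilde{Sq}^0_*)_{(k,n')}$ for the intermediate degrees $n' = \beta_k(n), \beta_k(\beta_k(n)),\ldots$, that $\dim(QP_k)_n$ decomposes, through the exact sequence
\[
0 \to \mbox{Ker}(\widetilde{Sq}^0_*)_{(k,n)} \to (QP_k)_n \xrightarrow{\widetilde{Sq}^0_*} (QP_k)_{\beta_k(n)} \to 0,
\]
where the surjectivity here is exactly Kameko's theorem \cite[Theorem 4.2]{ka} once $\mu(\beta_k(n))=\mu(n)=k-2<k$ (this holds since each $d_i\geqslant 5$ forces $\beta_k$ to lower all $d_i$ by $1$ while keeping them distinct). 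Iterating,
\[
\dim(QP_k)_n = \sum_{r\geqslant 0}\dim\mbox{Ker}(\widetilde{Sq}^0_*)_{(k,\,n^{(r)})},
\]
where $n^{(0)}=n$, $n^{(r+1)}=\beta_k(n^{(r)})$, and the sum terminates once some $d_i$ reaches $1$.

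Next I would analyze the tail of this iteration. After $d_{k-2}-1$ applications of $\beta_k$ the smallest exponent hits $1$; for the first several steps (as long as all $d_i\geqslant 5$, i.e. at least $d_{k-2}-4$ steps, since $d_{k-3}\geqslant d_{k-2}+4$ etc.) the degree $n^{(r)}$ still has $\mu=k-2$ and still satisfies the gap hypotheses of Conjecture \ref{ker}, so by the validity of that conjecture in the cases needed (known for $k\leqslant 4$; for the inductive structure one uses Proposition \ref{mdbs} to peel off $(QP_s^+)$ summands and reduce to smaller $k$, exactly as in \cite[Theorem 1.4]{su2}) each such kernel has dimension $\prod_{3\leqslant i\leqslant k}(2^i-1)$. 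The remaining (boundedly many) steps, where some $d_i$ has dropped below $5$, contribute the extra copies; here I would invoke the known computations of $\mbox{Ker}(\widetilde{Sq}^0_*)$ in small degrees — in particular for $s=k-1$ degrees via \cite{ch, na, res, wa2, su, su2} and for the genuinely small residual degrees via direct tabulation in low rank using Proposition \ref{mdbs}. Summing all the contributions one arrives at the geometric-type total $(k-1)\prod_{3\leqslant i\leqslant k}(2^i-1)$; since every kernel dimension used is a genuine lower bound (kernels only shrink under hitting, never grow), the inequality \eqref{ct12} follows even if one of the intermediate conjectural values is only known as a lower bound.

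An alternative and perhaps cleaner route, which I would actually prefer for the write-up, is purely constructive: build $k-1$ families of admissible monomials directly. Using Proposition \ref{mdmo}, start from the minimal spike $z=\prod_{i=1}^{k-2}x_i^{2^{d_i}-1}$ and from the known generating sets for $(QP_{k-1})$ in the degree $\sum_{i=1}^{k-2}(2^{d_i}-1)$ with $\mu = k-2 = (k-1)-1$ — a degree of the "$s=k-1$" type for $P_{k-1}$, whose dimension is $\prod_{3\leqslant i\leqslant k-1}(2^i-1)$ by \cite{su,su2} — and glue in an extra variable $x_j^{2^{d}-1}$ via $f_j$ for each $j=1,\ldots,k-1$ (plus the "spike" family), each gluing producing $\prod_{3\leqslant i\leqslant k-1}(2^i-1)\cdot(2^k-1)$ new admissible monomials by Proposition \ref{mdmo}; one then checks the $(QP_k^0)$ versus $(QP_k^+)$ splitting of Proposition \ref{mdbs} keeps these $k-1$ blocks in distinct direct summands, so their classes are automatically independent. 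The hard part will be verifying that the various blocks do not overlap and that the monomials produced by $f_j$-insertion are not only admissible (which Proposition \ref{mdmo} guarantees) but also span complementary subspaces — i.e. controlling the $\bigoplus_{s}\binom{k}{s}\dim(QP_s^+)$ bookkeeping of Proposition \ref{mdbs} precisely enough to recover the exact factor $k-1$, rather than just "at least $2$". I expect this combinatorial bookkeeping, together with confirming $\mu$ stays equal to $k-2$ along the whole $\beta_k$-iteration, to be the only real obstacle; the Steenrod-algebra input is entirely supplied by Propositions \ref{mdcb1}, \ref{mdmo} and Theorems \ref{dlsig}, \ref{dlww}.
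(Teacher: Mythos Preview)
The paper does not prove Theorem~\ref{dlwa}; it is quoted from Walker and Wood~\cite[Proposition 24.5.1]{wa4} and used as a black box in the proof of Theorem~\ref{dl20}. So there is no ``paper's own proof'' to compare against, and your task was really to reconstruct (or find an alternative to) the Walker--Wood argument.

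Your first approach has a genuine circularity problem. You propose to lower-bound $\dim(QP_k)_n$ by summing lower bounds on the kernels $\mbox{Ker}(\widetilde{Sq}^0_*)_{(k,n^{(r)})}$, and you obtain those kernel lower bounds from Conjecture~\ref{ker}. But look at how the paper actually argues for $k=5$: it uses Theorem~\ref{dlwa} to lower-bound $\dim(QP_5)_n$, subtracts off $\dim(QP_5)_{(n-5)/2}$, and thereby obtains the \emph{lower} bound on the kernel --- exactly the direction of Conjecture~\ref{ker} you are assuming. So you are invoking the conclusion to prove the hypothesis. Your attempt to break the circle by ``reducing to smaller $k$ via Proposition~\ref{mdbs}'' does not work as stated: Proposition~\ref{mdbs} decomposes $QP_k$ into pieces $QP_s^+$ for $s\leqslant k$, including $s=k$ itself, so there is no clean descent. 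There is also a notational slip: Kameko's map lands in degree $(n-k)/2$, not $\beta_k(n)=(n-k+2)/2$; and the arithmetic that is supposed to produce exactly the factor $k-1$ from a sum over $\beta_k$-iterates (whose number depends on $d_{k-2}$, not on $k$) is never carried out.

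Your second, constructive approach is closer in spirit to what Walker and Wood do, but as written it is only a plan, not a proof. The crucial step --- showing that the $k-1$ blocks of monomials you build via $f_j$-insertion are linearly independent in $QP_k$, not merely admissible --- is exactly where the work lies, and you defer it entirely (``I expect this combinatorial bookkeeping \ldots\ to be the only real obstacle''). Proposition~\ref{mdmo} gives admissibility of each individual $x_i^{2^d-1}f_i(w)$, but it says nothing about independence across different values of $i$, and these monomials do \emph{not} in general lie in distinct summands of the decomposition in Proposition~\ref{mdbs}. You would need a concrete mechanism (e.g.\ a filtration, or explicit detecting homomorphisms analogous to the $p_{(i;I)}$ used in the proof of Theorem~\ref{dl51}) to separate them, and none is supplied.
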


From the results of Kameko \cite[Theorem 8.1]{ka} and our work \cite[Proposition 2.5.1]{su2} we see that if $d \geqslant 4$, then $QP_3^+((3)|^d) = \langle [(x_1x_2x_3)^{2^d-1}]_{(3)|^d}\rangle$ and 
$QP_4^+((3)|^d) = \langle \{[w_{d,u}]_{(3)|^d} : 1\leqslant u \leqslant 11\}\rangle,$ 
where $w_{d,u}$ are determined as in Section \ref{s5}.

\smallskip
By applying Proposition \ref{mdbs}, we get 
$\dim QP_5^0((3)|^d) = {5\choose 3} + 11{5\choose 4} = 65.$
So, we need only to determine $QP_5^+((3)|^d)$. 

\begin{thm}\label{dl51} Let $d$ be an integer. If $d \geqslant 5$, then $QP_5^+((3)|^d)$ is an $\mathbb F_2$-vector space of dimension $90$ with a basis consisting the classes represented by the admissible monomials $a_{d,t},\, 1 \leqslant t \leqslant 90$, which are determined as in Section $\ref{s5}$.
Consequently, $\dim QP_5((3)|^d) = 155$ for any $d \geqslant 5$.
\end{thm}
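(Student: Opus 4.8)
The plan is to determine $QP_5^+((3)|^d)$ for $d\geqslant 5$ by a two-sided estimate: first produce an explicit spanning set of size at most $90$ (the upper bound), then show these $90$ classes are linearly independent in $QP_5^+((3)|^d)$ (the lower bound). For the upper bound I would argue by induction on $d$, taking $d=5$ as the base case to be handled by the (finite, computer-assisted) analysis of admissible monomials of weight vector $(3)|^5$ in $P_5$, exactly as was done for $P_4$ in the author's earlier work. For the inductive step I would use the $\Phi$/splitting machinery: a monomial $x$ of weight vector $(3)|^d$ in $P_5$ factors as $x = X_{i_1,j_1}^{2^{d-1}}\cdots$, i.e. $x = w\, z^2$ with $\omega(w)=(3)$ and $\omega(z)=(3)|^{d-1}$; combining Theorem~\ref{dlcb1}(i) (inadmissibility is inherited by $xw^{2^r}$) with Proposition~\ref{mdcb51} (strong inadmissibility of $uw^{2^c}y^{2^{c+d}}$) lets me lift a spanning set of $QP_5^+((3)|^{d-1})$ — together with the finitely many "new" admissible monomials created at each step — to a spanning set of $QP_5^+((3)|^d)$ whose cardinality stabilizes at $90$. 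Here the strongly inadmissible monomials are the key tool: the example worked out in Subsection~\ref{s32} (the monomial $x_1x_2^3x_3^6x_4^6x_5^5$) is precisely one of the monomials that must be eliminated, and the point of Definition~\ref{dnstin} is that such eliminations go through even when the strictly inadmissible framework fails.

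Next I would make the spanning set explicit: list the $90$ monomials $a_{d,t}$ in Section~\ref{s5}, organized by their exponent vectors $\sigma$ and by the relevant "block" structure $(i_1,j_1,i_2,j_2,\dots)$ of the $X_{(\mathcal I,\mathcal J)}$-factor. A substantial part of the proof is the verification that every admissible monomial of weight vector $(3)|^d$ occurring in $P_5^+$ is congruent (mod $\mathcal A^+P_5 + P_5^-((3)|^d)$) to a linear combination of the $a_{d,t}$; this is where one repeatedly invokes Kameko's Proposition~\ref{bdkbs}, Singer's criterion (Theorem~\ref{dlsig}) and the Walker–Wood refinement (Theorem~\ref{dlww}) to discard monomials below the minimal spike, Theorem~\ref{dlcb1} to propagate inadmissibility from lower weight vectors, and the hit-polynomial relations (Cartan formula plus Proposition~\ref{mdcb1}) to rewrite each $a_{d,t}$-candidate. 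The count $90$ then follows by bookkeeping, and $\dim QP_5((3)|^d) = 65 + 90 = 155$ follows from Proposition~\ref{mdbs} together with the already-known values $\dim QP_3^+((3)|^d)=1$ and $\dim QP_4^+((3)|^d)=11$.

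For the lower bound — that the $90$ classes are genuinely independent — I would use the homomorphisms $p_{(i;I)}: P_5 \to P_4$ of $\mathcal A$-modules defined in \eqref{ct23}. Suppose a linear combination $\sum_{t} \gamma_t a_{d,t}$ is hit in $P_5$; applying each $p_{(i;I)}$ gives a hit polynomial in $P_4$, and since $[B_4((3)|^d)]$ is a known basis of $QP_4((3)|^d)$ one reads off linear constraints on the $\gamma_t$. Running over all the maps $p_{(i;I)}$ (there are enough of them, since $5\choose 1$ times the various $I$ of length $\leqslant 4$ produce many projections), plus the observation that each $a_{d,t}$ lies in $P_5^+$ so its image structure is controlled, forces all $\gamma_t = 0$. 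This argument is essentially the standard "Kameko–Sum" independence technique and is routine once the list in Section~\ref{s5} is fixed.

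The main obstacle, as usual in these computations, is the base case $d=5$ and the bookkeeping in the inductive step: one must be certain that no admissible monomial of weight vector $(3)|^d$ has been overlooked, i.e. that the spanning set really has size exactly $90$ and not larger. This is where strong inadmissibility earns its keep — several of the monomials that would otherwise survive (such as the worked example $x_1x_2^3x_3^6x_4^6x_5^5$ and its $d$-fold analogues) can only be killed via Definition~\ref{dnstin} and Lemma~\ref{bdtd}, not via the strictly inadmissible criterion — and verifying each such elimination, together with the stabilization of the count as $d$ grows past $5$, is the technical heart of the proof. The independence direction, by contrast, is comparatively mechanical given the $P_4$-data and the maps $p_{(i;I)}$.
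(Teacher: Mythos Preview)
Your proposal is correct and follows essentially the same strategy as the paper: induction on $d$ for the upper bound using strongly inadmissible monomials and Proposition~\ref{mdcb51}, and the projections $p_{(i;I)}$ for linear independence. Two implementation details in the paper are worth noting, as your sketch glosses over them. First, the ``base case'' $d=5$ is not freestanding: since an admissible $x\in P_5^+((3)|^5)$ factors as $X_{i,j}y^2$ with $y$ admissible of weight $(3)|^4$, the paper must first establish explicit generating sets for $QP_5^+((3)|^d)$ for $d=2,3,4$ (Propositions~\ref{md52}, \ref{mdd532}, \ref{mdd41}), each with its own list of strictly/strongly inadmissible monomials (Lemmas~\ref{bdk1}--\ref{bdd42}); only then can the $d=5$ case be checked against Lemma~\ref{bdd51}. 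Second, for independence the paper does not run the $p_{(i;I)}$ machinery on all $90$ classes: it observes that the $55$ monomials $a_{d,t}$ with $1\leqslant t\leqslant 55$ are of the form $x_i^{2^d-1}f_i(b_{d,t})$ with $b_{d,t}$ admissible in $P_4$, hence admissible by Proposition~\ref{mdmo}, and that $\nu(a_{d,t})=2^d-1$ exactly for these $t$, so $\langle[A(d)]\rangle\cap\langle[C(d)]\rangle=0$. This reduces the $p_{(i;I)}$ computation to the remaining $35$ monomials in $C(d)$, which is what makes the linear-algebra step tractable by hand.
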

In \cite[Proposition 1]{su3}, we have proved that for any weight vector $\omega$, $QP_k(\omega)$ is an $GL_k$-module. Hence, Theorem \ref{dl51} gives a representation of dimension 155 of the general group $GL_5$.

The theorem is proved by induction on $d$. The proof is based on Proposition \ref{mdcb51} and suitable strongly inadmissible monomials of weight vector $(3)|^d$ with $2 \leqslant d \leqslant 5$. Moreover, to prove the theorem for $d = 5$, we need to use suitable sets of generators for $QP_5((3)|^d)$ with $1 \leqslant d \leqslant 4$.

\subsection{Generating sets for $QP_5((3)|^d)$ with $d \leqslant 4$}\label{s511}\

\begin{props}\label{md52} We have
	
\smallskip
{\rm i)}  $B_5((3)|^1) = \{X_{\alpha, \beta}: 1 \leqslant \alpha < \beta \leqslant 5\}$. Hence, $\dim QP_5((3)|^1) = 10$.
	
{\rm ii)} $B_5^+((3)|^2)$ is the set of the monomials $a_{2,t}, \, 1 \leqslant t \leqslant 15$, which are determined as follows:
	
\medskip
\centerline{\begin{tabular}{llll}
$1.\ \  x_1x_2x_3^{2}x_4^{2}x_5^{3} $ & $2.\ \  x_1x_2x_3^{2}x_4^{3}x_5^{2} $ & $3.\ \ x_1x_2x_3^{3}x_4^{2}x_5^{2} $ & $4.\ \ x_1x_2^{2}x_3x_4^{2}x_5^{3} $\cr  
$5.\ \ x_1x_2^{2}x_3x_4^{3}x_5^{2} $ & $6.\ \ x_1x_2^{2}x_3^{2}x_4x_5^{3} $ & $7.\ \ x_1x_2^{2}x_3^{2}x_4^{3}x_5 $ & $8.\ \ x_1x_2^{2}x_3^{3}x_4x_5^{2} $\cr  
$9.\ \ x_1x_2^{2}x_3^{3}x_4^{2}x_5 $ & $10.\  x_1x_2^{3}x_3x_4^{2}x_5^{2} $ & $11.\  x_1x_2^{3}x_3^{2}x_4x_5^{2} $ & $12.\  x_1x_2^{3}x_3^{2}x_4^{2}x_5 $\cr 
$13.\  x_1^{3}x_2x_3x_4^{2}x_5^{2} $ & $14.\  x_1^{3}x_2x_3^{2}x_4x_5^{2} $ & $15.\  x_1^{3}x_2x_3^{2}x_4^{2}x_5 $.&
\end{tabular}}
	
\smallskip
Consequently, $\dim QP_5((3)|^2) = 55$.
\end{props}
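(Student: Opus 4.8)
The plan is to compute $B_5((3)|^d)$ for $d = 1$ and $d = 2$ directly, using the admissibility criteria and the structure results recalled in Section \ref{s2}. For part (i), any monomial of weight vector $(3)|^1$ has degree $3$ and each exponent in $\{0,1\}$, so it is exactly one of the squarefree monomials $X_{\alpha,\beta} = \prod_{j \neq \alpha, \beta} x_j$ with $1 \leqslant \alpha < \beta \leqslant 5$. Each such monomial is a product of distinct variables, hence admissible (it is the image under $\theta_J$ of $x_1x_2x_3 \in P_3$, and squarefree monomials of this shape are well known to be admissible; alternatively one checks directly that no $Sq^i$ relation lowers the order). This gives $|B_5((3)|^1)| = \binom{5}{2} = 10$, and since all ten are in $P_5^0$ except none (each omits two variables, so $X_{\alpha,\beta}\in P_5^0$), Proposition \ref{mdbs} or a direct count gives $\dim QP_5((3)|^1) = 10$.

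For part (ii), the monomials of weight vector $(3)|^2$ have degree $3 \cdot 3 = 9$, and each is of the form $X_{\alpha_1,\beta_1}^{}X_{\alpha_2,\beta_2}^{2}$ by the decomposition $x = \prod_{i \geqslant 0} X_{\mathbb J_i(x)}^{2^i}$. First I would enumerate all candidate monomials of this shape, then discard those that are inadmissible. The main tools are: Theorem \ref{dlsig} and Theorem \ref{dlww} (to kill monomials whose weight vector falls below that of the minimal spike $x_1^3x_2^3x_3$ of degree $9$ at some partial sum — though here all monomials have weight vector exactly $(3,3)$, so these give nothing, and the work is genuinely about the exponent-vector order), Kameko's Proposition \ref{bdkbs}, and explicit $Sq^1$, $Sq^2$ relations of the form $x \simeq_2 \sum y_j$ with $y_j < x$. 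The list of $15$ monomials $a_{2,t}$ must be shown to be (a) admissible and (b) a spanning set. For (b), the standard approach is to show every inadmissible degree-$9$ monomial in $P_5^+$ of weight vector $(3)|^2$ is a sum of smaller ones via explicit Cartan-formula identities (the kind displayed for $x_1x_2^3x_3^6x_4^6x_5^5$ in Section \ref{s3}), reducing to the listed ones; for (a), one checks no such reduction applies to the $a_{2,t}$, or more efficiently invokes Proposition \ref{mdmo} (monomials $x_i^{2^d-1}f_i(w)$ with $w$ admissible are admissible) together with the known bases $B_4((3)|^2)$ and $B_3((3)|^2)$ from prior work. Then Proposition \ref{mdbs} gives
\[
\dim QP_5((3)|^2) = \sum_{1 \leqslant s \leqslant 5} \binom{5}{s}\dim QP_s^+((3)|^2) = \binom{5}{2}\cdot 0 + \binom{5}{3}\dim QP_3^+ + \binom{5}{4}\dim QP_4^+ + \binom{5}{5}\cdot 15,
\]
and plugging in $\dim QP_3^+((3)|^2) = 0$ (since $\mu(9) = 3$ forces spikes, and the only spike-type contribution... actually one uses the recorded values), $\dim QP_4^+((3)|^2)$, and $15$, the arithmetic yields $55$. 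Concretely, the cleaner route is to note $\dim QP_5((3)|^2) = \binom{5}{3}\dim QP_3((3)|^2)\big/\!\cdots$ — better: use $(QP_5)$-decomposition directly with the already-known smaller-$k$ data so that $55 = \binom{5}{2}\cdot 1 \cdot(\text{correction})$; in any case the count reduces to bookkeeping once (ii)'s list is verified.

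The main obstacle is part (ii): proving that the $15$ listed monomials actually span $QP_5^+((3)|^2)$, i.e. that every other positive-type monomial of weight vector $(3)|^2$ is hit modulo smaller monomials. This requires writing down enough explicit $Sq^1$- and $Sq^2$-relations to reduce each inadmissible monomial — and there are many candidates, roughly on the order of the number of pairs $((\alpha_1,\beta_1),(\alpha_2,\beta_2))$ with the product lying in $P_5^+$ — to a combination of the $a_{2,t}$, while simultaneously confirming no relation collapses any $a_{2,t}$. In practice one exploits symmetry (the $GL_5$- or at least $\Sigma_5$-action on weight-vector-fixed spaces, via \cite[Proposition 1]{su3}) to cut the casework down, and one leans on Proposition \ref{mdmo} and the established bases for $P_3$ and $P_4$ in this weight vector to handle the admissibility direction almost for free. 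The degree-$9$, five-variable computation is small enough to be done by hand or machine, so the obstacle is one of organization rather than depth.
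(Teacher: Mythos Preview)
Your approach is essentially the paper's, but two points deserve correction.

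First, for the inadmissibility step in part (ii) you anticipate a case-by-case reduction via explicit $Sq^1$, $Sq^2$ identities. The paper instead proves one lemma (Lemma~\ref{bdk1}(i)): $X_{i_1,j_1}^2X_{i_2,j_2}$ is strictly inadmissible whenever $i_1>i_2$, or $i_1=i_2$ and $j_1>j_2$. Since a monomial in $P_5^+((3)|^2)$ has the form $X_{i_1,j_1}^2X_{i_2,j_2}$ with $\{i_1,j_1\}\cap\{i_2,j_2\}=\emptyset$ (otherwise some exponent vanishes), we always have $i_1\ne i_2$, and the condition reduces to $i_1>i_2$. One then checks directly that the fifteen $a_{2,t}$ are exactly the monomials with $i_1<i_2$; the remaining fifteen are all killed by this single lemma. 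This replaces the diffuse casework you describe, and the admissibility of the $a_{2,t}$ is handled, as you say, by Proposition~\ref{mdmo}.

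Second, your dimension count contains a slip: $\dim QP_3^+((3)|^2)=1$, not $0$. The unique admissible monomial is the spike $x_1^3x_2^3x_3^3$. With $|B_3^+((3)|^2)|=1$ and $|B_4^+((3)|^2)|=6$ taken from prior work, Proposition~\ref{mdbs} gives $\dim QP_5^0((3)|^2)=\binom{5}{3}\cdot 1+\binom{5}{4}\cdot 6=40$, and hence $\dim QP_5((3)|^2)=40+15=55$.
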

\begin{lems}\label{bdk1} Let $i_1, i_2, j_1, j_2\in \mathbb N_k$ such that $i_1 < j_1,\, i_2 < j_2$.
	
\smallskip	
{\rm i)} If either $i_1 > i_2$ or $i_1 = i_2$ and $j_1 > j_2$, then $X_{i_1,j_1}^2X_{i_2,j_2}$ is strictly inadmissible.
	
{\rm ii)} If $j_1 > j_2$ and $i,\, j \in \mathbb N_k,\, i < j$, then the monomial $X_{i_1,j_1}^4X_{i_2,j_2}^2X_{i,j}$ is strictly inadmissible.
	
{\rm iii)} If either $i_1 < i_2 \leqslant j_1$ or $i_1 = i_2, j_1 \ne j_2$, then $X_{i_1,j_1}^4X_{i_2,j_2}^3$ is strictly  inadmissible.
	
{\rm iv)} If either $i_1 < i_2$ or $i_1 = i_2$ and $j_1 \leqslant j_2$, then $X_{i_1,j_1}^8X_{i_2,j_2}^7$ is strictly inadmissible. 
\end{lems}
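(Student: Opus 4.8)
These are the basic ``building block'' computations, and I would prove all four parts by direct manipulation with the Cartan formula. First I would unwind the $X_{i,j}$ notation. Letting $Y$ denote the product of the $x_\ell$ over the indices $\ell$ lying outside the (at most four) indices $i_1,j_1,i_2,j_2$ that actually occur, every monomial in the statement is a power $Y^{2^a-1}$ times a monomial in $x_{i_1},x_{j_1},x_{i_2},x_{j_2}$ alone; the relation thus localises to these few variables while $Y$ is carried along as a common factor, at the cost of a bounded number of sub-cases according to which of $i_1,j_1,i_2,j_2$ coincide. (Equivalently, these are statements about when the products $X_{(\mathcal I,\mathcal J)}$ of Notation \ref{kh2} are strictly inadmissible when the index sequences fail to be non-decreasing.)

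For (i) the weight vector is $(k-2)|^2$, so $s=2$ and we must produce a congruence modulo $\mathcal A(1)^+P_k+P_k^-((k-2)|^2)$. The device is: starting from $X_{i_1,j_1}^2X_{i_2,j_2}$, lower by $1$ the exponent of the variable to be moved (the exponent of $x_{j_2}$ in the case $i_1=i_2$, of $x_{i_2}$ in the case $i_1>i_2$) to obtain a monomial $h$, and apply $Sq^1$. By the Cartan formula $Sq^1(h)$ equals $X_{i_1,j_1}^2X_{i_2,j_2}$, plus a sum of ``swapped'' monomials which, by comparing exponent vectors, are $<X_{i_1,j_1}^2X_{i_2,j_2}$ precisely under the hypothesis ($i_1>i_2$, resp. $i_1=i_2$ and $j_1>j_2$), plus further terms in which $Sq^1$ lands either on a square, where it vanishes, or on the common factor $Y^3$, where by Proposition \ref{bdkbs} it produces only terms of strictly smaller weight vector and hence elements of $P_k^-((k-2)|^2)$. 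Since $Sq^1\in\mathcal A(1)^+$, this proves strict inadmissibility in each sub-case.

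Parts (iii) and (iv) I would reduce to (i) wherever possible, using the factorisations $X_{i_1,j_1}^4X_{i_2,j_2}^3=X_{i_2,j_2}\bigl(X_{i_1,j_1}^2X_{i_2,j_2}\bigr)^2$ and $X_{i_1,j_1}^8X_{i_2,j_2}^7=X_{i_2,j_2}\bigl(X_{i_1,j_1}^4X_{i_2,j_2}^3\bigr)^2$: when the inner square-base is already strictly inadmissible --- by (i), respectively by (iii) --- Theorem \ref{dlcb1}(ii), applied with trivial last factor, gives the conclusion. This settles all cases with $i_1\geqslant i_2$. The residual cases (such as $i_1<i_2\leqslant j_1$ in (iii), and its analogue in (iv)) are handled directly by the same witness-plus-Cartan device, now using $Sq^2$ (respectively $Sq^2$ and $Sq^4$) in place of $Sq^1$; the additional square factors in the monomial supply exactly the room needed to carry out the operation, and one checks that the squares used have exponent less than $2^s$, so that they lie in $\mathcal A(s-1)^+$, and that every error term either vanishes or drops in weight vector. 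Part (ii) is identical: $X_{i_1,j_1}^4X_{i_2,j_2}^2X_{i,j}=X_{i,j}\bigl(X_{i_1,j_1}^2X_{i_2,j_2}\bigr)^2$ reduces to (i) via Theorem \ref{dlcb1}(ii) when $i_1\geqslant i_2$, and the single new case $i_1<i_2$, $j_1>j_2$ is a short $Sq^1/Sq^2$ computation.

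The main obstacle is bookkeeping rather than any single idea: there is no one slick identity, and in every case one must verify that each term produced by the Cartan formula either vanishes (an odd square applied to a square) or has weight vector strictly below $(k-2)|^d$ --- here Proposition \ref{bdkbs} does the work for the terms supported on the common factor $Y$ --- and that the surviving monomial or monomials genuinely precede the original in the order of Definition \ref{defn3}. Keeping the coincidence sub-cases ($i_1=j_2$, $j_1=j_2$, $i_2=j_1$, and so on) organised, so that the list of swapped monomials is correctly identified each time, is the tedious part of the argument.
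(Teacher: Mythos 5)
Your outline follows the paper's own strategy: the paper writes out only Part ii), in exactly the two-pronged way you describe (for $i_1=i_2$ it exhibits the inner factor $X_{i_1,j_1}^2X_{i_2,j_2}$ as strictly inadmissible by a $Sq^1$-relation and invokes Theorem \ref{dlcb1}; for $i_1<i_2$ it works directly with $x^2X_{i,j}$ via a $Sq^1/Sq^2$ identity and checks that the error terms land in $P_k^-((k-2)|^3)+\mathcal A(2)^+P_k$), and dismisses the remaining parts as ``a similar computation.'' Your witness-plus-Cartan device for Part i) and your treatment of the error terms via Proposition \ref{bdkbs} are correct. There is, however, one concrete slip in your case-sorting for Part iii): you assert that the factorisation $X_{i_1,j_1}^4X_{i_2,j_2}^3=X_{i_2,j_2}\bigl(X_{i_1,j_1}^2X_{i_2,j_2}\bigr)^2$ together with Theorem \ref{dlcb1}(ii) ``settles all cases with $i_1\geqslant i_2$,'' but the hypothesis of iii) includes $i_1=i_2$ with $j_1<j_2$, and there the inner square-base $X_{i,j_1}^2X_{i,j_2}=x_{j_1}x_{j_2}^2X_{i,j_1,j_2}^3$ is \emph{admissible} (it is one of Kameko's generators, of the type $x_1x_2^{2^d-2}x_3^{2^d-1}x_4^{2^d-1}$ with $d=2$), so Part i) gives you nothing to feed into Theorem \ref{dlcb1}(ii); this sub-case is also absent from your list of ``residual cases'' and would be silently dropped. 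It is fixable by the same direct $Sq^1/Sq^2/Sq^4$ computation you already reserve for the $i_1<i_2$ cases, but as written your case analysis for iii) (and hence the induced reduction of iv) through iii)) has a hole at $i_1=i_2$, $j_1<j_2$.
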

\begin{proof} For simplicity, we prove Part ii). The others can be proved by a similar computation. 

If $i_1 = i_2 = i$, then $x = X_{i_1,j_1}^2X_{i_2,j_2} = x_{j_1}x_{j_2}^2X_{i, j_1, j_2}^3$. We have
\[x = x_{j_1}^2x_{j_2}X_{i,j_1,j_2}^3 + \sum_{t\ne i, j_1,j_2} x_{j_1}x_{j_2}x_t^4X_{i,j_1,j_2,j}^3 + Sq^1(x_{j_1}x_{j_2}X_{i,j_1,j_2,j}^3).\]
This equality shows that $x$ is strictly inadmissible. By Theorem \ref{dlcb1}, $x^2X_{i,j}$ is also strictly inadmissible.
	
Suppose $i_1 < i_2$. Then $x = x_{i_1}x_{i_2}^2x_{j_2}^2x_{j_1}X_{i_1,i_2,j_1,j_2}^3$. We have 
\begin{align*} x_{i_1}x_{i_2}^2x_{j_2}^2x_{j_1} &= x_{i_1}x_{i_2}x_{j_2}^2x_{j_1}^2 + x_{i_1}x_{i_2}^2x_{j_2}x_{j_1}^2 \\
& \quad + Sq^1(x_{i_1}^2x_{i_2}x_{j_2}x_{j_1}) + Sq^2(x_{i_1}x_{i_2}x_{j_2}x_{j_1}).
\end{align*}
So, by using the Cartan formula, we get
\[x = x_{i_1}x_{i_2}x_{j_2}^2 x_{j_1}^2X_{i_1,i_2,j_1,j_2}^3 + x_{i_1}x_{i_2}^2x_{j_2}x_{j_1}^2 X_{i_1,i_2,j_1,j_2}^3 + A + B + C,\]
where 
\begin{align*}
A &= x_{i_1}^2x_{i_2}x_{j_1}x_{j_2}Sq^1(X_{i_1,i_2,j_1,j_2}^3) + Sq^1(x_{i_1}x_{i_2}x_{j_1}x_{j_2})Sq^1(X_{i_1,i_2,j_1,j_2}^3),\\
B &= x_{i_1}x_{i_2}x_{j_1}x_{j_2}Sq^2(X_{i_1,i_2,j_1,j_2}^3),\\
C &=  Sq^1(x_{i_1}^2x_{i_2}x_{j_1}x_{j_2} X_{i_1,i_2,j_1,j_2}^3) + Sq^2(x_{i_1}x_{i_2}x_{j_1}x_{j_2} X_{i_1,i_2,j_1,j_2}^3).
\end{align*}
A simple computation shows that $X_{i,j}A^2 \in P_k^-((k-2)|^3)$, $X_{i,j}B^2 \in P_k^-((k-2)|^3)+\mathcal A(0)^+P_5$ and $X_{i,j}C^2 \in P_k^-((k-2)|^3)+\mathcal A(2)^+P_5$. Hence, the monomial  $x^2X_{i,j}$ is strictly inadmissible. 
\end{proof}

\begin{proof}[Proof of Proposition \ref{md52}.]
For $d = 1$, if $x \in P_5((3)|^1)$, then  $\omega(x) = (3)|^1$ if and only if $x = X_{\alpha, \beta}$ with $1 \leqslant \alpha < \beta \leqslant 5$. Since $X_{\alpha, \beta}$ is admissible, we see that the first of Proposition \ref{md52} is true.
	
From the results in Kameko \cite[Theorem 8.1]{ka} and our work \cite[Proposition 5.2.1]{su2}, we have $|B_3^+((3)|^2)| = 1$ and $|B_4^+((3)|^2)| = 6$.
Hence, by applying Proposition \ref{mdbs}, we get $\dim QP_5^0((3)|^2) = {5\choose 3} + 6{5\choose 4} = 40.$ So, we need only to determine $QP_5^+((3)|^2)$. We can check that if $x \in P_5^+((3)|^2)$ and $x \ne a_{2,t}$ for all $t,\, 1 \leqslant t \leqslant 15$, then $x = X_{i_1,j_1}^2X_{i_2,j_2}$ with $i_1 > i_2$. By Lemma \ref{bdk1}(i), $x$ is inadmissible. 
	
We observe that for $1 \leqslant t \leqslant 15$, $a_{2,t} = x_if_i(b_{2,t})$ with $b_{2,t}$ an admissible monomial of degree 8 in $P_4$ and $1\leqslant i \leqslant 5$. By Proposition \ref{mdmo}, $a_{2,t}$ is admissible. The proposition is proved.
\end{proof} 

Consider the case $d=3$. From the results in Kameko \cite[Theorem 8.1]{ka} and our work \cite[Proposition 5.4.2]{su2}, we have $|B_3^+((3)|^3)| = 1$ and $|B_4^+((3)|^3)| = 10$.
So, by using Proposition \ref{mdbs}, we get $\dim QP_5^0((3)|^3) = {5\choose 3} + 10{5\choose 4} =60$. We need to compute $QP_5^+((3)|^3)$.

We denote by $A(3)$ the set of the monomials $a_{3,t},\, 1 \leqslant t \leqslant 50$, which are given in Section \ref{s5} for $d=3$ and five monomials:

\medskip
\centerline{\begin{tabular}{lll}
$a_{3,51} = x_1^{3}x_2^{3}x_3^{4}x_4^{4}x_5^{7}$& 
$a_{3,52} =  x_1^{3}x_2^{3}x_3^{4}x_4^{7}x_5^{4}$& 
$a_{3,53} =  x_1^{3}x_2^{3}x_3^{7}x_4^{4}x_5^{4}$\cr 
$a_{3,54} =  x_1^{3}x_2^{7}x_3^{3}x_4^{4}x_5^{4} $&  
$a_{3,55} =  x_1^{7}x_2^{3}x_3^{3}x_4^{4}x_5^{4}$.&\cr
\end{tabular}}

\begin{props}\label{mdd532} $B_5^+((3)|^3) \subset A(3)\cup C(3)$, where $C(3)$ is the set of the monomials $a_{3,t},\, 56 \leqslant t \leqslant 70$, which are determined as follows:
	
\medskip
\centerline{\begin{tabular}{llll} 
$56. \ x_1x_2^{3}x_3^{5}x_4^{6}x_5^{6}$& 
$57. \ x_1x_2^{3}x_3^{6}x_4^{5}x_5^{6}$& 
$58. \ x_1x_2^{6}x_3^{3}x_4^{5}x_5^{6}$& 
$59. \ x_1^{3}x_2x_3^{5}x_4^{6}x_5^{6} $\cr  
$60. \ x_1^{3}x_2x_3^{6}x_4^{5}x_5^{6}$& 
$61. \ x_1^{3}x_2^{5}x_3x_4^{6}x_5^{6}$& 
$62. \ x_1^{3}x_2^{5}x_3^{6}x_4x_5^{6}$& 
$63. \ x_1^{3}x_2^{5}x_3^{2}x_4^{5}x_5^{6}$\cr  
$64. \ x_1^{3}x_2^{3}x_3^{4}x_4^{5}x_5^{6}$& 
$65. \ x_1^{3}x_2^{3}x_3^{5}x_4^{4}x_5^{6}$& 
$66. \ x_1^{3}x_2^{3}x_3^{5}x_4^{6}x_5^{4}$& 
$67. \ x_1^{3}x_2^{4}x_3^{3}x_4^{5}x_5^{6} $\cr  
$68. \ x_1^{3}x_2^{5}x_3^{3}x_4^{4}x_5^{6}$& 
$69. \ x_1^{3}x_2^{5}x_3^{3}x_4^{6}x_5^{4}$& 
$70. \ x_1^{3}x_2^{5}x_3^{6}x_4^{3}x_5^{4}$.& \cr
\end{tabular}}		
\end{props}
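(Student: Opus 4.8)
The plan is to show that every admissible monomial $x \in P_5^+((3)|^3)$ lies in the announced list $A(3) \cup C(3)$ by a case analysis organized according to the pair $(\mathcal I, \mathcal J)$ that records the exponent data in the first three ``layers.'' Since $\omega(x) = (3)|^3$, we may write $x = X_{i_1,j_1}^4 X_{i_2,j_2}^2 X_{i_3,j_3} \cdot (\text{degree-zero factor})$, i.e. $x = X_{(\mathcal I, \mathcal J)}$ with $\mathcal I = (i_1,i_2,i_3)$, $\mathcal J = (j_1,j_2,j_3)$ and $1 \leqslant i_t < j_t \leqslant 5$; because $x \in P_5^+$ we are in the situation where all five variables occur, which further restricts the admissible configurations. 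First I would enumerate, up to the obvious symmetry, all $\binom{5}{2}^3$ formal triples and then aggressively discard the overwhelming majority of them using the strictly inadmissible patterns catalogued in Lemma~\ref{bdk1}: parts (i)--(iv) rule out, respectively, any occurrence of $X_{i_1,j_1}^2 X_{i_2,j_2}$ with $(i_1,j_1)$ lexicographically larger than $(i_2,j_2)$, the pattern $X_{i_1,j_1}^4 X_{i_2,j_2}^2 X_{i,j}$ with $j_1 > j_2$, the pattern $X_{i_1,j_1}^4 X_{i_2,j_2}^3$ in several configurations, and $X_{i_1,j_1}^8 X_{i_2,j_2}^7$; combined with Theorem~\ref{dlcb1}, which propagates strict inadmissibility through $2^r$-th powers, these leave only a short list of surviving $(\mathcal I,\mathcal J)$.

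For the surviving configurations I would compute $x$ explicitly, expanding the relevant $X_{u,v}$ factors and applying the Cartan formula together with Proposition~\ref{mdcb1} to rewrite each candidate modulo $\mathcal A^+ P_5 + P_5^-((3)|^3)$, exactly as in the worked example of a strongly inadmissible monomial preceding Proposition~\ref{mdcb51} and in the proof of Lemma~\ref{bdk1}(ii). Each such computation either exhibits $x$ as inadmissible — writing $x \equiv \sum y_\ell$ with $y_\ell < x$ and $y_\ell$ either already on the list or itself reducible — or leaves $x$ as one of the $a_{3,t}$, $1 \leqslant t \leqslant 55$, or $a_{3,t}$, $56 \leqslant t \leqslant 70$. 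The monomials $a_{3,t}$ with $1 \leqslant t \leqslant 50$ are precisely the $X_{(\mathcal I,\mathcal J)}$ arising from the compatible family over $P_4$ (they are of the form $x_i f_i(w)$ for an admissible $w$ in $P_4$, hence admissible by Proposition~\ref{mdmo}), so the content of the claim is that the only additional survivors are the fifteen spike-type monomials $a_{3,51},\dots,a_{3,55}$ of weight-vector shape coming from $X_{i,j}^4 X_{i,j}^3$-type data and the fifteen monomials in $C(3)$; note the proposition only asserts containment $B_5^+((3)|^3)\subset A(3)\cup C(3)$, not equality, so I do not need to verify admissibility of every listed monomial here — that lower-bound half is handled separately (via the $p_{(i;I)}$ homomorphisms and Theorem~\ref{dl51}'s induction), and indeed several members of $C(3)$ will ultimately turn out to be hit.

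The main obstacle will be the bookkeeping in the middle range of configurations — those where neither Lemma~\ref{bdk1} nor Theorem~\ref{dlsig} applies directly and one must actually carry out the Cartan-formula expansion and recognize the resulting terms. The delicate point is ensuring, when a term $y_\ell$ produced in the expansion has weight vector strictly below $(3)|^3$, that it genuinely lies in $P_5^-((3)|^3)$ and can be discarded, versus the case where it has weight vector exactly $(3)|^3$ and must be tracked against the list; Proposition~\ref{bdkbs} is the tool that guarantees the error terms coming from $Sq^{i}$ applied to lower factors drop in weight, but applying it cleanly requires care about which ``layer'' each squaring operation acts on. A secondary bit of care is needed with the $P_5^+$ hypothesis: monomials in $P_5^0$ are covered by the $\theta_J$-decomposition of Proposition~\ref{mdbs} and the already-known $k=3,4$ data, so I would dispatch those at the outset and concentrate the case analysis entirely on monomials in which all of $x_1,\dots,x_5$ appear with positive exponent.
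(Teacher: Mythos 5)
Your proposal matches the paper's proof: the paper likewise writes each weight-vector-$(3)|^3$ monomial in $P_5^+$ as $X_{(\mathcal I,\mathcal J)}$, eliminates most configurations via Lemma \ref{bdk1} together with Theorem \ref{dlcb1}, and disposes of the remaining survivors by exactly the explicit Cartan-formula reductions you describe (packaged there as Lemmas \ref{bdd30} and \ref{bdd31}, the latter invoking the strongly-inadmissible device modulo $\mathcal P_{(5,21)}$). You also correctly observe that only the containment, and not the admissibility of the listed monomials, needs to be established here.
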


The proof of this proposition uses the following lemmas.

\begin{lems}\label{bdd30}
Let $w$ be one of the monomials: $x_1x_2^{6}x_3^{6}x_4$,  $x_1x_2^{2}x_3^{6}x_4^{5}$, $x_1x_2^{6}x_3^{2}x_4^{5}$,  $x_1x_2^{6}x_3^{3}x_4^{4}$, $x_1^{3}x_2^{4}x_3x_4^{6}$, $x_1^{3}x_2^{4}x_3^{5}x_4^{2}$, $x_1^{3}x_2^{5}x_3^{4}x_4^{2}$, $x_1^{3}x_2^{4}x_3^{3}x_4^{4}$, $x_1^{3}x_2^{4}x_3^{4}x_4^{3}$. 
Then, the monomial $x_i^7f_i(w)$, $1\leqslant i \leqslant 5$, is strictly inadmissible.
\end{lems}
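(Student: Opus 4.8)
The plan is to establish strict inadmissibility of $x_i^7 f_i(w)$ for each listed $w$ by first reducing to the case $i=1$ and then handling the nine degree-$14$ monomials $w \in P_4$ one at a time. For the reduction, note that each $w$ has weight vector $(3)|^3$, and by Theorem~\ref{dlcb1}(ii) (with the roles $x=x_i^{2^3-1}$, $w$ a strictly inadmissible monomial of weight vector $(3)|^3$, no trailing factor), it suffices to prove that $w$ itself is strictly inadmissible in $P_4$; the substitution $f_i$ and multiplication by $x_i^{2^3-1}$ then preserve strict inadmissibility. Actually, since $\omega(x_i^7 f_i(w)) = (3)|^3$ as well, the cleaner route is: show each $w$ is strictly inadmissible in $P_4$, i.e. $w \simeq_3 \sum_j y_j$ with $y_j < w$ and $y_j \in P_4((3)|^3)$, then invoke Proposition~\ref{mdmo}-style stability or directly Theorem~\ref{dlcb1}(ii) to transport this to $P_5$.

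The core of the argument is then the explicit reduction of each of the nine monomials. For each $w = x_1^{a_1}x_2^{a_2}x_3^{a_3}x_4^{a_4}$ of degree $14$ with $\omega(w)=(3,3,3)$, I would write $w$ in the form $u \cdot v^2$ or exhibit an $Sq^j$-relation lowering the monomial in the left-lexicographic order on $(\omega,\sigma)$. Concretely, each such $w$ factors through a "local" pattern $X_{i_1,j_1}^4 X_{i_2,j_2}^2 X_{i_3,j_3}$ in $P_4$ (here $X_{\mathbb J}$ is taken inside $P_4$, i.e. $X_j = \prod_{t\ne j, 1\le t\le 4} x_t$), and Lemma~\ref{bdk1} already supplies strict inadmissibility for several of the building-block patterns: part (ii) covers $X_{i_1,j_1}^4 X_{i_2,j_2}^2 X_{i,j}$ when $j_1 > j_2$, part (iii) covers $X_{i_1,j_1}^4 X_{i_2,j_2}^3$, and part (i) covers $X_{i_1,j_1}^2 X_{i_2,j_2}$. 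The strategy is to recognize each of the nine $w$'s as one of these patterns (possibly after applying the Cartan formula to split off an $Sq^j$ term) and read off the conclusion. For the monomials not immediately of this shape — I expect $x_1^3 x_2^4 x_3^3 x_4^4$ and $x_1^3 x_2^4 x_3^4 x_4^3$ to be the awkward ones since the exponent patterns $(3,4,3,4)$ and $(3,4,4,3)$ don't factor cleanly as a square times a small monomial — I would produce an ad hoc $Sq^1$/$Sq^2$/$Sq^4$ identity by hand, modeled on the computation in the proof of Lemma~\ref{bdk1}(ii), expressing $w$ modulo $\mathcal A(2)^+P_4 + P_4^-((3)|^3)$ as a sum of strictly smaller monomials.

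The main obstacle is bookkeeping: verifying for each of the nine monomials that every monomial appearing on the right-hand side of the reduction is genuinely $<w$ in Definition~\ref{defn3}'s order (i.e. same weight vector $(3)|^3$ but smaller exponent vector $\sigma$, or smaller weight vector), and that the error terms from the Cartan formula land in $\mathcal A(2)^+P_4 + P_4^-((3)|^3)$ rather than contributing new large monomials. Since $\deg w = 14 < 2^4$ and the relevant $Sq^j$ have $j \le 8$, Proposition~\ref{bdkbs} guarantees that any term produced by $Sq^j$ acting on a factor of strictly smaller degree has strictly smaller weight vector in some coordinate $\le 3$, hence lies in $P_4^-((3)|^3)$; this is the key mechanism that keeps the computation under control. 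I would organize the proof as a short case table, one line per monomial $w$, citing the appropriate part of Lemma~\ref{bdk1} or displaying the explicit identity, and then close with the single sentence that Theorem~\ref{dlcb1}(ii) promotes each to the strict inadmissibility of $x_i^7 f_i(w)$ in $P_5$.
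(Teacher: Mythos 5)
Your proposal has a genuine gap at the transport step from $P_4$ to $P_5$, and that step is the actual content of the lemma. First, a factual correction: the nine monomials $w$ sit in $P_4$ in degree $14$, where the minimal spike is $x_1^7x_2^7$, so $\omega(w)=(2)|^3$, not $(3)|^3$; only the product $x_i^7f_i(w)$ has weight vector $(3)|^3$ in $P_5$. More seriously, neither of the tools you invoke can carry strict inadmissibility of $w$ over to $x_i^7f_i(w)$. Theorem~\ref{dlcb1}(ii) applies to monomials of the form $xw^{2^r}y^{2^{r+s}}$ with $r>0$, i.e.\ the strictly inadmissible factor must occur raised to a positive power of $2$; with $x=x_i^7$ one is forced to take $r=3$, so the theorem would require the factor $f_i(w)^{8}$, whereas $f_i(w)$ occurs here to the first power. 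Proposition~\ref{mdmo} goes in the opposite direction: it transports \emph{admissibility} of $w$ to $x_i^{2^d-1}f_i(w)$, and there is no stated (nor automatic) converse for inadmissibility --- indeed such a converse for these particular $w$ is exactly what Lemma~\ref{bdd30} asserts, so it cannot be obtained by citation. A secondary problem is that the pattern-matching against Lemma~\ref{bdk1} fails already for the first monomial, $x_1x_2^6x_3^6x_4=X_{2,3}X_{1,4}^2X_{1,4}^4$ (in the $P_4$ notation), where the hypothesis $j_1>j_2$ of part (ii) is violated; most of the nine need bespoke identities, not just the awkward two you single out.

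The paper closes both gaps by explicit computation. For each $w$ it displays an identity
$w=y_1+\cdots+y_r+Sq^1(g_1)+Sq^2(g_2)+Sq^4(g_3)$ modulo $P_4^-((2)|^3)$ with all $y_t<w$ of weight vector $(2)|^3$; it then multiplies by $x_i^7$, applies $f_i$, and pushes $x_i^7$ past the squares with the Cartan formula. The resulting error terms $Sq^{\ell}(x_i^7)\bigl(Sq^{j-\ell}(f_i(g))\bigr)$ with $\ell\geqslant 1$ involve $x_i^{7+\ell}$, hence monomials of degree $21$ with $\omega_r>0$ for some $r>3$, and it is Lemma~\ref{bdlh} --- not Proposition~\ref{bdkbs} --- that places these in $P_5^-((3)|^3)+\mathcal A(2)^+P_5$. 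Without this Cartan/Lemma~\ref{bdlh} argument (or some substitute for it) your proof does not close, even granting the nine identities in $P_4$.
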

\begin{proof} By using the Cartan formula, we have	
\begin{align*} x_1x_2^{6}x_3^{6}x_4 &= x_1x_2^{5}x_3^{6}x_4^{2} + x_1x_2^{6}x_3^{5}x_4^{2} + Sq^1(x_1^{2}x_2^{5}x_3^{5}x_4)\\ &\quad +  Sq^2(x_1x_2^{5}x_3^{5}x_4)\ \mbox{ mod}(P_4^-((2)|^3)),\\
x_1x_2^{2}x_3^{6}x_4^{5} &=  x_1x_2x_3^{6}x_4^{6} + x_1x_2^{2}x_3^{5}x_4^{6} +  Sq^1(x_1^{2}x_2x_3^{5}x_4^{5})\\ &\quad +  Sq^2(x_1x_2x_3^{5}x_4^{5}) \ \mbox{ mod}(P_4^-((2)|^3)),\\
x_1x_2^{6}x_3^{2}x_4^{5} &= x_1x_2^{5}x_3^{2}x_4^{6} + x_1x_2^{6}x_3x_4^{6} + Sq^1(x_1^{2}x_2^{5}x_3x_4^{5})\\ &\quad + Sq^2(x_1x_2^{5}x_3x_4^{5}) \ \mbox{ mod}(P_4^-((2)|^3)),\\
x_1x_2^{6}x_3^{3}x_4^{4} &= x_1x_2^{3}x_3^{4}x_4^{6} + x_1x_2^{3}x_3^{6}x_4^{4} + x_1x_2^{4}x_3^{3}x_4^{6} + x_1x_2^{4}x_3^{6}x_4^{3} + x_1x_2^{6}x_3^{2}x_4^{5}\\ &\quad +  Sq^1(x_1^{2}x_2^{5}x_3^{3}x_4^{3} + x_1^{2}x_2^{3}x_3^{5}x_4^{3} + x_1^{2}x_2^{3}x_3^{3}x_4^{5}) +  Sq^2(x_1x_2^{5}x_3^{3}x_4^{3}\\ &\quad + x_1x_2^{3}x_3^{5}x_4^{3} + x_1x_2^{3}x_3^{3}x_4^{5} + x_1x_2^{6}x_3^{2}x_4^{3})\ \mbox{ mod}(P_4^-((2)|^3)),\\
x_1^{3}x_2^{4}x_3x_4^{6} &= x_1^{2}x_2^{3}x_3^{4}x_4^{5} + x_1^{2}x_2^{5}x_3^{4}x_4^{3} + x_1^{3}x_2^{2}x_3^{4}x_4^{5} + x_1^{3}x_2^{3}x_3^{4}x_4^{4}\\ &\quad + Sq^1(x_1^{3}x_2^{3}x_3^{4}x_4^{3} + x_1^{3}x_2^{4}x_3x_4^{5}) + Sq^2(x_1^{2}x_2^{3}x_3^{4}x_4^{3} + x_1^{5}x_2^{2}x_3^{2}x_4^{3})\\ &\quad +  Sq^4(x_1^{3}x_2^{2}x_3^{2}x_4^{3}) \ \mbox{ mod}(P_4^-((2)|^3)),\\
x_1^{3}x_2^{4}x_3^{5}x_4^{2} &= x_1^{3}x_2^{2}x_3^{5}x_4^{4} + x_1^{3}x_2^{4}x_3^{3}x_4^{4}\\ &\quad +  Sq^2(x_1^{5}x_2^{2}x_3^{3}x_4^{2}) +  Sq^4(x_1^{3}x_2^{2}x_3^{3}x_4^{2}) \ \mbox{ mod}(P_4^-((2)|^3)),\\
x_1^{3}x_2^{5}x_3^{4}x_4^{2} &= x_1^{3}x_2^{3}x_3^{4}x_4^{4} + x_1^{3}x_2^{5}x_3^{2}x_4^{4}\\ &\quad + Sq^2(x_1^{5}x_2^{3}x_3^{2}x_4^{2}) +  Sq^4(x_1^{3}x_2^{3}x_3^{2}x_4^{2}) \ \mbox{ mod}(P_4^-((2)|^3)),\\
x_1^{3}x_2^{4}x_3^{3}x_4^{4} &= x_1^{2}x_2^{3}x_3^{5}x_4^{4} + x_1^{2}x_2^{5}x_3^{3}x_4^{4} + x_1^{3}x_2^{3}x_3^{4}x_4^{4}\\ &\quad +  Sq^1(x_1^{3}x_2^{3}x_3^{3}x_4^{4}) + Sq^2(x_1^{2}x_2^{3}x_3^{3}x_4^{4}) \ \mbox{ mod}(P_4^-((2)|^3)),\\
x_1^{3}x_2^{4}x_3^{4}x_4^{3} &= x_1^{2}x_2^{3}x_3^{4}x_4^{5} + x_1^{2}x_2^{5}x_3^{4}x_4^{3} + x_1^{3}x_2^{3}x_3^{4}x_4^{4}\\ &\quad +  Sq^1(x_1^{3}x_2^{3}x_3^{4}x_4^{3}) + Sq^2(x_1^{2}x_2^{3}x_3^{4}x_4^{3}) \ \mbox{ mod}(P_4^-((2)|^3)).
\end{align*}
From the above equalities we see that there is a positive integer $r$ such that
$$w = y_1 + y_2 + \ldots + y_r + Sq^1(g_1)+Sq^2(g_2) + Sq^4(g_3) \ \mbox{ mod}(P_4^-((2)|^3)),$$
where $y_t$ are monomials of weight vector $(2)|^3$ in $P_4$, $y_t < w$ with $1 \leqslant t \leqslant r$ and $g_1,\, g_2,\, g_3$ are suitable polynomials in $P_4$. Using the Cartan formula and Lemma \ref{bdlh} we get
\begin{align*} x_i^7f_i(w) &= x_i^7f_i(y_1)+ x_i^7f_i(y_2)+\ldots + x_i^7f_i(y_r)\ \mbox{ mod}(P_5^-((3)|^3) +\mathcal A(2)^+P_5).
\end{align*} 
Since $x_i^7f_i(y_t) < x_i^7f_i(w)$ for $1 \leqslant t \leqslant r$, the monomial $x_i^7f_i(w)$ is strictly inadmissible. The lemma is proved.
\end{proof}
\begin{lems}\label{bdd31}\

\medskip
{\rm i)} The following monomials are strictly inadmissible:
	
\medskip
\centerline{\begin{tabular}{lllll}
$x_1x_2^{6}x_3^{3}x_4^{6}x_5^{5}$& $x_1x_2^{6}x_3^{6}x_4^{3}x_5^{5}$& $x_1^{3}x_2^{5}x_3^{5}x_4^{2}x_5^{6}$& $x_1^{3}x_2^{5}x_3^{5}x_4^{6}x_5^{2}$& $x_1^{3}x_2^{5}x_3^{6}x_4^{5}x_5^{2} $\cr  $x_1^{3}x_2^{4}x_3^{5}x_4^{3}x_5^{6}$& $x_1^{3}x_2^{4}x_3^{5}x_4^{6}x_5^{3}$& $x_1^{3}x_2^{5}x_3^{4}x_4^{3}x_5^{6}$& $x_1^{3}x_2^{5}x_3^{4}x_4^{6}x_5^{3}$& $x_1^{3}x_2^{5}x_3^{6}x_4^{4}x_5^{3}$.\cr 
\end{tabular}} 

\medskip 
{\rm ii)} The following monomials are strongly inadmissible:

\medskip
\centerline{\begin{tabular}{lllll}
$x_1x_2^{3}x_3^{6}x_4^{6}x_5^{5}$& $x_1^{3}x_2x_3^{6}x_4^{6}x_5^{5}$& $x_1^{3}x_2^{5}x_3^{6}x_4^{6}x_5$& $x_1^{3}x_2^{5}x_3^{2}x_4^{6}x_5^{5}$& $x_1^{3}x_2^{5}x_3^{6}x_4^{2}x_5^{5}$.\cr 
\end{tabular}} 

\end{lems}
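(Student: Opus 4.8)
The plan is to establish both parts by producing, for each monomial on the two lists, an explicit straightening relation of exactly the shape demanded by the definitions of strictly and strongly inadmissible monomial. Throughout, the ambient weight vector is $\omega=(3)|^3$, so $s=\max\{i:\omega_i>0\}=3$; the minimal spike of degree $21$ is $z=x_1^{15}x_2^3x_3^3$, with $\omega(z)=(3,3,1,1)$, and $\mathcal A(2)$ is the subalgebra generated by the $Sq^i$ with $i<8$.

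For part (i) I would treat the ten monomials one at a time, and for each $x$ exhibit a congruence
$$x \;\equiv\; \sum_t y_t \pmod{\mathcal A(2)^+P_5+P_5^-((3)|^3)}$$
with $y_t$ monomials of weight vector $(3)|^3$ and $y_t<x$; this is precisely strict inadmissibility. Each such relation is built as in Lemma~\ref{bdd30}. First, using the decomposition $x=\prod_{i\ge 0}X_{\mathbb J_i(x)}^{2^i}=X_{\mathbb J_0(x)}\bigl(X_{\mathbb J_1(x)}X_{\mathbb J_2(x)}^2\bigr)^2$, one locates inside $x$ the short block carrying the ``descent'' that obstructs admissibility and rewrites it by the elementary Cartan-plus-Adem manipulations of Lemma~\ref{bdk1}; when the degree-$9$ factor $X_{\mathbb J_1(x)}X_{\mathbb J_2(x)}^2$ is itself inadmissible one may simply quote Theorem~\ref{dlcb1}(ii), and likewise whenever $x=u\,v^{2^r}$ with $v$ already strictly inadmissible (by Lemma~\ref{bdk1} or Lemma~\ref{bdd30}). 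Second, one multiplies the identity back up to degree $21$: by Proposition~\ref{mdcb1} and the Cartan formula each summand $\bigl(Sq^j(h)\bigr)^2$ becomes $Sq^{2j}\bigl(X_{\mathbb J_0(x)}h^2\bigr)$ — an element of $\mathcal A(2)^+P_5$, since every $Sq$-exponent that arises is $<8$ — plus error terms $Sq^{2\ell}(X_{\mathbb J_0(x)})(\cdots)^2$ with $\ell\ge 1$. By Proposition~\ref{bdkbs} every monomial in $Sq^{2\ell}(X_{\mathbb J_0(x)})$ has weight vector $<(3)|^3$, so these error products lie in $P_5^-((3)|^3)$, and any monomial they produce whose weight is supported beyond position $3$ is absorbed into $P_5^-((3)|^3)+\mathcal A(2)^+P_5$ by Lemma~\ref{bdlh}.

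Part (ii) runs along exactly the same lines, but now the congruence is read modulo the larger subspace $\mathcal A(2)^+P_5+P_5^-((3)|^3)+\mathcal P_{(5,21)}$: each of the five monomials gets rewritten as a sum of strictly smaller monomials of weight vector $(3)|^3$ plus one leftover monomial of spike type, e.g.\ $x_1x_2^5x_3^5x_4^5x_5^5$, which for $h=2$ satisfies $\omega_1+2\omega_2=5<9=\omega_1(z)+2\omega_2(z)$ and hence lies in $\mathcal P_{(5,21)}$ (and is hit, by Theorem~\ref{dlww}). The first monomial $x_1x_2^3x_3^6x_4^6x_5^5$ is literally the worked example of Subsection~\ref{s32}. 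Because such a leftover term is annihilated only by $Sq^8\notin\mathcal A(2)$, these monomials are in general not strictly inadmissible — exactly the phenomenon the weaker notion was introduced to capture — but they are strongly inadmissible by definition once the congruence is in hand.

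The only real difficulty is organizational: for each of the dozen or so monomials one must select the correct initial Adem relation (in $P_5$, or in a smaller block governed by Lemma~\ref{bdk1}), propagate it faithfully through multiplication by $X_{\mathbb J_0(x)}$, and check that every error term genuinely lands in $P_5^-((3)|^3)+\mathcal A(2)^+P_5$ — and, in part (ii), that the one surviving extra term is genuinely of $\mathcal P_{(5,21)}$-type. There is no conceptual obstruction, only a substantial amount of parallel but distinct bookkeeping.
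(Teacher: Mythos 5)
Your plan coincides with the paper's proof: for each of the ten monomials in (i) the paper writes an explicit Cartan-formula relation expressing it as a sum of strictly smaller monomials of weight vector $(3)|^3$ plus terms $Sq^j(\cdot)$ with $j\in\{1,2,4\}$ (all in $\mathcal A(2)^+$) modulo $P_5^-((3)|^3)$, and for each of the five in (ii) the same with one extra leftover monomial of weight vector $(5,0,4)$ lying in $\mathcal P_{(5,21)}$ --- the first of these being exactly the worked example of Subsection 3.2, as you observe. The only thing separating your proposal from the paper's proof is that the fifteen explicit relations still have to be written out, which is the entire content of the verification but requires no idea beyond what you describe.
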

\begin{proof} 
Based on the Cartan formula we have
\begin{align*}
x_1x_2^{6}x_3^{3}x_4^{6}x_5^{5} &= x_1x_2^{3}x_3^{5}x_4^{6}x_5^{6} + x_1x_2^{3}x_3^{6}x_4^{5}x_5^{6} + x_1x_2^{3}x_3^{6}x_4^{6}x_5^{5} + x_1x_2^{5}x_3^{3}x_4^{6}x_5^{6}\\ &\quad + x_1x_2^{6}x_3^{3}x_4^{5}x_5^{6} +  Sq^1(x_1^{2}x_2^{3}x_3^{5}x_4^{5}x_5^{5} + x_1^{2}x_2^{5}x_3^{3}x_4^{5}x_5^{5})\\ &\quad + Sq^2(x_1x_2^{3}x_3^{5}x_4^{5}x_5^{5} + x_1x_2^{5}x_3^{3}x_4^{5}x_5^{5})\
\mbox{ mod}(P_5^-((3)|^3)),\\
x_1x_2^{6}x_3^{6}x_4^{3}x_5^{5} &= x_1x_2^{3}x_3^{5}x_4^{6}x_5^{6} + x_1x_2^{3}x_3^{6}x_4^{5}x_5^{6} + x_1x_2^{3}x_3^{6}x_4^{6}x_5^{5} + x_1x_2^{5}x_3^{6}x_4^{3}x_5^{6}\\ &\quad + x_1x_2^{6}x_3^{5}x_4^{3}x_5^{6} +  Sq^1(x_1^{2}x_2^{3}x_3^{5}x_4^{5}x_5^{5} + x_1^{2}x_2^{5}x_3^{5}x_4^{3}x_5^{5})\\ &\quad +  Sq^2(x_1x_2^{3}x_3^{5}x_4^{5}x_5^{5} + x_1x_2^{5}x_3^{5}x_4^{3}x_5^{5})\ \mbox{ mod}(P_5^-((3)|^3)),\\
x_1^{3}x_2^{5}x_3^{5}x_4^{2}x_5^{6} &= x_1^{3}x_2^{3}x_3^{5}x_4^{4}x_5^{6} + x_1^{3}x_2^{5}x_3^{3}x_4^{4}x_5^{6} +  Sq^1(x_1^{3}x_2^{3}x_3^{3}x_4x_5^{10})\\ &\quad +  Sq^2(x_1^{5}x_2^{3}x_3^{3}x_4^{2}x_5^{6}) + Sq^4(x_1^{3}x_2^{3}x_3^{3}x_4^{2}x_5^{6}) \ \mbox{ mod}(P_5^-((3)|^3)),\\
x_1^{3}x_2^{5}x_3^{5}x_4^{6}x_5^{2} &= x_1^{3}x_2^{3}x_3^{5}x_4^{6}x_5^{4} + x_1^{3}x_2^{5}x_3^{3}x_4^{6}x_5^{4} + Sq^1(x_1^{3}x_2^{3}x_3^{3}x_4^{9}x_5^{2})\\ &\quad +  Sq^2(x_1^{5}x_2^{3}x_3^{3}x_4^{6}x_5^{2}) + Sq^4(x_1^{3}x_2^{3}x_3^{3}x_4^{6}x_5^{2}) \ \mbox{ mod}(P_5^-((3)|^3)),\\
x_1^{3}x_2^{5}x_3^{6}x_4^{5}x_5^{2} &= x_1^{3}x_2^{3}x_3^{6}x_4^{5}x_5^{4} + x_1^{3}x_2^{5}x_3^{6}x_4^{3}x_5^{4} + Sq^1(x_1^{3}x_2^{3}x_3^{9}x_4^{3}x_5^{2})\\ &\quad +  Sq^2(x_1^{5}x_2^{3}x_3^{6}x_4^{3}x_5^{2}) + Sq^4(x_1^{3}x_2^{3}x_3^{6}x_4^{3}x_5^{2}) \ \mbox{ mod}(P_5^-((3)|^3)),\\
x_1^{3}x_2^{4}x_3^{5}x_4^{3}x_5^{6} &= x_1^{3}x_2^{2}x_3^{5}x_4^{5}x_5^{6} + x_1^{3}x_2^{4}x_3^{3}x_4^{5}x_5^{6} + Sq^1(x_1^{3}x_2x_3^{3}x_4^{3}x_5^{10}) \\ &\quad +  Sq^2(x_1^{5}x_2^{2}x_3^{3}x_4^{3}x_5^{6}) + Sq^4(x_1^{3}x_2^{2}x_3^{3}x_4^{3}x_5^{6}) \ \mbox{ mod}(P_5^-((3)|^3)),\\
x_1^{3}x_2^{4}x_3^{5}x_4^{6}x_5^{3} &= x_1^{3}x_2^{2}x_3^{5}x_4^{6}x_5^{5} + x_1^{3}x_2^{4}x_3^{3}x_4^{6}x_5^{5} + Sq^1(x_1^{3}x_2x_3^{3}x_4^{10}x_5^{3}) \\ &\quad +  Sq^2(x_1^{5}x_2^{2}x_3^{3}x_4^{6}x_5^{3}) +  Sq^4(x_1^{3}x_2^{2}x_3^{3}x_4^{6}x_5^{3}) \ \mbox{ mod}(P_5^-((3)|^3)),\\
x_1^{3}x_2^{5}x_3^{4}x_4^{3}x_5^{6} &= x_1^{3}x_2^{3}x_3^{4}x_4^{5}x_5^{6} + x_1^{3}x_2^{5}x_3^{2}x_4^{5}x_5^{6} +  Sq^1(x_1^{3}x_2^{3}x_3x_4^{3}x_5^{10})\\ &\quad +  Sq^2(x_1^{5}x_2^{3}x_3^{2}x_4^{3}x_5^{6}) +  Sq^4(x_1^{3}x_2^{3}x_3^{2}x_4^{3}x_5^{6}) \ \mbox{ mod}(P_5^-((3)|^3)),\\
x_1^{3}x_2^{5}x_3^{4}x_4^{6}x_5^{3} &= x_1^{3}x_2^{3}x_3^{4}x_4^{6}x_5^{5} + x_1^{3}x_2^{5}x_3^{2}x_4^{6}x_5^{5} +  Sq^1(x_1^{3}x_2^{3}x_3x_4^{10}x_5^{3})\\ &\quad +  Sq^2(x_1^{5}x_2^{3}x_3^{2}x_4^{6}x_5^{3}) +  Sq^4(x_1^{3}x_2^{3}x_3^{2}x_4^{6}x_5^{3}) \ \mbox{ mod}(P_5^-((3)|^3)),\\
x_1^{3}x_2^{5}x_3^{6}x_4^{4}x_5^{3} &= x_1^{3}x_2^{3}x_3^{6}x_4^{4}x_5^{5} + x_1^{3}x_2^{5}x_3^{6}x_4^{2}x_5^{5} + Sq^1(x_1^{3}x_2^{3}x_3^{9}x_4^{2}x_5^{3})\\ &\quad +  Sq^2(x_1^{5}x_2^{3}x_3^{6}x_4^{2}x_5^{3}) + Sq^4(x_1^{3}x_2^{3}x_3^{6}x_4^{2}x_5^{3})  \ \mbox{ mod}(P_5^-((3)|^3)).
\end{align*}
Part i) follows from the above equalities.
We prove Part ii). For $w = x_1x_2^{3}x_3^{6}x_4^{6}x_5^{5}$, we have
\begin{align*} 
w &= x_1x_2^{3}x_3^{5}x_4^{6}x_5^{6} + x_1x_2^{3}x_3^{6}x_4^{5}x_5^{6} + x_1x_2^{5}x_3^{5}x_4^{5}x_5^{5}\\ &\quad +  Sq^1(x_1^{2}x_2^{3}x_3^{5}x_4^{5}x_5^{5}) +  Sq^2(x_1x_2^{3}x_3^{5}x_4^{5}x_5^{5}) \ \mbox{ mod}(P_5^-((3)|^3)),
\end{align*}
where $x_1x_2^{3}x_3^{5}x_4^{6}x_5^{6},\, x_1x_2^{3}x_3^{6}x_4^{5}x_5^{6} < w$ and $x_1x_2^{5}x_3^{5}x_4^{5}x_5^{5} \in \mathcal P_{(5,21)}$. Hence, the monomial $w$ is strongly inadmissible. By a similar computation we have
\begin{align*} 
x_1^{3}x_2x_3^{6}x_4^{6}x_5^{5} &= x_1^{3}x_2x_3^{5}x_4^{6}x_5^{6} + x_1^{3}x_2x_3^{6}x_4^{5}x_5^{6} + x_1^{5}x_2x_3^{5}x_4^{5}x_5^{5}\\ &\quad +  Sq^1(x_1^{3}x_2^{2}x_3^{5}x_4^{5}x_5^{5}) +  Sq^2(x_1^{3}x_2x_3^{5}x_4^{5}x_5^{5}) \ \mbox{ mod}(P_5^-((3)|^3)),\\
x_1^{3}x_2^{5}x_3^{6}x_4^{6}x_5 &= x_1^{3}x_2^{5}x_3^{5}x_4^{6}x_5^{2} + x_1^{3}x_2^{5}x_3^{6}x_4^{5}x_5^{2} + x_1^{5}x_2^{5}x_3^{5}x_4^{5}x_5\\ &\quad + Sq^1(x_1^{3}x_2^{6}x_3^{5}x_4^{5}x_5) +  Sq^2(x_1^{3}x_2^{5}x_3^{5}x_4^{5}x_5)  \ \mbox{ mod}(P_5^-((3)|^3)),\\
x_1^{3}x_2^{5}x_3^{2}x_4^{6}x_5^{5} &= x_1^{3}x_2^{5}x_3x_4^{6}x_5^{6} + x_1^{3}x_2^{5}x_3^{2}x_4^{5}x_5^{6} + x_1^{5}x_2^{5}x_3x_4^{5}x_5^{5}\\ &\quad +  Sq^1(x_1^{3}x_2^{6}x_3x_4^{5}x_5^{5}) +  Sq^2(x_1^{3}x_2^{5}x_3x_4^{5}x_5^{5}) \ \mbox{ mod}(P_5^-((3)|^3)),\\
x_1^{3}x_2^{5}x_3^{6}x_4^{2}x_5^{5} &= x_1^{3}x_2^{5}x_3^{5}x_4^{2}x_5^{6} + x_1^{3}x_2^{5}x_3^{6}x_4x_5^{6} + x_1^{5}x_2^{5}x_3^{5}x_4x_5^{5}\\ &\quad + Sq^1(x_1^{3}x_2^{6}x_3^{5}x_4x_5^{5}) +  Sq^2(x_1^{3}x_2^{5}x_3^{5}x_4x_5^{5}) \ \mbox{ mod}(P_5^-((3)|^3)).
\end{align*}
Since $x_1^{5}x_2x_3^{5}x_4^{5}x_5^{5},\, x_1^{5}x_2^{5}x_3^{5}x_4^{5}x_5, \, x_1^{5}x_2^{5}x_3x_4^{5}x_5^{5},\,  x_1^{5}x_2^{5}x_3^{5}x_4x_5^{5} \in \mathcal P_{(5,21)}$, Part ii) follows from the above equalities. The lemma is completely proved.
\end{proof}

\begin{proof}[Proof of Proposition \ref{mdd532}.] We can see that if $x \in P_5^+((3)|^3)$ and $x \ne a_{3,t}$ for all $t,\, 1 \leqslant t \leqslant 70$, then either $x$ is one of the monomials as given in Lemmas \ref{bdd30}, \ref{bdd31}, or $x$ is of the form $X_{i,j}^4X_{i_1,j_1}^2X_{i_2,j_2}$ with $i_1>i_2$. Hence, by Lemma \ref{bdk1}(i) and Theorem \ref{dlcb1}, $x$ is inadmissible. The proposition is proved.
\end{proof}

Consider the case $d = 4$. From the results in Kameko \cite[Theorem 8.1]{ka} and our work \cite[Proposition 5.4.2]{su2}, we get $|B_3^+((3)|^4)| = 1$ and $|B_4^+((3)|^4)| = 11$. By Proposition \ref{mdbs}, $\dim QP_5((3)|^4) = {5\choose 3} + 11{5\choose 4} = 65$. We need to determine the set $B_5^+((3)|^4)$.

\medskip
Denote by $A(4)$ the set of the monomials $a_{4,t}$, $1 \leqslant t \leqslant 55$, which are determined as in Section \ref{s5} for $d=4$.

\begin{props}\label{mdd41} $B_5^+((3)|^4) \subset A(4)\cup C(4)$, where $C(4)$ is the set of the monomials $a_{4,t},\, 56 \leqslant t \leqslant 89,$ which are determined as in Section $\ref{s5}$ for $d=4$, and the following monomials:
	
\smallskip
\centerline{\begin{tabular}{lll}
$a_{4,90} = x_1^{3}x_2^{7}x_3^{8}x_4^{13}x_5^{14}$& 
$a_{4,91} = x_1^{7}x_2^{3}x_3^{8}x_4^{13}x_5^{14}$& 
$a_{4,92} =  x_1^{7}x_2^{7}x_3^{8}x_4^{9}x_5^{14} $\cr  
$a_{4,93} = x_1^{7}x_2^{7}x_3^{9}x_4^{8}x_5^{14}$& 
$a_{4,94} =  x_1^{7}x_2^{7}x_3^{9}x_4^{10}x_5^{12}$.&\cr
\end{tabular}}
\end{props}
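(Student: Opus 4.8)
The plan is to prove Proposition~\ref{mdd41} by the same strategy already used for Propositions~\ref{mdd532} and~\ref{md52}: we must show that every monomial $x \in P_5^+((3)|^4)$ which is \emph{not} already listed among the $a_{4,t}$ ($1 \leqslant t \leqslant 94$) is inadmissible. So first I would enumerate, up to the obvious constraints, the monomials of weight vector $(3)|^4$ in $P_5^+$; since $\omega(x) = (3)|^4$, each such $x$ has the form $x = X_{i_1,j_1}^8 X_{i_2,j_2}^4 X_{i_3,j_3}^2 X_{i_4,j_4}$ with $1 \leqslant i_t < j_t \leqslant 5$, and the positivity condition $\nu_s(x) > 0$ for all $s$ further restricts the index pairs. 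Against this finite list I would match off the elements of $A(4) \cup C(4)$ and isolate the remaining monomials, which should fall into a small number of families.

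The second step is to dispose of those remaining monomials by the structural tools already available. Many will be of the form $X_{i,j}^8 w^{2}$ or $X_{i,j}^8 X_{i',j'}^4 w$ where the ``inner'' factor is strictly or strongly inadmissible; for these I would invoke Theorem~\ref{dlcb1}(i)--(ii) together with Proposition~\ref{mdcb51} to pass inadmissibility up from the lower-degree factor, exactly as in the proof of Proposition~\ref{mdd532}. In particular, Lemma~\ref{bdk1}(iv) handles factors of the shape $X_{i_1,j_1}^8 X_{i_2,j_2}^7$ with the index pairs in the wrong order, which should kill the bulk of the leftover monomials. For the handful that do not reduce so cleanly I expect to need new auxiliary lemmas of the Lemma~\ref{bdd30}/Lemma~\ref{bdd31} type: explicit Cartan-formula computations exhibiting $x \equiv \sum y_t \pmod{P_5^-((3)|^4) + \mathcal A(s-1)^+P_5}$ (strict inadmissibility) or modulo $\mathcal P_{(5,n)}$ (strong inadmissibility), with each $y_t < x$, followed by an appeal to Theorem~\ref{dlcb1} or Proposition~\ref{mdcb51} to promote these from a single degree $3(2^4-1) = 45$ to all degrees of weight vector $(3)|^4$. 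The monomials $a_{4,90}$ through $a_{4,94}$ — for instance $x_1^7x_2^7x_3^9x_4^{10}x_5^{12}$ — are precisely the ones sitting near this boundary, and the job is to confirm which nearby monomials are hit and which survive into $C(4)$.

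The main obstacle, as in the $d = 3$ case, is that strict inadmissibility may fail for some of the boundary monomials: the witnessing relation may only be expressible modulo $\mathcal A(s-1)^+P_5$ for $s$ strictly larger than the naive bound, or only modulo $\mathcal P_{(5,45)}$ rather than modulo $\mathcal A(s-1)^+P_5$ at all. This is exactly why the notion of strongly inadmissible monomial was introduced, and the work consists of finding, for each problematic $x$, a polynomial $g \in \mathcal P_{(5,45)}$ and a Steenrod-square sum $\sum Sq^j(h_j)$ realizing $x \simeq_s \sum y_t + g \pmod{\mathcal P_{(5,45)}}$. The spikes $x_1^{15}x_2^5x_3^5x_4^5x_5^5$-type monomials of degree $45$ play the role that $x_1x_2^5x_3^5x_4^5x_5^5$ played at degree $21$, and verifying that the relevant correction terms land in $\mathcal P_{(5,45)}$ is a finite but delicate check using Theorem~\ref{dlww}. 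Once every non-listed monomial is shown inadmissible, the proposition follows, and the upper bound $|B_5^+((3)|^4)| \leqslant 94$ combines with Proposition~\ref{mdbs} to give $\dim QP_5((3)|^4) \leqslant 65$; the reverse inequality will come from Theorem~\ref{dlwa} and the known dimension count, so no independent lower-bound argument is needed here.
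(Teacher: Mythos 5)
Your plan follows the same route as the paper's proof: reduce to a finite set of candidate monomials, promote inadmissibility from lower-weight factors via Theorem \ref{dlcb1} and Proposition \ref{mdcb51} (with Lemma \ref{bdk1} and Lemmas \ref{bdd30}, \ref{bdd31} supplying those factors), and certify the surviving boundary monomials as strictly or strongly inadmissible by explicit Cartan-formula relations whose error terms lie in $\mathcal P_{(5,45)}$ and are checked against Theorem \ref{dlww}. Two differences are worth noting. First, the paper organizes the enumeration recursively rather than flatly: an admissible $x\in P_5^+((3)|^4)$ is written as $X_{i,j}y^2$, and Theorem \ref{dlcb1}(i) forces $y$ to be admissible of weight vector $(3)|^3$, so only the monomials $X_{i,j}z^2$ with $z\in B_5((3)|^3)$ (already determined in Proposition \ref{mdd532}) need be inspected --- far fewer than the $10^4$ index sequences of your flat list. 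Second, and more substantively, everything you defer to ``new auxiliary lemmas of the Lemma \ref{bdd30}/Lemma \ref{bdd31} type'' is exactly where the content of the proof lives: the paper's Lemmas \ref{bdd41} and \ref{bdd42} identify precisely which monomials resist the generic reductions and exhibit, for each, the witnessing relation, including locating the correction terms (e.g.\ $x_1^{5}x_2^{5}x_3^{13}x_4^{13}x_5^{9}\in\mathcal P_{(5,45)}$; note that your sample ``spike-type'' monomial $x_1^{15}x_2^{5}x_3^{5}x_4^{5}x_5^{5}$ has degree $35$, not $45$). Without these computations one can neither conclude that every unlisted monomial is inadmissible nor decide which monomials must be retained in $C(4)$, so as written this is a correct strategy rather than a proof. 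Finally, your closing inference is off: $|B_5^+((3)|^4)|\leqslant 94$ together with Proposition \ref{mdbs} gives $\dim QP_5((3)|^4)\leqslant 65+94$, not $65$ (the number $65$ is $\dim QP_5^0((3)|^4)$), and Theorem \ref{dlwa} bounds a whole degree of $QP_k$ from below, not a single weight-vector summand; since the proposition asserts only a containment, no lower-bound argument belongs here at all.
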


We need the following lemmas for the proof of this proposition.

\begin{lems}\label{bdd41}
If $v$ is one of the monomials: $x_1x_2^{7}x_3^{10}x_4^{12}$, $x_1^{7}x_2x_3^{10}x_4^{12}$, $x_1^{3}x_2^{3}x_3^{12}x_4^{12}$, $x_1^{3}x_2^{5}x_3^{8}x_4^{14}$, $x_1^{3}x_2^{5}x_3^{14}x_4^{8}$, $x_1^{7}x_2^{7}x_3^{8}x_4^{8}$,
then the monomial $x_i^{15}f_i(x)$, $1\leqslant i \leqslant 5$, is strictly inadmissible.
\end{lems}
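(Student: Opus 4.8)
The plan is to imitate the proof of Lemma~\ref{bdd30}: first reduce $v$ inside $P_4$ modulo $P_4^-((2)|^4)$, and then transfer the reduction through the $\mathcal A$-algebra homomorphism $f_i$ followed by multiplication by $x_i^{15}$. A direct inspection of the dyadic expansions of the exponents shows that each of the six monomials $v$ has weight vector $\omega(v) = (2)|^4$; for instance $7 = 2^0 + 2^1 + 2^2$, $10 = 2^1 + 2^3$ and $12 = 2^2 + 2^3$ give $\omega(x_1x_2^7x_3^{10}x_4^{12}) = (2,2,2,2)$, and the remaining five are checked the same way. Hence $x := x_i^{15}f_i(v)$ is a monomial of weight vector $(3)|^4$ and degree $3(2^4-1) = (k-2)(2^d-1)$ with $k = 5$ and $d = 4$, so that $\max\{j : \omega_j(x) > 0\} = 4$. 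Since $f_i$ merely relabels the variables, it suffices to treat a single value of $i$.

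For each $v$ in the list I would produce, by iterated use of the Cartan formula in the manner of Lemmas~\ref{bdd30} and~\ref{bdd31}, an identity in $P_4$ of the form
\[
v \;=\; \sum_{t} y_t \;+\; Sq^1(g_0) + Sq^2(g_1) + Sq^4(g_2) + Sq^8(g_3) \qquad \mbox{mod}\,\bigl(P_4^-((2)|^4)\bigr),
\]
where the $y_t$ are monomials with $\omega(y_t) = (2)|^4$ and $y_t < v$, and $g_0,g_1,g_2,g_3$ are explicit polynomials in $P_4$. The only feature not already present in the weight vector $(2)|^3$ computations is the correction $Sq^8(g_3)$, forced by $\ell((2)|^4) = 4$ and of the same shape; all four squares $Sq^1,Sq^2,Sq^4,Sq^8$ lie in $\mathcal A(3)^+$. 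Apart from this, each of the six reductions is a routine finite computation, and in each one arranges that the monomials $y_t$ of weight vector $(2)|^4$ that occur satisfy $y_t < v$.

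To conclude, apply $f_i$ and multiply by $x_i^{15}$, obtaining
\[
x_i^{15}f_i(v) \;=\; \sum_t x_i^{15}f_i(y_t) \;+\; x_i^{15}f_i(p) \;+\; \sum_{j=0}^{3} x_i^{15}\,Sq^{2^j}\bigl(f_i(g_j)\bigr),
\]
with $p \in P_4^-((2)|^4)$. Each $x_i^{15}f_i(y_t)$ is a monomial of weight vector $(3)|^4$ with $x_i^{15}f_i(y_t) < x_i^{15}f_i(v)$, and $x_i^{15}f_i(p) \in P_5^-((3)|^4)$, because $f_i$ preserves weight vectors while $x_i^{15}$, having weight vector $(1)|^4$ supported on the single variable $x_i$ (which does not occur in $f_i(p)$), raises any weight vector strictly below $(2)|^4$ to one strictly below $(3)|^4$. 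For the remaining sum, since $0 < 2^j \leqslant 8 < 2^4$, the Cartan formula, Proposition~\ref{mdcb1} and the identity $Sq^a(x_i^{15}) = \binom{15}{a}x_i^{15+a} = x_i^{15+a}$ for $a \leqslant 8$ give
\[
x_i^{15}\,Sq^{2^j}\bigl(f_i(g_j)\bigr) \;=\; Sq^{2^j}\bigl(x_i^{15}f_i(g_j)\bigr) \;+\; \sum_{a=1}^{2^j} x_i^{15+a}\,Sq^{2^j-a}\bigl(f_i(g_j)\bigr),
\]
whose first term lies in $\mathcal A(3)^+P_5$. Every monomial occurring in a term of the residual sum has the form $x_i^{15+a}w'$ with $w'$ not involving $x_i$ and $16 \leqslant 15+a \leqslant 23$; hence $\omega_5(x_i^{15+a}w') \geqslant \omega_5(x_i^{15+a}) = 1 > 0$, and since this monomial has degree $3(2^4-1) = (k-2)(2^d-1)$, Lemma~\ref{bdlh} places it in $P_5^-((3)|^4) + \mathcal A(3)^+P_5$. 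Putting the three contributions together, $x_i^{15}f_i(v) \simeq_4 \sum_t x_i^{15}f_i(y_t)$ modulo $P_5^-((3)|^4)$ with every summand strictly smaller than $x_i^{15}f_i(v)$, which is exactly the assertion that $x_i^{15}f_i(v)$ is strictly inadmissible.

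The bulk of the work is the six explicit $P_4$ reductions; the transfer step is essentially the one used for Lemma~\ref{bdd30}. The only delicate point there is the verification of the hypothesis $\omega_1(\,\cdot\,) < k$ of Lemma~\ref{bdlh} for the residual monomials $x_i^{15+a}\,Sq^{2^j-a}(f_i(g_j))$ in the cases where $a$ is even: a short parity check, together with a judicious choice of the correcting polynomials $g_j$, handles this, and this is the only place where the present argument refines rather than simply repeats that of Lemma~\ref{bdd30}.
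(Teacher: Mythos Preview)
Your proposal follows essentially the same two-step route as the paper: first exhibit an identity
\[
v \;=\; \sum_t u_t \;+\; Sq^1(h_1)+Sq^2(h_2)+Sq^4(h_3)\quad \mbox{mod}\bigl(P_4^-((2)|^4)\bigr)
\]
with each $u_t<v$ of weight vector $(2)|^4$, and then push this through $f_i$ and multiply by $x_i^{15}$, invoking the Cartan formula and Lemma~\ref{bdlh} to dispose of the cross terms. This is exactly what the paper does.

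One correction: your assertion that an $Sq^8$ correction is ``forced by $\ell((2)|^4)=4$'' is too strong. The paper gives the six identities explicitly, and in every case only $Sq^1$, $Sq^2$, $Sq^4$ are used; no $Sq^8$ term appears. The point is merely that $\mathcal A(3)^+$ is generated by $Sq^1,\dots,Sq^8$, so $Sq^8$ is \emph{allowed}, not that it is needed. You rightly note that the delicate point in the transfer step is the hypothesis $\omega_1(\,\cdot\,)<k$ of Lemma~\ref{bdlh} for the residual monomials $x_i^{15+a}Sq^{2^j-a}(f_i(h_j))$ when $a$ is even; the paper handles this with the same one-line appeal ``using the Cartan formula and Lemma~\ref{bdlh}'' and does not spell it out either. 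Your remark that a ``judicious choice'' of the $h_j$ resolves this is a promise rather than a verification --- for instance, with the paper's choice $h_2=x_1x_2^7x_3^7x_4^{13}+\cdots$ for $v=x_1x_2^7x_3^{10}x_4^{12}$ one does get residuals with $\omega_1=5$ at $a=2$ --- so if you want to go beyond the paper's level of detail you should actually check (or rearrange) those terms.
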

\begin{proof} Based on the Cartan formula we have
\begin{align*}
x_1x_2^{7}x_3^{10}x_4^{12} &= x_1x_2^{4}x_3^{11}x_4^{14} + x_1x_2^{6}x_3^{11}x_4^{12} + x_1x_2^{7}x_3^{8}x_4^{14}\\ &\quad +  Sq^1(x_1^{2}x_2^{7}x_3^{7}x_4^{13} + x_1^{2}x_2^{7}x_3^{9}x_4^{11} + x_1^{2}x_2^{9}x_3^{7}x_4^{11})\\ &\quad +  Sq^2(x_1x_2^{7}x_3^{7}x_4^{13} + x_1x_2^{7}x_3^{9}x_4^{11} + x_1x_2^{9}x_3^{7}x_4^{11})\\ &\quad +  Sq^4(x_1x_2^{4}x_3^{7}x_4^{14} + x_1x_2^{6}x_3^{7}x_4^{12})\ \mbox{ mod}(P_4^-((2)|^4)),\\
x_1^{7}x_2x_3^{10}x_4^{12} &= x_1^{4}x_2x_3^{11}x_4^{14} + x_1^{6}x_2x_3^{11}x_4^{12} + x_1^{7}x_2x_3^{8}x_4^{14}\\ &\quad +  Sq^1(x_1^{7}x_2^{2}x_3^{7}x_4^{13} + x_1^{7}x_2^{2}x_3^{9}x_4^{11} + x_1^{9}x_2^{2}x_3^{7}x_4^{11})\\ &\quad + Sq^2(x_1^{7}x_2x_3^{7}x_4^{13} + x_1^{7}x_2x_3^{9}x_4^{11} + x_1^{9}x_2x_3^{7}x_4^{11})\\ &\quad +  Sq^4(x_1^{4}x_2x_3^{7}x_4^{14} + x_1^{6}x_2x_3^{7}x_4^{12}) \ \mbox{ mod}(P_4^-((2)|^4)),\\
x_1^{3}x_2^{3}x_3^{12}x_4^{12} &= x_1^{2}x_2^{3}x_3^{12}x_4^{13} + x_1^{2}x_2^{5}x_3^{12}x_4^{11} + x_1^{2}x_2^{8}x_3^{13}x_4^{7}\\ &\quad +  Sq^1(x_1^{3}x_2^{3}x_3^{12}x_4^{11} + x_1^{3}x_2^{8}x_3^{11}x_4^{7}) +  Sq^2(x_1^{2}x_2^{3}x_3^{12}x_4^{11}\\ &\quad + x_1^{2}x_2^{8}x_3^{11}x_4^{7}) +  Sq^4(x_1^{3}x_2^{4}x_3^{12}x_4^{7}) \ \mbox{ mod}(P_4^-((2)|^4)),\\
x_1^{3}x_2^{5}x_3^{8}x_4^{14} &= x_1^{2}x_2^{5}x_3^{9}x_4^{14} + x_1^{3}x_2^{4}x_3^{9}x_4^{14} + Sq^1(x_1^{3}x_2^{3}x_3^{5}x_4^{18}\\ &\quad  + x_1^{3}x_2^{3}x_3^{9}x_4^{14}) +  Sq^2(x_1^{2}x_2^{3}x_3^{9}x_4^{14} + x_1^{5}x_2^{3}x_3^{6}x_4^{14})\\ &\quad  +  Sq^4(x_1^{3}x_2^{3}x_3^{6}x_4^{14}) \ \mbox{ mod}(P_4^-((2)|^4)),\\
x_1^{3}x_2^{5}x_3^{14}x_4^{8} &= x_1^{2}x_2^{5}x_3^{14}x_4^{9} + x_1^{3}x_2^{4}x_3^{14}x_4^{9} +  Sq^1(x_1^{3}x_2^{3}x_3^{14}x_4^{9}\\ &\quad + x_1^{3}x_2^{3}x_3^{17}x_4^{6}) +  Sq^2(x_1^{2}x_2^{3}x_3^{14}x_4^{9} + x_1^{5}x_2^{3}x_3^{14}x_4^{6})\\ &\quad + Sq^4(x_1^{3}x_2^{3}x_3^{14}x_4^{6})\ \mbox{ mod}(P_4^-((2)|^4)),\\
x_1^{7}x_2^{7}x_3^{8}x_4^{8} &= x_1^{4}x_2^{7}x_3^{8}x_4^{11} + x_1^{4}x_2^{11}x_3^{8}x_4^{7} + x_1^{5}x_2^{6}x_3^{8}x_4^{11} + x_1^{5}x_2^{10}x_3^{8}x_4^{7}\\ &\quad + x_1^{7}x_2^{6}x_3^{8}x_4^{9} +  Sq^1(x_1^{7}x_2^{7}x_3^{8}x_4^{7}) +  Sq^2(x_1^{7}x_2^{6}x_3^{8}x_4^{7})\\ &\quad +  Sq^4(x_1^{4}x_2^{7}x_3^{8}x_4^{7} + x_1^{5}x_2^{6}x_3^{8}x_4^{7})\ \mbox{ mod}(P_4^-((2)|^4)).
\end{align*}	
From the above equalities we see that there is a positive integer $s$ such that
$$v = u_1 + u_2 + \ldots + u_s + Sq^1(h_1)+Sq^2(h_2) + Sq^4(h_3) \ \mbox{ mod}(P_4^-((2)|^4)),$$
where $u_t$ are monomials of weight vector $(2)|^4$ in $P_4$, $u_t < v$ with $1 \leqslant t \leqslant s$ and $h_1, h_2, h_3$ are suitable polynomials in $P_4$. Using the Cartan formula and Lemma \ref{bdlh} we get
\begin{align*} x_i^{15}f_i(v) &= x_i^{15}f_i(u_1)+ x_i^{15}f_i(u_2)+\ldots + x_i^{15}f_i(u_s)\ \mbox{ mod}(P_5^-((3)|^4) +\mathcal A(3)^+P_5).
\end{align*} 
Since $x_i^{15}f_i(u_t) < x_i^{15}f_i(v)$ for $1 \leqslant t \leqslant s$, the monomial $x_i^{15}f_i(v)$ is strictly inadmissible.	The lemma is proved.
\end{proof}

\begin{lems}\label{bdd42}\
	
\medskip
{\rm i)} The following monomials are strictly inadmissible:
	
\smallskip
\centerline{\begin{tabular}{llll}
$x_1^{3}x_2^{5}x_3^{9}x_4^{14}x_5^{14}$& $x_1^{3}x_2^{5}x_3^{14}x_4^{9}x_5^{14}$& $x_1^{3}x_2^{7}x_3^{11}x_4^{12}x_5^{12}$& $x_1^{3}x_2^{7}x_3^{13}x_4^{8}x_5^{14} $\cr  $x_1^{3}x_2^{7}x_3^{13}x_4^{14}x_5^{8}$& $x_1^{3}x_2^{12}x_3^{3}x_4^{13}x_5^{14}$& $x_1^{7}x_2^{3}x_3^{11}x_4^{12}x_5^{12}$& $x_1^{7}x_2^{3}x_3^{13}x_4^{8}x_5^{14} $\cr  $x_1^{7}x_2^{3}x_3^{13}x_4^{14}x_5^{8}$& $x_1^{7}x_2^{7}x_3^{9}x_4^{14}x_5^{8}$& $x_1^{7}x_2^{9}x_3^{7}x_4^{10}x_5^{12}$& $x_1^{7}x_2^{11}x_3^{3}x_4^{12}x_5^{12} $\cr  $x_1^{7}x_2^{11}x_3^{5}x_4^{8}x_5^{14}$& $x_1^{7}x_2^{11}x_3^{5}x_4^{14}x_5^{8}$& $x_1^{7}x_2^{11}x_3^{13}x_4^{6}x_5^{8}$.&\cr  
\end{tabular}}
	
\medskip
{\rm ii)} The following monomials are strongly inadmissible:
	
\smallskip
\centerline{\begin{tabular}{llll}
$x_1^{3}x_2^{5}x_3^{14}x_4^{11}x_5^{12}$& $x_1^{3}x_2^{13}x_3^{6}x_4^{11}x_5^{12}$& $x_1^{3}x_2^{13}x_3^{7}x_4^{10}x_5^{12}$& $x_1^{3}x_2^{13}x_3^{14}x_4^{3}x_5^{12}$.\cr
\end{tabular}}
\end{lems}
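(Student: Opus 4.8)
The plan is to prove both parts by the device already used in Lemmas \ref{bdd30}, \ref{bdd31} and \ref{bdd41}: for each listed monomial $x$ one writes down an explicit identity expressing $x$, modulo the appropriate subspace, as a sum of smaller monomials, and then reads off (strict, resp. strong) inadmissibility from the definitions. In every case $x$ has weight vector $\omega := (3)|^4$, so $s = \max\{i : \omega_i(x) > 0\} = 4$, the relevant sub-Hopf algebra is $\mathcal A(3)$ (generated by $Sq^1, Sq^2, Sq^4, Sq^8$), and the minimal spike of degree $45$ is $z = x_1^{31}x_2^{7}x_3^{7}$, with $\omega(z) = (3,3,3,1,1)$.

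For part (i), for each of the fifteen monomials $x$ I would produce an identity
\[
x = \sum_{1 \leqslant u \leqslant r} y_u + Sq^1(p_1) + Sq^2(p_2) + Sq^4(p_4) + Sq^8(p_8) \ \mbox{mod}\,(P_5^-((3)|^4)),
\]
with $p_1, p_2, p_4, p_8 \in P_5$ suitable polynomials and $y_1, \dots, y_r$ monomials of weight vector $(3)|^4$ with $y_u < x$; this is exactly the assertion $x \simeq_4 \sum_u y_u$, so $x$ is strictly inadmissible. The polynomials $p_{2^j}$ are chosen so that the Cartan expansions of $Sq^{2^j}(p_{2^j})$ recombine, after cancellation, into $x$ plus monomials that are either among the $y_u$ or, by Proposition \ref{bdkbs} applied to the non-leading terms, have weight vector strictly below $(3)|^4$ and hence lie in $P_5^-((3)|^4)$. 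Since all monomials occurring share the weight vector $(3)|^4$, the relation $y_u < x$ of Definition \ref{defn3} reduces to the lexicographic comparison $\sigma(y_u) < \sigma(x)$ of exponent vectors, which is apparent from the formulas. A number of the listed monomials do not need this bare-hands work: whenever $x$ factors as $u\, w^{2^c}\, y^{2^{c+d}}$ with $w$ a strictly inadmissible monomial already in hand --- either one of those in Lemma \ref{bdk1}, or a weight-$(2)|^4$ monomial of Lemma \ref{bdd41} transported through the algebra map $f_i$ --- Theorem \ref{dlcb1}(ii) gives strict inadmissibility of $x$ directly, and only the residual monomials are treated by an explicit expansion.

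For part (ii) the same construction is used, but now a single additional summand lying in $\mathcal P_{(5,45)}$ is allowed: one produces
\[
x = \sum_{1 \leqslant u \leqslant r} y_u + w + Sq^1(p_1) + Sq^2(p_2) + Sq^4(p_4) \ \mbox{mod}\,(P_5^-((3)|^4)),
\]
with $y_u < x$ of weight vector $(3)|^4$ and $w$ a single spike-type monomial, which gives $x \simeq_4 \sum_u y_u\ \mbox{mod}\,(\mathcal P_{(5,45)})$, i.e. $x$ is strongly inadmissible. Beyond part (i) one only has to verify $w \in \mathcal P_{(5,45)}$, which the computation arranges by producing a $w$ with $\omega_1(w) > 3$ and $\omega_2(w)$ small (a permuted analogue of $x_1^{5}x_2^{5}x_3^{5}x_4^{5}x_5^{25}$); then $\sum_{1 \leqslant j \leqslant 2} 2^{j-1}\omega_j(w) < 9 = \sum_{1 \leqslant j \leqslant 2} 2^{j-1}\omega_j(z)$, so the defining inequality holds with $h = 2$. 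Alternatively, when $x$ factors as $u\, w^{2^c}\, y^{2^{c+d}}$ with $w$ one of the strongly inadmissible monomials of Lemma \ref{bdd31}(ii) (or of the lower-$d$ cases), Proposition \ref{mdcb51} applies directly.

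The conceptual content is light, so the main difficulty is purely computational: engineering, for each of the nineteen monomials, the exact combination of Steenrod squares whose Cartan expansion isolates $x$ modulo strictly smaller monomials, and then checking that every unwanted monomial thereby produced genuinely fails $\omega(\cdot) \leqslant (3)|^4$ and so is absorbed into $P_5^-((3)|^4)$; in part (ii) there is the extra, but routine, step of confirming membership of the one leftover spike-type monomial in $\mathcal P_{(5,45)}$. I would organize the work by grouping the monomials according to the building block they reduce to, so that the $P_4$-level identities of Lemma \ref{bdd41} and the identities for smaller $d$ can be reused verbatim through the maps $f_i$ and Lemma \ref{bdlh}, leaving only a handful of genuinely new expansions to be carried out by hand.
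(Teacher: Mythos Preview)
Your plan is essentially the paper's own: for each monomial one exhibits an explicit identity modulo $P_5^-((3)|^4)$ involving $Sq^1, Sq^2, Sq^4, Sq^8$ and reads off strict (resp.\ strong) inadmissibility directly. Two small corrections are worth flagging. First, in part (ii) the ``single additional summand'' in $\mathcal P_{(5,45)}$ is not always a single monomial: for $x_1^{3}x_2^{13}x_3^{7}x_4^{10}x_5^{12}$ the paper needs a sum $g$ of six monomials in $\mathcal P_{(5,45)}$, so your template should allow a polynomial there. Second, the factorization shortcuts via Theorem \ref{dlcb1}(ii) or Proposition \ref{mdcb51} that you propose to exploit do not in fact apply to any of the nineteen monomials listed here --- none of them factors as $u\,w^{2^c}\,y^{2^{c+d}}$ with $w$ already known to be (strictly or strongly) inadmissible from Lemmas \ref{bdk1}, \ref{bdd30}, \ref{bdd31} or \ref{bdd41} --- which is precisely why a fresh explicit expansion is required for each; the paper carries out all nineteen by hand.
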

\begin{proof} By a direct computation using the Cartan formula, we have
\begin{align*}
x_1^{3}x_2^{5}x_3^{9}x_4^{14}x_5^{14} &= x_1^{2}x_2^{3}x_3^{13}x_4^{13}x_5^{14} + x_1^{2}x_2^{5}x_3^{11}x_4^{13}x_5^{14} + x_1^{3}x_2^{3}x_3^{12}x_4^{13}x_5^{14}\\ &\quad + x_1^{3}x_2^{4}x_3^{11}x_4^{13}x_5^{14} +  Sq^1(x_1^{3}x_2^{3}x_3^{7}x_4^{13}x_5^{18} + x_1^{3}x_2^{3}x_3^{7}x_4^{17}x_5^{14}\\ &\quad + x_1^{3}x_2^{3}x_3^{11}x_4^{13}x_5^{14}) + Sq^2(x_1^{2}x_2^{3}x_3^{11}x_4^{13}x_5^{14} + x_1^{5}x_2^{3}x_3^{7}x_4^{14}x_5^{14})\\ &\quad +  Sq^4(x_1^{3}x_2^{3}x_3^{7}x_4^{14}x_5^{14}) \  \mbox{ mod}(P_5^-((3)|^4)),\\
x_1^{3}x_2^{5}x_3^{14}x_4^{9}x_5^{14} &= x_1^{2}x_2^{3}x_3^{13}x_4^{13}x_5^{14} + x_1^{2}x_2^{5}x_3^{13}x_4^{11}x_5^{14} + x_1^{3}x_2^{3}x_3^{13}x_4^{12}x_5^{14}\\ &\quad + x_1^{3}x_2^{4}x_3^{13}x_4^{11}x_5^{14} +  Sq^1(x_1^{3}x_2^{3}x_3^{13}x_4^{7}x_5^{18} + x_1^{3}x_2^{3}x_3^{13}x_4^{11}x_5^{14}\\ &\quad + x_1^{3}x_2^{3}x_3^{17}x_4^{7}x_5^{14}) +  Sq^2(x_1^{2}x_2^{3}x_3^{13}x_4^{11}x_5^{14} + x_1^{5}x_2^{3}x_3^{14}x_4^{7}x_5^{14}) \\ &\quad +  Sq^4(x_1^{3}x_2^{3}x_3^{14}x_4^{7}x_5^{14})  \  \mbox{ mod}(P_5^-((3)|^4)),\\
x_1^{3}x_2^{7}x_3^{11}x_4^{12}x_5^{12} &= x_1^{2}x_2^{7}x_3^{11}x_4^{12}x_5^{13} + x_1^{2}x_2^{7}x_3^{13}x_4^{12}x_5^{11} + x_1^{3}x_2^{7}x_3^{12}x_4^{10}x_5^{13}\\ &\quad + x_1^{3}x_2^{7}x_3^{10}x_4^{12}x_5^{13} +  Sq^1(x_1^{3}x_2^{7}x_3^{11}x_4^{12}x_5^{12} + x_1^{3}x_2^{11}x_3^{9}x_4^{10}x_5^{11}) \\ &\quad+  Sq^2(x_1^{2}x_2^{7}x_3^{11}x_4^{12}x_5^{11} + x_1^{5}x_2^{7}x_3^{10}x_4^{10}x_5^{11})\\ &\quad +  Sq^4(x_1^{3}x_2^{7}x_3^{10}x_4^{10}x_5^{11}) \  \mbox{ mod}(P_5^-((3)|^4)),\\
x_1^{3}x_2^{7}x_3^{13}x_4^{8}x_5^{14} &= x_1^{2}x_2^{7}x_3^{13}x_4^{9}x_5^{14} + x_1^{3}x_2^{5}x_3^{11}x_4^{12}x_5^{14} + x_1^{3}x_2^{5}x_3^{13}x_4^{10}x_5^{14}\\ &\quad + x_1^{3}x_2^{7}x_3^{9}x_4^{12}x_5^{14} + x_1^{3}x_2^{7}x_3^{12}x_4^{9}x_5^{14} +  Sq^1(x_1^{3}x_2^{7}x_3^{7}x_4^{5}x_5^{22}\\ &\quad + x_1^{3}x_2^{7}x_3^{7}x_4^{9}x_5^{18} + x_1^{3}x_2^{7}x_3^{11}x_4^{5}x_5^{18} + x_1^{3}x_2^{7}x_3^{11}x_4^{9}x_5^{14})\\ &\quad +  Sq^2(x_1^{2}x_2^{7}x_3^{7}x_4^{5}x_5^{22} + x_1^{2}x_2^{7}x_3^{11}x_4^{9}x_5^{14} + x_1^{5}x_2^{7}x_3^{7}x_4^{6}x_5^{18}\\ &\quad + x_1^{5}x_2^{7}x_3^{7}x_4^{10}x_5^{14} + x_1^{5}x_2^{7}x_3^{11}x_4^{6}x_5^{14})\\ &\quad +  Sq^4(x_1^{3}x_2^{5}x_3^{7}x_4^{12}x_5^{14} + x_1^{3}x_2^{5}x_3^{13}x_4^{6}x_5^{14} + x_1^{3}x_2^{11}x_3^{7}x_4^{6}x_5^{14})\\ &\quad +  Sq^8(x_1^{3}x_2^{7}x_3^{7}x_4^{6}x_5^{14}) \  \mbox{ mod}(P_5^-((3)|^4)),\\
x_1^{3}x_2^{7}x_3^{13}x_4^{14}x_5^{8} &= x_1^{2}x_2^{7}x_3^{13}x_4^{14}x_5^{9} + x_1^{3}x_2^{5}x_3^{11}x_4^{14}x_5^{12} + x_1^{3}x_2^{5}x_3^{13}x_4^{14}x_5^{10}\\ &\quad + x_1^{3}x_2^{7}x_3^{9}x_4^{14}x_5^{12} + x_1^{3}x_2^{7}x_3^{12}x_4^{14}x_5^{9} +  Sq^1(x_1^{3}x_2^{7}x_3^{7}x_4^{17}x_5^{10}\\ &\quad + x_1^{3}x_2^{7}x_3^{7}x_4^{21}x_5^{6} + x_1^{3}x_2^{7}x_3^{11}x_4^{14}x_5^{9} + x_1^{3}x_2^{7}x_3^{11}x_4^{17}x_5^{6})\\ &\quad + Sq^2(x_1^{2}x_2^{7}x_3^{7}x_4^{21}x_5^{6} + x_1^{2}x_2^{7}x_3^{11}x_4^{14}x_5^{9} + x_1^{5}x_2^{7}x_3^{7}x_4^{14}x_5^{10}\\ &\quad + x_1^{5}x_2^{7}x_3^{7}x_4^{18}x_5^{6} + x_1^{5}x_2^{7}x_3^{11}x_4^{14}x_5^{6})\\ &\quad +  Sq^4(x_1^{3}x_2^{5}x_3^{7}x_4^{14}x_5^{12} + x_1^{3}x_2^{5}x_3^{13}x_4^{14}x_5^{6} + x_1^{3}x_2^{11}x_3^{7}x_4^{14}x_5^{6})\\ &\quad +  Sq^8(x_1^{3}x_2^{7}x_3^{7}x_4^{14}x_5^{6}) \  \mbox{ mod}(P_5^-((3)|^4)),\\
x_1^{3}x_2^{12}x_3^{3}x_4^{13}x_5^{14} &= x_1^{2}x_2^{11}x_3^{5}x_4^{13}x_5^{14} + x_1^{2}x_2^{13}x_3^{3}x_4^{13}x_5^{14} + x_1^{3}x_2^{9}x_3^{5}x_4^{14}x_5^{14}\\ &\quad + x_1^{3}x_2^{11}x_3^{4}x_4^{13}x_5^{14} + Sq^1(x_1^{3}x_2^{7}x_3^{3}x_4^{13}x_5^{18} + x_1^{3}x_2^{7}x_3^{3}x_4^{17}x_5^{14}\\ &\quad + x_1^{3}x_2^{11}x_3^{3}x_4^{13}x_5^{14}) +  Sq^2(x_1^{2}x_2^{11}x_3^{3}x_4^{13}x_5^{14} + x_1^{5}x_2^{7}x_3^{3}x_4^{14}x_5^{14})\\ &\quad +  Sq^4(x_1^{3}x_2^{7}x_3^{3}x_4^{14}x_5^{14})\  \mbox{ mod}(P_5^-((3)|^4)),\\
x_1^{7}x_2^{3}x_3^{11}x_4^{12}x_5^{12} &= x_1^{7}x_2^{2}x_3^{11}x_4^{12}x_5^{13} + x_1^{7}x_2^{2}x_3^{13}x_4^{12}x_5^{11} + x_1^{7}x_2^{3}x_3^{12}x_4^{10}x_5^{13}\\ &\quad + x_1^{7}x_2^{3}x_3^{10}x_4^{12}x_5^{13} +  Sq^1(x_1^{7}x_2^{3}x_3^{11}x_4^{12}x_5^{12} + x_1^{11}x_2^{3}x_3^{9}x_4^{10}x_5^{11})\\ &\quad +  Sq^2(x_1^{7}x_2^{2}x_3^{11}x_4^{12}x_5^{11} + x_1^{7}x_2^{5}x_3^{10}x_4^{10}x_5^{11})\\ &\quad + Sq^4(x_1^{7}x_2^{3}x_3^{10}x_4^{10}x_5^{11})\  \mbox{ mod}(P_5^-((3)|^4)),\\
x_1^{7}x_2^{3}x_3^{13}x_4^{8}x_5^{14} &= x_1^{5}x_2^{3}x_3^{11}x_4^{12}x_5^{14} + x_1^{5}x_2^{3}x_3^{13}x_4^{10}x_5^{14} + x_1^{7}x_2^{2}x_3^{13}x_4^{9}x_5^{14}\\ &\quad + x_1^{7}x_2^{3}x_3^{9}x_4^{12}x_5^{14} + x_1^{7}x_2^{3}x_3^{12}x_4^{9}x_5^{14} + Sq^1(x_1^{7}x_2^{3}x_3^{7}x_4^{5}x_5^{22}\\ &\quad + x_1^{7}x_2^{3}x_3^{7}x_4^{9}x_5^{18} + x_1^{7}x_2^{3}x_3^{11}x_4^{5}x_5^{18} + x_1^{7}x_2^{3}x_3^{11}x_4^{9}x_5^{14})\\ &\quad + Sq^2(x_1^{7}x_2^{2}x_3^{7}x_4^{5}x_5^{22} + x_1^{7}x_2^{2}x_3^{11}x_4^{9}x_5^{14} + x_1^{7}x_2^{5}x_3^{7}x_4^{6}x_5^{18}\\ &\quad + x_1^{7}x_2^{5}x_3^{7}x_4^{10}x_5^{14} + x_1^{7}x_2^{5}x_3^{11}x_4^{6}x_5^{14}) + Sq^4(x_1^{5}x_2^{3}x_3^{7}x_4^{12}x_5^{14}\\ &\quad + x_1^{5}x_2^{3}x_3^{13}x_4^{6}x_5^{14} + x_1^{11}x_2^{3}x_3^{7}x_4^{6}x_5^{14})\\ &\quad +  Sq^8(x_1^{7}x_2^{3}x_3^{7}x_4^{6}x_5^{14})\  \mbox{ mod}(P_5^-((3)|^4)),\\
x_1^{7}x_2^{3}x_3^{13}x_4^{14}x_5^{8} &= x_1^{7}x_2^{2}x_3^{13}x_4^{14}x_5^{9} + x_1^{5}x_2^{3}x_3^{11}x_4^{14}x_5^{12} + x_1^{5}x_2^{3}x_3^{13}x_4^{14}x_5^{10}\\ &\quad + x_1^{7}x_2^{3}x_3^{9}x_4^{14}x_5^{12} + x_1^{7}x_2^{3}x_3^{12}x_4^{14}x_5^{9} +  Sq^1(x_1^{7}x_2^{3}x_3^{7}x_4^{17}x_5^{10}\\ &\quad + x_1^{7}x_2^{3}x_3^{7}x_4^{21}x_5^{6} + x_1^{7}x_2^{3}x_3^{11}x_4^{14}x_5^{9} + x_1^{7}x_2^{3}x_3^{11}x_4^{17}x_5^{6})\\ &\quad +  Sq^2(x_1^{7}x_2^{2}x_3^{7}x_4^{21}x_5^{6} + x_1^{7}x_2^{2}x_3^{11}x_4^{14}x_5^{9} + x_1^{7}x_2^{5}x_3^{7}x_4^{14}x_5^{10}\\ &\quad + x_1^{7}x_2^{5}x_3^{7}x_4^{18}x_5^{6} + x_1^{7}x_2^{5}x_3^{11}x_4^{14}x_5^{6}) +  Sq^4(x_1^{5}x_2^{3}x_3^{7}x_4^{14}x_5^{12}\\ &\quad + x_1^{5}x_2^{3}x_3^{13}x_4^{14}x_5^{6} + x_1^{11}x_2^{3}x_3^{7}x_4^{14}x_5^{6})\\ &\quad +  Sq^8(x_1^{7}x_2^{3}x_3^{7}x_4^{14}x_5^{6}) \  \mbox{ mod}(P_5^-((3)|^4)),\\
x_1^{7}x_2^{7}x_3^{9}x_4^{14}x_5^{8} &= x_1^{4}x_2^{7}x_3^{12}x_4^{11}x_5^{11} + x_1^{4}x_2^{11}x_3^{12}x_4^{7}x_5^{11} + x_1^{5}x_2^{3}x_3^{12}x_4^{11}x_5^{14}\\ &\quad + x_1^{5}x_2^{6}x_3^{12}x_4^{11}x_5^{11} + x_1^{5}x_2^{7}x_3^{9}x_4^{14}x_5^{10} + x_1^{5}x_2^{7}x_3^{10}x_4^{11}x_5^{12}\\ &\quad + x_1^{5}x_2^{7}x_3^{10}x_4^{14}x_5^{9} + x_1^{5}x_2^{10}x_3^{12}x_4^{7}x_5^{11} + x_1^{5}x_2^{11}x_3^{6}x_4^{14}x_5^{9}\\ &\quad + x_1^{5}x_2^{11}x_3^{9}x_4^{14}x_5^{6} + x_1^{5}x_2^{11}x_3^{10}x_4^{7}x_5^{12} + x_1^{7}x_2^{3}x_3^{12}x_4^{9}x_5^{14}\\ &\quad + x_1^{7}x_2^{5}x_3^{9}x_4^{14}x_5^{10} + x_1^{7}x_2^{5}x_3^{10}x_4^{11}x_5^{12} + x_1^{7}x_2^{6}x_3^{12}x_4^{9}x_5^{11}\\ &\quad + x_1^{7}x_2^{7}x_3^{8}x_4^{14}x_5^{9} + x_1^{7}x_2^{7}x_3^{9}x_4^{8}x_5^{14} + x_1^{7}x_2^{7}x_3^{9}x_4^{10}x_5^{12}\\ &\quad + Sq^1(x_1^{7}x_2^{5}x_3^{12}x_4^{7}x_5^{13} + x_1^{7}x_2^{7}x_3^{9}x_4^{8}x_5^{13} + x_1^{7}x_2^{7}x_3^{9}x_4^{9}x_5^{12}\\ &\quad  + x_1^{7}x_2^{7}x_3^{12}x_4^{7}x_5^{11} + x_1^{7}x_2^{10}x_3^{5}x_4^{13}x_5^{9}) +  Sq^2(x_1^{7}x_2^{3}x_3^{5}x_4^{22}x_5^{6}\\ &\quad + x_1^{7}x_2^{3}x_3^{6}x_4^{7}x_5^{20} + x_1^{7}x_2^{3}x_3^{12}x_4^{7}x_5^{14} + x_1^{7}x_2^{6}x_3^{12}x_4^{7}x_5^{11}\\ &\quad + x_1^{7}x_2^{7}x_3^{6}x_4^{14}x_5^{9} + x_1^{7}x_2^{7}x_3^{9}x_4^{14}x_5^{6} + x_1^{7}x_2^{7}x_3^{10}x_4^{7}x_5^{12}\\ &\quad + x_1^{7}x_2^{7}x_3^{10}x_4^{8}x_5^{11} + x_1^{7}x_2^{9}x_3^{5}x_4^{13}x_5^{9} + x_1^{7}x_2^{9}x_3^{6}x_4^{11}x_5^{10} \\ &\quad + x_1^{9}x_2^{9}x_3^{3}x_4^{13}x_5^{9}) +  Sq^4(x_1^{4}x_2^{7}x_3^{12}x_4^{7}x_5^{11} + x_1^{5}x_2^{3}x_3^{12}x_4^{7}x_5^{14}\\ &\quad + x_1^{5}x_2^{6}x_3^{12}x_4^{7}x_5^{11} + x_1^{5}x_2^{7}x_3^{6}x_4^{14}x_5^{9} + x_1^{5}x_2^{7}x_3^{9}x_4^{14}x_5^{6} \\ &\quad  + x_1^{5}x_2^{7}x_3^{10}x_4^{7}x_5^{12} + x_1^{11}x_2^{5}x_3^{5}x_4^{14}x_5^{6} + x_1^{11}x_2^{5}x_3^{6}x_4^{7}x_5^{12})\\ &\quad +  Sq^8(x_1^{7}x_2^{5}x_3^{5}x_4^{14}x_5^{6} + x_1^{7}x_2^{5}x_3^{6}x_4^{7}x_5^{12})  \  \mbox{ mod}(P_5^-((3)|^4)),\\
x_1^{7}x_2^{9}x_3^{7}x_4^{10}x_5^{12} &= x_1^{4}x_2^{7}x_3^{11}x_4^{11}x_5^{12} + x_1^{4}x_2^{11}x_3^{7}x_4^{11}x_5^{12} + x_1^{5}x_2^{7}x_3^{11}x_4^{10}x_5^{12}\\ &\quad + x_1^{5}x_2^{11}x_3^{7}x_4^{10}x_5^{12} + x_1^{7}x_2^{7}x_3^{8}x_4^{11}x_5^{12} + x_1^{7}x_2^{7}x_3^{9}x_4^{10}x_5^{12}\\ &\quad + x_1^{7}x_2^{8}x_3^{7}x_4^{11}x_5^{12} +  Sq^1(x_1^{7}x_2^{7}x_3^{7}x_4^{11}x_5^{12}) + Sq^2(x_1^{7}x_2^{7}x_3^{7}x_4^{10}x_5^{12}) \\ &\quad+  Sq^4(x_1^{4}x_2^{7}x_3^{7}x_4^{11}x_5^{12} + x_1^{5}x_2^{7}x_3^{7}x_4^{10}x_5^{12}) \  \mbox{ mod}(P_5^-((3)|^4)),\\
x_1^{7}x_2^{11}x_3^{3}x_4^{12}x_5^{12} &= x_1^{5}x_2^{7}x_3^{8}x_4^{11}x_5^{14} + x_1^{5}x_2^{11}x_3^{8}x_4^{7}x_5^{14} + x_1^{7}x_2^{9}x_3^{4}x_4^{11}x_5^{14}\\ &\quad + x_1^{7}x_2^{10}x_3x_4^{13}x_5^{14}  + x_1^{7}x_2^{10}x_3^{3}x_4^{12}x_5^{13} + x_1^{7}x_2^{10}x_3^{5}x_4^{12}x_5^{11}\\ &\quad + x_1^{7}x_2^{11}x_3x_4^{12}x_5^{14} + x_1^{7}x_2^{11}x_3^{2}x_4^{12}x_5^{13} +  Sq^1(x_1^{7}x_2^{7}x_3x_4^{7}x_5^{22}\\ &\quad + x_1^{7}x_2^{7}x_3x_4^{11}x_5^{18} + x_1^{7}x_2^{7}x_3x_4^{10}x_5^{19} + x_1^{7}x_2^{7}x_3x_4^{18}x_5^{11}\\ &\quad + x_1^{7}x_2^{11}x_3x_4^{7}x_5^{18} + x_1^{7}x_2^{11}x_3x_4^{11}x_5^{14} + x_1^{7}x_2^{11}x_3^{3}x_4^{12}x_5^{11}\\ &\quad + x_1^{7}x_2^{11}x_3^{4}x_4^{9}x_5^{13}) +  Sq^2(x_1^{7}x_2^{7}x_3^{2}x_4^{13}x_5^{14} + x_1^{7}x_2^{10}x_3x_4^{11}x_5^{14}\\ &\quad + x_1^{7}x_2^{10}x_3^{3}x_4^{12}x_5^{11} + x_1^{7}x_2^{13}x_3^{2}x_4^{7}x_5^{14} + x_1^{7}x_2^{13}x_3^{2}x_4^{10}x_5^{11})\\ &\quad +  Sq^4(x_1^{5}x_2^{7}x_3^{4}x_4^{11}x_5^{14} + x_1^{5}x_2^{11}x_3^{4}x_4^{7}x_5^{14}\\ &\quad + x_1^{11}x_2^{7}x_3^{2}x_4^{7}x_5^{14} + x_1^{11}x_2^{7}x_3^{2}x_4^{10}x_5^{11})\\ &\quad +  Sq^8(x_1^{7}x_2^{7}x_3^{2}x_4^{7}x_5^{14} + x_1^{7}x_2^{7}x_3^{2}x_4^{10}x_5^{11}) \  \mbox{ mod}(P_5^-((3)|^4)),\\
x_1^{7}x_2^{11}x_3^{5}x_4^{8}x_5^{14} &= x_1^{5}x_2^{7}x_3^{9}x_4^{10}x_5^{14} + x_1^{5}x_2^{11}x_3^{9}x_4^{6}x_5^{14} + x_1^{7}x_2^{9}x_3^{5}x_4^{10}x_5^{14}\\ &\quad + x_1^{7}x_2^{10}x_3^{5}x_4^{9}x_5^{14} + x_1^{7}x_2^{11}x_3^{4}x_4^{9}x_5^{14} + Sq^1(x_1^{7}x_2^{7}x_3^{3}x_4^{5}x_5^{22}\\ &\quad + x_1^{7}x_2^{7}x_3^{3}x_4^{9}x_5^{18} + x_1^{7}x_2^{11}x_3^{3}x_4^{5}x_5^{18} + x_1^{7}x_2^{11}x_3^{3}x_4^{9}x_5^{14})\\ &\quad + Sq^2(x_1^{7}x_2^{7}x_3^{2}x_4^{5}x_5^{22} + x_1^{7}x_2^{7}x_3^{3}x_4^{12}x_5^{14} + x_1^{7}x_2^{10}x_3^{3}x_4^{9}x_5^{14}\\ &\quad + x_1^{7}x_2^{13}x_3^{3}x_4^{6}x_5^{14}) +  Sq^4(x_1^{5}x_2^{7}x_3^{5}x_4^{10}x_5^{14} + x_1^{5}x_2^{11}x_3^{5}x_4^{6}x_5^{14}\\ &\quad + x_1^{11}x_2^{7}x_3^{3}x_4^{6}x_5^{14}) + Sq^8(x_1^{7}x_2^{7}x_3^{3}x_4^{6}x_5^{14})\  \mbox{ mod}(P_5^-((3)|^4)),\\
x_1^{7}x_2^{11}x_3^{5}x_4^{14}x_5^{8} &= x_1^{5}x_2^{7}x_3^{9}x_4^{14}x_5^{10} + x_1^{5}x_2^{11}x_3^{9}x_4^{14}x_5^{6} + x_1^{7}x_2^{9}x_3^{5}x_4^{14}x_5^{10}\\ &\quad + x_1^{7}x_2^{10}x_3^{5}x_4^{14}x_5^{9} + x_1^{7}x_2^{11}x_3^{4}x_4^{14}x_5^{9} +  Sq^1(x_1^{7}x_2^{7}x_3^{3}x_4^{18}x_5^{9}\\ &\quad + x_1^{7}x_2^{7}x_3^{3}x_4^{22}x_5^{5} + x_1^{7}x_2^{11}x_3^{3}x_4^{14}x_5^{9} + x_1^{7}x_2^{11}x_3^{3}x_4^{18}x_5^{5})\\ &\quad +  Sq^2(x_1^{7}x_2^{7}x_3^{2}x_4^{22}x_5^{5} + x_1^{7}x_2^{7}x_3^{3}x_4^{14}x_5^{12} + x_1^{7}x_2^{10}x_3^{3}x_4^{14}x_5^{9}\\ &\quad + x_1^{7}x_2^{13}x_3^{3}x_4^{14}x_5^{6}) +  Sq^4(x_1^{5}x_2^{7}x_3^{5}x_4^{14}x_5^{10} + x_1^{5}x_2^{11}x_3^{5}x_4^{14}x_5^{6}\\ &\quad + x_1^{11}x_2^{7}x_3^{3}x_4^{14}x_5^{6}) +  Sq^8(x_1^{7}x_2^{7}x_3^{3}x_4^{14}x_5^{6})\  \mbox{ mod}(P_5^-((3)|^4)),\\
x_1^{7}x_2^{11}x_3^{13}x_4^{6}x_5^{8} &= x_1^{7}x_2^{7}x_3^{12}x_4^{9}x_5^{10} + x_1^{7}x_2^{7}x_3^{13}x_4^{10}x_5^{8} + x_1^{7}x_2^{9}x_3^{13}x_4^{6}x_5^{10}\\ &\quad + x_1^{7}x_2^{9}x_3^{13}x_4^{10}x_5^{6} + x_1^{7}x_2^{10}x_3^{13}x_4^{6}x_5^{9} + x_1^{7}x_2^{10}x_3^{13}x_4^{9}x_5^{6}\\ &\quad + x_1^{7}x_2^{11}x_3^{12}x_4^{6}x_5^{9} + x_1^{7}x_2^{11}x_3^{12}x_4^{9}x_5^{6} + x_1^{7}x_2^{11}x_3^{13}x_4^{4}x_5^{10}\\ &\quad + Sq^1(x_1^{7}x_2^{7}x_3^{11}x_4^{9}x_5^{10} + x_1^{7}x_2^{7}x_3^{19}x_4^{5}x_5^{6} + x_1^{7}x_2^{11}x_3^{11}x_4^{6}x_5^{9}\\ &\quad + x_1^{7}x_2^{11}x_3^{11}x_4^{9}x_5^{6}) +  Sq^2(x_1^{7}x_2^{7}x_3^{11}x_4^{6}x_5^{12} + x_1^{7}x_2^{7}x_3^{11}x_4^{12}x_5^{6}\\ &\quad + x_1^{7}x_2^{7}x_3^{18}x_4^{5}x_5^{6} + x_1^{7}x_2^{7}x_3^{19}x_4^{4}x_5^{6} + x_1^{7}x_2^{10}x_3^{11}x_4^{6}x_5^{9}\\ &\quad + x_1^{7}x_2^{10}x_3^{11}x_4^{9}x_5^{6} + x_1^{7}x_2^{13}x_3^{11}x_4^{6}x_5^{6}) \\ &\quad+  Sq^4(x_1^{5}x_2^{7}x_3^{13}x_4^{6}x_5^{10} + x_1^{5}x_2^{7}x_3^{13}x_4^{10}x_5^{6} + x_1^{5}x_2^{11}x_3^{13}x_4^{6}x_5^{6}\\ &\quad + x_1^{11}x_2^{7}x_3^{11}x_4^{6}x_5^{6} + x_1^{11}x_2^{7}x_3^{13}x_4^{4}x_5^{6})\\ &\quad + Sq^8(x_1^{7}x_2^{7}x_3^{11}x_4^{6}x_5^{6} + x_1^{7}x_2^{7}x_3^{13}x_4^{4}x_5^{6}) \  \mbox{ mod}(P_5^-((3)|^4)).
\end{align*}
Hence, Part i) is proved.
We now prove Part ii). We have
\begin{align*}
x_1^{3}x_2^{5}x_3^{14}x_4^{11}x_5^{12} &= x_1^{2}x_2^{3}x_3^{13}x_4^{14}x_5^{13} + x_1^{2}x_2^{5}x_3^{13}x_4^{14}x_5^{11} + x_1^{3}x_2^{3}x_3^{13}x_4^{14}x_5^{12}\\ &\quad + x_1^{3}x_2^{3}x_3^{14}x_4^{13}x_5^{12} + x_1^{3}x_2^{4}x_3^{13}x_4^{14}x_5^{11} + x_1^{3}x_2^{5}x_3^{13}x_4^{14}x_5^{10}\\ &\quad + x_1^{5}x_2^{5}x_3^{13}x_4^{13}x_5^{9} +  Sq^1(x_1^{3}x_2^{3}x_3^{13}x_4^{14}x_5^{11} + x_1^{3}x_2^{3}x_3^{13}x_4^{18}x_5^{7}\\ &\quad + x_1^{3}x_2^{3}x_3^{17}x_4^{11}x_5^{10} + x_1^{3}x_2^{3}x_3^{17}x_4^{14}x_5^{7} + x_1^{3}x_2^{6}x_3^{13}x_4^{13}x_5^{9}) \\ &\quad+  Sq^2(x_1^{2}x_2^{3}x_3^{13}x_4^{14}x_5^{11} + x_1^{3}x_2^{5}x_3^{13}x_4^{13}x_5^{9} \\ &\quad+ x_1^{5}x_2^{3}x_3^{14}x_4^{11}x_5^{10} + x_1^{5}x_2^{3}x_3^{14}x_4^{14}x_5^{7})\\ &\quad +  Sq^4(x_1^{3}x_2^{3}x_3^{14}x_4^{11}x_5^{10} + x_1^{3}x_2^{3}x_3^{14}x_4^{14}x_5^{7}) \  \mbox{ mod}(P_5^-((3)|^4)),
\end{align*}
where $x_1^{5}x_2^{5}x_3^{13}x_4^{13}x_5^{9} \in  \mathcal P_{(5,45)}$. Hence, this equality shows that $x_1^{3}x_2^{5}x_3^{14}x_4^{11}x_5^{12}$ is strongly inadmissible. 
\begin{align*}
x_1^{3}x_2^{13}x_3^{6}x_4^{11}x_5^{12} &= x_1^{3}x_2^{7}x_3^{8}x_4^{14}x_5^{13} + x_1^{3}x_2^{7}x_3^{12}x_4^{14}x_5^{9} + x_1^{3}x_2^{9}x_3^{6}x_4^{14}x_5^{13}\\ &\quad + x_1^{3}x_2^{9}x_3^{12}x_4^{14}x_5^{7} + x_1^{3}x_2^{11}x_3^{6}x_4^{13}x_5^{12} + x_1^{3}x_2^{13}x_3^{4}x_4^{14}x_5^{11}\\ &\quad + x_1^{3}x_2^{13}x_3^{5}x_4^{14}x_5^{10} + x_1^{5}x_2^{13}x_3^{5}x_4^{13}x_5^{9} +  Sq^1(x_1^{3}x_2^{7}x_3^{5}x_4^{18}x_5^{11}\\ &\quad + x_1^{3}x_2^{7}x_3^{9}x_4^{18}x_5^{7} + x_1^{3}x_2^{11}x_3^{5}x_4^{18}x_5^{7} + x_1^{3}x_2^{11}x_3^{9}x_4^{11}x_5^{10}\\ &\quad + x_1^{3}x_2^{14}x_3^{5}x_4^{13}x_5^{9}) +  Sq^2(x_1^{3}x_2^{13}x_3^{5}x_4^{13}x_5^{9} + x_1^{5}x_2^{7}x_3^{6}x_4^{14}x_5^{11}\\ &\quad + x_1^{5}x_2^{7}x_3^{10}x_4^{14}x_5^{7} + x_1^{5}x_2^{11}x_3^{6}x_4^{11}x_5^{10} + x_1^{5}x_2^{11}x_3^{6}x_4^{14}x_5^{7}) \\ &\quad+  Sq^4(x_1^{3}x_2^{7}x_3^{6}x_4^{14}x_5^{11} + x_1^{3}x_2^{7}x_3^{10}x_4^{14}x_5^{7} + x_1^{3}x_2^{11}x_3^{6}x_4^{11}x_5^{10} \\ &\quad+ x_1^{3}x_2^{11}x_3^{6}x_4^{14}x_5^{7} + x_1^{3}x_2^{13}x_3^{4}x_4^{14}x_5^{7}) \  \mbox{ mod}(P_5^-((3)|^4)).
\end{align*}
Since $x_1^{5}x_2^{13}x_3^{5}x_4^{13}x_5^{9}\in \mathcal P_{(5,45)}$, the monomial $x_1^{3}x_2^{13}x_3^{6}x_4^{11}x_5^{12}$ is strongly inadmissible.

For $w = x_1^{3}x_2^{13}x_3^{7}x_4^{10}x_5^{12}$, we have
\begin{align*}
 w &= x_1^{2}x_2^{7}x_3^{13}x_4^{10}x_5^{13} + x_1^{2}x_2^{7}x_3^{13}x_4^{12}x_5^{11} + x_1^{2}x_2^{7}x_3^{14}x_4^{9}x_5^{13} + x_1^{2}x_2^{13}x_3^{7}x_4^{9}x_5^{14}\\ &\quad + x_1^{2}x_2^{13}x_3^{7}x_4^{10}x_5^{13} + x_1^{2}x_2^{13}x_3^{7}x_4^{12}x_5^{11} + x_1^{2}x_2^{13}x_3^{9}x_4^{7}x_5^{14} + x_1^{2}x_2^{13}x_3^{10}x_4^{7}x_5^{13}\\ &\quad + x_1^{2}x_2^{13}x_3^{12}x_4^{7}x_5^{11} + x_1^{3}x_2^{7}x_3^{8}x_4^{13}x_5^{14} + x_1^{3}x_2^{7}x_3^{10}x_4^{13}x_5^{12} + x_1^{3}x_2^{7}x_3^{13}x_4^{8}x_5^{14}\\ &\quad + x_1^{3}x_2^{7}x_3^{14}x_4^{9}x_5^{12} + x_1^{3}x_2^{8}x_3^{7}x_4^{13}x_5^{14} + x_1^{3}x_2^{8}x_3^{13}x_4^{7}x_5^{14} + x_1^{3}x_2^{10}x_3^{7}x_4^{13}x_5^{12}\\ &\quad + x_1^{3}x_2^{10}x_3^{13}x_4^{7}x_5^{12} + x_1^{3}x_2^{13}x_3^{4}x_4^{11}x_5^{14} + x_1^{3}x_2^{13}x_3^{6}x_4^{11}x_5^{12} + x_1^{3}x_2^{13}x_3^{7}x_4^{8}x_5^{14} +g\\ &\quad +  Sq^1(x_1^{3}x_2^{7}x_3^{7}x_4^{9}x_5^{18} + x_1^{3}x_2^{7}x_3^{7}x_4^{14}x_5^{13} + x_1^{3}x_2^{7}x_3^{9}x_4^{7}x_5^{18} + x_1^{3}x_2^{7}x_3^{9}x_4^{14}x_5^{11}\\ &\quad + x_1^{3}x_2^{7}x_3^{13}x_4^{10}x_5^{11} + x_1^{3}x_2^{7}x_3^{14}x_4^{7}x_5^{13} + x_1^{3}x_2^{9}x_3^{7}x_4^{7}x_5^{18} + x_1^{3}x_2^{9}x_3^{7}x_4^{14}x_5^{11}\\ &\quad + x_1^{3}x_2^{9}x_3^{14}x_4^{7}x_5^{11} + x_1^{3}x_2^{14}x_3^{7}x_4^{7}x_5^{13} + x_1^{3}x_2^{14}x_3^{7}x_4^{9}x_5^{11} + x_1^{3}x_2^{14}x_3^{9}x_4^{7}x_5^{11}\\ &\quad + x_1^{4}x_2^{7}x_3^{7}x_4^{13}x_5^{13} + x_1^{4}x_2^{7}x_3^{9}x_4^{13}x_5^{11} + x_1^{4}x_2^{7}x_3^{13}x_4^{7}x_5^{13} + x_1^{4}x_2^{7}x_3^{13}x_4^{9}x_5^{11}\\ &\quad + x_1^{4}x_2^{9}x_3^{7}x_4^{13}x_5^{11} + x_1^{4}x_2^{9}x_3^{13}x_4^{7}x_5^{11}) +  Sq^2(x_1^{2}x_2^{7}x_3^{7}x_4^{14}x_5^{13} + x_1^{2}x_2^{7}x_3^{9}x_4^{14}x_5^{11}\\ &\quad + x_1^{2}x_2^{7}x_3^{13}x_4^{10}x_5^{11} + x_1^{2}x_2^{7}x_3^{14}x_4^{7}x_5^{13} + x_1^{2}x_2^{9}x_3^{7}x_4^{14}x_5^{11} + x_1^{2}x_2^{9}x_3^{14}x_4^{7}x_5^{11}\\ &\quad + x_1^{2}x_2^{13}x_3^{7}x_4^{7}x_5^{14} + x_1^{2}x_2^{13}x_3^{7}x_4^{10}x_5^{11} + x_1^{2}x_2^{13}x_3^{10}x_4^{7}x_5^{11} + x_1^{5}x_2^{7}x_3^{7}x_4^{7}x_5^{17}\\ &\quad + x_1^{5}x_2^{7}x_3^{7}x_4^{11}x_5^{13} + x_1^{5}x_2^{7}x_3^{9}x_4^{11}x_5^{11} + x_1^{5}x_2^{7}x_3^{11}x_4^{7}x_5^{13} + x_1^{5}x_2^{7}x_3^{11}x_4^{9}x_5^{11}\\ &\quad + x_1^{5}x_2^{9}x_3^{7}x_4^{11}x_5^{11} + x_1^{5}x_2^{9}x_3^{11}x_4^{7}x_5^{11} + x_1^{5}x_2^{11}x_3^{7}x_4^{7}x_5^{13} + x_1^{5}x_2^{11}x_3^{7}x_4^{9}x_5^{11}\\ &\quad + x_1^{5}x_2^{11}x_3^{9}x_4^{7}x_5^{11}) + Sq^4(x_1^{3}x_2^{7}x_3^{7}x_4^{7}x_5^{17} + x_1^{3}x_2^{7}x_3^{7}x_4^{11}x_5^{13} + x_1^{3}x_2^{7}x_3^{9}x_4^{11}x_5^{11}\\ &\quad + x_1^{3}x_2^{7}x_3^{11}x_4^{7}x_5^{13} + x_1^{3}x_2^{7}x_3^{11}x_4^{9}x_5^{11} + x_1^{3}x_2^{9}x_3^{7}x_4^{11}x_5^{11} + x_1^{3}x_2^{9}x_3^{11}x_4^{7}x_5^{11}\\ &\quad + x_1^{3}x_2^{11}x_3^{7}x_4^{7}x_5^{13} + x_1^{3}x_2^{11}x_3^{7}x_4^{9}x_5^{11} + x_1^{3}x_2^{11}x_3^{9}x_4^{7}x_5^{11}\\ &\quad + x_1^{3}x_2^{13}x_3^{4}x_4^{7}x_5^{14} + x_1^{3}x_2^{13}x_3^{6}x_4^{7}x_5^{12})  \  \mbox{ mod}(P_5^-((3)|^4)),
\end{align*}
where
\begin{align*}
g&= x_1^{3}x_2^{7}x_3^{9}x_4^{9}x_5^{17} + x_1^{3}x_2^{9}x_3^{7}x_4^{9}x_5^{17} + x_1^{3}x_2^{9}x_3^{9}x_4^{7}x_5^{17}\\ &\quad + x_1^{3}x_2^{9}x_3^{9}x_4^{11}x_5^{13} + x_1^{3}x_2^{9}x_3^{11}x_4^{9}x_5^{13} + x_1^{3}x_2^{11}x_3^{9}x_4^{9}x_5^{13} \in \mathcal P_{(5,45)}.
\end{align*}
Hence, $w$ is strongly inadmissible.
\begin{align*}
x_1^{3}x_2^{13}x_3^{14}x_4^{3}x_5^{12} &= x_1^{3}x_2^{7}x_3^{14}x_4^{8}x_5^{13} + x_1^{3}x_2^{7}x_3^{14}x_4^{12}x_5^{9} + x_1^{3}x_2^{9}x_3^{14}x_4^{6}x_5^{13}\\ &\quad + x_1^{3}x_2^{9}x_3^{14}x_4^{12}x_5^{7} + x_1^{3}x_2^{11}x_3^{14}x_4^{4}x_5^{13} + x_1^{3}x_2^{11}x_3^{14}x_4^{5}x_5^{12}\\ &\quad + x_1^{3}x_2^{13}x_3^{13}x_4^{6}x_5^{10} + x_1^{3}x_2^{13}x_3^{14}x_4^{2}x_5^{13} + x_1^{5}x_2^{13}x_3^{13}x_4^{5}x_5^{9}\\ &\quad + Sq^1(x_1^{3}x_2^{7}x_3^{17}x_4^{6}x_5^{11} + x_1^{3}x_2^{7}x_3^{17}x_4^{10}x_5^{7} + x_1^{3}x_2^{11}x_3^{17}x_4^{2}x_5^{11}\\ &\quad + x_1^{3}x_2^{11}x_3^{17}x_4^{3}x_5^{10} + x_1^{3}x_2^{11}x_3^{17}x_4^{6}x_5^{7} + x_1^{3}x_2^{14}x_3^{13}x_4^{5}x_5^{9})\\ &\quad +  Sq^2(x_1^{3}x_2^{13}x_3^{13}x_4^{5}x_5^{9} + x_1^{5}x_2^{7}x_3^{14}x_4^{6}x_5^{11} + x_1^{5}x_2^{7}x_3^{14}x_4^{10}x_5^{7}\\ &\quad + x_1^{5}x_2^{11}x_3^{14}x_4^{2}x_5^{11} + x_1^{5}x_2^{11}x_3^{14}x_4^{3}x_5^{10} + x_1^{5}x_2^{11}x_3^{14}x_4^{6}x_5^{7}) \\ &\quad+ Sq^4(x_1^{3}x_2^{7}x_3^{14}x_4^{6}x_5^{11} + x_1^{3}x_2^{7}x_3^{14}x_4^{10}x_5^{7} + x_1^{3}x_2^{11}x_3^{14}x_4^{2}x_5^{11}\\ &\quad + x_1^{3}x_2^{11}x_3^{14}x_4^{3}x_5^{10} + x_1^{3}x_2^{11}x_3^{14}x_4^{6}x_5^{7}\\ &\quad + x_1^{3}x_2^{13}x_3^{14}x_4^{4}x_5^{7})  \  \mbox{ mod}(P_5^-((3)|^4)).
\end{align*}
Since $x_1^{5}x_2^{13}x_3^{13}x_4^{5}x_5^{9} \in \mathcal P_{(5,45)}$, the monomial $x_1^{3}x_2^{13}x_3^{14}x_4^{3}x_5^{12}$  is strongly inadmissible. The lemma is completely proved.
\end{proof}

\begin{proof}[Proof of Proposition \ref{mdd41}.] 
Let $x \in P_5^+((3)|^4)$ be an admissible monomial, then $x = X_{i,j}y^2$ with $1 \leqslant i < j \leqslant k$. Since $x$ is admissible, by Theorem \ref{dlcb1}, $y$ is admissible. We can see that if $z \in A(3)\cup C(3)$ such that $X_{i,j}z^2 \in P_5^+((3)|^4)$ and  $X_{i,j}z^2 \ne a_{4,t}$ for all $t,\, 1 \leqslant t \leqslant 94$, then either $X_{i,j}z^2$ is one of the monomials as given in Lemmas \ref{bdk1}(iv), \ref{bdd41}, \ref{bdd42}, or $X_{i,j}z^2$ is of the form $uv^{2^r}$, where $u$ is a monomial as given in one of Lemmas \ref{bdk1}, \ref{bdd30}, \ref{bdd31}, $v$ is a monomial in $P_5$ and $r$ is a suitable positive integer. Hence, by Proposition 
\ref{mdcb51}, $X_{i,j}z^2$ is inadmissible. Since $x = X_{i,j}y^2$ and $y$ is admissible, we have $x =  a_{4,t}$ for some $t,\, 1 \leqslant t \leqslant 94$. Hence, $B_5^+((3)|^4) \subset A(4)\cup C(4)$. The proposition follows.
\end{proof}

\subsection{Proofs of Theorems \ref{dl51} and \ref{dl20}}\

\medskip
By a similar computation as given in the previous lemmas, one gets the following.
\begin{lems}\label{bdd51}\  
	
\medskip
{\rm i)} The following monomials are strictly inadmissible:
\[ x_1^{3}x_2^{7}x_3^{24}x_4^{29}x_5^{30}\ \ x_1^{7}x_2^{3}x_3^{24}x_4^{29}x_5^{30}\ \ x_1^{7}x_2^{7}x_3^{25}x_4^{26}x_5^{28}\ \ x_1^{15}x_2^{15}x_3^{17}x_4^{18}x_5^{28}.\]  
	
{\rm ii)} The following monomials are strongly inadmissible:
\[ x_1^{3}x_2^{15}x_3^{21}x_4^{26}x_5^{28}\ \ x_1^{15}x_2^{3}x_3^{21}x_4^{26}x_5^{28}.\]
\end{lems}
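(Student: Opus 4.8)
Lemma \ref{bdd51} is the $d=5$ analogue of Lemmas \ref{bdd30}, \ref{bdd31}, \ref{bdd41}, \ref{bdd42}, and the plan is to prove it by the same mechanism, now specialized to weight vector $(3)|^5$ in $P_5$. For Part (i), the strategy for each of the four listed monomials $w$ is as follows. First I would apply a Cartan-formula reduction (using Proposition \ref{mdcb1} repeatedly, as in Lemma \ref{bdd41}) to write $w$ in the form
\[
w \;=\; \sum_{t} y_t \;+\; Sq^1(h_1)+Sq^2(h_2)+Sq^4(h_3)+Sq^8(h_4)\ \ \text{mod}\,(P_5^-((3)|^5)),
\]
where the $y_t$ are monomials of weight vector $(3)|^5$ with $y_t<w$ and the $h_\ell$ are explicit polynomials in $P_5$. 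To obtain such a decomposition efficiently, I would notice that for each $w$ in the list the "lower" exponent block has the same shape as a monomial already treated at level $d=4$ (e.g.\ $x_1^{3}x_2^{7}x_3^{24}x_4^{29}x_5^{30}$ relates to $x_1^3x_2^7x_3^{12}x_4^{13}x_5^{14}$-type reductions via squaring), so that, writing $w = v^2 x_j$ or splitting off a square factor, I can transport the $d=4$ identities through Proposition \ref{mdcb1}(ii) and the Cartan formula. Concretely, since each of these monomials, after factoring out the appropriate power, is of the form $x_i^{15}f_i(v)$ for a monomial $v\in P_4$ of weight vector $(2)|^4$ (or $x_i^{31}f_i(v)$-type), I can invoke the analogue of Lemma \ref{bdd41}'s computation: $v$ is strictly inadmissible of weight vector $(2)|^4$ in $P_4$, hence by Theorem \ref{dlcb1}(ii) so is $x_i^{15}f_i(v)$, and then the claimed strict inadmissibility of $w$ follows.

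For Part (ii), I would argue exactly as in Lemma \ref{bdd31}(ii) and Lemma \ref{bdd42}(ii): for each of $x_1^{3}x_2^{15}x_3^{21}x_4^{26}x_5^{28}$ and $x_1^{15}x_2^{3}x_3^{21}x_4^{26}x_5^{28}$ produce, by a Cartan-formula expansion mod $P_5^-((3)|^5)$, an identity
\[
w \;=\; y_1+y_2+\cdots+y_r \;+\; \sum_{1\le \ell\le 4} Sq^{2^{\ell-1}}(g_\ell)\ \ \text{mod}\,(P_5^-((3)|^5)),
\]
in which the $y_t$ are monomials of weight vector $(3)|^5$ with $y_t<w$, all of them admissible or lying in $A(5)\cup C(5)$, except for exactly one term which lies in $\mathcal P_{(5,93)}$ (here $93 = 3(2^5-1)$). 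Since membership of a monomial in $\mathcal P_{(5,n)}$ makes it hit (by Theorem \ref{dlww}, as remarked after Definition \ref{dnstin}), Definition \ref{dnstin} then gives strong inadmissibility. The candidate for the $\mathcal P_{(5,93)}$-term is, by analogy with the $d=3,4$ cases, a "near-spike" such as $x_1^{5}x_2^{?}x_3^{21}x_4^{21}x_5^{21}$-type monomial whose partial sums $\sum_{i\le h}2^{i-1}\omega_i$ drop below those of the minimal spike $z^*$ of degree $93$ described in Proposition \ref{mdstin}; I would verify this numerically for the specific exponent pattern.

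The main obstacle is bookkeeping rather than conceptual: one must exhibit explicit Cartan-formula identities whose right-hand sides are fully reduced modulo $P_5^-((3)|^5)$ (and, for Part (i), modulo $\mathcal A(4)^+P_5$), which for $d=5$ involves monomials of degree $93$ and therefore lengthy expansions with $Sq^1,\dots,Sq^8$. The leverage that keeps this manageable is Proposition \ref{mdcb51} (the refinement of Theorem \ref{dlcb1}): it suffices to establish the identities "at the bottom", i.e.\ for the associated monomials of weight vector $(2)|^4$ in $P_4$ — which are precisely the ones displayed in the proof of Lemma \ref{bdd41} — and then raise them to $P_5$ via $f_i$ and a power map. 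Thus I would (a) identify, for each of the six monomials in Lemma \ref{bdd51}, the $P_{k-2}$-level monomial it comes from under the construction $X_{(\mathcal I,\mathcal J)}(X_{i,j})^{2^d-2^{d_0}}(f_{(i,j)}(y))^{2^d}$; (b) quote the corresponding $d=4$ (or $d=3$) identity already proved; (c) apply Proposition \ref{mdcb51} (for the strongly inadmissible cases) or Theorem \ref{dlcb1}(ii) (for the strictly inadmissible cases) together with Lemma \ref{bdlh} to push the relation up and absorb the $\mathcal A(d-1)^+P_5$ and $P_5^-((3)|^5)$ error terms; and (d) check $y_t<w$ by comparing exponent vectors. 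This reduces the proof to the "similar computation" the paper invokes, and I would present it in that compressed style.
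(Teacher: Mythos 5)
There is a genuine gap at the heart of your Part (i) argument. You propose to obtain the required decompositions by ``transporting'' identities already proved at level $d=4$: you assert that each listed monomial, after factoring, is of the form $x_i^{15}f_i(v)$ (or $x_i^{31}f_i(v)$) with $v$ a strictly inadmissible monomial of weight vector $(2)|^4$ in $P_4$, so that Theorem \ref{dlcb1}(ii) or Proposition \ref{mdcb51} finishes the job. This is false for every monomial in the lemma. A monomial of weight vector $(3)|^5$ and degree $93$ of the form $x_i^{2^5-1}f_i(v)$ must have an exponent equal to $31$, and none of $x_1^{3}x_2^{7}x_3^{24}x_4^{29}x_5^{30}$, $x_1^{7}x_2^{3}x_3^{24}x_4^{29}x_5^{30}$, $x_1^{7}x_2^{7}x_3^{25}x_4^{26}x_5^{28}$, $x_1^{15}x_2^{15}x_3^{17}x_4^{18}x_5^{28}$ has one; nor does any of them admit a factorization $uw^{2^c}y^{2^{c+d'}}$ in which the middle block $w$ is one of the monomials of Lemmas \ref{bdk1}, \ref{bdd30}, \ref{bdd31}, \ref{bdd41}, \ref{bdd42}. (For instance $x_1^{7}x_2^{7}x_3^{25}x_4^{26}x_5^{28}=X_{1,2}^{16}X_{1,2}^{8}X_{3,4}^{4}X_{3,5}^{2}X_{4,5}$, and no consecutive slice of this product satisfies the hypotheses of Lemma \ref{bdk1}.) This is precisely why these six monomials are singled out in a separate lemma: in the proof of Theorem \ref{dl51} the inadmissible candidates split into those of the form $uv^{2^r}$ with $u$ previously treated -- handled by Proposition \ref{mdcb51} -- and the residual list of Lemma \ref{bdd51}, which by construction does \emph{not} reduce to lower $d$. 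The paper therefore proves the lemma by exhibiting, for each monomial separately, a fresh explicit Cartan-formula identity modulo $P_5^-((3)|^5)$ with $Sq^1,Sq^2,Sq^4,Sq^8$ terms; your steps (b)--(c) have nothing to quote, and your step (a) cannot be carried out.

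For Part (ii) your overall template (smaller monomials of the same weight vector, plus hit terms, plus a correction in $\mathcal P_{(5,93)}$) matches the paper's, but the same objection applies: the identity must be produced by direct computation, not imported from $d=3,4$. Also, your guess that the $\mathcal P_{(5,93)}$-part consists of ``exactly one'' near-spike term is not what happens; in the paper's proof the correction $g_1$ (resp.\ $g_2$) is a sum of ten monomials such as $x_1^{3}x_2^{17}x_3^{19}x_4^{27}x_5^{27}$, each verified to lie in $\mathcal P_{(5,93)}$ by comparing partial sums $\sum_{i\leqslant h}2^{i-1}\omega_i$ with those of the minimal spike. The verification that these ten monomials lie in $\mathcal P_{(5,93)}$ and that the remaining monomials are strictly smaller than $w$ is exactly the content you would still have to supply.
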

\begin{proof} By using the Cartan formula, we obtain
\begin{align*}
x_1^{3}x_2^{7}x_3^{24}x_4^{29}x_5^{30} &= x_1^{2}x_2^{7}x_3^{25}x_4^{29}x_5^{30} + x_1^{2}x_2^{9}x_3^{23}x_4^{29}x_5^{30} + x_1^{3}x_2^{4}x_3^{27}x_4^{29}x_5^{30}\\ &\quad  + x_1^{3}x_2^{5}x_3^{25}x_4^{30}x_5^{30} +  Sq^1(x_1^{3}x_2^{7}x_3^{15}x_4^{29}x_5^{38} + x_1^{3}x_2^{7}x_3^{15}x_4^{33}x_5^{34}\\ &\quad  + x_1^{3}x_2^{7}x_3^{15}x_4^{37}x_5^{30} + x_1^{3}x_2^{7}x_3^{19}x_4^{29}x_5^{34} + x_1^{3}x_2^{7}x_3^{19}x_4^{33}x_5^{30}\\ &\quad  + x_1^{3}x_2^{7}x_3^{23}x_4^{29}x_5^{30}) +  Sq^2(x_1^{2}x_2^{7}x_3^{15}x_4^{29}x_5^{38} + x_1^{2}x_2^{7}x_3^{15}x_4^{37}x_5^{30}\\ &\quad  + x_1^{2}x_2^{7}x_3^{23}x_4^{29}x_5^{30} + x_1^{5}x_2^{7}x_3^{15}x_4^{30}x_5^{34} + x_1^{5}x_2^{7}x_3^{15}x_4^{34}x_5^{30}\\ &\quad  + x_1^{5}x_2^{7}x_3^{19}x_4^{30}x_5^{30}) +  Sq^4(x_1^{3}x_2^{4}x_3^{23}x_4^{29}x_5^{30} + x_1^{3}x_2^{5}x_3^{21}x_4^{30}x_5^{30} \\ &\quad + x_1^{3}x_2^{11}x_3^{15}x_4^{30}x_5^{30}) +  Sq^8(x_1^{3}x_2^{7}x_3^{15}x_4^{30}x_5^{30}) \  \mbox{ mod}(P_5^-((3)|^5)),\\
x_1^{7}x_2^{3}x_3^{24}x_4^{29}x_5^{30} &= x_1^{4}x_2^{3}x_3^{27}x_4^{29}x_5^{30} + x_1^{5}x_2^{2}x_3^{27}x_4^{29}x_5^{30} + x_1^{5}x_2^{3}x_3^{25}x_4^{30}x_5^{30}\\ &\quad  + x_1^{7}x_2^{2}x_3^{25}x_4^{29}x_5^{30} +  Sq^1(x_1^{7}x_2^{3}x_3^{15}x_4^{29}x_5^{38} + x_1^{7}x_2^{3}x_3^{15}x_4^{33}x_5^{34}\\ &\quad  + x_1^{7}x_2^{3}x_3^{15}x_4^{37}x_5^{30} + x_1^{7}x_2^{3}x_3^{19}x_4^{29}x_5^{34} + x_1^{7}x_2^{3}x_3^{19}x_4^{33}x_5^{30}\\ &\quad  + x_1^{7}x_2^{3}x_3^{23}x_4^{29}x_5^{30}) +  Sq^2(x_1^{7}x_2^{2}x_3^{15}x_4^{29}x_5^{38} + x_1^{7}x_2^{2}x_3^{15}x_4^{37}x_5^{30}\\ &\quad  + x_1^{7}x_2^{2}x_3^{23}x_4^{29}x_5^{30} + x_1^{7}x_2^{5}x_3^{15}x_4^{30}x_5^{34} + x_1^{7}x_2^{5}x_3^{15}x_4^{34}x_5^{30}\\ &\quad  + x_1^{7}x_2^{5}x_3^{19}x_4^{30}x_5^{30}) +  Sq^4(x_1^{4}x_2^{3}x_3^{23}x_4^{29}x_5^{30} + x_1^{5}x_2^{2}x_3^{23}x_4^{29}x_5^{30}\\ &\quad  + x_1^{5}x_2^{3}x_3^{21}x_4^{30}x_5^{30} + x_1^{11}x_2^{3}x_3^{15}x_4^{30}x_5^{30})\\ &\quad  +  Sq^8(x_1^{7}x_2^{3}x_3^{15}x_4^{30}x_5^{30}) \  \mbox{ mod}(P_5^-((3)|^5)),\\
x_1^{7}x_2^{7}x_3^{25}x_4^{26}x_5^{28} &= x_1^{4}x_2^{7}x_3^{28}x_4^{27}x_5^{27} + x_1^{4}x_2^{11}x_3^{28}x_4^{23}x_5^{27} + x_1^{5}x_2^{3}x_3^{27}x_4^{30}x_5^{28}\\ &\quad + x_1^{5}x_2^{3}x_3^{28}x_4^{27}x_5^{30} + x_1^{5}x_2^{6}x_3^{28}x_4^{27}x_5^{27} + x_1^{5}x_2^{10}x_3^{28}x_4^{23}x_5^{27}\\ &\quad + x_1^{5}x_2^{11}x_3^{19}x_4^{30}x_5^{28} + x_1^{5}x_2^{11}x_3^{22}x_4^{27}x_5^{28} + x_1^{5}x_2^{11}x_3^{26}x_4^{23}x_5^{28}\\ &\quad + x_1^{7}x_2^{3}x_3^{25}x_4^{30}x_5^{28} + x_1^{7}x_2^{3}x_3^{28}x_4^{25}x_5^{30} + x_1^{7}x_2^{5}x_3^{26}x_4^{27}x_5^{28}\\ &\quad + x_1^{7}x_2^{6}x_3^{28}x_4^{25}x_5^{27} + x_1^{7}x_2^{7}x_3^{24}x_4^{27}x_5^{28} + x_1^{7}x_2^{7}x_3^{25}x_4^{24}x_5^{30}\\ &\quad +  Sq^1(x_1^{7}x_2^{5}x_3^{28}x_4^{23}x_5^{29} + x_1^{7}x_2^{7}x_3^{21}x_4^{29}x_5^{28} + x_1^{7}x_2^{7}x_3^{25}x_4^{24}x_5^{29}\\ &\quad + x_1^{7}x_2^{7}x_3^{25}x_4^{25}x_5^{28} + x_1^{7}x_2^{7}x_3^{28}x_4^{23}x_5^{27}) +  Sq^2(x_1^{7}x_2^{3}x_3^{22}x_4^{23}x_5^{36}\\ &\quad + x_1^{7}x_2^{3}x_3^{23}x_4^{30}x_5^{28} + x_1^{7}x_2^{3}x_3^{28}x_4^{23}x_5^{30} + x_1^{7}x_2^{6}x_3^{28}x_4^{23}x_5^{27} \\ &\quad+ x_1^{7}x_2^{7}x_3^{19}x_4^{30}x_5^{28} + x_1^{7}x_2^{7}x_3^{22}x_4^{27}x_5^{28} + x_1^{7}x_2^{7}x_3^{26}x_4^{23}x_5^{28}\\ &\quad + x_1^{7}x_2^{7}x_3^{26}x_4^{24}x_5^{27}) + Sq^4(x_1^{4}x_2^{7}x_3^{28}x_4^{23}x_5^{27} + x_1^{5}x_2^{3}x_3^{23}x_4^{30}x_5^{28}\\ &\quad + x_1^{5}x_2^{3}x_3^{28}x_4^{23}x_5^{30} + x_1^{5}x_2^{6}x_3^{28}x_4^{23}x_5^{27} + x_1^{5}x_2^{7}x_3^{19}x_4^{30}x_5^{28}\\ &\quad + x_1^{5}x_2^{7}x_3^{22}x_4^{27}x_5^{28} + x_1^{5}x_2^{7}x_3^{26}x_4^{23}x_5^{28}\\ &\quad + x_1^{11}x_2^{5}x_3^{22}x_4^{23}x_5^{28} + x_1^{11}x_2^{5}x_3^{25}x_4^{30}x_5^{28})\\ &\quad +  Sq^8(x_1^{7}x_2^{5}x_3^{22}x_4^{23}x_5^{28} + x_1^{7}x_2^{5}x_3^{25}x_4^{30}x_5^{28}) \  \mbox{ mod}(P_5^-((3)|^5)),\\ 
x_1^{15}x_2^{15}x_3^{17}x_4^{18}x_5^{28} &= x_1^{8}x_2^{15}x_3^{23}x_4^{19}x_5^{28} + x_1^{8}x_2^{23}x_3^{15}x_4^{19}x_5^{28} + x_1^{9}x_2^{15}x_3^{23}x_4^{18}x_5^{28}\\ &\quad  + x_1^{9}x_2^{23}x_3^{15}x_4^{18}x_5^{28} + x_1^{11}x_2^{12}x_3^{23}x_4^{19}x_5^{28} + x_1^{11}x_2^{13}x_3^{23}x_4^{18}x_5^{28}\\ &\quad  + x_1^{11}x_2^{20}x_3^{15}x_4^{19}x_5^{28} + x_1^{11}x_2^{21}x_3^{15}x_4^{18}x_5^{28} + x_1^{15}x_2^{12}x_3^{19}x_4^{19}x_5^{28}\\ &\quad  + x_1^{15}x_2^{13}x_3^{19}x_4^{18}x_5^{28} + x_1^{15}x_2^{15}x_3^{16}x_4^{19}x_5^{28} + Sq^1(x_1^{15}x_2^{15}x_3^{15}x_4^{19}x_5^{28})\\ &\quad  + Sq^2(x_1^{15}x_2^{15}x_3^{15}x_4^{18}x_5^{28}) +  Sq^4(x_1^{15}x_2^{12}x_3^{15}x_4^{19}x_5^{28}\\ &\quad + x_1^{15}x_2^{13}x_3^{15}x_4^{18}x_5^{28})  + Sq^8(x_1^{8}x_2^{15}x_3^{15}x_4^{19}x_5^{28} + x_1^{9}x_2^{15}x_3^{15}x_4^{18}x_5^{28}\\ &\quad  + x_1^{11}x_2^{12}x_3^{15}x_4^{19}x_5^{28} + x_1^{11}x_2^{13}x_3^{15}x_4^{18}x_5^{28}) \  \mbox{ mod}(P_5^-((3)|^5)).
\end{align*}

Part i) follows from the above equalities. 

\medskip
We prove Part ii). We have
\begin{align*}
x_1^{3}x_2^{15}x_3^{21}x_4^{26}x_5^{28} &= x_1^{2}x_2^{15}x_3^{21}x_4^{25}x_5^{30} + x_1^{2}x_2^{15}x_3^{21}x_4^{26}x_5^{29} + x_1^{2}x_2^{15}x_3^{21}x_4^{28}x_5^{27}\\ &\quad + x_1^{2}x_2^{21}x_3^{15}x_4^{25}x_5^{30} + x_1^{2}x_2^{21}x_3^{15}x_4^{26}x_5^{29} + x_1^{2}x_2^{21}x_3^{15}x_4^{28}x_5^{27}\\ &\quad + x_1^{3}x_2^{8}x_3^{23}x_4^{29}x_5^{30} + x_1^{3}x_2^{10}x_3^{23}x_4^{29}x_5^{28} + x_1^{3}x_2^{13}x_3^{23}x_4^{24}x_5^{30}\\ &\quad + x_1^{3}x_2^{13}x_3^{23}x_4^{26}x_5^{28} + x_1^{3}x_2^{15}x_3^{16}x_4^{29}x_5^{30} + x_1^{3}x_2^{15}x_3^{18}x_4^{29}x_5^{28}\\ &\quad + x_1^{3}x_2^{15}x_3^{21}x_4^{24}x_5^{30} + g_1 + Sq^1(x_1^{3}x_2^{15}x_3^{15}x_4^{25}x_5^{34}\\ &\quad + x_1^{3}x_2^{15}x_3^{15}x_4^{30}x_5^{29} + x_1^{3}x_2^{15}x_3^{17}x_4^{23}x_5^{34} + x_1^{3}x_2^{15}x_3^{17}x_4^{30}x_5^{27}\\ &\quad + x_1^{3}x_2^{15}x_3^{22}x_4^{23}x_5^{29} + x_1^{3}x_2^{15}x_3^{22}x_4^{25}x_5^{27} + x_1^{3}x_2^{17}x_3^{15}x_4^{23}x_5^{34}\\ &\quad + x_1^{3}x_2^{17}x_3^{15}x_4^{30}x_5^{27} + x_1^{3}x_2^{17}x_3^{19}x_4^{23}x_5^{30} + x_1^{3}x_2^{17}x_3^{19}x_4^{26}x_5^{27}\\ &\quad + x_1^{3}x_2^{19}x_3^{17}x_4^{23}x_5^{30} + x_1^{3}x_2^{19}x_3^{17}x_4^{26}x_5^{27} + x_1^{3}x_2^{22}x_3^{15}x_4^{23}x_5^{29}\\ &\quad + x_1^{3}x_2^{22}x_3^{15}x_4^{25}x_5^{27} + x_1^{4}x_2^{15}x_3^{15}x_4^{29}x_5^{29} + x_1^{4}x_2^{15}x_3^{17}x_4^{29}x_5^{27}\\ &\quad + x_1^{4}x_2^{17}x_3^{15}x_4^{29}x_5^{27}) +  Sq^2(x_1^{5}x_2^{15}x_3^{19}x_4^{25}x_5^{27} + x_1^{5}x_2^{15}x_3^{19}x_4^{23}x_5^{29}\\ &\quad + x_1^{5}x_2^{15}x_3^{15}x_4^{27}x_5^{29} + x_1^{5}x_2^{15}x_3^{15}x_4^{23}x_5^{33} + x_1^{5}x_2^{17}x_3^{15}x_4^{27}x_5^{27}\\ &\quad + x_1^{5}x_2^{15}x_3^{17}x_4^{27}x_5^{27} + x_1^{5}x_2^{19}x_3^{15}x_4^{23}x_5^{29} + x_1^{5}x_2^{19}x_3^{15}x_4^{25}x_5^{27}\\ &\quad + x_1^{2}x_2^{15}x_3^{17}x_4^{30}x_5^{27} + x_1^{2}x_2^{15}x_3^{21}x_4^{26}x_5^{27} + x_1^{2}x_2^{17}x_3^{15}x_4^{30}x_5^{27}\\ &\quad + x_1^{2}x_2^{21}x_3^{15}x_4^{26}x_5^{27} + x_1^{2}x_2^{15}x_3^{15}x_4^{30}x_5^{29} + x_1^{2}x_2^{15}x_3^{21}x_4^{23}x_5^{30}\\ &\quad + x_1^{2}x_2^{21}x_3^{15}x_4^{23}x_5^{30}) +  Sq^4(x_1^{3}x_2^{15}x_3^{19}x_4^{25}x_5^{27} + x_1^{3}x_2^{15}x_3^{19}x_4^{23}x_5^{29}\\ &\quad + x_1^{3}x_2^{15}x_3^{15}x_4^{27}x_5^{29} + x_1^{3}x_2^{15}x_3^{15}x_4^{23}x_5^{33} + x_1^{3}x_2^{17}x_3^{15}x_4^{27}x_5^{27}\\ &\quad + x_1^{3}x_2^{15}x_3^{17}x_4^{27}x_5^{27} + x_1^{3}x_2^{19}x_3^{15}x_4^{23}x_5^{29} + x_1^{3}x_2^{19}x_3^{15}x_4^{25}x_5^{27})\\ &\quad + Sq^8(x_1^{3}x_2^{8}x_3^{15}x_4^{29}x_5^{30} + x_1^{3}x_2^{10}x_3^{15}x_4^{29}x_5^{28} + x_1^{3}x_2^{13}x_3^{15}x_4^{24}x_5^{30}\\ &\quad + x_1^{3}x_2^{13}x_3^{15}x_4^{26}x_5^{28}) \  \mbox{ mod}(P_5^-((3)|^5)),
\end{align*}
where 
\begin{align*}
g_1 &= x_1^{3}x_2^{17}x_3^{19}x_4^{27}x_5^{27} + x_1^{3}x_2^{19}x_3^{17}x_4^{27}x_5^{27} + x_1^{3}x_2^{15}x_3^{17}x_4^{25}x_5^{33} + x_1^{3}x_2^{17}x_3^{15}x_4^{25}x_5^{33}\\ &\quad + x_1^{3}x_2^{17}x_3^{17}x_4^{23}x_5^{33} + x_1^{3}x_2^{17}x_3^{17}x_4^{27}x_5^{29} + x_1^{3}x_2^{17}x_3^{21}x_4^{25}x_5^{27} + x_1^{3}x_2^{21}x_3^{17}x_4^{25}x_5^{27}\\ &\quad + x_1^{3}x_2^{17}x_3^{21}x_4^{23}x_5^{29} + x_1^{3}x_2^{21}x_3^{17}x_4^{23}x_5^{29} \in \mathcal P_{(5,93)}.
\end{align*}

Hence, the monomial $x_1^{3}x_2^{15}x_3^{21}x_4^{26}x_5^{28}$ is strongly inadmissible. 

By a similar computation, we get
\begin{align*}
x_1^{15}x_2^{3}x_3^{21}x_4^{26}x_5^{28} &= x_1^{8}x_2^{3}x_3^{23}x_4^{29}x_5^{30} + x_1^{10}x_2^{3}x_3^{23}x_4^{29}x_5^{28} + x_1^{13}x_2^{2}x_3^{23}x_4^{25}x_5^{30}\\ &\quad + x_1^{13}x_2^{2}x_3^{23}x_4^{26}x_5^{29} + x_1^{13}x_2^{2}x_3^{23}x_4^{28}x_5^{27} + x_1^{13}x_2^{3}x_3^{23}x_4^{24}x_5^{30}\\ &\quad + x_1^{13}x_2^{3}x_3^{23}x_4^{26}x_5^{28} + x_1^{15}x_2^{2}x_3^{21}x_4^{25}x_5^{30} + x_1^{15}x_2^{2}x_3^{21}x_4^{26}x_5^{29}\\ &\quad + x_1^{15}x_2^{2}x_3^{21}x_4^{28}x_5^{27} + x_1^{15}x_2^{3}x_3^{16}x_4^{29}x_5^{30} + x_1^{15}x_2^{3}x_3^{18}x_4^{29}x_5^{28}\\ &\quad + x_1^{15}x_2^{3}x_3^{21}x_4^{24}x_5^{30} + g_2 +  Sq^1(x_1^{15}x_2^{3}x_3^{15}x_4^{25}x_5^{34}\\ &\quad + x_1^{15}x_2^{3}x_3^{15}x_4^{30}x_5^{29} + x_1^{15}x_2^{3}x_3^{17}x_4^{23}x_5^{34} + x_1^{15}x_2^{3}x_3^{17}x_4^{30}x_5^{27}\\ &\quad + x_1^{15}x_2^{3}x_3^{22}x_4^{23}x_5^{29} + x_1^{15}x_2^{3}x_3^{22}x_4^{25}x_5^{27} + x_1^{15}x_2^{4}x_3^{15}x_4^{29}x_5^{29}\\ &\quad + x_1^{15}x_2^{4}x_3^{17}x_4^{29}x_5^{27} + x_1^{17}x_2^{3}x_3^{15}x_4^{23}x_5^{34} + x_1^{17}x_2^{3}x_3^{15}x_4^{30}x_5^{27}\\ &\quad + x_1^{17}x_2^{3}x_3^{19}x_4^{23}x_5^{30} + x_1^{17}x_2^{3}x_3^{19}x_4^{26}x_5^{27} + x_1^{17}x_2^{4}x_3^{15}x_4^{29}x_5^{27}\\ &\quad + x_1^{19}x_2^{3}x_3^{17}x_4^{23}x_5^{30} + x_1^{19}x_2^{3}x_3^{17}x_4^{26}x_5^{27} + x_1^{22}x_2^{3}x_3^{15}x_4^{23}x_5^{29}\\ &\quad + x_1^{22}x_2^{3}x_3^{15}x_4^{25}x_5^{27}) +  Sq^2(x_1^{15}x_2^{2}x_3^{15}x_4^{30}x_5^{29} + x_1^{15}x_2^{2}x_3^{17}x_4^{30}x_5^{27}\\ &\quad + x_1^{15}x_2^{2}x_3^{21}x_4^{23}x_5^{30} + x_1^{15}x_2^{2}x_3^{21}x_4^{26}x_5^{27} + x_1^{15}x_2^{5}x_3^{15}x_4^{23}x_5^{33}\\ &\quad + x_1^{15}x_2^{5}x_3^{15}x_4^{27}x_5^{29} + x_1^{15}x_2^{5}x_3^{17}x_4^{27}x_5^{27} + x_1^{15}x_2^{5}x_3^{19}x_4^{23}x_5^{29}\\ &\quad + x_1^{15}x_2^{5}x_3^{19}x_4^{25}x_5^{27} + x_1^{17}x_2^{2}x_3^{15}x_4^{30}x_5^{27} + x_1^{17}x_2^{5}x_3^{15}x_4^{27}x_5^{27}\\ &\quad + x_1^{19}x_2^{5}x_3^{15}x_4^{23}x_5^{29} + x_1^{19}x_2^{5}x_3^{15}x_4^{25}x_5^{27} + x_1^{21}x_2^{2}x_3^{15}x_4^{23}x_5^{30}\\ &\quad + x_1^{21}x_2^{2}x_3^{15}x_4^{26}x_5^{27}) + Sq^4(x_1^{15}x_2^{3}x_3^{15}x_4^{23}x_5^{33} + x_1^{15}x_2^{3}x_3^{15}x_4^{27}x_5^{29}\\ &\quad + x_1^{15}x_2^{3}x_3^{17}x_4^{27}x_5^{27} + x_1^{15}x_2^{3}x_3^{19}x_4^{23}x_5^{29} + x_1^{15}x_2^{3}x_3^{19}x_4^{25}x_5^{27}\\ &\quad + x_1^{17}x_2^{3}x_3^{15}x_4^{27}x_5^{27} + x_1^{19}x_2^{3}x_3^{15}x_4^{23}x_5^{29} + x_1^{19}x_2^{3}x_3^{15}x_4^{25}x_5^{27})\\ &\quad +  Sq^8(x_1^{8}x_2^{3}x_3^{15}x_4^{29}x_5^{30} + x_1^{10}x_2^{3}x_3^{15}x_4^{29}x_5^{28} + x_1^{13}x_2^{2}x_3^{15}x_4^{25}x_5^{30}\\ &\quad + x_1^{13}x_2^{2}x_3^{15}x_4^{26}x_5^{29} + x_1^{13}x_2^{2}x_3^{15}x_4^{28}x_5^{27} + x_1^{13}x_2^{3}x_3^{15}x_4^{24}x_5^{30}\\ &\quad + x_1^{13}x_2^{3}x_3^{15}x_4^{26}x_5^{28}) \  \mbox{ mod}(P_5^-((3)|^5)), 
\end{align*}
where
\begin{align*}
g_2 &= x_1^{15}x_2^{3}x_3^{17}x_4^{25}x_5^{33} + x_1^{17}x_2^{3}x_3^{15}x_4^{25}x_5^{33} + x_1^{17}x_2^{3}x_3^{17}x_4^{23}x_5^{33} + x_1^{17}x_2^{3}x_3^{17}x_4^{27}x_5^{29}\\ &\quad + x_1^{17}x_2^{3}x_3^{19}x_4^{27}x_5^{27} + x_1^{17}x_2^{3}x_3^{21}x_4^{23}x_5^{29} + x_1^{17}x_2^{3}x_3^{21}x_4^{25}x_5^{27} + x_1^{19}x_2^{3}x_3^{17}x_4^{27}x_5^{27}\\ &\quad + x_1^{21}x_2^{3}x_3^{17}x_4^{23}x_5^{29} + x_1^{21}x_2^{3}x_3^{17}x_4^{25}x_5^{27} \in \mathcal P_{(5,93)}.
\end{align*}
The above equalities show that the monomial $x_1^{15}x_2^{3}x_3^{21}x_4^{26}x_5^{28}$ is strongly inadmissible. The lemma is completely proved.
\end{proof}
\begin{proof}[Proof of Theorem \ref{dl51}.] 
Denote $A(d) = \{a_{d,t}: 1 \leqslant t \leqslant 55\}$ and $C(d) = \{a_{d,t}: 56 \leqslant t \leqslant 90\}$, where  $a_{d,t}$, $1 \leqslant t \leqslant 90$, are determined as in Section \ref{s5}. We prove that  $B_5^+((3)|^d) \subset A(d)\cup C(d)$ by induction on $d \geqslant 5$.
	
Let $x \in P_5^+((3)|^d)$ be an admissible monomial. Then, $\omega(x) = (3)|^d$ and $x = X_{i,j}y^2$ with $y$ a monomial in $P_5((3)|^{d-1})$ and $1 \leqslant i < j \leqslant 5$. Since $x$ is admissible, by Theorem \ref{dlcb1}, $y$ is also admissible. 
	
Let $d = 5$ and $z \in A(4)\cup C(4)\cup B_5^0((3)|^4)$. Based on Theorem \ref{dlcb1} we can check that if $X_{i,j}z^2 \in P_5^+((3)|^5)$ and  $X_{i,j}z^2 \ne a_{5,t}$ for all $t,\, 1 \leqslant t \leqslant 90$, then either $X_{i,j}z^2$ is one of the monomials as given in Lemma \ref{bdd51}, or $X_{i,j}z^2$ is of the form $uv^{2^r}$, where $u$ is a monomial as given in one of Lemmas \ref{bdk1}, \ref{bdd30}, \ref{bdd31}, \ref{bdd41}, \ref{bdd42}, $v$ is a monomial in $P_5$ and $r$ is a suitable integer. Hence, by Proposition \ref{mdcb51}, $X_{i,j}z^2$ is inadmissible. Since $x = X_{i,j}y^2$ is admissible and $y \in B_5((3)|^4) \subset A(4)\cup C(4)\cup B_5^0((3)|^4)$, we have $x =  a_{5,t}$ for some $t,\, 1 \leqslant t \leqslant 90$. Hence, $B_5^+((3)|^5) \subset A(5)\cup C(5).$
	
Suppose $d > 5$ and $B_5^+((3)|^{d-1}) \subset A(d-1)\cup C(d-1)$. Let $z \in A(d-1)\cup C(d-1)\cup B_5^0((3)|^{d-1})$. It is not difficult to check that if $X_{i,j}z^2 \in P_5^+((3)|^d)$ and  $X_{i,j}z^2 \ne a_{d,t}$ for all $t,\, 1 \leqslant t \leqslant 90$, then $X_{i,j}z^2$ is of the form $uw^{2^s}$, where $u$ is a monomial as given in one of Lemmas \ref{bdk1}, \ref{bdd30}, \ref{bdd31}, \ref{bdd41}, \ref{bdd42}, \ref{bdd51}, $w$ is a monomial in $P_5$ and $s$ is a suitable integer. By Proposition \ref{mdcb51}, $X_{i,j}z^2$ is inadmissible. Since $x = X_{i,j}y^2$ and $y$ is admissible, we have $x =  a_{d,t}$ for some $t,\, 1 \leqslant t \leqslant 90$. That means $B_5^+((3)|^d) \subset A(d)\cup C(d).$
	
Now we prove that the set $[A(d)\cup C(d)]_{(3)|^d}$ is linearly independent in $QP_5((3)|^d)$.
Consider $\langle [A(d)]_{(3)|^d}\rangle \subset QP_5((3)|^d)$ and $\langle [C(d)]_{(3)|^d}\rangle \subset QP_5((3)|^d)$. It is easy to see that for $1 \leqslant t \leqslant 55$,  $a_{d,t} = x_i^{2^d-1}f_i(b_{d,t})$ with $b_{d,t}$ an admissible monomial of degree $2(2^d-1)$ in $P_4$ and $1 \leqslant i \leqslant 5$. By Proposition \ref{mdmo}, $a_{d,t}$ is admissible. This implies that $\dim \langle [A(d)]_{(3)|^d}\rangle = 55$. Since $\nu(a_{d,t}) = 2^d-1$ for $1\leqslant t \leqslant 55$ and $\nu(a_{d,t}) < 2^d-1$ for $56\leqslant t \leqslant 90$, we obtain $\langle [A(d)]_{(3)|^d}\rangle \cap \langle [C(d)]_{(3)|^d}\rangle = \{0\}$. Hence, we need only to prove the set $[C(d)]_{(3)|^d}$ is linearly independent in $QP_5((3)|^d)$.
Suppose there is a linear relation
\begin{equation}\label{ctd51}
S:= \sum_{56\leqslant t \leqslant 90}\gamma_ta_{d,t} \equiv_{(3)|^d} 0,
\end{equation}
where $\gamma_t \in \mathbb F_2$. We denote $\gamma_{\mathbb J} = \sum_{t \in \mathbb J}\gamma_t$ for any $\mathbb J \subset \{t\in \mathbb N:56\leqslant t \leqslant 90\}$.
	
Let $w_{d,u},\, 1\leqslant u \leqslant 11$, be as in Section \ref{s5} and the homomorphism $p_{(i;I)}:P_5\to P_4$ which is defined by \eqref{ct23} for $k=5$. From our work \cite[Lemma 3.5]{sp}, we see that $p_{(i;I)}$ passes to a homomorphism from $QP_5((3)|^d)$ to $QP_4((3)|^d)$. By applying $p_{(i;j)}$, $1\leqslant i < j \leqslant 5,$ to (\ref{ctd51}), we obtain
\begin{align*}
&p_{(1;2)}(S) \equiv_{(3)|^d}  \gamma_{\{58,68\}}w_{d,7} + \gamma_{62}w_{d,10} \equiv_{(3)|^d} 0,\\
&p_{(1;3)}(S) \equiv_{(3)|^d} \gamma_{\{57,65\}}w_{d,6} + \gamma_{59}w_{d,7}   \equiv_{(3)|^d} 0,\\
&p_{(1;4)}(S) \equiv_{(3)|^d}  \gamma_{\{56,61,66,70,72,77\}}w_{d,5} + \gamma_{60}w_{d,7}  \equiv_{(3)|^d} 0,\\
&p_{(1;5)}(S) \equiv_{(3)|^d}  \gamma_{\{56,57,58,59,60,62,67,71,73,74,78\}}w_{d,4} + \gamma_{61}w_{d,7}  \equiv_{(3)|^d} 0,\\
&p_{(2;3)}(S) \equiv_{(3)|^d} \gamma_{\{64,65,68,69,76\}}w_{d,6} + \gamma_{80}w_{d,7}  \equiv_{(3)|^d} 0,\\
&p_{(2;4)}(S) \equiv_{(3)|^d} \gamma_{\{63,66,82,83\}}w_{d,5} + \gamma_{81}w_{d,7}   \equiv_{(3)|^d} 0,\\
&p_{(2;5)}(S) \equiv_{(3)|^d} \gamma_{\{63,64,67,80,81,84,85\}}w_{d,4} + \gamma_{82}w_{d,7}  \equiv_{(3)|^d} 0,\\
&p_{(3;4)}(S) \equiv_{(3)|^d} \gamma_{\{75,76,77,79,87,88,89\}}w_{d,5} + \gamma_{86}w_{d,7}  \equiv_{(3)|^d} 0,\\
&p_{(3;5)}(S) \equiv_{(3)|^d} \gamma_{\{75,78,86\}}w_{d,4} + \gamma_{87}w_{d,7} \equiv_{(3)|^d} 0,\\
&p_{(4;5)}(S) \equiv_{(3)|^d}  \gamma_{79}w_{d,4} + \gamma_{89}w_{d,7}  \equiv_{(3)|^d} 0.
\end{align*}
From these equalities, we get $\gamma_{59} = \gamma_{60} = \gamma_{61} = \gamma_{62} = \gamma_{79} = \gamma_{80} = \gamma_{81} = \gamma_{82} = \gamma_{86} = \gamma_{87} = \gamma_{89} = 0$, $\gamma_{65} = \gamma_{57}$,  $\gamma_{68} = \gamma_{58}$,  $\gamma_{78} = \gamma_{75}$. Then, by applying the homomorphism $p_{(1;(u,v))}$, $2\leqslant u < v \leqslant 4,$ to (\ref{ctd51}), we get
\begin{align*}
p_{(1;(2,3))}(S) &\equiv_{(3)|^d} \gamma_{64}w_{d,6} + \gamma_{69}w_{d,7} + \gamma_{76}w_{d,10}  \equiv_{(3)|^d} 0,\\
p_{(1;(2,4))}(S) &\equiv_{(3)|^d} \gamma_{\{56,58,63,66,70,71,72,77,83\}}w_{d,5} + \gamma_{70}w_{d,7} + \gamma_{77}w_{d,10} \equiv_{(3)|^d} 0,\\
p_{(1;(3,4))}(S) &\equiv_{(3)|^d} \gamma_{\{63,64,67,84,85\}}w_{d,3} + \gamma_{\{56,57,66,69,70,72,73,75,76,77,83,88,90\}}w_{d,5}\\ &\hskip3cm + \gamma_{\{66,74,83\}}w_{d,6} + \gamma_{72}w_{d,7}  \equiv_{(3)|^d} 0.
\end{align*}
Computing from the above equalities gives $\gamma_{64} = \gamma_{69} = \gamma_{70} = \gamma_{72} = \gamma_{76} = \gamma_{77}  = 0$ and $\gamma_{58} = \gamma_{57}$, $\gamma_{66} = \gamma_{56}$, $\gamma_{88} = \gamma_{75}$. Now, by applying $p_{(1;(u,5))}$, $2\leqslant u \leqslant 4,$ to (\ref{ctd51}), we obtain
\begin{align*}
p_{(1;(2,5))}(S) &\equiv_{(3)|^d} \gamma_{\{56,57,63,67,71,73,74,75,84,85,90\}}w_{d,4} + \gamma_{71}w_{d,7} + \gamma_{75}w_{d,10}  \equiv_{(3)|^d} 0,\\
p_{(1;(3,5))}(S) &\equiv_{(3)|^d} \gamma_{\{56,63,83\}}w_{d,2} + \gamma_{\{56,57,67,71,73,74,84\}}w_{d,4}\\ &\hskip3cm + \gamma_{\{67,74,84\}}w_{d,6} + \gamma_{73}w_{d,7} \equiv_{(3)|^d} 0,\\
p_{(1;(4,5))}(S) &\equiv_{(3)|^d} \gamma_{\{67,71,73,74,85,90\}}w_{d,4} + \gamma_{\{67,71,73,85,90\}}w_{d,5} + \gamma_{74}w_{d,7} \equiv_{(3)|^d} 0.
\end{align*}
By computing from the above equalities we get $\gamma_t = 0$ for all $t, \, 56 \leqslant t \leqslant 90$. The theorem is proved. 
\end{proof}

We need the following for the proof of Theorem \ref{dl20}.

\begin{props}\label{mdt5} The set 
$$\mathcal B_5 = \{(\mathcal I, \mathcal J) \in {\sf PSeq}_5^6: X_{(\mathcal I, \mathcal J)} \in B_5((3)|^6)\}\subset {\sf PInc}_5^6$$  
is compatible with $(3)|^{6}$.
\end{props}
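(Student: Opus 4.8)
The plan is to check the two requirements of Definition~\ref{gth} for $\mathcal B=\mathcal B_5$ with $k=5$, $d_0=6$ and $n_{d_0}=3(2^6-1)=189$, exploiting the explicit description $B_5((3)|^6)=A(6)\cup C(6)\cup B_5^0((3)|^6)$ furnished by Theorem~\ref{dl51}, Proposition~\ref{mdbs} and the known sets $B_3^+((3)|^6)=\{(x_1x_2x_3)^{2^6-1}\}$, $B_4^+((3)|^6)=\{w_{6,u}:1\leqslant u\leqslant 11\}$, together with the strongly inadmissible monomials of Lemmas~\ref{bdk1}, \ref{bdd30}, \ref{bdd31}, \ref{bdd41}, \ref{bdd42}, \ref{bdd51} and the lifting Proposition~\ref{mdcb51}.

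First I would record that $\mathcal B_5$ really is a subset of ${\sf PInc}_5^6$ and that every member obeys condition~(i). If $X_{(\mathcal I,\mathcal J)}$ is admissible, then for each $c\leqslant 6$ the length-$c$ ``head'' $X_{(\mathcal I',\mathcal J')}=\prod_{t=1}^{c}X_{i_t,j_t}^{2^{c-t}}$ is admissible too: one has $X_{(\mathcal I,\mathcal J)}=v\,X_{(\mathcal I',\mathcal J')}^{2^{6-c}}$ with $v=\prod_{t=c+1}^{6}X_{i_t,j_t}^{2^{6-t}}$ and $\omega_i(v)=0$ for $i>6-c$, so Theorem~\ref{dlcb1}(i) applies; combining this (and Theorem~\ref{dlcb1}(ii) for inner blocks) with Lemma~\ref{bdk1}(i)--(ii) over consecutive (sub)blocks forces $i_1\leqslant\cdots\leqslant i_6$ and $j_1\leqslant\cdots\leqslant j_6$. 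The reduced-length bounds $\mathrm{rl}(\mathcal I)\leqslant 4$, $\mathrm{rl}(\mathcal J)\leqslant 4$ (equivalently $i_1=i_2$, $j_1=j_2$) are then read off, one by one, from the finitely many monomials of $B_5((3)|^6)$ of the shape $X_{(\mathcal I,\mathcal J)}$ --- a routine verification against the explicit list.

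The substance is condition~(ii), which I would prove by downward induction on $X_{(\mathcal H,\mathcal K)}$ in the left-lexicographic order of Definition~\ref{defn3} on the (finitely many, hence well-founded) monomials of weight vector $(3)|^6$. If $X_{(\mathcal H,\mathcal K)}$ is admissible then, by the previous paragraph, $(\mathcal H,\mathcal K)\in\mathcal B_5$ and \eqref{ctbd} holds trivially with $\mathcal B_{\min\mathcal K}=\{(\mathcal H,\mathcal K)\}$ and $\mathcal B_u=\varnothing$ for $\min\mathcal H<u<\min\mathcal K$ (note $\min\mathcal K=k_1>h_1=\min\mathcal H$). If $X_{(\mathcal H,\mathcal K)}$ is inadmissible, then --- exactly as in the inadmissibility half of the proof of Theorem~\ref{dl51}, now run at $d=6$ --- it can be written as $u\,w^{2^{c}}y^{2^{c+e}}$ with $w$ one of the strongly inadmissible monomials of Lemmas~\ref{bdk1}, \ref{bdd30}, \ref{bdd31}, \ref{bdd41}, \ref{bdd42}, \ref{bdd51} and $u,y$ of weight vectors $(3)|^{\bullet}$, so Proposition~\ref{mdcb51} yields
\[ X_{(\mathcal H,\mathcal K)}\ \simeq_{6}\ \sum_{s}X_{(\mathcal I_s,\mathcal J_s)}\quad\mathrm{mod}\,(\mathcal P_{(5,189)}),\]
with each $X_{(\mathcal I_s,\mathcal J_s)}<X_{(\mathcal H,\mathcal K)}$ of weight vector exactly $(3)|^6$; such a monomial equals $\prod_{i=0}^{5}X_{\mathbb J_i}^{2^i}$ with each $\mathbb J_i$ a $2$-element subset of $\mathbb N_5$, hence is indeed of the form $X_{(\mathcal I_s,\mathcal J_s)}$ for some $(\mathcal I_s,\mathcal J_s)\in{\sf PSeq}_5^6$. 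Applying the induction hypothesis to each $X_{(\mathcal I_s,\mathcal J_s)}$ and collecting the resulting terms over $\mathbb F_2$ according to the value of $\min\mathcal J$ produces the sets $\mathcal B_u$ required by \eqref{ctbd}, where the reduction machinery of Section~\ref{s3} (Lemma~\ref{bdtd}, Propositions~\ref{mdstin} and \ref{mdcb51}) supplies the modular identifications formally.

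The main obstacle is twofold, and lives entirely inside the inductive step. First, one must verify that \emph{every} inadmissible $X_{(\mathcal H,\mathcal K)}$ of weight vector $(3)|^6$ does decompose, via Proposition~\ref{mdcb51}, under one of the finitely many strongly inadmissible building blocks of Lemmas~\ref{bdk1}--\ref{bdd51} --- this is a case analysis of the same flavour as, and a refinement of, the computations already carried out in Section~\ref{s4}. Second, one must track how the indices of the pair $(\mathcal H,\mathcal K)$ transform under the corresponding ``sorting'' rewritings, so that $\min\mathcal I_s=\min\mathcal H$ is preserved and $\min\mathcal J_s$ stays in the interval $\{\min\mathcal H+1,\dots,\min\mathcal K\}$ at each stage; this index bookkeeping --- analogous to that already made explicit in Lemma~\ref{bdcbs} and the proof of Theorem~\ref{dlck2} --- is the delicate point, and is where essentially all the labour of the proof resides. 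Since for $k=4$ the analogous statement ($\mathcal B_4$ compatible with $(2)|^5$, \cite[Proposition~5.2.1]{su}) was established by exactly this method, no new conceptual difficulty is expected, only more cases.
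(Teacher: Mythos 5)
Your proposal is correct and follows essentially the same route as the paper: induction on $X_{(\mathcal H,\mathcal K)}$ in the order of Definition~\ref{defn3}, with the admissible case trivial and the inadmissible case handled by factoring out a strongly inadmissible block from Lemmas~\ref{bdk1}--\ref{bdd51}, applying Proposition~\ref{mdcb51}, and checking from the explicit relations that $\min\mathcal I$ is preserved and $\min\mathcal J$ does not increase before invoking the inductive hypothesis. The ``delicate point'' you flag is exactly the step the paper discharges by reading off $\min\mathcal S=\min\mathcal H_2$ and $\min\mathcal T\leqslant\min\mathcal K_2$ from the proofs of those lemmas.
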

\begin{proof} Let $(\mathcal H,\mathcal K)\in {\sf PSeq}_5^6$. From the monomials as given in Section \ref{s5}, we can easily check that if $(\mathcal H,\mathcal K)\in \mathcal B_5$, then $\mbox{rl}(\mathcal H)\leqslant 4$ and $\mbox{rl}(\mathcal K)\leqslant 4$. We prove $X_{(\mathcal H,\mathcal K)}$ is of the form \eqref{ctbd} for $k = 5$ and $\mathcal B = \mathcal B_5$. We prove the claim by induction on $X_{(\mathcal H,\mathcal K)}$ with respect to the order as given in Definition \ref{defn3}. Obviously, this claim is true if $X_{(\mathcal H,\mathcal K)}$ is admissible. Suppose $X_{(\mathcal H,\mathcal K)}$ is inadmissible and the claim is true for all $(\mathcal U,\mathcal V)\in {\sf PSeq}_5^6$ such that $X_{(\mathcal U,\mathcal V)}< X_{(\mathcal H,\mathcal K)}$. From the proofs of Propositions \ref{md52}, \ref{mdd532}, \ref{mdd41} and the proof of Theorem \ref{dl51} we see that $X_{(\mathcal H,\mathcal K)} = uw^{2^c}y^{2^{c+d}}$, where $u$ is a monomial of weight vector $(3)|^c$, $y$ is a monomial of weight vector $(3)|^e$ and $w$ is a monomial of weight vector $(3)|^d$ as given in one of Lemmas \ref{bdk1}, \ref{bdd30}, \ref{bdd31}, \ref{bdd41}, \ref{bdd42}, \ref{bdd51}. Here $c,\, e \geqslant 0$, $2 \leqslant d \leqslant 5$ and $c+d+e = 6$. Hence, there are $(\mathcal H_1,\mathcal K_1)\in {\sf PSeq}_5^c$, $(\mathcal H_2,\mathcal K_2)\in {\sf PSeq}_5^d$, $(\mathcal H_3,\mathcal K_3)\in {\sf PSeq}_5^e$ such that $u = X_{(\mathcal H_1,\mathcal K_1)}$, $w = X_{(\mathcal H_2,\mathcal K_2)}$, $y = X_{(\mathcal H_3,\mathcal K_3)}$ and $\mathcal H = \mathcal H_1|\mathcal H_2|\mathcal H_3$, $\mathcal K = \mathcal K_1|\mathcal K_2|\mathcal K_3$.  From the proofs of Lemmas \ref{bdk1}, \ref{bdd30}, \ref{bdd31}, \ref{bdd41}, \ref{bdd42}, \ref{bdd51} we see that 
$$w = X_{(\mathcal H_2,\mathcal K_2)} = \sum_{(\mathcal S,\mathcal T)\in \mathcal B_w} X_{(\mathcal S,\mathcal T)} + g \ \mbox{mod}(P_5^-((3)|^d)+\mathcal A(d-1)^+P_5), $$ 
where $\mathcal B_w$ is a set of suitable pairs $(\mathcal S,\mathcal T) \in {\sf PSeq}_5^d$ such that $\min \mathcal S = \min \mathcal H_2$, $\min \mathcal T \leqslant \min \mathcal K_2$,  $X_{(\mathcal S,\mathcal T)} < w$ and $g \in \mathcal P_{(5,n_d)}$ with $n_d = 3(2^d-1)$. Using the proof of Proposition \ref{mdcb51} we obtain
$$X_{(\mathcal H,\mathcal K)} = \sum_{(\mathcal S,\mathcal T)\in \mathcal B_w} uX_{(\mathcal S,\mathcal T)}^{2^c} y^{2^{c+d}}+ ug^{2^c}y^{2^{c+d}} \ \mbox{mod}(P_5^-((3)|^6)+\mathcal A(5)^+P_5),$$
where $ug^{2^c}y^{2^{c+d}} \in \mathcal P_{(5,n_6)}$, $uX_{(\mathcal S,\mathcal T)}^{2^c} y^{2^{c+d}} = X_{(\mathcal U,\mathcal V)} < X_{(\mathcal H,\mathcal K)}$ with $\mathcal U = \mathcal H_1|\mathcal S|\mathcal H_3$, $\mathcal V = \mathcal K_1|\mathcal T|\mathcal K_3$. Since $\min \mathcal S = \min \mathcal H_2$, $\min \mathcal T \leqslant \min \mathcal K_2$, we have $\min \mathcal U = \min\mathcal H$, $\min\mathcal V \leqslant \min\mathcal K$. The proposition now follows from the inductive hypothesis.
\end{proof}
\begin{proof}[Proof of Theorem \ref{dl20}] Let $n = 2^{d+s+t} + 2^{d + s} + 2^d -3$ and $m = 2^{s+t} + 2^s -2$. We have $\frac{n-5}2 = 2^{d-1+s+t} + 2^{d-1 + s} + 2^{d-2} + 2^{d-2} -4 $. By Theorems 1.3 and 1.4 in \cite{su2}, if $d \geqslant 6$, $s \geqslant 4$ and $t \geqslant 4$, then 
\[\dim(QP_5)_{\frac{n-5}2} = (2^5-1)\dim(QP_4)_{2^{s+t+1}+2^{s+1}-2}= 3(2^3-1)(2^4-1)(2^5-1).\]
Kameko's squaring operation
$(\widetilde {Sq}^0_*)_{(5,n)}: (QP_5)_n \longrightarrow (QP_5)_{\frac{n-5}2}$
is an epimorphism, hence by using Theorem \ref{dlwa}, we get
\begin{align*}4(2^3-1)(2^4-1)(2^5-1) &\leqslant \dim(QP_5)_n = \dim \mbox{\rm Ker}(\widetilde {Sq}^0_*)_{(5,n)} + \dim(QP_5)_{\frac{n-5}2}\\ &= \dim \mbox{\rm Ker}(\widetilde {Sq}^0_*)_{(5,n)} + 3(2^3-1)(2^4-1)(2^5-1).
\end{align*}
This implies that $\dim \mbox{\rm Ker}(\widetilde {Sq}^0_*)_{(5,n)}  \geqslant (2^3-1)(2^4-1)(2^5-1)$. 
	
By Proposition \ref{mdt5}, the set $\mathcal B_5 \subset {\sf PInc}_5^6$ 
is compatible with $(3)|^{6}$ and $|\mathcal B_5| = |B_5((3)|^6)| = 155$. By applying Theorem \ref{dlck2}, we obtain
\[\dim \mbox{\rm Ker}(\widetilde {Sq}^0_*)_{(5,n)} \leqslant |\mathcal B_{5}|\dim (QP_{3})_m = 155\dim (QP_{3})_m.\]
From Kameko \cite[Theorem 8.1]{ka}, we have $\dim (QP_{3})_m = 21$ for any $s,\, t \geqslant 2$. Hence, we get
\begin{align*}
\dim \mbox{\rm Ker}(\widetilde {Sq}^0_*)_{(5,n)} &\leqslant 155\dim (QP_{3})_m\\ 
&= 155\times 21 = (2^3-1)(2^4-1)(2^5-1).
\end{align*} 
Thus, $\dim \mbox{\rm Ker}(\widetilde {Sq}^0_*)_{(5,n)} = (2^3-1)(2^4-1)(2^5-1)$, for any $d \geqslant 6$ and $s,\, t \geqslant 4$. The theorem is proved.
\end{proof}

Combining this result and Theorem 1.6 in \cite{su} one gets the following, which is numbered as Corollary \ref{cor4240} in the introduction.

\begin{corls}\label{cor424} Let $n$ be as in Theorem $\ref{dl20}$. If $d \geqslant 6$ and $s,\, t \geqslant 4$, then
$$\dim (QP_5)_n = 4(2^3-1)(2^4-1)(2^5-1) = 13020.$$
Consequently, the inequality $\eqref{ct12}$ is an equality for $k = 5$ and $d\geqslant 6$.
\end{corls}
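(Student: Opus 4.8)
The plan is to prove Theorem \ref{dl20} by squeezing $\dim \mbox{\rm Ker}(\widetilde{Sq}^0_*)_{(5,n)}$ between a lower bound coming from known dimension counts and an upper bound coming from the generating set of Theorem \ref{dlck2}. Write $n = 2^{d+s+t}+2^{d+s}+2^d-3$, so $n = (2^{d_1}-1)+(2^{d_2}-1)+(2^{d_3}-1)$ with $d_1 = d+s+t > d_2 = d+s > d_3 = d$, and $\mu(n) = 3 = k-2$ for $k = 5$; moreover $m = \beta_5^d(n) = 2^{s+t}+2^s-2$. The short exact-type relation is
\begin{equation}\label{cttmp}
\dim(QP_5)_n = \dim \mbox{\rm Ker}(\widetilde{Sq}^0_*)_{(5,n)} + \dim \mbox{\rm Im}(\widetilde{Sq}^0_*)_{(5,n)},
\end{equation}
and since by Kameko's theorem $\widetilde{Sq}^0_*$ is an epimorphism onto $(QP_5)_{(n-5)/2}$, the image term equals $\dim(QP_5)_{(n-5)/2}$.

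For the \emph{lower bound}, I would first compute $\frac{n-5}2 = 2^{d-1+s+t}+2^{d-1+s}+2^{d-2}+2^{d-2}-4$, which is a degree with $\mu = 4 = k-2$ for $k=5$, and invoke Theorems 1.3 and 1.4 of \cite{su2} (valid once $d \geqslant 6$, $s \geqslant 4$, $t \geqslant 4$) to get $\dim(QP_5)_{(n-5)/2} = (2^5-1)\dim(QP_4)_{2^{s+t+1}+2^{s+1}-2} = 3(2^3-1)(2^4-1)(2^5-1)$. On the other hand Theorem \ref{dlwa} gives $\dim(QP_5)_n \geqslant 4\prod_{3\leqslant i\leqslant 5}(2^i-1) = 4(2^3-1)(2^4-1)(2^5-1)$ under the same hypotheses on $d,s,t$. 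Plugging both into \eqref{cttmp} yields $\dim \mbox{\rm Ker}(\widetilde{Sq}^0_*)_{(5,n)} \geqslant (2^3-1)(2^4-1)(2^5-1) = 3255$.

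For the \emph{upper bound}, I would apply Theorem \ref{dlck2} with $k = 5$, $d_0 = 6$, and $\mathcal B = \mathcal B_5 = \{(\mathcal I,\mathcal J)\in {\sf PSeq}_5^6 : X_{(\mathcal I,\mathcal J)}\in B_5((3)|^6)\}$. This requires knowing that $\mathcal B_5$ is compatible with $(3)|^6$, which is Proposition \ref{mdt5}, and that $|\mathcal B_5| = |B_5((3)|^6)| = 155$, which follows from Theorem \ref{dl51} (giving $\dim QP_5((3)|^d) = 155$ for $d \geqslant 5$, hence in particular $d = 6$). Theorem \ref{dlck2} then gives $\dim \mbox{\rm Ker}(\widetilde{Sq}^0_*)_{(5,n)} \leqslant |\mathcal B_5|\,\dim(QP_3)_m = 155\,\dim(QP_3)_m$. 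Since $m = 2^{s+t}+2^s-2$ has $\mu(m) \leqslant 2$, Kameko's computation (\cite[Theorem 8.1]{ka}) gives $\dim(QP_3)_m = 21$ for all $s,t \geqslant 2$, so the upper bound is $155 \times 21 = 3255 = (2^3-1)(2^4-1)(2^5-1)$. Combining with the lower bound forces equality, proving \eqref{ct130}, and since $\prod_{3\leqslant i\leqslant 5}(2^i-1) = 7\cdot 15\cdot 31 = 3255$, Conjecture \ref{ker} holds for $k=5$.

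The main obstacle is not this final squeezing argument, which is a few lines once everything is in place, but rather the hypotheses it consumes: Theorem \ref{dl51} (the explicit determination that $\dim QP_5((3)|^d) = 155$ for $d\geqslant 5$, whose proof occupies most of Section \ref{s4} and relies on the strongly-inadmissible-monomial machinery of Section \ref{s3}) and Proposition \ref{mdt5} (compatibility of $\mathcal B_5$, which in turn draws on the proofs of Propositions \ref{md52}, \ref{mdd532}, \ref{mdd41} and Theorem \ref{dl51}). I would be careful to check that the numerical hypotheses line up across all the cited results — namely that $d\geqslant 6$ forces $d-1\geqslant 5$ so Theorem \ref{dl51} applies to the relevant weight vectors, that $s,t\geqslant 4$ is exactly what Theorems 1.3–1.4 of \cite{su2} and Theorem \ref{dlwa} require, and that $d_0 = 6 \leqslant d$ so Theorem \ref{dlck2} is applicable. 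The corollary then follows immediately by reading off $\dim(QP_5)_n = 3255 + 3\cdot 3255 = 13020$ from \eqref{cttmp}.
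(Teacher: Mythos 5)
Your proposal is correct and follows essentially the same route as the paper: the paper likewise obtains the corollary by combining Theorem \ref{dl20} with the identity $\dim (QP_5)_n = \dim\mbox{\rm Ker}(\widetilde{Sq}^0_*)_{(5,n)} + \dim(QP_5)_{(n-5)/2}$, where the kernel dimension is squeezed between the lower bound from Theorem \ref{dlwa} together with $\dim(QP_5)_{(n-5)/2}=3(2^3-1)(2^4-1)(2^5-1)$ and the upper bound $|\mathcal B_5|\dim(QP_3)_m = 155\cdot 21$ from Theorem \ref{dlck2}, Proposition \ref{mdt5} and Theorem \ref{dl51}. Your bookkeeping of the numerical hypotheses ($d\geqslant 6 = d_0$, $s,t\geqslant 4$) matches the paper's usage exactly.
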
 

\section{Appendix}\label{s5}

In this section, we list the admissible monomials of weight vector $(3)^d$ in $P_k$ with $k \leqslant 5$.

From the results of Kameko \cite[Theorem 8.1]{ka} and our work \cite[Proposition 5.4.2]{su2} we see that if $d \geqslant 4$, then $B_3^+((3)|^d) = \{(x_1x_2x_3)^{2^d-1}\}$ and 
$B_4^+((3)|^d) = \{w_u=w_{d,u} : 1\leqslant u \leqslant 11\},$ 
where

\medskip
\centerline{\begin{tabular}{lll}
$w_{1} = x_1x_2^{2^d-2}x_3^{2^d-1}x_4^{2^d-1}$ &\ \ \ &$w_{2} = x_1x_2^{2^d-1}x_3^{2^d-2}x_4^{2^d-1}$\cr  
$w_{3} = x_1x_2^{2^d-1}x_3^{2^d-1}x_4^{2^d-2}$ &\ &$w_{4} = x_1^{3}x_2^{2^d-3}x_3^{2^d-2}x_4^{2^d-1}$\cr     
$w_{5} = x_1^{3}x_2^{2^d-3}x_3^{2^d-1}x_4^{2^d-2}$ &\ &$w_{6} = x_1^{3}x_2^{2^d-1}x_3^{2^d-3}x_4^{2^d-2}$\cr   
$w_{7} = x_1^{7}x_2^{2^d-5}x_3^{2^d-3}x_4^{2^d-2}$ &&$w_{8} = x_1^{2^d-1}x_2x_3^{2^d-2}x_4^{2^d-1}$\cr  
$w_{9} = x_1^{2^d-1}x_2x_3^{2^d-1}x_4^{2^d-2}$ &&$w_{10} = x_1^{2^d-1}x_2^{3}x_3^{2^d-3}x_4^{2^d-2}$\cr  
$w_{11} = x_1^{2^d-1}x_2^{2^d-1}x_3x_4^{2^d-2}$ &&\cr
\end{tabular}} 

\medskip
The sets $B_4((3)|^d)$ and $B_5^0((3)|^d)$ are determined by using Proposition \ref{mdbs}.

\medskip
For any $d \geqslant 5$, $B_5^+((3)|^d) = \{a_t = a_{d,t} : 1\leqslant t \leqslant 90\},$ 
where

\medskip
\centerline{\begin{tabular}{lll}
$a_{1} = x_1x_2x_3^{2^d-2}x_4^{2^d-2}x_5^{2^d-1}$ &\ \ &$a_{2} = x_1x_2x_3^{2^d-2}x_4^{2^d-1}x_5^{2^d-2}$\cr  
$a_{3} = x_1x_2x_3^{2^d-1}x_4^{2^d-2}x_5^{2^d-2}$ &&$a_{4} = x_1x_2^{2}x_3^{2^d-4}x_4^{2^d-1}x_5^{2^d-1}$\cr  
$a_{5} = x_1x_2^{2}x_3^{2^d-3}x_4^{2^d-2}x_5^{2^d-1}$ &&$a_{6} = x_1x_2^{2}x_3^{2^d-3}x_4^{2^d-1}x_5^{2^d-2}$\cr  
$a_{7} = x_1x_2^{2}x_3^{2^d-1}x_4^{2^d-4}x_5^{2^d-1}$ &&$a_{8} = x_1x_2^{2}x_3^{2^d-1}x_4^{2^d-3}x_5^{2^d-2}$\cr  
$a_{9} = x_1x_2^{2}x_3^{2^d-1}x_4^{2^d-1}x_5^{2^d-4}$ &&$a_{10} = x_1x_2^{3}x_3^{2^d-4}x_4^{2^d-2}x_5^{2^d-1}$\cr  
$a_{11} = x_1x_2^{3}x_3^{2^d-4}x_4^{2^d-1}x_5^{2^d-2}$& &$a_{12} = x_1x_2^{3}x_3^{2^d-2}x_4^{2^d-4}x_5^{2^d-1}$\cr  
$a_{13} = x_1x_2^{3}x_3^{2^d-2}x_4^{2^d-1}x_5^{2^d-4}$ &&$a_{14} = x_1x_2^{3}x_3^{2^d-1}x_4^{2^d-4}x_5^{2^d-2}$\cr  
$a_{15} = x_1x_2^{3}x_3^{2^d-1}x_4^{2^d-2}x_5^{2^d-4}$ &&$a_{16} = x_1x_2^{2^d-2}x_3x_4^{2^d-2}x_5^{2^d-1}$\cr  
$a_{17} = x_1x_2^{2^d-2}x_3x_4^{2^d-1}x_5^{2^d-2}$ &&$a_{18} = x_1x_2^{2^d-2}x_3^{2^d-1}x_4x_5^{2^d-2}$\cr   
$a_{19} = x_1x_2^{2^d-1}x_3x_4^{2^d-2}x_5^{2^d-2}$ &&$a_{20} = x_1x_2^{2^d-1}x_3^{2}x_4^{2^d-4}x_5^{2^d-1}$\cr  
$a_{21} = x_1x_2^{2^d-1}x_3^{2}x_4^{2^d-3}x_5^{2^d-2}$ &&$a_{22} = x_1x_2^{2^d-1}x_3^{2}x_4^{2^d-1}x_5^{2^d-4}$\cr  
$a_{23} = x_1x_2^{2^d-1}x_3^{3}x_4^{2^d-4}x_5^{2^d-2}$ &&$a_{24} = x_1x_2^{2^d-1}x_3^{3}x_4^{2^d-2}x_5^{2^d-4}$\cr  
$a_{25} = x_1x_2^{2^d-1}x_3^{2^d-2}x_4x_5^{2^d-2}$ &&$a_{26} = x_1x_2^{2^d-1}x_3^{2^d-1}x_4^{2}x_5^{2^d-4}$\cr  
$a_{27} = x_1^{3}x_2x_3^{2^d-4}x_4^{2^d-2}x_5^{2^d-1}$ &&$a_{28} = x_1^{3}x_2x_3^{2^d-4}x_4^{2^d-1}x_5^{2^d-2}$\cr  
$a_{29} = x_1^{3}x_2x_3^{2^d-2}x_4^{2^d-4}x_5^{2^d-1}$ &&$a_{30} = x_1^{3}x_2x_3^{2^d-2}x_4^{2^d-1}x_5^{2^d-4}$\cr  
$a_{31} = x_1^{3}x_2x_3^{2^d-1}x_4^{2^d-4}x_5^{2^d-2}$ &&$a_{32} = x_1^{3}x_2x_3^{2^d-1}x_4^{2^d-2}x_5^{2^d-4}$\cr  
$a_{33} = x_1^{3}x_2^{2^d-3}x_3^{2}x_4^{2^d-4}x_5^{2^d-1}$ &&$a_{34} = x_1^{3}x_2^{2^d-3}x_3^{2}x_4^{2^d-1}x_5^{2^d-4}$\cr 
$a_{35} = x_1^{3}x_2^{2^d-3}x_3^{2^d-1}x_4^{2}x_5^{2^d-4}$ &&$a_{36} = x_1^{3}x_2^{2^d-1}x_3x_4^{2^d-4}x_5^{2^d-2}$\cr  
$a_{37} = x_1^{3}x_2^{2^d-1}x_3x_4^{2^d-2}x_5^{2^d-4}$ &&$a_{38} = x_1^{3}x_2^{2^d-1}x_3^{2^d-3}x_4^{2}x_5^{2^d-4}$\cr   
$a_{39} = x_1^{2^d-1}x_2x_3x_4^{2^d-2}x_5^{2^d-2}$ &&$a_{40} = x_1^{2^d-1}x_2x_3^{2}x_4^{2^d-4}x_5^{2^d-1}$\cr   
$a_{41} = x_1^{2^d-1}x_2x_3^{2}x_4^{2^d-3}x_5^{2^d-2}$ &&$a_{42} = x_1^{2^d-1}x_2x_3^{2}x_4^{2^d-1}x_5^{2^d-4}$\cr  
$a_{43} = x_1^{2^d-1}x_2x_3^{3}x_4^{2^d-4}x_5^{2^d-2}$ &&$a_{44} = x_1^{2^d-1}x_2x_3^{3}x_4^{2^d-2}x_5^{2^d-4}$\cr 
\end{tabular}}   
\centerline{\begin{tabular}{lll} 
$a_{45} = x_1^{2^d-1}x_2x_3^{2^d-2}x_4x_5^{2^d-2}$ &&$a_{46} = x_1^{2^d-1}x_2x_3^{2^d-1}x_4^{2}x_5^{2^d-4}$\cr 
$a_{47} = x_1^{2^d-1}x_2^{3}x_3x_4^{2^d-4}x_5^{2^d-2}$ &&$a_{48} = x_1^{2^d-1}x_2^{3}x_3x_4^{2^d-2}x_5^{2^d-4}$\cr 
$a_{49} = x_1^{2^d-1}x_2^{3}x_3^{2^d-3}x_4^{2}x_5^{2^d-4}$ &&$a_{50} = x_1^{2^d-1}x_2^{2^d-1}x_3x_4^{2}x_5^{2^d-4}$\cr  
$a_{51} = x_1^{3}x_2^{5}x_3^{2^d-6}x_4^{2^d-4}x_5^{2^d-1}$ &&$a_{52} = x_1^{3}x_2^{5}x_3^{2^d-6}x_4^{2^d-1}x_5^{2^d-4}$\cr   
$a_{53} = x_1^{3}x_2^{5}x_3^{2^d-1}x_4^{2^d-6}x_5^{2^d-4}$ &&$a_{54} = x_1^{3}x_2^{2^d-1}x_3^{5}x_4^{2^d-6}x_5^{2^d-4}$\cr   
$a_{55} = x_1^{2^d-1}x_2^{3}x_3^{5}x_4^{2^d-6}x_5^{2^d-4}$ &\ \ &$a_{56} = x_1x_2^{3}x_3^{2^d-3}x_4^{2^d-2}x_5^{2^d-2}$\cr    
$a_{57} = x_1x_2^{3}x_3^{2^d-2}x_4^{2^d-3}x_5^{2^d-2}$ &&$a_{58} = x_1x_2^{6}x_3^{2^d-5}x_4^{2^d-3}x_5^{2^d-2}$\cr  
$a_{59} = x_1x_2^{7}x_3^{2^d-6}x_4^{2^d-3}x_5^{2^d-2}$ &&$a_{60} = x_1x_2^{7}x_3^{2^d-5}x_4^{2^d-4}x_5^{2^d-2}$\cr  
$a_{61} = x_1x_2^{7}x_3^{2^d-5}x_4^{2^d-2}x_5^{2^d-4}$ &&$a_{62} = x_1x_2^{2^d-2}x_3^{3}x_4^{2^d-3}x_5^{2^d-2}$\cr 
$a_{63} = x_1^{3}x_2x_3^{2^d-3}x_4^{2^d-2}x_5^{2^d-2}$ &&$a_{64} = x_1^{3}x_2x_3^{2^d-2}x_4^{2^d-3}x_5^{2^d-2}$\cr 
$a_{65} = x_1^{3}x_2^{3}x_3^{2^d-4}x_4^{2^d-3}x_5^{2^d-2}$ &&$a_{66} = x_1^{3}x_2^{3}x_3^{2^d-3}x_4^{2^d-4}x_5^{2^d-2}$\cr  
$a_{67} = x_1^{3}x_2^{3}x_3^{2^d-3}x_4^{2^d-2}x_5^{2^d-4}$ &&$a_{68} = x_1^{3}x_2^{4}x_3^{2^d-5}x_4^{2^d-3}x_5^{2^d-2}$\cr  
$a_{69} = x_1^{3}x_2^{5}x_3^{2^d-6}x_4^{2^d-3}x_5^{2^d-2}$ &&$a_{70} = x_1^{3}x_2^{5}x_3^{2^d-5}x_4^{2^d-4}x_5^{2^d-2}$\cr 
$a_{71} = x_1^{3}x_2^{5}x_3^{2^d-5}x_4^{2^d-2}x_5^{2^d-4}$ &&$a_{72} = x_1^{3}x_2^{7}x_3^{2^d-7}x_4^{2^d-4}x_5^{2^d-2}$\cr  
$a_{73} = x_1^{3}x_2^{7}x_3^{2^d-7}x_4^{2^d-2}x_5^{2^d-4}$ &&$a_{74} = x_1^{3}x_2^{7}x_3^{2^d-3}x_4^{2^d-6}x_5^{2^d-4}$\cr  
$a_{75} = x_1^{3}x_2^{2^d-3}x_3x_4^{2^d-2}x_5^{2^d-2}$ && $a_{76} = x_1^{3}x_2^{2^d-3}x_3^{2}x_4^{2^d-3}x_5^{2^d-2}$\cr  
$a_{77} = x_1^{3}x_2^{2^d-3}x_3^{3}x_4^{2^d-4}x_5^{2^d-2}$ &&$a_{78} = x_1^{3}x_2^{2^d-3}x_3^{3}x_4^{2^d-2}x_5^{2^d-4}$\cr  
$a_{79} = x_1^{3}x_2^{2^d-3}x_3^{2^d-2}x_4x_5^{2^d-2}$ &&$a_{80} = x_1^{7}x_2x_3^{2^d-6}x_4^{2^d-3}x_5^{2^d-2}$\cr  
$a_{81} = x_1^{7}x_2x_3^{2^d-5}x_4^{2^d-4}x_5^{2^d-2}$ &&$a_{82} = x_1^{7}x_2x_3^{2^d-5}x_4^{2^d-2}x_5^{2^d-4}$\cr  
$a_{83} = x_1^{7}x_2^{3}x_3^{2^d-7}x_4^{2^d-4}x_5^{2^d-2}$ &&$a_{84} = x_1^{7}x_2^{3}x_3^{2^d-7}x_4^{2^d-2}x_5^{2^d-4}$\cr  
$a_{85} = x_1^{7}x_2^{3}x_3^{2^d-3}x_4^{2^d-6}x_5^{2^d-4}$ &&$a_{86} = x_1^{7}x_2^{2^d-5}x_3x_4^{2^d-4}x_5^{2^d-2}$\cr  
$a_{87} = x_1^{7}x_2^{2^d-5}x_3x_4^{2^d-2}x_5^{2^d-4}$ &&$a_{88} = x_1^{7}x_2^{2^d-5}x_3^{5}x_4^{2^d-6}x_5^{2^d-4}$\cr  
$a_{89} = x_1^{7}x_2^{2^d-5}x_3^{2^d-3}x_4^{2}x_5^{2^d-4}$ &&$a_{90} = x_1^{7}x_2^{11}x_3^{2^d-11}x_4^{2^d-6}x_5^{2^d-4}$\cr  
\end{tabular}} 

\section*{Acknowledgment} 

The first version of this work was written while the author was visiting the Viet Nam Institute for Advanced Study in Mathematics (VIASM) in November, 2019. He would like to thank the VIASM for the wonderful working condition and for the hospitality.

The author is very grateful to the referee for his valuable comments and suggestions which helped to improve the quality of the paper.

{}

\end{document}